\DeclareMathOperator{\Br}{Br}
\DeclareMathOperator{\coker}{coker}
\DeclareMathOperator{\Div}{Div}
\DeclareMathOperator{\divi}{div}
\DeclareMathOperator{\Gal}{Gal}
\DeclareMathOperator{\Hom}{Hom}
\DeclareMathOperator{\Id}{Id}
\DeclareMathOperator{\im}{im}
\DeclareMathOperator{\NS}{NS}
\DeclareMathOperator{\Pic}{Pic}
\DeclareMathOperator{\pr}{pr}
\DeclareMathOperator{\sgn}{sgn}
\DeclareMathOperator{\Spec}{Spec}
\DeclareMathOperator{\Sym}{Sym}
\DeclareMathOperator{\Val}{Val}
\DeclareMathOperator{\Vol}{Vol}
\newcommand{\etale}{_\textinmath{\'et}}
\newcommand{\eff}{_\textinmath{eff}}
\newcommand{\zerofour}{{\{0,\dots,4\}}}
\newcommand{\onefour}{{\{1,2,3,4\}}}
\newcommand{\DDelta}{{\boldsymbol\Delta}}
\newcommand{\badp}{{\matheusm S}}
\newcommand{\NN}{\mathbf Z_{\geq 0}}
\newcommand{\Sp}{{\matheusm S_p}}
\newcommand{\primes}{{\mathcal P}}
\newcommand{\Ceff}{{C\eff}}
\newcommand{\mset}{,\ }
\newcommand{\bmset}{,\ }
\newcommand{\ZZ}{{\mathbf Z}}
\newcommand{\QQ}{{\mathbf Q}}
\newcommand{\KK}{{\mathbf K}}
\newcommand{\RR}{{\mathbf R}}
\newcommand{\Ga}{{\mathcal G}}
\newcommand{\Adeles}{{\boldsymbol A}}
\newcommand{\ci}{{\mathsl i}}
\newcommand{\Aff}{{\mathbf A}}
\newcommand{\Proj}{{\mathbf P}}
\newcommand{\torsQ}{{\matheusm T}}
\newcommand{\torsZ}{{\mathcal T}}
\newcommand{\torscZ}{{\mathcal Y}}
\newcommand{\torsiQ}{{\matheusm T}_\inter}
\newcommand{\torsiZ}{{\mathcal T}_\inter}
\newcommand{\torsicZ}{{\mathcal Y}_\inter}
\newcommand{\TNS}{T_{\NS}}
\newcommand{\Tspl}{T_{\inter}}
\newcommand{\Gm}{{\mathbf G}_m}
\newcommand{\GmZ}{{\mathbf G}_{m,\ZZ}}
\newcommand{\GmQ}{{\mathbf G}_{m,\QQ}}
\newcommand{\RProj}{{\matheusm P}}
\newcommand{\CU}{{\mathcal U}}
\newcommand{\mm}{{\boldsymbol{m}}}
\newcommand{\fa}{\mathfrak{a}}
\newcommand{\faa}{{\boldsymbol{\mathfrak a}}}
\newcommand{\fb}{\mathfrak{b}}
\newcommand{\fbb}{{\boldsymbol{\mathfrak b}}}
\newcommand{\bb}{{\boldsymbol{\mathfrak b}}}
\newcommand{\ee}{{\boldsymbol{e}}}
\newcommand{\ff}{{\boldsymbol{f}}}
\newcommand{\nn}{{\boldsymbol{n}}}
\newcommand{\zz}{{\boldsymbol{z}}}
\newcommand{\rr}{{\boldsymbol{r}}}
\newcommand{\sss}{{\boldsymbol{s}}}
\newcommand{\llambda}{{\boldsymbol{\lambda}}}
\newcommand{\formon}[1]{{{\breve\omega}_{#1}}}
\newcommand{\measure}{\boldsymbol\omega}
\newcommand{\card}{\sharp}
\newcommand{\meas}{{\boldsymbol\omega}}
\newcommand{\tors}{\textinmath{tors}}
\newcommand{\inter}{\textinmath{spl}}
\newcommand{\oone}{{\boldsymbol 1}}
\newcommand{\dual}{^\vee}
\newcommand{\sha}{{\cyrille X}}
\let\mod\bmod
\newcommand{\rd}{\,\textinmath{d}}
\newcommand{\lab}{\label}
\newcommand{\ma}{\mathbf}
\newcommand{\fD}{{\mathfrak{D}}}
\newcommand{\mcal}{\mathcal}
\newcommand{\sfg}{\mathsf{\Gamma}}
\newcommand{\sfl}{\mathsf{\Lambda}}
\newcommand{\D}{\Delta}
\newcommand{\ZZp}{\mathbf{Z}_{>0}}
\newcommand{\UU}{\mathcal{U}}
\newcommand{\ve}{\varepsilon}
\renewcommand{\rho}{\varrho}
\newcommand{\mnu}{\boldsymbol{\nu}}
\newcommand{\mal}{\boldsymbol{\lambda}}
\newcommand{\mmu}{\boldsymbol{\mu}}
\newcommand{\bd}{\mathbf{d}}
\newcommand{\la}{\lambda}
\newcommand{\N}{\mathrm{N}}
\date{\today}
\title[Manin's conjecture for Ch\^atelet surfaces]{On Manin's
conjecture\\for a family of Ch\^atelet surfaces}%T%\\\draftname}
\author{R\'egis de la Bret\`eche}
\address{Institut de Math\'ematiques de Jussieu,  UMR 7586
Case 7012, Universit\'e Paris 7 -- Denis Diderot
2, place Jussieu, F-75251 Paris cedex 05, France}
\email{breteche@math.jussieu.fr}
\author{Tim Browning}
\address{School of Mathematics,  
University of Bristol, Bristol BS8 1TW, England}
\email{t.d.browning@bristol.ac.uk}
\author{Emmanuel Peyre}
\address{Institut Fourier\\
UFR de Math\'ematiques, UMR 5582\\
Universit\'e de Grenoble I et CNRS\\
BP 74\\ 38402 Saint-Martin d'H\`eres CEDEX\\ France}
\email{Emmanuel\!.Peyre@ujf-grenoble.fr}
\urladdr{http://www-fourier.ujf-grenoble.fr/\~{}peyre}
\subjclass{primary 14E08; secondary 11D45, 12G05, 14F43}
\begin{document}
%% Including file abstract.tex
\begin{abstract}
The Manin conjecture is established for 
Ch\^atelet surfaces over $\QQ$ arising as minimal proper smooth models of 
the surface 
\[Y^2+Z^2=f(X)\]
in $\Aff_\QQ^3$, 
where $f\in \ZZ[X]$ is a totally reducible polynomial of
degree $3$ without repeated roots.  These surfaces do not satisfy weak
approximation. 
\end{abstract}
\ifx\altabstract\undefined\else
\begin{altabstract}
Nous d\'emontrons la conjecture de Manin pour les
surfaces de Ch\^atelet sur $\QQ$ obtenues comme mod\`ele minimal propre
et lisse de la surface affine d'\'equation
\[Y^2+Z^2=f(X)\]
o\`u $f\in \ZZ[X]$ est un polyn\^ome scind\'e \`a racines distinctes.
Ces surfaces ne satisfont pas l'appro\-xi\-ma\-tion faible.
\end{altabstract}
\fi
%% End of file abstract.tex
\maketitle
\tableofcontents
%% Including file introduction.tex
\section{Introduction}
%\label{section.introduction}

The purpose of this paper is to prove Manin's conjecture
about points of bounded height for a family of Ch\^atelet surfaces
over $\QQ$. These surfaces have been considered by F.~Ch\^atelet
in \cite{chatelet:points} and \cite{chatelet:points2},
by V.~A.~Iskovskikh~\cite{iskovskih:hasse},
by D.~Coray and M.~A. Tsfasman \cite{coraytsfasman:delpezzo},
and by {J.-L.} Colliot-Th\'el\`ene, J.-J. Sansuc, and P. Swinnerton-Dyer
in \cite{ctssd:chatelet1} and \cite{ctssd:chatelet2}, among others.

The surfaces considered here are smooth proper models
of the affine surfaces given in $\Aff^3_\QQ$ by an equation
of the form
\begin{equation*}
Y^2+Z^2=X(a_3X+b_3)(a_4X+b_4),
\end{equation*}
for suitable $a_3$, $b_3$, $a_4$, $b_4\in\ZZ$.

It is important to note that the surfaces we consider
do not satisfy weak approximation, 
the lack of which
is explained by the Brauer-Manin obstruction, as described in
\cite{ctssd:chatelet1} and \cite{ctssd:chatelet2}.
Up to now, the only cases for which Manin's principle was
proven despite weak approximation not holding
were obtained using harmonic analysis and required the
action of an algebraic group on the variety with
an open orbit. The method used in this paper is completely
different. Following ideas of P.~Salberger \cite{salberger:tamagawa},
we use versal torsors introduced by
Colliot-Th\'el\`ene and Sansuc in \cite{cts:predescente2},
\cite{cts:descente1}, and \cite{cts:descente2}
to estimate the number of rational points of bounded
height on the surface.

This paper is organised as follows: in section~\ref{section.geometry},
we recall some facts about the geometry of the surfaces. In 
section~\ref{section.result}, we define the height and state
our main result. Section~\ref{section.torsors} contains
the description of the versal torsors we use. In 
section~\ref{section.jumpingup}, we describe the lifting of rational
points to the versal torsors. This lifting reduces the initial
problem to the estimation of some arithmetic sums denoted by
$\CU(T)$. The following sections contain
the key analytical tools used in the proof.
In section~\ref{section.bound} we give a uniform upper bound
for $\CU(T)$ and in section~\ref{section.estimate}
an asymptotic formula for it.
The last section is devoted to an interpretation of the leading
constant.

Let us fix some notation for the remainder of this text.

\begin{notaconv}
If $k$ is a field, we denote by $\overline k$ an algebraic closure
of $k$. For any variety $X$ over $k$ and any $k$-algebra $A$, we denote
by $X_A$ the product $X\times_{\Spec(k)}\Spec(A)$ and by $X(A)$
the set $\Hom_{\Spec(k)}(\Spec(A),X)$. We also put $\overline X=X_{\overline k}$.
The cohomological Brauer group of $X$ is defined as 
$\Br(X)=H^2\etale(X,\mathbf G_m)$, where $\mathbf G_m$ denotes
the multiplicative group.
The projective space of dimension $n$
over $A$ is denoted by $\Proj^n_A$ and the
affine space by $\Aff^n_A$. For any $(x_0,\dots,x_n)\in k^{n+1}\setminus\{0\}$
we denote by $(x_0:\dots:x_n)$ its image in $\Proj^n(k)$.
\end{notaconv}
%% End of file introduction.tex
%% Including file geometry.tex
\section{A family of Ch\^atelet surfaces}
\label{section.geometry}

Let us fix $a_1$, $a_2$, $a_3$, $a_4$, $b_1$, $b_2$,
$b_3, b_4\in\ZZ$ such that 
\[\Delta_{i,j}=\left\vert\begin{matrix}
a_i&a_j\\
b_i&b_j
\end{matrix}\right\vert\neq 0\]
for any $i,j\in\onefour$ with $i\neq j$.
We then consider the linear forms $L_i$ defined
by $L_i(U,V)=a_iU+b_iV$ for $i\in\onefour$
and define the hypersurface $S_1$ of $\Proj^2_\QQ\times\Aff^1_\QQ$
given by the equation
\begin{equation*}%E%\label{equ.Sone}
X^2+Y^2=T^2\prod_{i=1}^4L_i(U,1)
\end{equation*}
and the hypersurface $S_2$ given by the equation
\begin{equation*}%E%\label{equ.Stwo}
{X'}^2+{Y'}^2={T'}^2\prod_{i=1}^4L_i(1,V).
\end{equation*}
Let $U_1$ be the open subset of $S_1$ defined by $U\neq 0$
and $U_2$ be the open subset of $S_2$ defined by $V\neq 0$.
The map $\Phi:U_1\to U_2$ which maps $((X:Y:T),U)$
onto $((X:Y:U^2T),1/U)$ is an isomorphism and we define $S$
as the surface obtained by glueing $S_1$ to $S_2$ using the isomorphism
$\Phi$. The surface $S$ is a smooth projective surface and is
a particular case of a Ch\^atelet surface. The geometry of such surfaces
has been described by J.-L. Colliot-Th\'el\`ene, J.-J. Sansuc
and P. Swinnerton-Dyer in \cite[\S7]{ctssd:chatelet2}. For
the sake of completeness, let us recall part of this description which will
be useful for the description of versal torsors. 

The maps $S_1\to\Proj^1_\QQ$ (resp. $S_2\to\Proj^1_\QQ$)
which maps $((X:Y:T),U)$ onto $(U:1)$
(resp. $((X':Y':T'),V)$ onto $(1:V)$) glue together
to give a conic fibration $\pi:S\to\Proj^1_\QQ$
with four degenerate fibres over the points given by
$P_i=(-b_i:a_i)\in\Proj^1(\QQ)$ for $i\in\onefour$.
In fact, the glueing of $\Proj^2_\QQ\times\Aff^1_\QQ$ to
$\Proj^2_\QQ\times\Aff^1_\QQ$ through the map
\begin{equation}\label{equ.glueing}
((X:Y:T),U)\mapsto ((X:Y:U^2T),1/U)
\end{equation}
gives the projective bundle\footnote{We define
here $\Proj(\mathcal O^2\oplus\mathcal O(-2))$ as the projective
bundle associated to the sheave of graded commutative algebras
$\underline{\Sym}(\mathcal O^2\oplus\mathcal O(2))$. In other
words the fibre over a point is given by the lines in the
fibre of the vector bundle and not by the hyperplanes.} 
$\RProj=\Proj(\mathcal O^2\oplus\mathcal O(-2))$
over $\Proj^1_\QQ$ and $S$ may be seen as a hypersurface
in that bundle.

Over $\QQ(\ci)$, if $\xi\in\{-\ci,\ci\}$, the map 
$\Aff_{\QQ(\ci)}\to {S_1}_{\QQ(\ci)}$ given by $u\mapsto ((\xi:1:0),U)$
extends to a section $\sigma_\xi$ of $\pi$. The surface
$S_{\QQ(\ci)}$ contains $10$ exceptional curves, that is irreducible
curves with negative self-intersection. Eight of them are given
in $S_{\QQ(\ci)}$ by the following equations
\begin{equation*}%E%\label{equ.div.vert}
D_j^\xi:\qquad L_j(\pi(P))=0\quad\text{and}\quad X-\xi Y=0
\end{equation*}
for $\xi\in\{-\ci,\ci\}$ and $j\in\onefour$; the last ones
correspond to the section $\sigma_\xi$ and are given by the\
equations
\begin{equation*}%\label{equ.div.hor}
E^\xi:\qquad T=0\quad\text{and}\quad X-\xi Y=0.
\end{equation*}
Here $X$, $Y$ and $T$ are seen as sections of 
$\mathcal O_{\RProj}(1)$.
Let us denote by $\Ga$ the Galois group of $\QQ(\ci)$
over $\QQ$ and by $z\mapsto \overline z$ the nontrivial element
in $\Ga$. Then we have
\[\overline{E^\xi}=E^{\overline\xi}\quad\text{and}\quad
\overline{D_j^\xi}=D_j^{\overline\xi}\]
for $\xi\in\{-\ci,\ci\}$ and $j\in\onefour$. We shall also
write $D_j^+$ (resp. $D_j^-$, $E^+$, $E^-$)
for $D_j^{\ci}$ (resp. $D_j^{-\ci}$, $E^{\ci}$, $E^{-\ci}$).
The intersection multiplicities of these divisors are given by
\[(E^\xi,E^\xi)=-2,\quad (D_j^\xi,D_j^\xi)=-1,\quad (D_j^\xi,D_j^{-\xi})=1,
\quad(E^\xi,D_j^\xi)=1,\]
where $\xi\in\{-\ci,\ci\}$, and $j\in\onefour$, all other 
intersection multiplicities
being equal to~$0$. These intersections are summarized in 
figure~\ref{fig.intersection}.
\begin{figure}[ht]
\[\begin{pspicture}(0,0)(10,6)
\psline(1,1)(9,1)
\rput(0.7,1){$E^-$}
\rput(8.7,1.2){$-2$}
\psline(1,5)(9,5)
\rput(0.7,5){$E^+$}
\rput(8.7,4.8){$-2$}
\psbezier(2,0)(2,1)(2,3)(1,4)
\rput(1.7,0){$D_1^-$}
\rput(2.1,2.1){-1}
\psbezier(4,0)(4,1)(4,3)(3,4)
\rput(3.7,0){$D_2^-$}
\rput(4.1,2.1){-1}
\psbezier(6,0)(6,1)(6,3)(5,4)
\rput(5.7,0){$D_3^-$}
\rput(6.1,2.1){-1}
\psbezier(8,0)(8,1)(8,3)(7,4)
\rput(7.7,0){$D_4^-$}
\rput(8.1,2.1){-1}
\psbezier(2,6)(2,5)(2,3)(1,2)
\rput(1.7,6){$D_1^+$}
\rput(2.1,3.9){-1}
\psbezier(4,6)(4,5)(4,3)(3,2)
\rput(3.7,6){$D_2^+$}
\rput(4.1,3.9){-1}
\psbezier(6,6)(6,5)(6,3)(5,2)
\rput(5.7,6){$D_3^+$}
\rput(6.1,3.9){-1}
\psbezier(8,6)(8,5)(8,3)(7,2)
\rput(7.7,6){$D_4^+$}
\rput(8.1,3.9){-1}
\end{pspicture}\]
\caption{Intersection multiplicities}
\label{fig.intersection}
\end{figure}
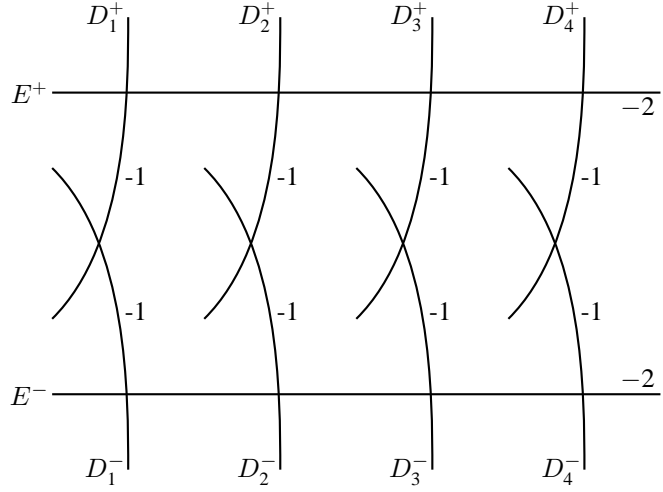
The geometric Picard group of $S$, that is $\Pic(\overline S)$,
is isomorphic to $\Pic(S_{\QQ(\ci)})$ and is generated by these exceptional
divisors with the relations
\begin{equation}\label{equ.pic.fibre}
[D_j^+]+[D_j^-]=[D_k^+]+[D_k^-]
\end{equation}
for $j,k\in\onefour$ and
\begin{equation}\label{equ.pic.can}
[E^+]+[D_j^+]+[D_k^+]=[E^-]+[D_l^-]+[D_m^-]
\end{equation}
whenever $\{j,k,l,m\}=\onefour$.
In particular, a basis of $\Pic(S_{\QQ(\ci)})$ is
given by the family 
\[([E^+],[D_1^+],[D_2^+],[D_3^+],[D_4^+],[D_1^-])\]
and the rank of the geometric Picard group of $S$ is equal
to $6$.  
Using the fact that $\Pic(S)=(\Pic(S_{\QQ(\ci)}))^{\Ga}$ it is 
easy to deduce that $\Pic(S)$ has rank $2$.

The class of the anticanonical line bundle is given by
\[\omega_S^{-1}=2E^++\sum_{j=1}^4D_j^+=2E^-+\sum_{j=1}^4D_j^-.\]
Indeed, by the adjunction formula, for any curve $C$ in $S$ of genus
$g$, one has the relation
$[C].([C]+\omega_S)=2g-2$. Therefore if $\xi\in\{-\ci,\ci\}$
and $j\in\onefour$,
\[[D_j^\xi].\omega_S^{-1}=1\quad\text{and}\quad[E^\xi].\omega_S^{-1}=0.\]
It is worthwhile noting
that $\omega_S^{-1}=
\mathcal O_{\RProj}(1)$.

\begin{lemma}
\label{lemma.sectionsomega}%
Using the trivialisation described by \eqref{equ.glueing},
the $5$-tuple of functions 
\[(T,UT,U^2T,X,Y)\] 
gives
a basis of $\Gamma(S,\omega_S^{-1})$.
\end{lemma}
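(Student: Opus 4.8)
The plan is to identify $\Gamma(S,\omega_S^{-1})$ with $\Gamma(\RProj,\mathcal{O}_{\RProj}(1))$ through the restriction map, exploiting that $S$ is a divisor of class $\mathcal{O}_{\RProj}(2)$ in $\RProj$ and that $\omega_S^{-1}=\mathcal{O}_{\RProj}(1)|_S$.

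First I would compute the ambient space of sections. Since $\pi\colon\RProj\to\Proj^1_\QQ$ is a $\Proj^2$-bundle and, with the convention of the footnote, $\pi_*\mathcal{O}_{\RProj}(1)=\mathcal{O}^2\oplus\mathcal{O}(2)$, one gets $\Gamma(\RProj,\mathcal{O}_{\RProj}(1))\cong\QQ^2\oplus\Gamma(\Proj^1_\QQ,\mathcal{O}(2))$, of dimension $5$. Concretely, in the trivialisation \eqref{equ.glueing} a section reads $aX+bY+c(U)T$ on the first chart, with $a,b\in\QQ[U]$ and $c\in\QQ[U]$; under the gluing one has $T=V^2T'$, hence $U^kT=V^{2-k}T'$, which is regular on the second chart precisely for $k\in\{0,1,2\}$, and similarly $a,b$ must be constant. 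Thus $(T,UT,U^2T,X,Y)$ is a basis of $\Gamma(\RProj,\mathcal{O}_{\RProj}(1))$.

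Next I would check that $S$ is cut out in $\RProj$ by a global section of $\mathcal{O}_{\RProj}(2)$: on the first chart the defining polynomial $X^2+Y^2-T^2\prod_{i=1}^4L_i(U,1)$ is a quadratic form in $X,Y,T$ with coefficients in $\QQ[U]$, and the identity $T^2\prod_iL_i(U,1)={T'}^2\prod_iL_i(1,V)$ — which is exactly what makes the construction of $S$ by glueing $S_1$ to $S_2$ work — shows that it glues across \eqref{equ.glueing}. Hence $\mathcal{O}_{\RProj}(S)=\mathcal{O}_{\RProj}(2)$, the ideal sheaf of $S$ is $\mathcal{O}_{\RProj}(-2)$, and twisting the structure sequence of $S\subset\RProj$ by $\mathcal{O}_{\RProj}(1)$ yields the short exact sequence
\[0\longrightarrow\mathcal{O}_{\RProj}(-1)\longrightarrow\mathcal{O}_{\RProj}(1)\longrightarrow\mathcal{O}_{\RProj}(1)|_S=\omega_S^{-1}\longrightarrow0\]
of sheaves on $\RProj$.

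It then suffices to prove $H^0(\RProj,\mathcal{O}_{\RProj}(-1))=H^1(\RProj,\mathcal{O}_{\RProj}(-1))=0$, for then the restriction map $\Gamma(\RProj,\mathcal{O}_{\RProj}(1))\to\Gamma(S,\omega_S^{-1})$ is an isomorphism and carries the basis found above onto a basis of $\Gamma(S,\omega_S^{-1})$. This vanishing is standard: the restriction of $\mathcal{O}_{\RProj}(-1)$ to every fibre of $\pi$ is $\mathcal{O}_{\Proj^2}(-1)$, whose cohomology vanishes in all degrees, so $R^q\pi_*\mathcal{O}_{\RProj}(-1)=0$ for every $q$ and the Leray spectral sequence gives $H^j(\RProj,\mathcal{O}_{\RProj}(-1))=0$ for all $j$. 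The only delicate point in the whole argument is the bookkeeping forced by the twisting convention of the footnote — making sure it is $\mathcal{O}(2)$, not $\mathcal{O}(-2)$, that enters $\pi_*\mathcal{O}_{\RProj}(1)$, and that the five functions in the statement, as well as the equation of $S$, genuinely extend to global sections across the gluing \eqref{equ.glueing}; once this is settled the cohomological computation is immediate.
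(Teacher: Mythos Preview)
Your argument is correct but follows a genuinely different route from the paper's. The paper works intrinsically on $S$: it picks a generic curve $C\in|\omega_S^{-1}|$, shows by adjunction that $g_C=1$, and uses the exact sequence $0\to\mathcal{O}_S\to\omega_S^{-1}\to\omega_S^{-1}|_C\to 0$ together with $H^1(S,\mathcal{O}_S)=0$ (rationality) and Riemann--Roch on $C$ to conclude $h^0(S,\omega_S^{-1})=5$; it then simply observes that the five listed functions are linearly independent sections of $\mathcal{O}_{\RProj}(1)$ restricted to $S$. You instead work extrinsically on the ambient bundle $\RProj$: you compute $\Gamma(\RProj,\mathcal{O}_{\RProj}(1))$ directly and then prove, via the ideal-sheaf sequence and the vanishing of $H^0$ and $H^1$ of $\mathcal{O}_{\RProj}(-1)$ (using that $\mathcal{O}_{\Proj^2}(-1)$ is acyclic on every fibre), that restriction to $S$ is an isomorphism. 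Your approach has the advantage of explaining in one stroke both the dimension count and why precisely the ambient sections form a basis, without appealing to the rationality of $S$ or Riemann--Roch on a curve; the paper's approach, on the other hand, avoids the bookkeeping with projective-bundle conventions and the identification of $\mathcal{O}_{\RProj}(S)$.
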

\begin{proof}
Let $C$ be a generic divisor in $\vert\omega_S^{-1}\vert$. Then
$C$ is a smooth irreducible curve; let
$g_C$ be its genus. According to the adjunction formula,
we have that $2g_C-2=\omega_S.(\omega_S-\omega_S)=0$. Thus $g_C=1$.
The exact sequence of sheaves
\[0\longrightarrow\mathcal O_S\longrightarrow\omega_S^{-1}
\longrightarrow\omega_S^{-1}\otimes\mathcal O_C\longrightarrow 0\]
gives an exact sequence
\[0\longrightarrow H^0(S,\mathcal O_S)\longrightarrow
H^0(S,\omega_S^{-1})\longrightarrow H^0(C,{\omega_S^{-1}}_{\vert C})
\longrightarrow H^1(S,\mathcal O_S).\]
But $S$ is geometrically rational and $H^1(S,\mathcal O_S)=\{0\}$.
We get that
\[h^0(S,\omega_S^{-1})=1+h^0(C,{\omega_S^{-1}}_{\vert C}).\]
Let $D={\omega_S^{-1}}_{\vert C}$. We have that $\deg(D)=4$ and
$\deg(\omega_C-D)=-4$ since $\omega_C=0$.
Applying Riemann--Roch theorem to $C$, we get that
\[h^0(D)=\deg(D)+2g_C-2=4\]
and $h^0(S,\omega_S^{-1})=5$. Since the sections $T,UT,U^2T,X$ and $Y$
are linearly independent, and extend to a section
of $\mathcal O_{\RProj}(1)$, we get
a basis of $\Gamma(S,\omega_S^{-1})$.
\end{proof}
\begin{lemma}
\label{lemma.linearomega}%
The linear system $\vert\omega_S^{-1}\vert$ has no base point and the
basis given in lemma~\ref{lemma.sectionsomega} gives a morphism
from $S$ to $\Proj^4_\QQ$, the image of which is the surface $S'$
given by the system of equations
\begin{equation*}%E%\label{equ.quadrics}
\begin{cases}
X_0X_2-X_1^2=0\\
X_3^2+X_4^2=(aX_0+bX_1+cX_2)(a'X_0+b'X_1+c'X_2)
\end{cases}
\end{equation*}
where
\begin{align*}
a&=a_1a_2,&b&=a_1b_2+a_2b_1,&c&=b_1b_2,\\
a'&=a_3a_4,&b'&=a_3b_4+a_4b_3,&c'&=b_3b_4.
\end{align*}
The induced map $\psi:S\to S'$ is the blowing up
of the conjugate singular points of $S'$ given by
$P^\xi=(0:0:0:1:-\xi)$ with $\xi^2=-1$ and $\psi^{-1}(P^\xi)=E^{\xi}$.
\end{lemma}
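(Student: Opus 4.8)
The plan is to carry everything out on the two affine charts $S_1,S_2$ constructed above together with the explicit form of the five sections. First I would check base-point freeness: via the trivialisation~\eqref{equ.glueing} the functions $T,UT,U^2T,X,Y$ restrict on $S_1$ to $T,UT,U^2T,X,Y$, and since $(X:Y:T)$ is always a well-defined point of $\Proj^2$ the three of them $T,X,Y$ have no common zero; passing to $S_2$ the same functions become $T'V^2,T'V,T',X',Y'$ and the same remark applies. Hence $|\omega_S^{-1}|$ is base-point free and, by Lemma~\ref{lemma.sectionsomega}, defines a morphism $\psi\colon S\to\Proj^4_\QQ$, which is non-degenerate because the five sections are linearly independent; on $S_1$ it is $((X:Y:T),U)\mapsto(T:UT:U^2T:X:Y)$. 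Writing $(X_0:\dots:X_4)$ for the coordinates on $\Proj^4$, the relation $X_0X_2=X_1^2$ is immediate, and substituting the coordinates of $\psi$ into the defining equation of $S_1$, after factoring $\prod_{i=1}^4L_i(U,1)$ as $(aU^2+bU+c)(a'U^2+b'U+c')$ (which is how $a,\dots,c'$ are defined), produces the second equation of $S'$; the chart $S_2$ is treated the same way. Thus $\psi(S)\subseteq S'$.

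Next I would analyse $S'$ directly. It is a complete intersection of two quadrics in $\Proj^4_\QQ$ (it clearly has the expected dimension~$2$), hence a connected Cohen--Macaulay surface. A Jacobian computation — the point at which the hypothesis $\Delta_{i,j}\neq0$ is used — shows that its singular locus consists exactly of the two conjugate points $P^\xi=(0:0:0:1:-\xi)$, $\xi^2=-1$: indeed the line $X_0=X_1=X_2=0$ meets $S'$ only in these two points, where $X_3^2+X_4^2=0$, and each of them is an ordinary double point. Being regular in codimension~$1$, $S'$ is normal, hence irreducible, of dimension~$2$ and degree~$4$. On the other hand $(\omega_S^{-1})^2=4$ — an immediate computation from $\omega_S^{-1}=2E^++\sum_{j=1}^4D_j^+$ and the intersection table — so $\psi\colon S\to S'$ has degree~$1$, that is, it is birational.

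It then remains to identify the exceptional locus. For $\xi^2=-1$ one computes $\psi(E^\xi)=(0:0:0:\xi:1)=(0:0:0:1:-\xi)=P^\xi$, so $E^\xi$ is contracted (this also follows from $E^\xi\cdot\omega_S^{-1}=0$). Conversely, on $S\setminus(E^+\cup E^-)$ the triple $(X_0:X_1:X_2)$ is a non-zero point of the conic $X_0X_2=X_1^2\cong\Proj^1$ — on $S_1$ because $\{T=0\}\cap S_1\subseteq E^+\cup E^-$, and likewise on $S_2$ — and the corresponding isomorphism identifies it with $\pi$; within each fibre of $\pi$ the map $\psi$ is moreover induced by a linear injection $\Proj^2\hookrightarrow\Proj^4$. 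Hence $\psi$ is injective on $S\setminus(E^+\cup E^-)$, and any point of $S$ above $P^\xi$ must satisfy $(X_0:X_1:X_2)=(0:0:0)$ and $(X_3:X_4)=(\xi:1)$, forcing it onto $E^\xi$; so $\psi^{-1}(P^\xi)=E^\xi$. Therefore $\psi$ induces a proper, injective — hence finite — birational morphism $S\setminus(E^+\cup E^-)\to S'\setminus\{P^+,P^-\}$ onto a smooth, a fortiori normal, variety, which is accordingly an isomorphism. So $\psi$ is an isomorphism away from $E^+\cup E^-$ and contracts the two disjoint $(-2)$-curves $E^+,E^-$ to the two ordinary double points of $S'$; this is exactly the statement that $\psi$ is the minimal desingularisation of $S'$, which for $A_1$-singularities is the blowing up of the reduced singular locus, with $\psi^{-1}(P^\xi)=E^\xi$.

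The step I expect to cost the most effort is this last one: the chart-by-chart check that $\psi$ is injective off $E^+\cup E^-$ — in particular the bookkeeping around the point $U=\infty$, which must be carried out on $S_2$ — together with the precise description of the fibres over $P^+$ and $P^-$. The Jacobian computation for $S'$ is routine but is precisely where the non-degeneracy $\Delta_{i,j}\neq0$ is genuinely needed, while the input $(\omega_S^{-1})^2=4$, though elementary, is what lets one conclude that $\psi$ has degree one.
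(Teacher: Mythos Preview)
Your proof is correct and follows the same route as the paper --- working out the map explicitly on the two charts $S_1,S_2$ via the formulas $((x:y:t),u)\mapsto(t:ut:u^2t:x:y)$ and $((x':y':t'),v)\mapsto(v^2t':vt':t':x':y')$. The paper's own proof consists of nothing more than recording these two formulas and asserting that everything follows; your degree computation $(\omega_S^{-1})^2=4$, the singularity/normality analysis of $S'$, and the identification of the exceptional fibres are exactly the details a careful reader would supply, and they are all correct.
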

\begin{proof}
This follows from the fact that the map from $S$ to $\Proj^4_\QQ$
induces the maps
\[((x:y:t),u)\longmapsto (t:ut:u^2t:x:y)\]
from $S_1$ to $\Proj^4_\QQ$ and
\[((x':y':t'),v)\longmapsto (v^2t':vt':t':x':y')\]
from $S_2$ to $\Proj^4_\QQ$.
\end{proof}
\begin{rema}
The surface $S'$ is an Iskovskikh surface
\cite{coraytsfasman:delpezzo}; 
it is a singular Del Pezzo surface of degree $4$ with a singularity
of type $2A_1$ and $\psi:S\to S'$ is a minimal resolution
of singularities for $S'$.
\end{rema}
We finish this section by a brief reminder of the description
of the Brauer group of $S$.
\begin{lemma}
\label{lem:brauer}%
The cokernel of the morphism from the Brauer group of
$\QQ$ to the Brauer group of $S$ is isomorphic to the Klein group
$(\ZZ/2\ZZ)^2$ and the image of the natural injective map
\[\Br(S)/\Br(\QQ)\longrightarrow \Br(\QQ(S))/\Br(\QQ)\]
is generated by the elements $(-1,L_j(U,V)/L_k(U,V))$
for $j,k\in\onefour$.
\end{lemma}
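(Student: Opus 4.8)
The plan is to compute $\coker(\Br(\QQ)\to\Br(S))$ by means of the Hochschild--Serre spectral sequence together with the description of $\Pic(\overline S)$ recalled above, and then to exhibit the quaternion classes $(-1,L_j/L_k)$ as representatives. First I would use the low-degree exact sequence of $H^p(\QQ,H^q\etale(\overline S,\Gm))\Rightarrow H^{p+q}\etale(S,\Gm)$: since $\overline S$ is a smooth projective rational surface, $\Br(\overline S)=0$ and $\Pic(\overline S)$ is finitely generated and torsion-free, and since $\pi$ has degenerate fibres, $S$ has a rational point, namely the singular point of such a fibre. Using this rational point, via functoriality of the spectral sequence, to annihilate the differential $H^1(\QQ,\Pic(\overline S))\to H^3(\QQ,\overline\QQ^*)$, one obtains
\[\coker\bigl(\Br(\QQ)\to\Br(S)\bigr)\simeq H^1(\QQ,\Pic(\overline S))\simeq H^1(\Ga,\Pic(S_{\QQ(\ci)})),\]
the second isomorphism holding because the $\Gal(\overline\QQ/\QQ)$-action factors through $\Ga$ and $\Pic(S_{\QQ(\ci)})$ is torsion-free; compare \cite{cts:descente1,cts:descente2}. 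As $\Ga$ has order~$2$, the right-hand side is the Tate cohomology group $\ker(1+s)/\im(s-\Id)$, where $s$ denotes the nontrivial automorphism.

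Then I would compute this group explicitly from the presentation of $\Pic(S_{\QQ(\ci)})$. In the basis $([E^+],[D_1^+],[D_2^+],[D_3^+],[D_4^+],[D_1^-])$, the relations \eqref{equ.pic.fibre} give $[D_j^-]=[D_1^+]+[D_1^-]-[D_j^+]$ for $j\in\onefour$, and then \eqref{equ.pic.can} gives $[E^-]=[E^+]-[D_1^+]+[D_2^+]+[D_3^+]+[D_4^+]-2[D_1^-]$, which together with $s([D_j^+])=[D_j^-]$ and $s([E^+])=[E^-]$ determine $s$ completely. Setting $v_j=[D_j^+]-[D_1^-]$ for $j\in\onefour$, one finds that $\ker(1+s)$ is free on $v_1,\dots,v_4$ while $\im(s-\Id)$ is generated by $v_1$, $2v_2$, $2v_3$, $2v_4$ and $v_2+v_3+v_4$; hence
\[H^1(\Ga,\Pic(S_{\QQ(\ci)}))\simeq(\ZZ/2\ZZ)^3/\langle(1,1,1)\rangle\simeq(\ZZ/2\ZZ)^2,\]
which gives the first assertion.

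Next I would identify the generators. Fix $j\neq k$ in $\onefour$ and put $f=L_j(U,V)/L_k(U,V)$. On $S_{\QQ(\ci)}$ the divisor of $f$ equals $(D_j^+ +D_j^-)-(D_k^+ +D_k^-)$, so the quaternion class $(-1,f)$ can ramify only along the curves $D_j^\pm$ and $D_k^\pm$; since each $D_l^\xi$ is cut out by $X-\xi Y=0$ with $\xi^2=-1$, its function field contains $\ci$, so the residue of $(-1,f)$ along it is trivial, and by Grothendieck's purity theorem $(-1,f)\in\Br(S)\subset\Br(\QQ(S))$. To see that $(-1,f)$ is not the image of an element of $\Br(\QQ)$, note that it is pulled back along $\pi$ from $\Br(\QQ(U))$, and that the kernel of the restriction $\Br(\QQ(U))\to\Br(\QQ(S))=\Br(\QQ(U)(C_\eta))$ — where $C_\eta$ is the generic fibre of $\pi$, a conic of class $(-1,L_1L_2L_3L_4)$ — is $\{0,(-1,L_1L_2L_3L_4)\}$ by Witt's theorem on conics. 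Were $(-1,f)$ to differ in $\Br(\QQ(S))$ from some $c\in\Br(\QQ)$, then $(-1,f)-c$ would equal $0$ or $(-1,L_1L_2L_3L_4)$ in $\Br(\QQ(U))$; but the former is excluded since $(-1,f)-c$ has nontrivial residue at the rational point $P_j$ (where $f$ has valuation~$1$, $c$ is unramified, and $-1\notin\QQ^{*2}$), and the latter since $(-1,f)-c-(-1,L_1L_2L_3L_4)$ has nontrivial residue at $P_l$ for any $l\notin\{j,k\}$ (where $L_1L_2L_3L_4$ has valuation~$1$). Hence $(-1,L_j/L_k)$ is a nonzero class in $\coker(\Br(\QQ)\to\Br(S))$. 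Finally, the equation of $S_1$ gives $L_1L_2L_3L_4=N_{\QQ(\ci)/\QQ}\bigl((X+\ci Y)/T\bigr)$, so $(-1,L_1L_2L_3L_4)=0$ in $\Br(\QQ(S))$; therefore $(-1,L_1/L_2)+(-1,L_1/L_3)+(-1,L_1/L_4)=0$ and the classes $(-1,L_j/L_k)$ span a subgroup of order at most~$4$ in $\coker(\Br(\QQ)\to\Br(S))$. As the four elements $0$, $(-1,L_1/L_2)$, $(-1,L_1/L_3)$, $(-1,L_1/L_4)$ are pairwise distinct (an equality $(-1,L_1/L_i)=(-1,L_1/L_{i'})$ would put $(-1,L_i/L_{i'})$ in $\Br(\QQ)$), this subgroup has order exactly~$4$ and hence, by the first part, is all of $\coker(\Br(\QQ)\to\Br(S))$. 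Since $\Br(S)$ injects into $\Br(\QQ(S))$, so does $\Br(S)/\Br(\QQ)$ into $\Br(\QQ(S))/\Br(\QQ)$, and its image is therefore generated by the $(-1,L_j(U,V)/L_k(U,V))$.

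The bulk of the work is the elementary but somewhat delicate module computation of the second step. The point most likely to cause trouble is the non-triviality established in the third step — that the explicit quaternion classes genuinely generate, rather than merely lie in, $\coker(\Br(\QQ)\to\Br(S))$ — which is equivalent to the failure of weak approximation for these surfaces; as an alternative to the residue computation on $\Proj^1_\QQ$ sketched above, one may appeal to the analysis of the Brauer--Manin obstruction for Ch\^atelet surfaces in \cite{ctssd:chatelet1,ctssd:chatelet2} and \cite{coraytsfasman:delpezzo}.
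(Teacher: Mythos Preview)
Your proof is correct. The first half---reducing to $H^1(\Ga,\Pic(S_{\QQ(\ci)}))$ via the spectral sequence (the paper cites Sansuc's lemma instead, which amounts to the same thing) and computing this Tate cohomology group explicitly---follows the paper's route closely, only with a different choice of basis for $\ker(1+s)$; both computations yield $(\ZZ/2\ZZ)^2$.

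Where you genuinely diverge is in identifying the generators. The paper traces the connecting homomorphism directly: starting from the short exact sequences obtained by splitting
\[0\longrightarrow\overline\QQ^*\longrightarrow\overline\QQ(S)^*\longrightarrow\Div(\overline S)\longrightarrow\Pic(\overline S)\longrightarrow 0,\]
it observes that a class in $H^1(\Ga,\Pic(\overline S))$ represented by a divisor $D$ with $(1+\sigma)D=\divi(f)$ for some $f\in\QQ(S)^*$ maps to $(-1,f)$; applying this to $D=D_j^+-D_k^+$ gives $(1+\sigma)D=\divi(L_j/L_k)$, and the conclusion is immediate. Your argument instead checks that $(-1,L_j/L_k)\in\Br(S)$ by purity, establishes nontriviality modulo $\Br(\QQ)$ via Amitsur's description of $\ker\bigl(\Br(\QQ(U))\to\Br(\QQ(S))\bigr)$ together with residue computations on $\Proj^1_\QQ$, derives the relation $\sum_{i\neq 1}(-1,L_1/L_i)=0$ from the equation of $S$, and finishes by counting. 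The paper's approach is shorter and avoids the appeal to Amitsur, but it requires knowing how the coboundary map is made explicit; your approach is more self-contained at the level of the Brauer group of $\QQ(\Proj^1)$, and has the mild advantage of exhibiting directly which cyclic classes on the base lift to $S$ and which do not.
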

\begin{proof}
By \cite[lemma 6.3]{sansuc:brauer} and the fact that $\Pic(S)$
coincides with $\Pic(S_{\QQ(\ci)})^\Ga$, there is an exact sequence
\[0\longrightarrow \Br(\QQ)\longrightarrow\ker(\Br(S)\longrightarrow
\Br(\overline S))\longrightarrow H^1(\Gal(\overline\QQ/\QQ),
\Pic(\overline S))\longrightarrow 0.\]
Since $\overline S$ is rational and the Brauer group is a
birational invariant of smooth projective varieties, we get
that the cokernel of the morphism $\Br(\QQ)\to\Br(S)$ is isomorphic
to the cohomology group
$H^1(\Gal(\overline \QQ/\QQ),\Pic(\overline S))$. But
the group $H^1(\Gal(\overline \QQ/\QQ(\ci)),\Pic(\overline S))$
is trivial and we are reduced to computing the group
$H^1(\Ga,\Pic(S_{\QQ(\ci)}))$.
Since $\Ga$ is cyclic of order $2$, this cohomology
group coincides with the homology of the complex
{\CDat
\[\Pic(S_{\QQ(\ci)})@>\Id-\sigma>>
\Pic(S_{\QQ(\ci)})@>\Id+\sigma>>\Pic(S_{\QQ(\ci)})\]}%
where $\sigma$ denotes the complex conjugation.
By the description of the action of $\sigma$, the $\ZZ$-module
$\ker(\Id+\sigma)$ has a basis given by
\[([D_1^+]-[D_2^+],[D_2^+]-[D_3^+],[D_3^+]-[D_4^+],
[D_1^+]-[D_1^-]).\]
On the other hand, $\im(\Id-\sigma)$ is generated by
\begin{gather*}
[D_1^+]-[D_1^-],\qquad 2[D_2^+]-[D_1^+]-[D_1^-],\\
2[D_3^+]-[D_1^+]-[D_1^-]\quad
\text{and}\quad
2[D_4^+]-[D_1^+]-[D_1^-].
\end{gather*}
Thus the quotient is isomorphic to $(\ZZ/2\ZZ)^2$ and generated
by the classes of elements of the form $[D_j^+]-[D_k^+]$
with $j,k\in\onefour$.

It remains to describe the images of the classes in the Brauer
group of the function field $\QQ(S)$. But the isomorphism
\[H^1(\Gal(\overline\QQ/\QQ),\Pic(\overline S))\longrightarrow
\Br(S)/\Br(\QQ)\]
may be described as follows:
let us consider the exact sequence of $\Gal(\overline \QQ/\QQ)$-modules:
{\CDat
\[0\longrightarrow\overline\QQ^*\longrightarrow\overline \QQ(S)^*@>\divi>>
\Div(\overline S)\longrightarrow\Pic(\overline S)\longrightarrow 0\]}%
which yields two short exact sequences:
\[0\longrightarrow\overline\QQ^*\longrightarrow\overline \QQ(S)^*\longrightarrow
\overline\QQ(S)^*/\overline\QQ^*\longrightarrow 0\]
and
\[0\longrightarrow\overline \QQ(S)^*/\overline\QQ^*\longrightarrow
\Div(\overline S)\longrightarrow\Pic(\overline S)\longrightarrow 0.\]
Taking the corresponding cohomology long exact sequences
we get exact sequences
\[0\longrightarrow H^1(\Gal(\overline \QQ/\QQ),\Pic(\overline S))
\buildrel\partial\over\longrightarrow H^2(\Gal(\overline\QQ/\QQ),
\overline\QQ(S)^*/\overline\QQ^*)\]
and
\[0\longrightarrow\Br(\QQ)\longrightarrow \Br(\QQ(S))
\longrightarrow H^2(\Gal(\overline\QQ/\QQ),\overline\QQ(S)^*/\overline\QQ^*)
\longrightarrow 0\]
and using the natural injection $\Br(S)\to\Br(\QQ(S))$
we get an isomorphism from the image of $\partial$
to $\coker(\Br(\QQ)\to\Br(S))$. But if $D$ is a divisor
on $S$ such that its class $[D]$ belongs to $\ker(1+\sigma)$ and represents
$\alpha\in H^1(\Gal(\overline \QQ/\QQ),\Pic(\overline S))$
then
\[(1+\sigma)D\in\ker(\Div(\overline S)\to\Pic(\overline S))
\cap\Div(S).\]
Therefore $(1+\sigma)D=\divi(f)$ for a function $f$ in $\QQ(S)^*$
and $\partial(\alpha)$ coincides with the image of $(-1,f)$.
In our particular case, we get that
\[(1+\sigma)(D_j^+-D_k^+)=D_j^++D_j^--D_k^+-D_k^-
=\divi(L_j(U,V)/L_k(U,V))\]
which concludes the proof.
\end{proof}
%% End of file geometry.tex
%% Including file result.tex
\section{Points of bounded height}
\label{section.result}

Over $\overline\QQ$ or even $\QQ(\ci)$, the only geometrical
invariant of $S$ is the cross-ratio
\[\alpha=\frac{\left\vert
\begin{matrix}
a_3&a_1\\
b_3&b_1
\end{matrix}
\right\vert\left/\left\vert
\begin{matrix}
a_3&a_2\\
b_3&b_2
\end{matrix}
\right\vert\right.}{\left\vert
\begin{matrix}
a_4&a_1\\
b_4&b_1
\end{matrix}
\right\vert\left/\left\vert
\begin{matrix}
a_4&a_2\\
b_4&b_2
\end{matrix}
\right\vert\right.}\quad\in\QQ.\]
Indeed the automorphisms of $\Proj^1_\QQ$
sending the points $P_1$, $P_2$, $P_3$
onto $\infty=(0:1)$, $0=(1:0)$ and $1=(1:1)$
lifts to an isomorphism from $S$ to the Ch\^atelet
surface with an equation of the form
\[X^2+Y^2=\beta U(U-1)(U-\alpha)T^2\]
where $\beta\in\QQ$. Over $\QQ(\ci)$ we may further reduce
to the case where $\beta=1$.
In particular, without any loss of generality, we may assume
that
\begin{equation}\label{equ.pthreefour}
a_1=b_2=1\qquad\text{and}\qquad a_2=b_1=0.
\end{equation}

\begin{hypo}%E%\label{hypo.simp}
From now on we assume the relations \eqref{equ.pthreefour},
that we have $\gcd(a_3,b_3)=\gcd(a_4,b_4)=1$, and that
$a_3b_3a_4b_4(a_3b_4-a_4b_3)\neq 0$.
\end{hypo}
\begin{nota}
\label{nota.height}%
Let $C=\sqrt{\prod_{j=1}^4(\vert a_j\vert+\vert b_j\vert)}$.
We equip the projective space $\Proj^4_\QQ$ with the exponential height
$H_4:\Proj^4(\QQ)\to\RR$
defined by
\begin{equation*}%E%\label{equ.height}
H_4(x_0:x_1:x_2:x_3:x_4)=
\max\left(\vert x_0\vert,\vert x_1\vert,\vert x_2\vert,
\frac{\vert x_3\vert}C,\frac{\vert x_4\vert}C\right)
\end{equation*}
if $x_0,\dots,x_4$ are coprime integers. Using the morphism
$\psi:S\to S'$, we get a height $H=H_4\circ\psi$
which is associated to the anticanonical line bundle $\omega_S^{-1}$.
\par
We denote by $\Val(\QQ)$ the set of places of $\QQ$.
For any $v\in\Val(\QQ)$, $\QQ_v$ is the corresponding
completion of $\QQ$.
As explained in \cite[\S2]{peyre:fano}, such a height
enables us to define a Tamagawa measure $\meas_H$ on the adelic space
$S(\Adeles_\QQ)=\prod_{v\in\Val(\QQ)}S(\QQ_v)$.
We also consider the constant $\alpha(S)$ defined
in \cite[definition 2.4]{peyre:fano} which is equal to $1$
in our particular case and, following Batyrev and 
Tschinkel~\cite{batyrevtschinkel:toric}, 
we also put
\[\beta(S)=\sharp\bigl(\coker(\Br(\QQ)\to\Br(S))\bigr)=4,\]
by lemma \ref{lem:brauer}. We then set
\[C_H(S)=\alpha(S)\beta(S)\meas_H(S(\Adeles_\QQ)^{\Br})\]
where $S(\Adeles_\QQ)^{\Br}$ is 
the set of points in the adelic space for which the Brauer-Manin obstruction
to weak approximation is trivial.
\par
We are interested in the asymptotic behaviour
of the number of points of bounded height in $S(\QQ)$,
that is by the number
\[N_{S,H}(B)=\sharp\{\,P\in S(\QQ)\mset H(P)\leq B\,\}\]
for $B\in\RR$ with $B>1$.
\end{nota}
\begin{figure}[ht]
\drawing{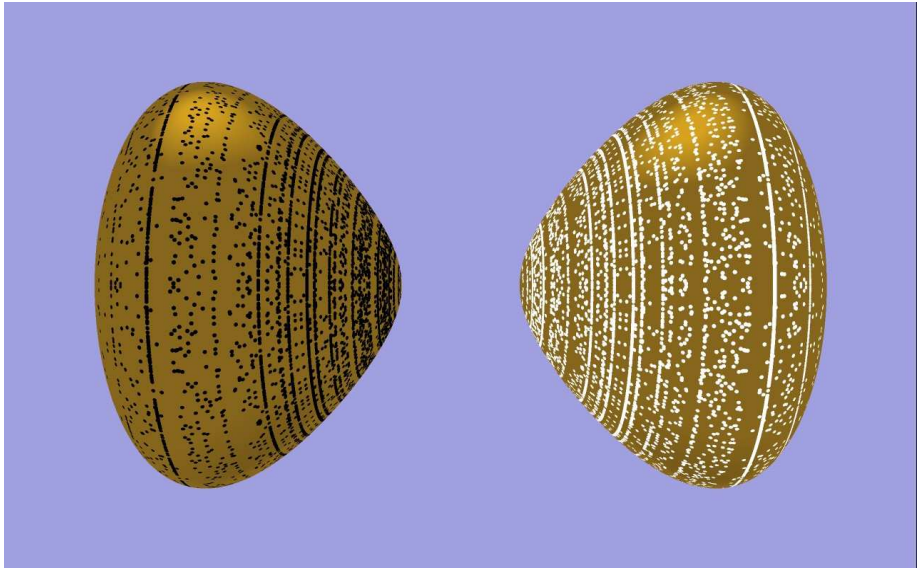}
\caption{Obstruction to weak approximation}\label{fig.zolidessin}
\end{figure}
As an illustration of our problem we have drawn in 
figure~\ref{fig.zolidessin}
the set of points
\[\{\,P\in S(\QQ)\mset H(P)\leq 2000\,\}\]
for the surface $S$ obtained with $a_2=b_1=0$,
$a_1=b_2=a_3=b_3=a_4=1$,
and $b_4=-1$.
The colour of a rational point $P=((y:z:t),u)$ is
black if $u/2^{v_2(u)}\equiv 1\mod 4$, white otherwise.
The fact that all black points are on one of the real
connected components of $S(\RR)$ may be explained by
the Brauer-Manin obstruction to weak approximation.
\par
We can now state the main result of this paper.
\begin{theo}\label{theo.main}
For any Ch\^atelet surface as above,
we have the asymptotic formula
\begin{equation}\tag{F}
N_{S,H}(B)= C_H(S)B\log(B) +O\big(B \log (B)^{0.972}\big).
\end{equation}
\end{theo}
\begin{rems}
(i) One may note that, as $S(\QQ)$ is dense in $S(\Adeles_\QQ)^{\Br}$
by \cite[theorem B]{ctssd:chatelet1},
this formula is compatible
with the empirical formula (F) described in
\cite[formule empirique 5.1]{peyre:lille}
which is a refinement of a conjecture of Batyrev and 
Manin~\cite{batyrevmanin:hauteur}.

(ii) Over $\RR$, the image of $S(\RR)$ on $\Proj^1(\RR)$ is
the union of two intervals defined by the conditions
$\prod_{j=1}^4L_j(U,V)>0$.
Therefore we may choose $j,k\in\onefour$ such that $j\neq k$
and the sign of $L_j(U,V)L_k(U,V)$ is not constant on $S(\RR)$.
The evaluation of the corresponding element $(-1,L_j(U,V)/L_k(U,V))\in\Br(S)$
(see lemma~\ref{lem:brauer}) is not constant on $S(\RR)$. Therefore
in all the cases we consider,
\[S(\Adeles_\QQ)^{\Br}\neq S(\Adeles_\QQ).\]%E%
\end{rems}
%% End of file result.tex
%% Including file torsors.tex
\section{Description of versal torsors}
\label{section.torsors}

Versal torsors were first
introduced by J.-L. Colliot-Th\'el\`ene and J.-J. Sansuc
in \cite{cts:predescente2}, \cite{cts:descente1}
and \cite{cts:descente2} as a tool to prove that the Brauer--Manin
obstruction to the Hasse principle and weak approximation
is the only one. In their setting, it is sufficient
to construct a variety which is birational
over the ground field to the versal torsors.
Such a construction for Ch\^atelet surfaces has
been carried out in \cite[\S7]{ctssd:chatelet2}.

Our purpose, however, is slightly different:
we want to parametrise the points of $S(\QQ)$ using versal
torsors. Therefore we shall make the description of
\cite[\S7]{ctssd:chatelet2} slightly more precise
in the particular case we are considering and construct the
versal torsors with rational points as constructible subsets
of an affine space of dimension ten. Our construction is also
akin to the constructions based upon Cox rings.

We shall first introduce an intermediate versal torsor
which corresponds to the Picard group of $S$ over $\QQ$,
that is to the maximal split quotient of $\TNS$.
This intermediate torsor is easy to describe and shall
be useful in the parametrisation of the rational points.

\begin{defi}
Let $\torsiZ$ be the subscheme of $\Aff^5_\ZZ=\Spec(\ZZ[X,Y,T,U,V])$ 
defined by the equation
\begin{equation}
\label{equ.torsiz}%
X^2+Y^2=T^2 \prod_{j=1}^4L_j(U,V)
\end{equation}
and the conditions
\[(X,Y,T)\neq 0\quad\text{and}\quad(U,V)\neq 0.\]
The split algebraic torus $\Tspl=\GmZ^2$ acts on $\torsiZ$
via the morphism of tori
\[(\lambda,\mu)\mapsto(\lambda,\lambda,\mu^{-2}\lambda,\mu,\mu)\]
from $\GmZ^2$ to $\GmZ^5$ and the natural action of $\GmZ^5$
on $\Aff^5_\ZZ$. Let $\torsiQ$ be the variety $\torsZ_{\inter,\QQ}$.
We have an obvious morphism $\pi_\inter$ from $\torsiQ$ to
$S$ which may be described as follows: for any extension $\KK$ of $\QQ$
and any point $(x,y,t,u,v)$ of $\torsiQ(\KK)$,
if $v\neq 0$, then the point ${((x:y:tv^2),u/v)}$ belongs to ${S_1(\KK)\subset
S(\KK)}$. If $u\neq 0$ then the point
${((x:y:tu^2),v/u)}$ belongs to ${S_2(\KK)\subset S(\KK)}$
and the points obtained in $S(\KK)$ coincide if $uv\neq0$.
The morphism $\pi_\inter$ makes of $\torsiQ$ a $\Gm^2$-torsor
over $S$. 
\end{defi}

We now turn to the construction of the versal torsors.
\begin{nota}
We denote by $\DDelta$ the set of exceptional divisors
in $S_{\QQ(\ci)}$ and consider it as a $\Ga$-set. We then
consider the affine space $\Aff_\DDelta$ of dimension $10$ over $\QQ$ %E%
defined by
\[\Aff_\DDelta=\Spec\left((\QQ(\ci)[Z_\delta,\delta\in\DDelta])^\Ga\right)\]
where ${Z_\delta,\delta\in\Delta}$ are ten variables.
We also consider the algebraic torus
\[T_\Delta=\Spec\left((\QQ(\ci)[Z_\delta,Z^{-1}_\delta,
\delta\in\DDelta])^\Ga\right).\]
We shall also write $Z_k^\varepsilon$ (resp. $Z_0^\varepsilon$)
for $Z_{D_k^\varepsilon}$ (resp. $Z_{E^\varepsilon}$).
Let $\DDelta_\QQ$ be the set of $\Ga$-orbits in $\DDelta$.
We put $E=\{E^+,E^-\}$ and $D_j=\{D_j^+,D_j^-\}$ for $j\in\onefour$.
Then $\DDelta_\QQ=\{E,D_1,D_2,D_3,D_4\}$. 
For $\delta\in\DDelta_\QQ$, we may also write $\delta=\{\delta^+,\delta^-\}$
and we put
\[X_\delta=\frac 12(Z_{\delta^+}+Z_{\delta^-})\qquad\text{and}
\qquad Y_\delta=\frac 1{2\ci}(Z_{\delta^+}-Z_{\delta^-}).\]
Then
\[(\QQ(\ci)[Z_\delta,\delta\in\DDelta])^\Ga=
\QQ[X_\delta,Y_\delta,\delta\in\DDelta_\QQ].\]
\end{nota}

We now wish to construct for each isomorphism class of versal torsor
over $S$ with a rational point a representative
of this class in $\Aff_\DDelta$.
It follows from \cite[proposition~2]{cts:descente1} that the set
of isomorphism classes of such torsors is finite.
We first introduce a finite set which will be used to
parametrise this set of torsors.
\begin{nota}
Let $\badp$ be the set of primes $p$ such that $p\mid\prod_{1\leq j<k\leq 4}
\Delta_{j,k}$\footnote{Over $\ZZ/2\ZZ$,
one of the $\Delta_{j,k}$ has to be zero,
and so $2 \in\badp$.}. For any $j$ in $\onefour$, we put
\[\badp_j=\{\,p\in\badp\mset p\equiv 3\mod 4\quad
\text{and}\quad p\mid\prod_{k\neq j}
\Delta_{j,k}\,\}\]
and
\[\Sigma_j=\biggl\{(-1)^{\varepsilon_{-1}}\prod_{p\in\badp_j}p^{\varepsilon_p},
(\varepsilon_{-1},(\varepsilon_p)_{p\in\badp_j})
\in\{0,1\}\times\{0,1\}^{\badp_j}\biggr\}.\]
Finally, we define $\Sigma$ to be the set of $\mm=(m_j)_{1\leq j\leq 4}
\in \prod_{j=1}^4\Sigma_j$ such that the four integers are
relatively prime,
$m_1$ is positive and $\prod_{j=1}^4m_j$
is a square. For any $\mm\in\Sigma$, we denote by $\alpha_\mm$
the positive square root of $\prod_{j=1}^4m_j$.

Let $\mm$ belong to $\Sigma$. We denote by $\torsQ_\mm$
the constructible subset of $\Aff_\DDelta$ defined by the equations
\begin{equation}\label{equ.tors.closed}
\Delta_{j,k}m_lZ_l^+Z_l^-+\Delta_{k,l}m_jZ_j^+Z_j^-
+\Delta_{l,j}m_kZ_k^+Z_k^-=0
\end{equation}
if $1\leq j<k<l\leq 4$ and the inequalities
\begin{equation}\label{equ.tors.open}
(Z_{\delta_1},Z_{\delta_2})\neq(0,0)
\end{equation}
whenever $\delta_1\cap\delta_2=\emptyset$.
Note that these conditions are invariant under the action
of the Galois group $\Ga$. Thus $\torsQ_\mm$ is defined
over $\QQ$.

We then define a morphism $\pi_\mm:\torsQ_\mm\to S$.
In order to do this, it is enough to define a morphism
$\widehat \pi_\mm:\torsQ_\mm\to\torsiQ$ which is done
as follows: for any extension $\KK$ of $\QQ$ and any 
$\zz=(z_\delta)_{\delta\in\Delta}$ in $\torsQ_\mm(\KK)$, the
conditions~\eqref{equ.tors.closed} and \eqref{equ.tors.open}
ensure that there exists a pair $(u,v)\in \KK^2\setminus\{0\}$
such that
\begin{equation}\label{equ.tors.uv}
L_j(u,v)=m_jz_j^+z_j^-
\end{equation}
for $j\in\onefour$. Let $(x,y,t)\in \KK^3\setminus\{0\}$ be given
by the conditions
\begin{equation}\label{equ.tors.xyz}
\begin{cases}
x+\ci y=\alpha_\mm (z_0^+)^2\prod_{j=1}^4z_j^+,\\
x-\ci y=\alpha_\mm (z_0^-)^2\prod_{j=1}^4z_j^-,\\
t=z_0^+z_0^-.
\end{cases}
\end{equation}
Then we have the relation
\[x^2+y^2=t^2\prod_{j=1}^4L_j(u,v).\]
and $(x,y,t,u,v)$ belongs to $\torsiQ(\KK)$.
\par
It remains to describe the action of the torus
$\TNS$ associated to the $\Ga$-lattice $\Pic(\overline S)$
on $\torsQ_\mm$. The algebraic torus
$T_\DDelta$ corresponds to the $\Ga$-lattice $\ZZ^\DDelta$ and $T_\DDelta$
acts by multiplication of the coordinates on $\Aff_\DDelta$.
The natural surjective morphism of $\Ga$-lattices
\[-\pr:\ZZ^\DDelta\longrightarrow \Pic(\overline S)\]
induces an embedding of the algebraic torus $\TNS$ on
$T_\DDelta$.\footnote{There is some question of convention
in the definition of versal torsors which leads us to use
the opposite of the projection map.}
\par
The description of the kernel of the morphism $\pr$ (see~\eqref{equ.pic.fibre}
and~\eqref{equ.pic.can}) give the following
equations for $\TNS$:
\begin{equation}\label{equ.torus.fibre}
Z_j^+Z_j^-=Z_k^+Z_k^-
\end{equation}
for $j$, $k\in\onefour$ and
\begin{equation}\label{equ.torus.can}
Z_0^+Z_j^+Z_k^+=Z_0^-Z_l^-Z_m^-
\end{equation}
if $\{j,k,l,m\}=\onefour$. The equations~\eqref{equ.tors.closed}
are invariant under the action of $\TNS$ thanks to~\eqref{equ.torus.fibre}
as are the inequalities~\eqref{equ.tors.open}. Therefore
the action of $\TNS$ on $\Aff_\DDelta$ induces a natural action
of $\TNS$ on $\torsQ_\mm$. This description of $\TNS$ also
implies that $\pi_\mm$ is invariant under the action
of $\TNS$ on $\torsQ_\mm$. Indeed let $\KK$ be an extension of $\QQ$,
let $t$ belong to $\TNS(\KK)$ and $\zz$ to $\torsQ_\mm(\KK)$.
We put $\zz'=t\zz$. It follows from \eqref{equ.tors.uv}
and \eqref{equ.torus.fibre} that $\zz$ and $\zz'$ define
the same point $(u:v)\in\Proj^1(\KK)$ and from \eqref{equ.tors.xyz},
\eqref{equ.torus.fibre} and \eqref{equ.torus.can} that $\zz$
and $\zz'$ give the same point $(x:y:tv^2)$ (resp.
$(x:y:tu^2)$ in $\Proj^2(\KK)$).
\end{nota}
\begin{prop}
\label{prop.torsor}%
For any $\mm\in\Sigma$,
the variety $\torsQ_\mm$ equipped with the map
$\pi_\mm:\torsQ_\mm\to S$ and the above action of $\TNS$
is a versal torsor above $S$.
\end{prop}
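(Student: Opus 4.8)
The plan is to verify the three requirements in the definition of a versal torsor for $\torsQ_\mm$ with the map $\pi_\mm$ and the $\TNS$-action: first, that $\torsQ_\mm$ is nonempty and smooth with $\pi_\mm$ faithfully flat; second, that the $\TNS$-action makes $\torsQ_\mm$ a $\TNS$-torsor over $S$ (i.e. the map $\torsQ_\mm\times_\QQ\TNS\to\torsQ_\mm\times_S\torsQ_\mm$ is an isomorphism, locally trivial in the appropriate topology); and third, that the class of $[\torsQ_\mm]$ in $H^1(S,\TNS)$ has the correct image under the map to $H^1(\QQ,\TNS)$ coming from the type, so that the torsors of type $-\pr$ are exactly the $\torsQ_\mm$ as $\mm$ ranges over $\Sigma$.

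First I would factor $\pi_\mm$ through the intermediate torsor as $\pi_\mm=\pi_\inter\circ\widehat\pi_\mm$, using that $\pi_\inter:\torsiQ\to S$ is already known to be a $\Gm^2$-torsor (this is asserted in the definition of $\torsiZ$). So it suffices to understand $\widehat\pi_\mm:\torsQ_\mm\to\torsiQ$ and the residual torus action. The residual torus is the kernel of $\TNS\to\Gm^2$, which by the description of $\pr$ is the subtorus of $T_\DDelta$ cut out by \eqref{equ.torus.fibre} and \eqref{equ.torus.can} lying over the trivial class in the split quotient; concretely it acts on the fibres of $\widehat\pi_\mm$. Over a point $(x,y,t,u,v)\in\torsiQ(\KK)$, equations \eqref{equ.tors.uv} and \eqref{equ.tors.xyz} say: the $z_j^+z_j^-$ are determined (up to the torus) by the $L_j(u,v)$, and the pair $((z_0^+)^2\prod z_j^+,\ (z_0^-)^2\prod z_j^-)$ is determined by $(x+\ci y,x-\ci y)$ up to scaling. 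One checks directly that the set of $\zz\in\Aff_\DDelta$ lying over a fixed point of $\torsiQ$, subject to \eqref{equ.tors.closed}–\eqref{equ.tors.uv}, is a principal homogeneous space under the residual torus — the point is that choosing $z_j^+$ freely (nonzero on the relevant locus, using \eqref{equ.tors.open}) then forces $z_j^-$, $z_0^\pm$ up to the relations \eqref{equ.torus.fibre}–\eqref{equ.torus.can}, which is precisely the kernel torus. This gives smoothness and the torsor property over the open locus where everything is invertible, and \eqref{equ.tors.open} is exactly what is needed to cover all of $S$, including the degenerate fibres of $\pi$, by such loci; the equations \eqref{equ.tors.closed} are the compatibility forcing a solution $(u,v)$ to exist. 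Nonemptiness over $\QQ$ follows because $S(\QQ)\neq\emptyset$ and $\mm$ is chosen with $\prod m_j$ a square so that $\alpha_\mm\in\QQ$: one can lift a suitable rational point.

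For the third part — identifying the type and showing these are all type-$(-\pr)$ torsors — I would use the theory of descent of Colliot-Th\'el\`ene–Sansuc: torsors of a given type over $S$ form a torsor under $H^1(\QQ,\TNS)$ once one exists, and for $S$ a Ch\^atelet surface $H^1(\QQ,\TNS)$ is computed from $\Pic(\overline S)$ as a $\Ga$-module; the finiteness is \cite[proposition~2]{cts:descente1}. The parameters $\mm\in\Sigma$ are designed to index exactly this group: the constraint that each $m_j\in\Sigma_j$ (supported on primes $p\equiv 3\bmod 4$ dividing the relevant $\Delta_{j,k}$), the coprimality, the square condition on $\prod m_j$, and the sign normalisation on $m_1$ together match the structure of $H^1(\QQ,\TNS)$ as read off from \eqref{equ.torus.fibre}–\eqref{equ.torus.can}; these are precisely the local conditions at the bad primes $\badp$ and at $\RR$ (reflecting $\QQ(\ci)/\QQ$ being ramified at $2$ and $\infty$) that obstruct the existence of a global section, i.e. they parametrise $\coker$ and the relevant Shafarevich–Tate-type defect. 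So one shows $\torsQ_\mm$ has type $-\pr$ by exhibiting, over $\overline\QQ$, an isomorphism of $\torsQ_\mm$ with the standard torsor and tracking the cocycle; and one shows the $\mm\mapsto[\torsQ_\mm]$ map is a bijection onto the set of such classes by a counting/injectivity argument: distinct $\mm$ give non-isomorphic torsors because the $m_j$ can be recovered from \eqref{equ.tors.uv} as the squarefree-ish part of $L_j(u,v)$ along rational points, and surjectivity because $\sharp\Sigma$ equals the order of the relevant group. I expect the main obstacle to be this last step: carefully matching the combinatorial set $\Sigma$ with the Galois cohomology group $H^1(\QQ,\TNS)$, getting the local conditions at $2$, at $\infty$, and at the odd bad primes exactly right, and verifying the type rather than merely that $\torsQ_\mm$ is *some* torsor — the geometric torsor property in the previous paragraph is comparatively routine bookkeeping with \eqref{equ.tors.uv}–\eqref{equ.tors.xyz}.
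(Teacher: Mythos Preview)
Two issues. First, you overshoot the target: the proposition asserts only that each $\torsQ_\mm$ is a versal torsor, i.e.\ a $\TNS$-torsor whose type is the identity endomorphism of $\Pic(\overline S)$. It does \emph{not} claim that the $\torsQ_\mm$ exhaust the versal torsors with a rational point, nor that $\mm\mapsto[\torsQ_\mm]$ is a bijection onto anything---that is the content of the subsequent proposition~\ref{prop.uniquem}, proved there by a direct arithmetic lifting argument rather than by matching $\Sigma$ against $H^1(\QQ,\TNS)$. Your entire third part is therefore extraneous here, as is nonemptiness over~$\QQ$ (a torsor need not have rational points).

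Second, on the two steps that do remain, the paper takes a different and sharper route. For the torsor property, instead of factoring through $\torsiQ$, the paper reduces over $\overline\QQ$ to $\mm=\boldsymbol 1$ and writes down an explicit open cover $U_+,U_-,U_1,\dots,U_4$ of $S$ (each the complement of a suitable union of exceptional curves) together with explicit sections of $\pi_{\boldsymbol 1}$ on each piece; the conditions~\eqref{equ.tors.open} force trivial stabilisers, so $\TNS\times U\to\pi_{\boldsymbol 1}^{-1}(U)$ is an isomorphism on every chart. Your factorisation idea is plausible but you do not actually explain how the residual-torus picture covers the degenerate fibres of $\pi$, where some $z_j^\pm$ vanish. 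The genuine gap, however, is the type computation, which you leave as ``tracking the cocycle''. The paper's argument is concrete and short: each coordinate function $Z_\delta$ on $\torsQ_\mm$ transforms under $\TNS$ by the character $\phi_\delta$ attached to $[\delta]\in\Pic(\overline S)$ and vanishes to order one along $\pi_\mm^{-1}(\delta)$; hence the $\Gm$-torsor $\torsQ_\mm\times^{\TNS}\Gm$ formed via $\phi_\delta$ represents $\mathcal O_S(\delta)$, so the type endomorphism sends $[\delta]\mapsto[\delta]$ for every $\delta\in\DDelta$ and is the identity. This is the step for which you have no concrete plan.
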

\begin{proof}
First of all, we may note that for any extension $K$ of
$\QQ$, if $R\in\torsQ_\mm(K)$ then $\pi_\mm^{-1}(\pi_\mm(R))$
coincides with the orbit of $R$ under the action of $\TNS$.
Indeed if $R'\in\torsQ_\mm(K)$ satisfies $\pi_\mm(R')=\pi_\mm(R)$,
then there exists a unique $\zz\in T_\DDelta(\KK)$ such that $R'=\zz R$.
Let us write $\zz=(z_\delta)_{\delta\in\DDelta}$.
Using~\eqref{equ.tors.uv} and~\eqref{equ.tors.xyz}
and the description of the action of $\Gm(K)$ on $\torsiZ$,
we get that $z_i^+z_i^-=z_j^+z_j^-$ if $1\leq i<j\leq 4$ and
\[z_0^+z_0^-(z_k^+z_k^-)^2=(z_0^+)^2\prod_{j=1}^4z_j^+=(z_0^-)^2
\prod_{j=1}^4z_j^-.\]
for $k\in\onefour$. We deduce from these equations that $\zz\in\TNS(K)$.
\par
It is enough to prove the result over $\overline\QQ$.
By choosing square roots $\alpha_j$ of $m_j$ such that
$\prod_{j=1}^4\alpha_j=\alpha_\mm$, and using a change of
variable of the form ${Z_j^\varepsilon}'=\alpha_jZ_j^\varepsilon$
for $\varepsilon\in\{+1,-1\}$ and $j\in\onefour$ we may
assume that $\mm=(1,1,1,1)$.
Note that for any $\delta$ in $\DDelta$, the variety
$\pi_\mm^{-1}(E_\DDelta)$ is the subvariety of $\torsQ_\mm$
defined by $Z_\delta=0$. If $\varepsilon\in\{+1,-1\}$,
we consider the open subset
\[U_\varepsilon=S-E^{\varepsilon}-\bigcup_{j=1}^4E_j^\varepsilon\]
of $S$ and for $j\in\onefour$, we put
\[U_j=S-E^+-E^--\bigcup_{k\neq j}(E_k^+\cup E_k^-).\]
The open subsets $U_1,U_2,U_3,U_4,U_+$ and $U_-$ form
an open covering of $S$. If $\varepsilon \in\{+1,-1\}$,
we may consider that $X+\varepsilon \ci Y=1$ on $U_\varepsilon$
and we define a section $s^1_\varepsilon$ (resp. $s^2_\varepsilon$)
of $\pi_{\boldsymbol 1}$ over $U_\varepsilon\cap S_1$ (resp. $U_\varepsilon\cap S_2$)
by $Z_0^\varepsilon=Z_1^\varepsilon=Z_2^\varepsilon=Z_3^\varepsilon=Z_4^\varepsilon=1$,
$Z_0^{-\varepsilon}=t$ and $Z_j^{-\varepsilon}=L_j(U,1)$
(resp. $Z_j^{-\varepsilon}=L_j(1,V)$) for $j\in\onefour$.
Similarly, for $j\in\onefour$, fix $k,l,m$ so that $\{j,k,l,m\}=
\onefour$. On $U_j$, we may consider that $L_k(U,V)=1$
and $T=1$. We may then define a section $s_j$ of $\pi_{\boldsymbol 1}$
over $U_j$ by
$Z_k^+=Z_k^-=Z_0^+=Z_0^-=Z_l^+=Z_m^+=1$
and
\[Z_l^-=L_l(U,V),\quad Z_m^-=L_m(U,V),
\quad Z_j^+=\frac{X+\ci Y}{\prod_{r\neq j}Z_r^+}\quad
\text{and}\quad Z_j^-=\frac{X-\ci Y}{\prod_{r\neq j}Z_r^+}.\]
The conditions~\eqref{equ.tors.open} ensures that, for any point
$P\in\torsQ_{\boldsymbol 1}(\overline\QQ)$, 
the stabilizer of $P$ in $\TNS(\overline\QQ)$
is trivial. Using the action of $\TNS$ on $\torsQ_{\boldsymbol 1}$
we then get an equivariant isomorphism from $\TNS\times U$ to
$\pi_{\boldsymbol 1}^{-1}(U)$ for each open subset $U$ described above.
This proves that $\torsQ_\mm$ is a $\TNS$-torsor over~$S$.
\par
It remains to prove that the endomorphism of $\Pic(\overline S)$
defined by this torsor is the identity map. Let us first recall
how this endomorphism may be defined. If $L$ is a line bundle 
over~$\overline S$, then the class of~$L$ defines a morphism
of Galois lattices $\ZZ\to\Pic(\overline S)$ and therefore
a morphism of algebraic tori $\phi_L:\TNS\to\Gm$ and an action
of $\TNS$ on $\Gm$. The restricted product $\torsQ\times^{\TNS}\Gm$
is a $\Gm$-torsor over $\overline S$ which defines an element 
of $\Pic(\overline S)$.
For any $\delta$ in $\DDelta$, the function $Z_\delta$ on $\torsQ_\mm$
is invariant under the action of the kernel of the map 
${\phi_\delta:\TNS\to\Gm}$ defined by the class of $\delta$
in $\Pic(\overline S)$. Therefore this function defines
an antiequivariant map from $\torsQ_\mm\times^{\TNS}\Gm$ to $\Aff^1$ which
vanishes with multiplicity one over $\pi_\mm^{-1}(\delta)$.
Thus the endomorphism defined by $\torsQ_\mm$ on $\Pic(\overline S)$
sends the class of $\delta$ to itself for any $\delta\in\DDelta$.
This proves that $\torsQ_\mm$ is a versal torsor over $S$. 
\end{proof}

To conclude these constructions it remains to prove
that the set of rational points $S(\QQ)$ is the disjoint union
of the sets $\pi_\mm(\torsQ_\mm(\QQ))$ where $\mm$ runs over  the
set $\Sigma$.

\begin{lemma}
\label{lemma.lift.split}
For any $P\in S(\QQ)$, we have
\[\card(\pi_\inter^{-1}(P)\cap\torsiZ(\ZZ))=\card\Gm^2(\QQ)_{\tors}=2^2.\]
\end{lemma}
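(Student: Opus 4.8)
The plan is to show that $\pi_\inter^{-1}(P)\cap\torsiZ(\ZZ)$ is a torsor under $\Gm^2(\QQ)_{\tors}=\{\pm1\}^2$ acting on it, and that it is nonempty. Concretely, fix $P\in S(\QQ)$. Since $P$ lies in $S_1(\QQ)\cup S_2(\QQ)$, write $P=((x_0:y_0:t_0),u_0)$ with $(x_0,y_0,t_0)\in\QQ^3\setminus\{0\}$ and $u_0\in\QQ$ (or use the $S_2$-chart if $P$ lies over $\infty$); scaling, we may take $u_0=a/b$ with $a,b$ coprime integers, and then $(U,V)=(a,b)$ is the unique primitive integral representative up to sign of the point $(u_0:1)\in\Proj^1(\QQ)$. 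The defining equation of $S_1$ says $x_0^2+y_0^2=t_0^2\prod_j L_j(u_0,1)$; clearing denominators and using that $\prod_j L_j(U,V)=b^4\prod_j L_j(u_0,1)$ we obtain an integral point $(X,Y,T,U,V)$ on $\torsiZ$ lying over $P$, after dividing $(X,Y,T)$ by the gcd of its coordinates (the condition $(X,Y,T)\neq 0$ holds since $P$ lies on $S$, not on a locus where all of $X,Y,T$ vanish). This establishes nonemptiness.

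For the count, note first that by the very definition of $\pi_\inter$ two points $(X,Y,T,U,V)$ and $(X',Y',T',U',V')$ of $\torsiZ(\QQ)$ have the same image in $S(\QQ)$ precisely when they lie in the same orbit of $\Tspl(\QQ)=\GmQ^2(\QQ)$; indeed $\pi_\inter$ makes $\torsiQ$ a $\Gm^2$-torsor over $S$, so the fibre $\pi_\inter^{-1}(P)$ is a principal homogeneous space under $\Gm^2(\QQ)=(\QQ^*)^2$. The action of $(\lambda,\mu)\in(\QQ^*)^2$ sends $(X,Y,T,U,V)$ to $(\lambda X,\lambda Y,\mu^{-2}\lambda T,\mu U,\mu V)$. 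I must determine which $(\lambda,\mu)$ preserve $\torsiZ(\ZZ)$, i.e. send one integral point of the fibre to another, starting from the integral point $(X,Y,T,U,V)$ constructed above which we normalise so that $\gcd(X,Y,T)=1$ and $\gcd(U,V)=1$. Applying $(\lambda,\mu)$: integrality and primitivity of $(\mu U,\mu V)$ forces $\mu=\pm1$; integrality and primitivity of $(\lambda X,\lambda Y,\mu^{-2}\lambda T)=(\lambda X,\lambda Y,\lambda T)$ then forces $\lambda=\pm1$. Conversely each of the four sign choices $(\pm1,\pm1)$ clearly sends integral points to integral points and primitive triples/pairs to primitive triples/pairs. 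Hence the stabiliser-translate acting simply transitively on $\pi_\inter^{-1}(P)\cap\torsiZ(\ZZ)$ is exactly $\{\pm1\}^2=\Gm^2(\QQ)_{\tors}$, which has cardinality $2^2$.

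The only genuine subtlety — the step I expect to be the main obstacle — is the verification that primitivity is preserved and, more importantly, that the integral point we start from is genuinely \emph{in the fibre over $P$ as a point of $S$} and not over some other point: one has to check that dividing $(x_0,y_0,t_0)$ and $(u_0,1)$ by their respective content does not move the image point in $S$, which uses precisely the weighted scaling in the torus morphism $(\lambda,\mu)\mapsto(\lambda,\lambda,\mu^{-2}\lambda,\mu,\mu)$ (the exponent $-2$ on $\mu$ matches the glueing \eqref{equ.glueing} and the fact that $T$ is a section of $\mathcal O(-2)$-type twisted line). Once this compatibility is pinned down, and once one checks that no common prime divides all five coordinates $(X,Y,T,U,V)$ in a way that would allow a further scaling (which cannot happen since $\gcd(X,Y,T)=1$ already), the simple transitivity of the $\{\pm1\}^2$-action on the set of integral primitive representatives of the fibre is immediate, and the lemma follows.
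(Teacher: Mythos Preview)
Your proposal is correct and follows essentially the same approach as the paper. Both arguments construct an explicit primitive integral lift of $P$ (writing $u_0=u/v$ with $\gcd(u,v)=1$, then scaling $(x_0,y_0,t_0)$ so that the resulting triple $(x,y,t)$ is a primitive integer triple) and then observe that any two integral lifts differ by an element of $\{\pm 1\}^2$ acting through $(\lambda,\mu)\mapsto(\lambda,\lambda,\mu^{-2}\lambda,\mu,\mu)$; you simply phrase this second step more explicitly in torsor language, using that $\pi_\inter^{-1}(P)$ is a principal homogeneous $(\QQ^*)^2$-space and computing the subset of $(\QQ^*)^2$ that preserves the coprimality conditions defining $\torsiZ(\ZZ)$.
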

\begin{proof}
Let us start with a point
$P=((x_0:y_0:t_0),u_0)$ in $S_1(\QQ)$. We then have the relation
\[x_0^2+y_0^2=t_0^2\prod_{j=1}^4L_i(u_0,1)\]
We may write $u_0=u/v$ with $u,v\in\ZZ$ and $\gcd(u,v)=1$. Then
we may find an element $\lambda$ of $\QQ$ such that the rational numbers
$x=\lambda x_0$, $y=\lambda y_0$ and $t=\lambda t_0/v^2$ are coprime
integers and we have
\[x^2+y^2=t^2\prod_{j=1}^4L_j(u,v).\]
The same construction works for any point of $S_2(\QQ)$
and if $P$ belongs to $S_1(\QQ)\cap S_2(\QQ)$ the elements
of $\ZZ^5$ thus obtained coincide up to multiplication
of the first three or the last two coordinates by $-1$.
\end{proof}
\begin{rema}
Note that if we impose conditions like
\[t>0,\quad L_1(u,v)\geq 0\quad\text{and}\quad\prod_{j=2}^4L_j(u,v)\geq 0,\]
the lifting of $P$ is unique.
\end{rema}
\begin{prop}
\label{prop.uniquem}%
Let $P$ belong to $S(\QQ)$.
Then there exists a unique $\mm$ in $\Sigma$ such that
$P$ belongs to $\pi_\mm(\torsQ_\mm(\QQ))$.
\end{prop}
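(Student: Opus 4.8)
The plan is to start from a point $P\in S(\QQ)$, lift it to the intermediate torsor as in Lemma~\ref{lemma.lift.split}, and then read off the integer $\mm$ from the factorisation of the values $L_j(u,v)$. More precisely, pick a lift $(x,y,t,u,v)\in\torsiZ(\ZZ)$ of $P$; by the remark after Lemma~\ref{lemma.lift.split} we may normalise the signs so that $t>0$, $L_1(u,v)\geq 0$ and $\prod_{j=2}^4L_j(u,v)\geq 0$, which pins down the lift uniquely. For each $j\in\onefour$ write $L_j(u,v)=\varepsilon_j\, n_j\, c_j^2$ where $\varepsilon_j=\pm1$, $c_j\in\ZZ_{>0}$ and $n_j$ is the squarefree part of $|L_j(u,v)|$, and set $m_j$ to be the part of $\varepsilon_j n_j$ supported on $\{-1\}\cup\badp_j$, i.e. $m_j=\varepsilon_j\prod_{p\in\badp_j}p^{v_p(n_j)}$ together with the appropriate sign. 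I would then argue that $\mm=(m_j)_j$ lies in $\Sigma$ and that $P\in\pi_\mm(\torsQ_\mm(\QQ))$.

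The membership $\mm\in\Sigma$ requires checking three things: that each $m_j$ lies in $\Sigma_j$ (immediate from the definition of $m_j$, once we know $m_j$ only involves primes $\equiv 3\bmod 4$ dividing $\prod_{k\neq j}\Delta_{j,k}$); that the $m_j$ are globally coprime; and that $\prod_j m_j$ is a square. For coprimality, a prime $p$ dividing $m_j$ and $m_k$ would have to divide both $L_j(u,v)$ and $L_k(u,v)$, hence divide $\Delta_{j,k}(u,v)=\Delta_{j,k}\cdot\det$, forcing $p\mid\gcd(u,v)=1$ once one accounts for the finitely many bad primes — this is where the precise definition of $\badp_j$ as primes $\equiv 3\bmod 4$ enters, together with the observation that the "good" part of each $L_j(u,v)$ is a sum of two squares and so has no prime factor $\equiv3\bmod4$ to odd order. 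For the square condition: $\prod_j L_j(u,v)=(x^2+y^2)/t^2$ up to the overall normalisation, and $x^2+y^2$ is a norm from $\QQ(\ci)$; combined with $t>0$ and the sign normalisation, $\prod_j\varepsilon_j n_j$ is a square in $\QQ^*$, hence the complementary part (the product of the square-free-at-bad-primes cofactors) is a square, so $\prod_j m_j$ is a square. Then $\alpha_\mm$ makes sense and one defines $(z_j^+,z_j^-)$ by splitting $m_jz_j^+z_j^- = L_j(u,v)$ using the good cofactor $n_j c_j^2/m_j^{\mathrm{good}}$ as a norm from $\QQ(\ci)$ — its prime factors being $\equiv1\bmod4$ or equal to $2$ — and $(z_0^+,z_0^-)$ from $t$ via \eqref{equ.tors.xyz}. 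One checks \eqref{equ.tors.closed} holds because it is the Plücker-type relation $\Delta_{j,k}L_l+\Delta_{k,l}L_j+\Delta_{l,j}L_k=0$ among the linear forms, and \eqref{equ.tors.open} holds because $(x,y,t)\neq 0$ and $(u,v)\neq 0$. This exhibits a point of $\torsQ_\mm(\QQ)$ mapping to $P$, so existence is done.

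For uniqueness, suppose $P\in\pi_\mm(\torsQ_\mm(\QQ))\cap\pi_{\mm'}(\torsQ_{\mm'}(\QQ))$ with witnesses $\zz\in\torsQ_\mm(\QQ)$, $\zz'\in\torsQ_{\mm'}(\QQ)$. Both determine, via $\widehat\pi_\mm$ and $\widehat\pi_{\mm'}$ and the relations \eqref{equ.tors.uv}, \eqref{equ.tors.xyz}, lifts of $P$ to $\torsiQ(\QQ)$; after clearing denominators these become integral lifts, which by Lemma~\ref{lemma.lift.split} agree up to the sign action of $\Gm^2(\QQ)_{\tors}$. Absorbing those signs (using that we may rescale the $z_\delta$), we get $m_j z_j^+z_j^- = m'_j {z'_j}^+{z'_j}^-$ for all $j$, with the same common value $L_j(u,v)$ on the nose. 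Now $m_j$ and $m'_j$ both lie in $\Sigma_j$, i.e. are products of $-1$ and distinct primes $\equiv3\bmod4$ from $\badp_j$; since $z_j^+z_j^-$ and ${z'_j}^+{z'_j}^-$ are (rational) norms from $\QQ(\ci)$ up to squares — here I use that $z_j^+,z_j^-$ are complex conjugates, so their product is a sum of two squares — they contribute no such prime to odd multiplicity and no sign ambiguity beyond what the normalisation already fixed; hence $m_j$ and $m'_j$ have the same bad-prime valuations and the same sign, so $m_j=m'_j$. Together with $m_1,m'_1>0$ this gives $\mm=\mm'$.

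\textbf{Main obstacle.} The delicate point is the bookkeeping of signs and of the prime factorisations: one must show that the "$\equiv3\bmod4$" part of each $L_j(u,v)$ is genuinely controlled by $\badp_j$ (no stray prime $\equiv3\bmod4$ dividing $L_j(u,v)$ to an odd power can fail to divide some $\Delta_{j,k}$), which rests on the fact that $x^2+y^2$ is a norm and on a careful gcd analysis of the $L_j(u,v)$ against the discriminants $\Delta_{j,k}$; and symmetrically, that the sign $\varepsilon_j$ is forced. Everything else — verifying \eqref{equ.tors.closed}, \eqref{equ.tors.open}, the square condition, and the reduction of uniqueness to Lemma~\ref{lemma.lift.split} — is routine once this local structure at the bad primes is in hand.
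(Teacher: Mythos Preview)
Your approach is essentially the same as the paper's: lift $P$ to a normalised integral point $(x,y,t,u,v)\in\torsiZ(\ZZ)$, read off $m_j$ from the sign and the odd-order primes $\equiv 3\bmod 4$ in $L_j(u,v)$, check $\mm\in\Sigma$, and then split each $L_j(u,v)/m_j$ and $t$ as norms from $\QQ(\ci)$ to build a preimage in $\torsQ_\mm(\QQ)$. The ``main obstacle'' you flag is exactly the key point the paper proves, and your uniqueness argument via parity of $v_p(L_j(u,v))$ at inert primes is the right mechanism.

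A few places need tightening. First, your coprimality argument is misstated: from $p\mid m_j$ and $p\mid m_k$ for a \emph{single} pair you only get $p\mid\Delta_{j,k}$, not $p\mid\gcd(u,v)$. The condition in $\Sigma$ is that $\gcd(m_1,m_2,m_3,m_4)=1$; if $p$ divides all four then $p\mid\Delta_{1,2}=1$ (under \eqref{equ.pthreefour}), which is the contradiction. Second, you need to argue that $t$ is a sum of two squares before writing $t=z_0^+\overline{z_0^+}$: this uses $\gcd(x,y,t)=1$ (a prime $p\equiv 3\bmod 4$ dividing $t$ would divide $x^2+y^2$, hence $x$ and $y$). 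Third, when constructing the preimage you cannot choose all four $z_j^+$ independently and still hit the given $x+\ci y$; the paper chooses $z_0^+,z_1^+,z_2^+,z_3^+$ freely and \emph{defines} $z_4^+=(x+\ci y)/(\alpha_\mm(z_0^+)^2 z_1^+z_2^+z_3^+)$, then checks its norm is $L_4(u,v)/m_4$. Finally, the degenerate case $L_j(u,v)=0$ needs a separate clause (the paper fixes $m_j$ there by the square condition on $\prod m_k$), both for existence and for uniqueness.
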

\begin{proof}
Let $Q=(x,y,t,u,v)\in\torsiZ(\ZZ)$ be such that $\pi_\inter(Q)=P$. 
Without loss of generality we may assume that $Q=(x,y,t,u,v)\in\ZZ^5$ 
is such that
\begin{equation}\label{equ.lifting.first}
\begin{cases}
x^2+y^2=t^2\prod_{j=1}^4L_j(u,v),\\
\gcd(x,y,t)=1,\ \gcd(u,v)=1,\\
t>0,\ L_1(u,v)\geq 0,\ \text{and}\ \prod_{j=2}^4L_j(u,v)\geq 0.\\
\end{cases}
\end{equation}
The fact that $t^2\prod_{j=1}^4L_j(u,v)$
is the sum of two squares implies that
\begin{equation}\label{equ.lifting.pos}
\prod_{j=1}^4L_j(u,v)\geq 0
\end{equation}
and, if $\prod_{j=1}^4L_j(u,v)\neq 0$,
for any prime $p$ congruent to $3$ modulo $4$
\begin{equation}\label{equ.lifting.cong}
\sum_{j=1}^4v_p(L_j(u,v))\equiv 0\ \text{mod}\ 2.
\end{equation}
Let $j$ belong to $\onefour$. 
If $L_j(u,v)\neq0 $, we denote by $\epsilon_j\in\{-1,+1\}$
the sign of $L_j(u,v)$ 
and by $\Sigma_j(Q)$ the set of prime numbers $p$
which are congruent to $3$ modulo $4$ and such that
$v_p(L_j(u,v))$ is odd. We then put
\[m_j=\epsilon_j\times\prod_{p\in\Sigma_j(Q)}p.\]
If $L_j(u,v)=0$ we define $m_j$ as the only integer in $\Sigma_j$
such that $\prod_{k=1}^4m_k$ is a square. By construction,
we have $m_j\mid L_j(u,v)$
and the quotient $L_j(u,v)/m_j$ is the sum of two squares.
\par
Let us now check that $\mm=(m_1,m_2,m_3,m_4)$ belongs to $\Sigma$.
According to~\eqref{equ.lifting.cong}, if a prime number belongs to
$\Sigma_j(Q)$ for some $j\in\onefour$, then there exists 
$k\in\onefour$ with $k\neq j$
such that $p\in\Sigma_k(Q)$.
In particular, $p$ divides both $L_j(u,v)$ and $L_k(u,v)$
as well as
\[\Delta_{j,k}u=b_kL_j(u,v)-b_jL_k(u,v)\]
and $\Delta_{j,k}v$. Since $\gcd(u,v)=1$, we get that $p\mid\Delta_{j,k}$.
This proves that $\mm\in\prod_{j=1}^4\Sigma_j$. But 
combining~\eqref{equ.lifting.pos}, \eqref{equ.lifting.cong} and
the definition of $\mm$ we get that $\prod_{j=1}^4m_j$ is a square.
If $d$ divides all the $m_j$, it divides $\gcd_{1\leq j<k\leq 4}(\Delta_{j,k})$
which is equal to $1$ since $\Delta_{1,2}=1$ under the
condition~\eqref{equ.pthreefour}. Finally $m_1>0$ since
$L_1(u,v)>0$ or $\prod_{j=2}^4L_j(u,v)>0$. Thus, $\mm$ belongs to $\Sigma$.
\par
We now wish to prove that $Q$ belongs to $\hat\pi_\mm(\torsQ_\mm(\QQ))$.
By construction of $\mm$, for any $j$ in $\onefour$, the integer
$L_j(u,v)/m_j$ is the sum of two squares. Moreover if $p$
is a prime number, congruent to $3$ modulo $4$, then $p$
generates a prime ideal of $\ZZ[\ci]$. From the 
relations~\eqref{equ.lifting.first}, if $p\mid t$, then
$p\mid (x+\ci y)(x-\ci y)$. In that case we have $p\mid x$ and
$p\mid y$, which contradicts the fact that $\gcd(x,y,t)=1$.
As $t>0$, we get that $t$ may also be written as the sum
of two squares.
\par
If $\prod_{j=1}^4L_j(u,v)\neq 0$, we choose for $j\in\{1,2,3\}$
an element $z^+_j\in\ZZ[\ci]$ such that $L_j(u,v)/m_j=z_j^+\overline{z_j^+}$
and an element $z_0^+\in\ZZ[\ci]$ such that $t=z_0^+\overline{z_0^+}$.
Then we get the relation
\[L_4(u,v)/m_4=\left(\frac{x+\ci y}{\alpha_\mm (z_0^+)^2\prod_{j=1}^3z_j^+}
\right)\overline{\left(\frac{x+\ci y}{\alpha_\mm (z_0^+)^2\prod_{j=1}^3z_j^+}
\right)}\]
and we put $z_4^+=(x+\ci y)/(\alpha_\mm (z_0^+)^2\prod_{j=1}^3z_j^+)\in\QQ[\ci]$.
If $\prod_{j=1}^4L_j(u,v)=0$, we choose $z_1^+,z_2^+,z_3^+,z^+$
as above and $z_4^+\in\ZZ[\ci]$ such that $L_4(u,v)/m_4=z_4^+\overline{z_4^+}$.
In both cases, we put $z_j^-=\overline{z_j^+}$ for $j\in\onefour$
and $z_0^-=\overline{z_0^+}$.
\par
The family so constructed satisfy the 
relations~\eqref{equ.tors.xyz} and~\eqref{equ.lifting.first},
from which it follows that the corresponding family 
$(z_\delta)_{\delta\in\DDelta}$ is a solution to the 
systems~\eqref{equ.tors.closed} and~\eqref{equ.tors.open}.
Thus we obtain a point $R$ in $\torsQ_\mm(\QQ)$ such that
$\pi_\mm(R)=P$.
\par
Let $\mm'$ belong to $\Sigma$ and assume that the point $P$ belongs to
the set $\pi_{\mm'}(\torsQ_{\mm'}(\QQ))$ as well. Then 
by~\eqref{equ.lifting.first}, we have for any prime number $p$
\[v_p(m'_j)-v_p(m'_k)=v_p(L_j(u,v))-v_p(L_k(u,v))=v_p(m_j)-v_p(m_k)\]
for any $j,k$ in $\onefour$ such that $L_j(u,v)L_k(u,v)\neq 0$.
Similarly, denoting by $\sgn(m)$ the sign of an integer $m$,
we have
\[\sgn(m'_j)/\sgn(m'_k)=\sgn(m_j)/\sgn(m_k).\]
These relations between $\mm$ and $\mm'$ remain valid if
$L_j(u,v)L_k(u,v)=0$ since the products $\prod_{j=1}^4m_j$ 
and $\prod_{j=1}^4m_j'$ are squares.
But, by definition of $\Sigma$, we have
\[m'_1>0\quad\text{and}\quad\min_{1\leq j\leq 4}v_p(m'_j)=0\]
for any prime number $p$, and similarly for $\mm$.
We obtain that $\mm=\mm'$.
\end{proof}

%% End of file torsors.tex
%% Including file jumpingup.tex
\section{Jumping up}
\label{section.jumpingup}%
Having constructed the needed versal torsors explicitly, %E%
we now wish to lift our initial counting problem
to these torsors. In order to do this, we shall define
an adelic domain $\mathcal D_\mm$ in the adelic space
$\torsQ_\mm(\Adeles_\QQ)$ so that for any $P\in\pi_\mm(\torsQ_\mm(\QQ))$
the cardinality of $\pi_\mm^{-1}(P)\cap\mathcal D_\mm$ is
$\card\TNS(\QQ)_\tors$. 

\subsection{Idelic preliminaries}
We first need to gather a few facts about the adelic
space $\TNS(\Adeles_\QQ)$.
\begin{nota}
We consider the affine space
\[\Aff_{\DDelta,\ZZ}=
\Spec(\ZZ[X_{\delta},Y_\delta,\delta\in\DDelta_\QQ]).\]
Let~$A$ be a commutative ring. The group $\Ga$ acts
on the ring
\[\prod_{\delta\in\DDelta}A\otimes_{\ZZ}\ZZ[\ci]\]
and we may identify the $A$-points of $\Aff_\DDelta$ with
the elements of the invariant ring
\[A_\DDelta=\Bigl(\prod_{\delta\in\DDelta}A\otimes_{\ZZ}\ZZ[\ci]\Bigr)^\Ga.\]
\par
Let $\primes$ be the set of prime numbers.
Let $p\in\primes$. We put ${\Sp=\Spec(\QQ_p\otimes_\ZZ\ZZ[\ci])}$
which we may identify with the set of places of $\QQ[\ci]$ above
$p$. If $\boldsymbol a=(a_{\mathfrak p})_{\mathfrak p\in\Sp}$ 
and $\boldsymbol b=(b_{\mathfrak p})_{\mathfrak p\in\Sp}$
belong to $\ZZ^\Sp$, we write 
$\boldsymbol a\geq \boldsymbol b$ if $a_{\mathfrak p}\geq b_{\mathfrak p}$
for $\mathfrak p\in\Sp$ 
and $\min(\boldsymbol a,\boldsymbol b)
=(\min(a_{\mathfrak p},b_{\mathfrak p}))_{\mathfrak p\in\Sp}$.
The valuations induce a map
\[\widehat v_p:\QQ_p\otimes_\ZZ\ZZ[\ci]
\longrightarrow(\ZZ\cup\{+\infty\})^{\Sp}.\]
Thus we get a natural map
\[(\QQ_p\otimes_\ZZ\ZZ[\ci])^\DDelta\longrightarrow
(\ZZ\cup\{+\infty\})^{\Sp\times\DDelta}.\]
The action of $\Ga$ on $\Sp$ and $\DDelta$ induces
an action of $\Ga$ on the set on the right-hand side
so that the above map is $\Ga$ equivariant.
Denoting by $\overline\Gamma_p$ the set of invariants
in $(\ZZ\cup\{+\infty\})^{\Sp\times\DDelta}$ and by $\Gamma_p$
its intersection with $\ZZ^{\Sp\times\DDelta}$, we get
a map
\[\log_p:\Aff_\DDelta(\QQ_p)\longrightarrow\overline\Gamma_p\]
whose restriction to $T_\DDelta(\QQ_p)$ is a morphism from
this group to the group $\Gamma_p$ and $\log_p$ is compatible with
the action of $T_\DDelta(\QQ_p)$ on the left and the action
of $\Gamma_p$ on the right.
We denote by $\Xi_p$ the set of elements $(r_{\mathfrak p,\delta})$ 
of $\Gamma_p$ such that
$r_{\mathfrak p,\delta}\geq 0$ for any $\mathfrak p\in\Sp$ and
any $\delta\in\DDelta$.
\par
If $T$ is an algebraic torus over $\QQ$ which splits over $\QQ(\ci)$,
then $X^*(T)$ denotes the group of characters of $T$ over $\QQ(\ci)$
and $X_*(T)=\Hom(X^*(T),\ZZ)$ its dual, that is the group
of cocharacters of $T$. 
We denote by $\langle\cdot{,}\cdot\rangle$ the natural pairing $X^*(T)\times X_*(T)\to\ZZ$.
For any place $v$ of $\QQ$, we denote
by $X_*(T)_v$ the group of cocharacters of $T$ over $\QQ_v$,
which may be described as $X_*(T)^{\Gal(\overline\QQ_v/\QQ_v)}$.
We also consider the groups $X_*(T)_\QQ=X_*(T)^\Ga$
and $X^*(T)_\QQ=X^*(T)^\Ga$.
The group $\Gamma_p$ may then be seen as the group $X_*(T_\DDelta)_p$.
The restriction of $\log_p$ from $T_\DDelta(\QQ_p)$ to $\Gamma_p$
is then the natural morphism defined in \cite[\S2.1]{ono:tori}.
For any $(\rr_\delta)_{\delta\in\DDelta}\in\Gamma_p$, 
we put $\rr_j^{\pm}=
\rr_{D_j^{\pm}}$ for $j\in\onefour$
and $\rr_0^{\pm}=\rr_{E^\pm}$.
The group $X_*(\TNS)_p$ is then the subgroup of $\Gamma_p$
given by the equations
\begin{align*}
\rr_j^++\rr_j^-&=\rr_l^++\rr_l^-\\
\noalign{\noindent for $1\leq j<l\leq 4$ and}
\rr_0^++\rr_j^++\rr^+_l&=\rr_0^-+\rr^-_m+\rr^-_n
\end{align*}
if $\{j,l,m,n\}=\onefour$.
\end{nota}
\begin{rema}
If $p\equiv 3\mod 4$ or $p=2$ then there exists a unique
element $\mathfrak p$ in $\Sp$. Thus $\Gamma_p$ is canonically isomorphic
to $\ZZ^{\DDelta_\QQ}$. If $p\equiv 1\mod 4$, then choosing
an element $\mathfrak p\in\Sp$, we get an isomorphism
from $\ZZ^\DDelta$ to $\Gamma_p$.
\end{rema}
\begin{lemma}
\label{lemma.idelic}%
For any prime $p$ the morphism $\log_p$ induces
an isomorphism from the quotient $\TNS(\QQ_p)/\TNS(\ZZ_p)$
to $X_*(\TNS)_p$ and there is an exact sequence
\[1\longrightarrow \TNS(\QQ)_{\tors}\longrightarrow\TNS(\QQ)
\longrightarrow\bigoplus_{p\in\primes}X_*(\TNS)_p\longrightarrow 0.\]
\end{lemma}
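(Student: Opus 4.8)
The statement has two halves — that $\log_p$ induces an isomorphism $\TNS(\QQ_p)/\TNS(\ZZ_p)\cong X_*(\TNS)_p$ for each $p$, and the global exact sequence — and I would establish them in turn, throughout reading $\TNS(\ZZ_p)$ as the maximal compact subgroup of $\TNS(\QQ_p)$, equivalently as $\TNS(\QQ_p)\cap T_\DDelta(\ZZ_p)$ under the closed immersion $\TNS\hookrightarrow T_\DDelta$ coming from $-\pr$. For the local part, $\TNS(\ZZ_p)$ is by construction the kernel of $\log_p$ on $\TNS(\QQ_p)$, which gives injectivity; for surjectivity one can either quote the general theory of tori over local fields (the valuation map of \cite[\S2.1]{ono:tori} being onto $X_*(\TNS)^{\Gal(\overline\QQ_p/\QQ_p)}$), or argue through $T_\DDelta$. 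On $T_\DDelta$, a product over $\DDelta_\QQ$ of copies of $R_{\QQ(\ci)/\QQ}\Gm$, the map $\log_p$ is simply the tuple of $\mathfrak p$-adic valuations on $\prod(\QQ_p\otimes_\QQ\QQ(\ci))^\times$, hence onto $\Gamma_p=X_*(T_\DDelta)_p$ with kernel the unit group $T_\DDelta(\ZZ_p)$. Given $n\in X_*(\TNS)_p\subseteq X_*(T_\DDelta)_p$, lifting $n$ to some $z\in T_\DDelta(\QQ_p)$ and correcting $z$ by a unit to land in $\TNS(\QQ_p)$ is possible provided $T_\DDelta(\ZZ_p)$ surjects onto $(T_\DDelta/\TNS)(\ZZ_p)$; this holds because $H^1\etale(\ZZ_p,\TNS)$ vanishes, which for $p\neq 2$ reduces via Hensel's lemma to Lang's theorem on the special fibre. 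The ramified prime $p=2$ needs a little more care, but is disposed of cleanly by the explicit description below.

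Globally, $t\mapsto(\log_p(t))_p$ is well defined because every $t\in\TNS(\QQ)$ lies in $\TNS(\ZZ_p)$, hence has $\log_p(t)=0$, for all but finitely many $p$. Its kernel is $\bigcap_p\TNS(\ZZ_p)$; via $\TNS\hookrightarrow T_\DDelta$ this sits inside $\bigcap_p T_\DDelta(\ZZ_p)=(\ZZ[\ci]^\times)^{\DDelta_\QQ}$, which is finite, and since it contains $\TNS(\QQ)_{\tors}$ it coincides with it. (The finiteness reflects that $\TNS$ splits over the \emph{imaginary} quadratic field $\QQ(\ci)$, so its group of units has rank zero.)

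The essential point, which I expect to be where the real work lies, is the surjectivity of $\TNS(\QQ)\to\bigoplus_p X_*(\TNS)_p$: its cokernel is the class group of the torus $\TNS$, and one must show it vanishes. The cleanest route is to identify $\TNS$ outright. From the data already in hand — $\operatorname{rk}\Pic(\overline S)=6$, $\operatorname{rk}\Pic(\overline S)^{\Ga}=\operatorname{rk}\Pic(S)=2$, and $H^1(\Ga,\Pic(\overline S))\cong(\ZZ/2\ZZ)^2$ from Lemma \ref{lem:brauer} — the classification of $\ZZ[\ZZ/2\ZZ]$-lattices together with the Krull--Schmidt theorem (or a short explicit computation of complements in the basis $[E^+],[D_1^+],\dots,[D_4^+],[D_1^-]$) forces $\Pic(\overline S)\cong\ZZ_-^{2}\oplus\ZZ[\ZZ/2\ZZ]^2$ as a $\Ga$-module, $\ZZ_-$ denoting the sign representation; equivalently
\[\TNS\ \cong\ \bigl(R^{(1)}_{\QQ(\ci)/\QQ}\Gm\bigr)^{2}\times\bigl(R_{\QQ(\ci)/\QQ}\Gm\bigr)^{2}.\]
For each type of factor the whole lemma becomes classical. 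For $R_{\QQ(\ci)/\QQ}\Gm$ the map is $\QQ(\ci)^\times\to\bigoplus_{\mathfrak p}\ZZ$, $x\mapsto(\operatorname{ord}_{\mathfrak p}x)_{\mathfrak p}$, whose surjectivity is precisely the fact that $\ZZ[\ci]$ is a principal ideal domain. For the norm-one torus $R^{(1)}_{\QQ(\ci)/\QQ}\Gm$ the target is supported at the primes split in $\QQ(\ci)$, and prescribing orders there amounts to prescribing an ideal $\mathfrak a\overline{\mathfrak a}^{-1}$; since $\ZZ[\ci]$ is principal such an ideal has a generator $x$, and since the norm form of $\QQ(\ci)$ is positive definite the unit $N(x)\in\ZZ$ equals $1$, so $x$ is the point sought. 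This same identification re-proves the local isomorphism for $\TNS$ (thereby also disposing of $p=2$) and the kernel computation. So the one genuine obstacle is the structural identification of $\TNS$, i.e. the vanishing of its class group; everything else is bookkeeping with the Gaussian integers and the valuation maps.
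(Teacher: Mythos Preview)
Your argument is correct and takes a genuinely different route from the paper's. The paper never identifies $\TNS$ abstractly; instead it works inside the ambient torus $T_\DDelta$ and, for each odd prime $p$, writes down an explicit section $\exp_{\varpi}:\Gamma_p\to T_\DDelta(\QQ)$ built from a chosen Gaussian prime $\varpi$ above $p$, satisfying $\chi(\exp_\varpi(\rr))=p^{\langle\chi,\rr\rangle}$ for every $\chi\in X^*(T_\DDelta)_\QQ$, which forces $\exp_\varpi(X_*(\TNS)_p)\subset\TNS(\QQ)$; the ramified prime $p=2$ is then handled by a separate hand-built lift using powers of $1+\ci$. You instead pin down $\TNS\cong(R^{(1)}_{\QQ(\ci)/\QQ}\Gm)^2\times(R_{\QQ(\ci)/\QQ}\Gm)^2$ from the numerical invariants of $\Pic(\overline S)$ as a $\ZZ[\ZZ/2\ZZ]$-lattice (the count $a=0$, $b=2$, $c=2$ is right, and the classification of $\ZZ[\ZZ/2\ZZ]$-lattices does give uniqueness here), after which both the local isomorphism and the global surjectivity reduce to the principality of $\ZZ[\ci]$ together with the positivity of its norm form. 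Your approach is cleaner and makes the role of the class number transparent; the paper's has the practical advantage that the explicit maps $\exp_\varpi$ are reused verbatim in the subsequent M\"obius inversions (in the definition of $\llambda(\faa)$), so its more hands-on bookkeeping is not wasted effort.
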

\begin{proof}
By \cite[p.~449]{draxl:tori}, the kernel of the map $\log_p$ 
from $\TNS(\QQ_p)$ to $X_*(\TNS)_p$
coincides with $\TNS(\ZZ_p)$ for any prime $p$.
Let us prove that the map $\bigoplus_p\log_p$ from $\TNS(\QQ)$ to
$\bigoplus_pX_*(\TNS)_p$ is surjective.
We first assume that $p\neq 2$. If $p\equiv 1\mod 4$
we choose an element $\varpi\in\ZZ[i]$ such that $p=\varpi\overline\varpi$
and identify $\Sp$ with $\{\varpi,\overline\varpi\}$.
If $\rr\in\Gamma_p$, we then define 
\begin{equation*}%E%\label{equ.power.k}
\exp_\varpi(\rr)=(\varpi^{r_{\varpi,\delta}}
\overline\varpi^{r_{\overline\varpi,\delta}})_{\delta\in\DDelta}.
\end{equation*}
If $p\equiv 3\mod 4$, then we put $\varpi=p$ and for $\rr\in\Gamma_p$,
we define $\exp_\varpi(\rr)$ to be $(\varpi^{r_{p,\delta}})_{\delta\in\DDelta}$.
By construction, $\exp_\varpi$ is a morphism from
$\Gamma_p$ to $T_\DDelta(\QQ)$ and satisfies
$\log_p\circ\exp_\varpi=\Id_{\Gamma_p}$ 
and $\log_\ell\circ\exp_\varpi=0$ for 
any prime $\ell\neq p$. Moreover we have
\begin{equation}
\label{equ.good.split}
\chi(\exp_\varpi(\rr))=p^{\langle\chi,\rr\rangle}
\end{equation}
for any $\chi\in X^*(T_\DDelta)_\QQ$ and any $\rr\in\Gamma_p$.
Therefore, if $\rr$ belongs to $X_*(\TNS)_p$,
then $\exp_\varpi(\rr)$ belongs to $\TNS(\QQ)$. It remains
to prove a similar result for $p=2$, although there is no
morphism which satisfies~\eqref{equ.good.split}.
Let $\rr$ belong to $X_*(\TNS)_2$.
Let us write $r_j=r_j^+=r_j^-$
for~$j$ in $\zerofour$. Since $\rr$ belong to $X_*(\TNS)_2$,
we have $r_1=r_2=r_3=r_4$.
We put $z^+_j=(1+\ci)^{r_j}$ for $j\in\{0,1,2,3\}$
and $z^+_4=(-\ci)^{r_0+2r_1}(1+\ci)^{r_0}$ and $z_j^-=\overline z_j^+$
for $j\in\zerofour$. Then $\log_2(\zz)=\rr$ and $\zz$ satisfies
equation~\eqref{equ.torus.fibre}. Moreover
if $\{j,k,l,m\}=\onefour$ one has
\[z_0^+z_j^+z_k^+/(z_0^-z_l^-z_m^-)=
\frac{(1+\ci)^{r_0+2r_1}}{(1-\ci)^{r_0+2r_1}}(-\ci)^{r_0+2r_1}=1\]
which proves that $\zz$ satisfies~\eqref{equ.torus.can}.
\par
If $\zz$ belongs to the kernel of the map $\bigoplus_p\log_p$
then its coordinates are invertible elements in $\ZZ[\ci]$.
Thus $\zz$ is a torsion element of $\TNS(\QQ)$.
\end{proof}

\subsection{Local domains}
To construct $\mathcal D_\mm$, for any prime $p$
and any $\mm\in\Sigma$
we shall define a fundamental domain in $\torsQ_\mm(\QQ_p)$
under the action of $\TNS(\QQ_p)$ modulo $\TNS(\ZZ_p)$.
In other words, we want to construct an open domain
$\mathcal D_{\mm,p}\subset\torsQ_\mm(\QQ_p)$ such that
\begin{conditions}
\item The open set $\mathcal D_{\mm,p}$ is stable
under the action of $\TNS(\ZZ_p)$;
\item For any $t$ in $\TNS(\QQ_p)\setminus\TNS(\ZZ_p)$,
one has $t.\mathcal D_{\mm,p}\cap\mathcal D_{\mm,p}=\emptyset$;
\item For any~$x$ in $\torsQ_\mm(\QQ_p)$, there exists
an element~$t$ in $\TNS(\QQ_p)$ such that~$x$ belongs to
$t.\mathcal D_{\mm,p}$.
\end{conditions}
\begin{lemma}
\label{lemma.lift.inter}%
For any prime number $p$, the domain $\torsiZ(\ZZ_p)$ is
a fundamental domain in $\torsiZ(\QQ_p)$ under the action
of $\Tspl(\QQ_p)$ modulo $\Tspl(\ZZ_p)$.
\end{lemma}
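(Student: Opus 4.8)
The plan is to check directly the three defining properties of a fundamental domain, using how transparent the action of $\Tspl=\GmZ^2$ is. First I would record what the relevant sets are. A $\ZZ_p$-point of $\torsiZ$ is a tuple $(x,y,t,u,v)\in\ZZ_p^5$ satisfying \eqref{equ.torsiz} such that the ideals $(x,y,t)$ and $(u,v)$ of $\ZZ_p$ are the unit ideal; since $\ZZ_p$ is local this means exactly that $\min(v_p(x),v_p(y),v_p(t))=0$ and $\min(v_p(u),v_p(v))=0$. Recall $\Tspl(\ZZ_p)=(\ZZ_p^\times)^2$ and $\Tspl(\QQ_p)=(\QQ_p^\times)^2$, with $(\lambda,\mu)$ acting by $(x,y,t,u,v)\mapsto(\lambda x,\lambda y,\mu^{-2}\lambda t,\mu u,\mu v)$; a direct computation (or simply the fact that $\Tspl$ acts on the scheme $\torsiZ$) shows this preserves \eqref{equ.torsiz}.

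Stability under $\Tspl(\ZZ_p)$ is immediate, since a unit does not change any $p$-adic valuation, hence preserves both $\ZZ_p$-integrality of the coordinates and the two minimality conditions. For surjectivity onto orbits, I would start from an arbitrary $(x,y,t,u,v)\in\torsiZ(\QQ_p)$. As $(u,v)\neq(0,0)$ there is a $\mu\in\QQ_p^\times$, unique modulo $\ZZ_p^\times$, with $v_p(\mu)=-\min(v_p(u),v_p(v))$; after replacing the tuple by $(1,\mu)\cdot(x,y,t,u,v)$ we may assume $\min(v_p(u),v_p(v))=0$, so $u,v\in\ZZ_p$ and hence $L_j(u,v)\in\ZZ_p$ for every $j\in\onefour$. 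Keeping $\mu$ fixed, the remaining factor $\lambda$ rescales the whole triple $(x,y,t)$; since it is nonzero there is a $\lambda\in\QQ_p^\times$, unique modulo $\ZZ_p^\times$, with $v_p(\lambda)=-\min(v_p(x),v_p(y),v_p(t))$, and after applying $(\lambda,1)$ we get $x,y,t\in\ZZ_p$ with $\min(v_p(x),v_p(y),v_p(t))=0$. Equation \eqref{equ.torsiz} still holds, so the resulting tuple lies in $\torsiZ(\ZZ_p)$; equivalently the original point lies in $\tau\cdot\torsiZ(\ZZ_p)$ for $\tau=(\lambda,\mu)^{-1}\in\Tspl(\QQ_p)$.

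Finally I would check the disjointness property in contrapositive form: suppose $(\lambda,\mu)\in\Tspl(\QQ_p)$ with both $P=(x,y,t,u,v)$ and $(\lambda,\mu)\cdot P$ in $\torsiZ(\ZZ_p)$. Comparing the $(U,V)$-coordinates gives $\min(v_p(u),v_p(v))=0$ and $\min(v_p(\mu u),v_p(\mu v))=v_p(\mu)=0$, so $\mu\in\ZZ_p^\times$; then $\mu^{-2}\in\ZZ_p^\times$, and comparing the $(X,Y,T)$-coordinates gives $\min(v_p(x),v_p(y),v_p(t))=0$ and $\min(v_p(\lambda x),v_p(\lambda y),v_p(\mu^{-2}\lambda t))=v_p(\lambda)=0$, so $\lambda\in\ZZ_p^\times$. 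Hence $(\lambda,\mu)\in\Tspl(\ZZ_p)$, which is exactly what is needed.

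The only subtlety, and the one place I would be careful, is the translation of the open conditions ``$(X,Y,T)\neq0$'' and ``$(U,V)\neq0$'' at the level of $\ZZ_p$-points: for a point of $\torsiZ$ over the local ring $\ZZ_p$ these say that the relevant coordinates generate the unit ideal, i.e.\ that at least one of $x,y,t$ (resp.\ one of $u,v$) is a $p$-adic unit. Once this is correctly in place, the whole argument is elementary bookkeeping with valuations, and there is no genuine obstacle.
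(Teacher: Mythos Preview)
Your proof is correct and follows essentially the same approach as the paper: both identify $\torsiZ(\ZZ_p)$ via the two valuation-minimum conditions and use the explicit form of the $\Tspl$-action to verify the fundamental domain properties. The paper's version is terser---it phrases surjectivity as lifting a point of $S(\QQ_p)$ and then invokes that the $\Tspl(\QQ_p)$-orbits are the fibres of $\pi_\inter$---but your direct verification of conditions (i)--(iii) unpacks exactly the same content.
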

\begin{proof}
As in the proof of lemma~\ref{lemma.lift.split}, if $P$ belongs
to $S(\QQ_p)$, there exists a point $Q=(x,y,t,u,v)\in\torsiZ(\QQ_p)$
such that $\pi_\inter(Q)=P$ and
\begin{equation*}
\min(v_p(x),v_p(y),v_p(t))=\min(v_p(u),v_p(v))=0.
\end{equation*}
The last condition is equivalent to  $Q\in\torsiZ(\ZZ_p)$. The lemma then
follows from the facts that the action of $\Tspl(\QQ_p)$ 
on $\torsiZ(\QQ_p)$ is given by
\[((\lambda,\mu),(x,y,t,u,v))\mapsto
(\lambda x,\lambda y,\mu^{-2}\lambda t,\mu u,\mu v)\]
and that the $\Tspl(\QQ_p)$-orbits are the fibers of the
projection ${\pi_{\inter}:\torsiZ(\QQ_p)\to S(\QQ_p)}$. 
\end{proof}
\begin{nota}
%\label{nota.torsZ}%
Let $\nn=(n_1,n_2,n_3,n_4)$ belong to $(\ZZ\setminus\{0\})^4$. We then define
$\torscZ_\nn$ as the subscheme of $\Aff_{\DDelta,\ZZ}$ given by the
equations
\begin{equation}
\label{equ.torcz}
\Delta_{j,k}n_l(X^2_l+Y^2_l)+\Delta_{k,l}n_j(X_j^2+Y_j^2)
+\Delta_{l,j}n_k(X_k^2+Y_k^2)=0
\end{equation}
if $1\leq j<k<l\leq 4$.
The scheme $\torsZ_\nn$ is the open subset of $\torscZ_\nn$
given by the conditions~\eqref{equ.tors.open},
where we put $Z_{\delta^+}=X_\delta+\ci Y_\delta$ and
$Z_{\delta^-}=X_\delta-\ci Y_\delta$ for $\delta\in\DDelta_\QQ$.
\end{nota}
\begin{rems}
(i) Let $\mm$ be an element of $\Sigma$.
The scheme $\torsZ_\mm$ is a model of $\torsQ_\mm$
over $\Spec(\ZZ)$.
\par
(ii)
The variety $\torscZ_{\mm,\QQ}$ corresponds to the
restricted product of the versal torsor
by the affine toric variety associated
to the opposite of the effective cone which
has been introduced in \cite[prop.~4.2.2]{peyre:torseurs}.
\par
(iii)
We may note that an element $Q\in\torsQ_\mm(\QQ_p)$
belongs to $\torscZ_\mm(\ZZ_p)$ if and only if $\log_p(Q)$
belongs to $\Xi_p$.
\par
(iv)%E%
The equations~\eqref{equ.torcz} define an intersection of two quadrics in 
$\Proj_\QQ^7$, 
upon which we will ultimately need to count integral points 
of bounded height.  As shown by Cook in \cite{cook:quadratic},
the Hardy--Littlewood circle method can be adapted to handle 
intersections of diagonal quadrics in at least $9$ variables 
provided that the associated singular locus is empty. 
Here we will need to deal with an intersection 
of diagonal quadrics in only $8$ variables.
For this we will call upon the alternative approach 
based on the geometry of numbers in~\cite{bretechebrowning:4linear}.
\end{rems}
\begin{lemma}
\label{lemma.orbit}%
Two elements of $\torsQ_\mm(\QQ_p)$ belong to the same orbit
under the action of $\TNS(\ZZ_p)$ if and only if they have the
same image by $\pi_\mm$ and $\log_p$.
\end{lemma}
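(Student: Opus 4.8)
The statement to prove is Lemma~\ref{lemma.orbit}: two points $Q,Q'\in\torsQ_\mm(\QQ_p)$ lie in the same $\TNS(\ZZ_p)$-orbit iff $\pi_\mm(Q)=\pi_\mm(Q')$ and $\log_p(Q)=\log_p(Q')$. One direction is immediate: $\pi_\mm$ is $\TNS$-invariant by Proposition~\ref{prop.torsor}, and $\log_p$ is $\TNS(\QQ_p)$-equivariant for the action of $X_*(\TNS)_p$ on the right with kernel $\TNS(\ZZ_p)$ (Lemma~\ref{lemma.idelic}), so if $Q'=tQ$ with $t\in\TNS(\ZZ_p)$ then $\log_p(Q')=\log_p(Q)+\log_p(t)=\log_p(Q)$ and $\pi_\mm(Q')=\pi_\mm(Q)$. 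So the content is the converse.

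For the converse, suppose $\pi_\mm(Q)=\pi_\mm(Q')$. By the first paragraph of the proof of Proposition~\ref{prop.torsor}, the fibre $\pi_\mm^{-1}(\pi_\mm(Q))$ over a point is exactly a single $\TNS$-orbit (over any field containing $\QQ$, in particular over $\QQ_p$): so there is a unique $t\in\TNS(\QQ_p)$ with $Q'=tQ$. It remains to show $t\in\TNS(\ZZ_p)$, and by Lemma~\ref{lemma.idelic} this is equivalent to $\log_p(t)=0$ in $X_*(\TNS)_p$. Now use equivariance of $\log_p$ again: $\log_p(Q')=\log_p(Q)+\log_p(t)$ as elements on which $\Gamma_p$ acts freely and transitively on each $T_\DDelta(\QQ_p)$-orbit in $\Aff_\DDelta(\QQ_p)$ — more precisely, $\log_p$ is compatible with the $T_\DDelta(\QQ_p)$-action on the left and the $\Gamma_p$-action on the right, so on the orbit of $Q$ the difference $\log_p(Q')-\log_p(Q)$ equals $\log_p(t)$, the image of $t$ under the restriction of $\log_p$ to $T_\DDelta(\QQ_p)$. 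Hence $\log_p(Q)=\log_p(Q')$ forces $\log_p(t)=0$, i.e. $t\in\TNS(\ZZ_p)$, which is what we want.

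The one point requiring a little care — and the main (mild) obstacle — is the claim that $\log_p$ restricted to a single $T_\DDelta(\QQ_p)$-orbit is injective, equivalently that $\log_p(t)=0$ on $T_\DDelta(\QQ_p)$ implies $t\in T_\DDelta(\ZZ_p)$ and the latter acts on $\torsQ_\mm(\QQ_p)$ freely enough for our purposes. In fact we only ever need this for $t\in\TNS(\QQ_p)$, where Lemma~\ref{lemma.idelic} already gives $\ker(\log_p|_{\TNS(\QQ_p)})=\TNS(\ZZ_p)$; so there is no gap. One then simply combines: $Q'=tQ$ with $t\in\TNS(\QQ_p)$ (from the fibre-is-an-orbit fact) and $\log_p(t)=\log_p(Q')-\log_p(Q)=0$ (from the hypothesis and equivariance), whence $t\in\TNS(\ZZ_p)$ by Lemma~\ref{lemma.idelic}. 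This closes the converse and proves the lemma.
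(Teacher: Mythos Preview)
Your argument has a genuine gap at the step where you write $\log_p(t)=\log_p(Q')-\log_p(Q)$. Recall that $\log_p$ takes values in $\overline\Gamma_p$, not $\Gamma_p$: a point $Q=(z_\delta)_{\delta\in\DDelta}\in\torsQ_\mm(\QQ_p)$ may well have some coordinate $z_\delta=0$ (these are precisely the points lying over the exceptional divisors), and at such a coordinate $(\log_p Q)_\delta=+\infty$. When $Q'=tQ$ with $t\in\TNS(\QQ_p)$, one has $z'_\delta=t_\delta z_\delta=0$ as well, so $(\log_p Q')_\delta=+\infty$ too. The equivariance relation $(\log_p Q')_\delta=(\log_p Q)_\delta+(\log_p t)_\delta$ then reads $+\infty=+\infty+(\log_p t)_\delta$, which gives no information on $(\log_p t)_\delta$. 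Thus the subtraction you perform is not well-defined, and you cannot conclude $\log_p(t)=0$ directly.

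This is exactly the point the paper's proof addresses. From $\log_p(Q)=\log_p(Q')$ one only deduces $(\log_p t)_\delta=0$ for those $\delta$ with $z_\delta\neq 0$. The paper then invokes the open conditions~\eqref{equ.tors.open}, which guarantee that for any two non-intersecting exceptional divisors $\delta_1,\delta_2$ at least one of $z_{\delta_1},z_{\delta_2}$ is nonzero, together with the defining equations~\eqref{equ.torus.fibre} and~\eqref{equ.torus.can} of $\TNS$, to force the remaining components of $\log_p(t)$ to vanish as well. Your proof sketch identifies the wrong obstacle (injectivity of $\log_p$ on $\TNS(\QQ_p)$, which is indeed handled by Lemma~\ref{lemma.idelic}) and misses the actual one.
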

\begin{proof}
According to proposition~\ref{prop.torsor},
two elements of $\torsQ_\mm(\QQ_p)$ belong to the same orbit
under the action of $\TNS(\QQ_p)$ if and
only if their image by $\pi_\mm$ coincide.
On the other hand, $\TNS(\ZZ_p)=\TNS(\QQ_p)\cap T_\DDelta(\ZZ_p)$
is the set of elements of $\Aff_\DDelta(\QQ_p)$ which
are sent to the origin of $\Gamma_p$ by $\log_p$.
Therefore if two elements of $\torsQ_\mm(\QQ_p)$
belong to the same orbit for $\TNS(\ZZ_p)$ their image
in $\overline\Gamma_p$ coincides. Conversely, let
$x$ and $y$ be elements of $\torsQ_\mm(\QQ_p)$ which have
the same image by $\pi_\mm$ and $\log_p$. Then there
exists an element $t\in\TNS(\QQ_p)$ such that $y=tx$.
Since $\log_p(x)=\log_p(y)$, if a coordinate $z_\delta$
of $x$ is different from $0$, the corresponding component
of $\log_p(t)$ is $0$. Taking into account the 
conditions~\eqref{equ.tors.open} and the equations~\eqref{equ.torus.fibre}
and~\eqref{equ.torus.can} which define $\TNS$, this implies
that $\log_p(t)$ is the unit element and thus $t\in\TNS(\ZZ_p)$.
\end{proof}
\begin{rema}
\label{rema.lifting}%
The idea behind the construction of $\mathcal D_{\mm,p}$
is first to consider
the intersection 
\[\widehat\pi_m^{-1}(\torsiZ(\ZZ_p))\cap\torscZ_\mm(\ZZ_p),\]
which is stable under the action of $\TNS(\ZZ_p)$.
For all primes $p$ for which there is good reduction,
this intersection coincides with $\torsZ_\mm(\ZZ_p)$.
More generally, if $p$ is good or if $p\not\equiv 1\mod 4$,
this intersection satisfies the conditions (i) to (iii) 
and yields the wanted domain.
On the other hand, if $p$ is a prime dividing one of the
$\Delta_{j,k}$ and such that $p\equiv 1\mod 4$, then
for any $Q\in\torsiZ(\ZZ_p)\cap\widehat\pi_\mm(\torsQ_\mm(\QQ_p))$ 
the intersection
\[\widehat\pi_\mm^{-1}(Q)\cap\torscZ_\mm(\ZZ_p)\]
is the union of a finite number of $\TNS(\ZZ_p)$-orbits.
We then select a total order on $\Gamma_p$ and choose
the minimal element in the image of the last intersection
by $\phi_p$. In that way, we construct the wanted domain.
\par
To better understand the construction,
let us first describe the conditions satisfied by $\log_p(R)$ for 
a lifting $R$ of a point $Q\in\torsiQ(\QQ_p)$. 
Let $R=(z_\delta)_{\delta\in\DDelta}\in\torsQ_\mm(\QQ_p)$ and let 
$Q=(x,y,t,u,v)=\widehat\pi_\mm(R)$. 
Let us denote by $(\rr_\delta)_{\delta\in\DDelta}\in
\overline\Gamma_p$ the image of $R$ by $\log_p$.
We also put $\boldsymbol n_j=\widehat v_p(L_j(u,v)/m_j)$ for $j\in\onefour$,
$\boldsymbol n_0=\widehat v_p(t)$
and $\boldsymbol n^{\pm}=\widehat v_p((x\pm\ci y)/\alpha_\mm)$. 
Then we have the relations
\begin{align}
\label{equ.rela.ni}%
\boldsymbol n_i&=\rr_i^++
\rr_i^-\\
\noalign{\noindent for $j\in\zerofour$, and}
\label{equ.rela.no}%
\boldsymbol n^{\pm}&=2
\rr_0^{\pm}+\sum_{j=1}^4\rr_j^{\pm}.
\end{align}
\end{rema}
\begin{lemma}
\label{lemma.coeff.inter}
Let $p$ be a prime number and let $\mm$ belong to $\Sigma$.
Let $Q$ belong to the intersection
$\torsiZ(\ZZ_p)\cap\pi_\mm(\torsQ_\mm(\QQ_p))$
and let $(\boldsymbol n_j)_{j\in\zerofour}$ and 
$\boldsymbol n^+,\boldsymbol n^-$
be the corresponding elements of $\ZZ^\Sp$
defined in remark~\ref{rema.lifting}.
\begin{assertions}
\item One has $\boldsymbol n_j\geq 0$ for $j\in\zerofour$, 
$\boldsymbol n^+\geq 0$ and $\boldsymbol n^-\geq 0$.
\item If $p\not\in\badp$, then $\min(\boldsymbol n_i,
\boldsymbol n_j)=0$
if $1\leq i<j\leq 4$.
\item If $p\not\equiv 1\mod 4$, then $\boldsymbol n_0=0$.
\item One has $\min(\boldsymbol n_0,\boldsymbol n^+,\boldsymbol n^-)=0$.
\item There exists a solution in $\Xi_p$ to the 
equations~\eqref{equ.rela.ni}
and \eqref{equ.rela.no}.
\item The number of such solutions is finite.
\item There exists a unique solution to these equations
in $\Xi_p$ if $p\not\in\badp$ or if $p\not\equiv 1\mod 4$.
\end{assertions}
\end{lemma}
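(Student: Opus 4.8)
The plan is to analyse the equations \eqref{equ.rela.ni} and \eqref{equ.rela.no} place-by-place in $\Sp$, reducing immediately to a purely combinatorial problem about non-negative tuples of integers. First I would establish assertions (i)--(iv), which describe the tuples $\boldsymbol n_j,\boldsymbol n^\pm$ intrinsically, before turning to the existence and uniqueness questions (v)--(vii). For (i): since $Q\in\torsiZ(\ZZ_p)$ the coordinates $x,y,t,u,v$ are $p$-integral, and since $\gcd(L_j(u,v)) $ divides the $\Delta_{j,k}$ while $m_j\mid L_j(u,v)$ by the definition of $\Sigma_j$, each quotient $L_j(u,v)/m_j$ is $p$-integral; hence $\boldsymbol n_j\geq 0$, and likewise $\boldsymbol n_0=\widehat v_p(t)\geq 0$, $\boldsymbol n^\pm=\widehat v_p((x\pm\ci y)/\alpha_\mm)\geq 0$ because $\alpha_\mm^2=\prod m_j$ and $(x+\ci y)(x-\ci y)=t^2\prod L_j(u,v)$ forces $\alpha_\mm\mid (x+\ci y)$ in $\ZZ[\ci]$ after the analysis below. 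For (ii): if $p\notin\badp$ then $p\nmid\Delta_{j,k}$ for all $j\neq k$, and from $\Delta_{j,k}u=b_kL_j(u,v)-b_jL_k(u,v)$ together with $\gcd(u,v)=1$, a prime dividing two of the $L_j(u,v)$ would divide some $\Delta_{j,k}$; this kills the minimum. For (iii): if $p\not\equiv 1\bmod 4$ then $p$ (or $2$) is inert or ramified in $\ZZ[\ci]$, and the argument already used in the proof of proposition~\ref{prop.uniquem} — that $p\mid t$ forces $p\mid x+\ci y$ hence $p\mid x$ and $p\mid y$, contradicting $\gcd(x,y,t)=1$ — gives $\boldsymbol n_0=0$. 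For (iv): $\min(\boldsymbol n_0,\boldsymbol n^+,\boldsymbol n^-)=0$ because $\gcd(x,y,t)=1$ means at least one of $x+\ci y$, $x-\ci y$, $t$ is a $\mathfrak p$-unit for each $\mathfrak p\in\Sp$; one has to check $\alpha_\mm$ is a $p$-unit divided off correctly, which follows since by construction $p^2\nmid \prod m_j$ (the exponents are $0$ or $1$), so $v_p(\alpha_\mm)=0$.

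For (v), existence of a solution in $\Xi_p$: here I would distinguish the two cases for $\Sp$. If $p\not\equiv 1\bmod 4$ there is one place $\mathfrak p$, so \eqref{equ.rela.ni}--\eqref{equ.rela.no} become scalar equations in $\rr_j^\pm\geq 0$ with $\rr_j^++\rr_j^-=n_j$; using (iii) one has $n_0=0$ hence $\rr_0^+=\rr_0^-=0$, and one simply splits each $n_j$ ($j\in\onefour$) as $\rr_j^+=n_j$, $\rr_j^-=0$ or vice versa, choosing the split so that \eqref{equ.rela.no} holds — this is where one uses that $\boldsymbol n^++\boldsymbol n^-=\sum_{j} n_j$ (which follows from multiplying the two expressions for $x\pm\ci y$) together with (iv). If $p\equiv 1\bmod 4$, there are two conjugate places $\mathfrak p,\overline{\mathfrak p}$, and the factorisation of $L_j(u,v)/m_j$, of $t$, and of $(x\pm\ci y)/\alpha_\mm$ into prime powers in $\ZZ[\ci]_p$ gives the $\rr_{\mathfrak p,\delta}$ directly: one reads off $\rr_{\mathfrak p,D_j^+}$ from the $\mathfrak p$-adic valuation of a chosen Gaussian factor $z_j^+$ of $L_j(u,v)/m_j$, and the relations \eqref{equ.rela.ni}--\eqref{equ.rela.no} together with \eqref{equ.tors.xyz} are then automatically satisfied. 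For (vi), finiteness: the equations force $0\leq \rr_j^\pm\leq \boldsymbol n_j$ and $0\leq 2\rr_0^\pm\leq \boldsymbol n^\pm$ componentwise, so the solution set is a finite set of lattice points in a bounded box.

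For (vii), uniqueness when $p\notin\badp$ or $p\not\equiv 1\bmod 4$: this is the delicate point and I expect it to be the main obstacle. In the inert/ramified case one uses (iii) to pin down $\rr_0^\pm=0$, and then (ii) (if moreover $p\notin\badp$) forces, for each $j$, that $\min(\boldsymbol n_i,\boldsymbol n_j)=0$, so among the four values $\boldsymbol n_j$ at most... — more precisely, the constraint \eqref{equ.torus.fibre}-type relation coming from $\rr\in X_*(\TNS)_p$, namely $\rr_j^++\rr_j^-=\rr_k^++\rr_k^-$, combined with $\rr_j^++\rr_j^-=\boldsymbol n_j$ and pairwise coprimality, forces all but possibly one $\boldsymbol n_j$ to vanish; and once $\boldsymbol n_j=0$ one has $\rr_j^+=\rr_j^-=0$. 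The single surviving index is then determined by \eqref{equ.rela.no}. When $p\equiv 1\bmod 4$ but $p\notin\badp$, assertion (ii) again limits which $\boldsymbol n_j$ can be nonzero, and the two-place bookkeeping has to be done carefully: the potential ambiguity is the choice of which of $\mathfrak p$, $\overline{\mathfrak p}$ carries which part of the factorisation, but the sign conditions and the fact that $m_j=1$ forces the Galois-symmetric splitting. The routine but careful part is checking that in every sub-case exactly one lattice point in the box survives; I would organise this as a short case analysis on $\card\{j:\boldsymbol n_j\neq 0\}$.
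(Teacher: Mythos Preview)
Your argument for (v) in the case $p\not\equiv 1\bmod 4$ rests on a misreading of the structure of $\Gamma_p$. When there is a single place $\mathfrak p$ above $p$, the $\Ga$-invariance built into $\Gamma_p\subset\ZZ^{\Sp\times\DDelta}$ forces $r_{\mathfrak p,D_j^+}=r_{\mathfrak p,D_j^-}$, i.e.\ $\rr_j^+=\rr_j^-$ for every $j\in\zerofour$. Your proposed splitting $(\rr_j^+,\rr_j^-)=(n_j,0)$ is therefore not an element of $\Gamma_p$ (let alone of $\Xi_p$) unless $n_j=0$. The paper does not attempt a direct construction here: it simply notes that, since $Q\in\widehat\pi_\mm(\torsQ_\mm(\QQ_p))$, the system~\eqref{equ.rela.ni}--\eqref{equ.rela.no} already has a solution in $\Gamma_p$, and the constraint $\rr_j^+=\rr_j^-$ makes that solution unique in all of $\Gamma_p$ (namely $\rr_j^\pm=n_j/2$, the evenness of $n_j$ being forced by the same hypothesis). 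Nonnegativity then comes from (a). This settles the inert/ramified case of (g) at the same time, with no further work.

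A related confusion appears in your sketch for (vii), where you invoke ``the constraint~\eqref{equ.torus.fibre}-type relation coming from $\rr\in X_*(\TNS)_p$, namely $\rr_j^++\rr_j^-=\rr_k^++\rr_k^-$''. But the solutions in question live in $\Xi_p\subset\Gamma_p$, not in $X_*(\TNS)_p$; the latter subgroup governs only the \emph{difference} of two solutions. Imposing $\rr_j^++\rr_j^-=\rr_k^++\rr_k^-$ on a single solution would give $n_j=n_k$ for all $j,k$, which is generically false. For the remaining case $p\equiv 1\bmod 4$, $p\notin\badp$, the paper bypasses any case analysis with the closed formula $\rr_j^\pm=\min(n_j,n^\pm)$ for $j\in\onefour$: by (b) at most one $n_j$ is nonzero, and (d) then forces the stated minimum; once the $\rr_j^\pm$ with $j\in\onefour$ are determined, $\rr_0^\pm$ follows from~\eqref{equ.rela.no}.

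One smaller slip: in (iv) you assert $v_p(\alpha_\mm)=0$ because ``$p^2\nmid\prod m_j$''. This is false when $p\equiv 3\bmod 4$ divides exactly two of the $m_j$, in which case $v_p(\alpha_\mm)=1$. Assertion (d) is nonetheless true there, since (c) already gives $\boldsymbol n_0=0$; but your justification needs adjusting.
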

\begin{proof}
We write $\mm=(m_1,\dots,m_4)$ and $Q=(x,y,t,u,v)$.
As $Q$ belongs to the set $\pi_\mm(\torsQ_\mm(\QQ_p))$,
one has that $p|m_i$ if and only if $p\equiv 3\mod 4$ and
$v_p(L_i(u,v))$ is odd.
If these conditions are verified, $v_p(\alpha_\mm)=1$
and $\alpha_\mm|L_i(u,v)$. Similarly, using the equation~\eqref{equ.torsiz},
we have that $\alpha_\mm|x\pm\ci y$ and this concludes the proof
of a).
\par
We now assume that $p \not\in\badp$. Let $i,j$ be such that
$1\leq i<j\leq 4$. Thus $p$ does not divide
$\Delta_{i,j}$. This implies that $\min(v_p(L_i(u,v)),v_p(L_j(u,v)))=0$
and so $\min(\boldsymbol n_i,\boldsymbol n_j)=0$.
\par
We now prove assertion c). If $p|t$ then by equation~\eqref{equ.torsiz},
it follows that $p^2|x^2+y^2$. If we assume that $p=2$ or $p\equiv 3\mod 4$
this implies that $p|x$ and $p|y$ which contradicts
the fact that $\min(v_p(x),v_p(y),v_p(t))=0$.
\par
Let $\mathfrak p\in\Sp$. If $\mathfrak p$ divides $x+\ci y$, $x-\ci y$
and $t$, then $p$ divides $x$, $y$ and $t$. This proves assertion d).
\par
Since $Q$ belongs to
$\pi_\mm(\torsQ(\QQ_p))$,
the equations~\eqref{equ.rela.ni}
and \eqref{equ.rela.no} have a solution in $\Gamma_p$. If $p\equiv 3\mod 4$
or $p=2$, then the integers $r_j^{\pm}\in\ZZ$ are
such that $r_j^+=r_j^-$ for $j\in\zerofour$.
Therefore the equations~\eqref{equ.rela.ni}
have a unique solution in $\Gamma_p$. By a) the coordinates
of this solution are positive. If $p\equiv 1\mod 4$, then
by choosing an element $\mathfrak p\in \Sp$ we are reduced
to solving the equations
\begin{align*}
 n_i&= r_i^++
 r_i^-\\
\noalign{\noindent for $j\in\zerofour$, and}
 n^{\pm}&=2 r_0^{\pm}+\sum_{j=1}^4 r_j^{\pm}.
\end{align*}
in $\ZZ^\DDelta$, where $n_j\geq 0$ for $j\in\zerofour$,
$n^+\geq 0$ and $n^-\geq 0$. Since we have the
relation $2n_0+\sum_{j=1}^4n_j=n^++n^-$, we may write
$n^+=2a_0^++\sum_{j=1}^4a_j^+$ where $0\leq a_j^+\leq n_j$
for $j\in\zerofour$. Then we put $a_j^-=n_j-a_j^+$
for $j\in\zerofour$ to get a solution with nonnegative coordinates.
\par
The assertion f) follows from the fact that there is only
a finite number of nonnegative 
integral solutions to an equation of the form
$n=k^++k^-$.
\par
If $p\equiv 3\mod 4$ or $p=2$ we have already seen that
the solution to the system of equations is unique.
If $p\not\in\badp$ and $p\equiv 1\mod 4$, 
then it follows from the assertions
b) and d) that $r_j^{\pm}=\min(n_j,n^{\pm})$,
which implies that the solution is unique.
\end{proof}
\begin{lemma}
\label{lemma.most.places}%
If $p$ is a prime number such that $p\equiv 1\mod 4$
or $p\not\in\badp$,
then for $\mm\in\Sigma$, the set 
$\torscZ_\mm(\ZZ_p)\cap\widehat\pi_\mm^{-1}(
\torsiZ(\ZZ_p))$
satisfies the conditions (i) to (iii) and defines
a fundamental domain in $\torsQ_\mm(\QQ_p)$ under the action
of $\TNS(\ZZ_p)$.
\end{lemma}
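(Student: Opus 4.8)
Write $\mathcal{D}_{\mm,p}=\torscZ_\mm(\ZZ_p)\cap\widehat\pi_\mm^{-1}(\torsiZ(\ZZ_p))$; this set is open in $\torsQ_\mm(\QQ_p)$, since $\torscZ_\mm(\ZZ_p)$ is cut out by the open condition $\log_p\in\Xi_p$ and $\widehat\pi_\mm$ is continuous. I would verify (i)--(iii) directly, using repeatedly: that $R\in\torsQ_\mm(\QQ_p)$ lies in $\torscZ_\mm(\ZZ_p)$ iff $\log_p(R)\in\Xi_p$; that two points of $\torsQ_\mm(\QQ_p)$ are $\TNS(\ZZ_p)$-equivalent iff they share the same image under $\pi_\mm$ and $\log_p$ (Lemma~\ref{lemma.orbit}); and that $\widehat\pi_\mm$ is equivariant for the projection $\phi\colon\TNS\to\Tspl$ onto the maximal split quotient --- visible in the construction of $\widehat\pi_\mm$, where acting by $\TNS$ on the $Z_\delta$ rescales $(u,v)$ and $(x,y,t)$ in \eqref{equ.tors.uv}--\eqref{equ.tors.xyz} by characters of $\TNS$, i.e.\ translates by the $\Tspl$-action on $\torsiZ$. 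Condition (i) is then immediate: multiplication by $T_\DDelta(\ZZ_p)\supseteq\TNS(\ZZ_p)$ fixes $\log_p$, hence preserves $\torscZ_\mm(\ZZ_p)$, and $\phi$ sends $\TNS(\ZZ_p)=\TNS(\QQ_p)\cap T_\DDelta(\ZZ_p)$ into $\Tspl(\ZZ_p)$, which stabilises $\torsiZ(\ZZ_p)$.

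For (ii), let $t\in\TNS(\QQ_p)$ with $x,tx\in\mathcal{D}_{\mm,p}$. As $\pi_\mm$ is $\TNS$-invariant (Proposition~\ref{prop.torsor}), $\widehat\pi_\mm(x)$ and $\widehat\pi_\mm(tx)$ lie in a common fibre of $\pi_\inter$; being both in $\torsiZ(\ZZ_p)$, they lie in one $\Tspl(\ZZ_p)$-orbit by Lemma~\ref{lemma.lift.inter}, hence have the same $\widehat v_p$-data, so the quantities $\nn_0,\dots,\nn_4,\nn^\pm$ of Remark~\ref{rema.lifting} agree for $x$ and $tx$. Both $\log_p(x)$ and $\log_p(tx)$ then solve \eqref{equ.rela.ni}--\eqref{equ.rela.no} for these data (Remark~\ref{rema.lifting}) and lie in $\Xi_p$; under the hypothesis on $p$, Lemma~\ref{lemma.coeff.inter}\,g) makes the $\Xi_p$-solution unique, so $\log_p(x)=\log_p(tx)$, whence by Lemma~\ref{lemma.orbit} $x$ and $tx$ are $\TNS(\ZZ_p)$-equivalent; since the $\TNS(\overline\QQ)$-stabiliser of a point of $\torsQ_\mm$ is trivial (proof of Proposition~\ref{prop.torsor}), $t\in\TNS(\ZZ_p)$. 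This is exactly where the hypothesis enters: at a prime $p\equiv1\bmod4$ dividing some $\Delta_{j,k}$ the $\Xi_p$-solution need not be unique and the argument breaks.

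For (iii), given $x$, put $P=\pi_\mm(x)\in\pi_\mm(\torsQ_\mm(\QQ_p))$ and choose (Lemma~\ref{lemma.lift.inter}) a lift $Q=(x_0,y_0,t_0,u_0,v_0)\in\torsiZ(\ZZ_p)$ of $P$; let $\nn_0,\dots,\nn_4,\nn^\pm$ be its data and pick, by Lemma~\ref{lemma.coeff.inter}\,a),\,e), a solution $(r_\delta)\in\Xi_p$ of \eqref{equ.rela.ni}--\eqref{equ.rela.no}. I would then mimic, $p$-adically, the factorisation in the proof of Proposition~\ref{prop.uniquem}: choose $z_1^+,z_2^+,z_3^+\in\QQ_p\otimes\ZZ[\ci]$ with the valuations prescribed by $(r_\delta)$ and $z_j^+z_j^-=L_j(u_0,v_0)/m_j$, then $z_0^+$ with $z_0^+z_0^-=t_0$, put $z_4^+=(x_0+\ci y_0)/(\alpha_\mm(z_0^+)^2z_1^+z_2^+z_3^+)$ and $z_\delta^-=\overline{z_\delta^+}$. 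Then $y=(z_\delta)_{\delta\in\DDelta}$ lies in $\torsQ_\mm(\QQ_p)$ --- \eqref{equ.tors.closed} from $m_jz_j^+z_j^-=L_j(u_0,v_0)$ and the linear dependence of the $L_j$, the open conditions \eqref{equ.tors.open} from $(x_0,y_0,t_0)\neq0$, $(u_0,v_0)\neq0$ and Lemma~\ref{lemma.coeff.inter}\,a)--d) --- with $\widehat\pi_\mm(y)=Q\in\torsiZ(\ZZ_p)$ and $\log_p(y)=(r_\delta)\in\Xi_p$, hence $y\in\mathcal{D}_{\mm,p}$. Since $\pi_\mm(y)=\pi_\inter(Q)=P=\pi_\mm(x)$, Proposition~\ref{prop.torsor} gives $x=ty$ with $t\in\TNS(\QQ_p)$, so $x\in t\cdot\mathcal{D}_{\mm,p}$.

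The hard part is the factorisation step in (iii): one must check that the purely numerical solution $(r_\delta)\in\Xi_p$ really lifts, i.e.\ that $L_j(u_0,v_0)/m_j$ and $t_0$ are norms from $\QQ_p\otimes\ZZ[\ci]$ with the prescribed valuations. For $p\equiv1\bmod4$ (so $p\notin\badp$ under the retained hypothesis) this algebra splits and there is nothing to prove; for $p\not\equiv1\bmod4$ the relation between $v_p(m_j)$ and $v_p(L_j(u_0,v_0))$ forced by $P\in\pi_\mm(\torsQ_\mm(\QQ_p))$ (cf.\ the proof of Proposition~\ref{prop.uniquem}) together with $Q\in\torsiZ(\ZZ_p)$ makes the valuations $\nn_j$ even and the quantities local norms --- the ramified case $p=2$ being the most delicate. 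Everything else I expect to be routine bookkeeping with Lemmas~\ref{lemma.lift.inter}, \ref{lemma.orbit} and \ref{lemma.coeff.inter}; the real content is coordinating, in (iii), the $\torsiZ(\ZZ_p)$-integrality of $\widehat\pi_\mm(y)$ with membership of $\log_p(y)$ in $\Xi_p$.
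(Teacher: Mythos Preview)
Your treatment of (i) and (ii) is correct and matches the paper's argument essentially line for line: both reduce to Lemma~\ref{lemma.lift.inter}, Lemma~\ref{lemma.orbit}, and the uniqueness assertion Lemma~\ref{lemma.coeff.inter}\,g).

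For (iii), however, you take a genuinely different and much harder route than the paper. You try to build a point $y\in\mathcal D_{\mm,p}$ \emph{from scratch} by factorising $L_j(u_0,v_0)/m_j$ and $t_0$ as norms from $\QQ_p\otimes\ZZ[\ci]$ with prescribed valuations --- and you correctly identify this norm problem (especially at $p=2$) as the delicate point. The paper avoids this entirely. Given $x\in\torsQ_\mm(\QQ_p)$, set $\rr'=\log_p(x)$; pick (via Lemma~\ref{lemma.coeff.inter}\,e)) a solution $\rr\in\Xi_p$ of \eqref{equ.rela.ni}--\eqref{equ.rela.no} for the chosen $Q\in\torsiZ(\ZZ_p)$. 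A short linear-algebra check (using that the data of $Q$ and of $\widehat\pi_\mm(x)$ differ by the $\widehat v_p$ of an element of $\Tspl(\QQ_p)$) shows $\rr'-\rr\in X_*(\TNS)_p$. Now Lemma~\ref{lemma.idelic} produces $t\in\TNS(\QQ_p)$ with $\log_p(t)=\rr-\rr'$, and $y=t\cdot x$ lies in $\mathcal D_{\mm,p}$: one has $\log_p(y)=\rr\in\Xi_p$, and since $\rr$ solves the equations for $Q$'s valuation data, $\widehat\pi_\mm(y)$ and $Q$ differ by $\Tspl(\ZZ_p)$, hence $\widehat\pi_\mm(y)\in\torsiZ(\ZZ_p)$.

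What this buys: the paper never has to solve a norm equation, because the point $x$ you started with already encodes the needed factorisations; one merely \emph{translates} it inside its $\TNS(\QQ_p)$-orbit to adjust $\log_p$. Your construction, by contrast, discards $x$ and rebuilds a preimage of $Q$ by hand, which forces you to re-derive (locally, and for every $p\not\equiv 1\bmod 4$) exactly the arithmetic input that the hypothesis $P\in\pi_\mm(\torsQ_\mm(\QQ_p))$ was meant to package. Your approach can be made to work, but the paper's translation argument is both shorter and conceptually cleaner.
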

\begin{proof}
To prove the lemma it is sufficient to prove that
the intersection of any nonempty
fiber of $\pi_\mm$ with $\torsZ_\mm(\ZZ_p)$ is not empty and is an orbit
under the action of $\TNS(\ZZ_p)$.
Let $P$ belong to the set $\pi_\mm(\torsQ_\mm(\QQ_p))$.
By lemma~\ref{lemma.lift.inter} we may lift $P$ to a point
$Q$ which belongs to $\torsiZ(\ZZ_p)$.
According to lemma~\ref{lemma.coeff.inter}, e), we may find
an element $\rr\in\Xi_p$ which is a solution to the 
equations~\eqref{equ.rela.ni}
and \eqref{equ.rela.no}. Let $R'$ be any lifting of $P$ to
$\torsQ_\mm(\QQ_p)$ and let $\rr'=\log_p(R)$. The difference
$\rr'-\rr$ belongs to $X_*(\TNS)_p$. According to 
lemma~\ref{lemma.idelic}, there exists $t\in\TNS(\QQ_p)$ such
that $\log_p(t)=\rr-\rr'$. Then the point $R=t.R'\in\torsQ_\mm(\QQ_p)$
satisfies ${\log_p(R)=\rr}$
and $R$ belongs to
$\torscZ_\mm(\ZZ_p)\cap\widehat\pi_\mm^{-1}(
\torsiZ(\ZZ_p))$.
\par
It remains to prove that if two element $R$ and $R'$ 
of $\torsZ_\mm(\ZZ_p)$ are
in the same fibre for $\pi_\mm$ then they belong to the same
orbit under the action of $\TNS(\ZZ_p)$. Their images in
$\torsiZ(\QQ_p)$ belong to $\torsiZ(\ZZ_p)$ and therefore
are contained in the same orbit for the action of 
$\Tspl(\ZZ_p)$, which means that 
the equations described in remark~\ref{rema.lifting}
for $\log_p(R)$ and $\log_p(R')$
are exactly the same. We then apply assertion g) of 
lemma~\ref{lemma.coeff.inter} and
lemma~\ref{lemma.orbit}. 
\end{proof}
\begin{lemma}
%\label{lemma.good.place}%
If the prime number $p$ does not belong to $\badp$, 
then for $\mm\in\Sigma$, we have
\[\torsZ_\mm(\ZZ_p)=\torscZ_\mm(\ZZ_p)\cap\widehat\pi_\mm^{-1}(
\torsiZ(\ZZ_p)).\]
\end{lemma}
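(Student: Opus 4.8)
The plan is to prove the two inclusions separately, using the fact that $p\notin\badp$ means $p$ does not divide any $\Delta_{j,k}$, which is a strong integrality constraint coming from equation~\eqref{equ.torcz}.

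\textbf{The inclusion $\torsZ_\mm(\ZZ_p)\subseteq\torscZ_\mm(\ZZ_p)\cap\widehat\pi_\mm^{-1}(\torsiZ(\ZZ_p))$.} First I would take $R=(z_\delta)_{\delta\in\DDelta}\in\torsZ_\mm(\ZZ_p)$. The inclusion $\torsZ_\mm(\ZZ_p)\subseteq\torscZ_\mm(\ZZ_p)$ is tautological since $\torsZ_\mm$ is by definition an open subscheme of $\torscZ_\mm$. So the content is showing $\widehat\pi_\mm(R)\in\torsiZ(\ZZ_p)$, i.e. that the associated point $Q=(x,y,t,u,v)$ constructed via \eqref{equ.tors.uv} and \eqref{equ.tors.xyz} can be normalized to lie in $\torsiZ(\ZZ_p)$. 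Since the $z_\delta$ are $p$-adic integers, $u=L_j$-combinations of the $m_jz_j^+z_j^-$ and $t=z_0^+z_0^-$ and $x\pm\ci y=\alpha_\mm(z_0^\pm)^2\prod z_j^\pm$ are all $p$-adic integers; what must be checked is the primitivity conditions $\min(v_p(u),v_p(v))=0$ and $\min(v_p(x),v_p(y),v_p(t))=0$. Here the hypothesis $p\notin\badp$ enters: if $p$ divided all $m_jz_j^+z_j^-=L_j(u,v)$ then (since $\gcd(u,v)=1$ on the torsor level after clearing) $p$ would divide each $\Delta_{j,k}u$ and $\Delta_{j,k}v$, forcing $p\mid\Delta_{j,k}$, a contradiction; a similar argument using the sum-of-two-squares structure handles the $(x,y,t)$ primitivity. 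The cleanest route is to observe that by the conditions~\eqref{equ.tors.open}, at least two of the $z_\delta$ with disjoint index sets are units, and trace through which coordinates of $Q$ this forces to be units.

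\textbf{The reverse inclusion.} Conversely, take $R\in\torscZ_\mm(\ZZ_p)$ with $Q=\widehat\pi_\mm(R)\in\torsiZ(\ZZ_p)$. I need to verify that $R$ satisfies the open conditions~\eqref{equ.tors.open}, i.e. that for every pair $\delta_1,\delta_2\in\DDelta$ with $\delta_1\cap\delta_2=\emptyset$, not both $Z_{\delta_1}$ and $Z_{\delta_2}$ vanish at $R$. This is where I expect the main work. Suppose for contradiction that $z_{\delta_1}=z_{\delta_2}=0$ for such a disjoint pair. The disjoint pairs are of the form $(D_j^+, D_k^-)$ with $j\neq k$, $(D_j^\pm, E^\mp)$, or $(E^+,E^-)$. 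Using the relations \eqref{equ.tors.uv}--\eqref{equ.tors.xyz} that define $\widehat\pi_\mm$: $z_{D_j^+}=z_{D_k^-}=0$ would force $L_j(u,v)=0$ (since $m_jz_j^+z_j^-=L_j(u,v)$) and simultaneously make $x-\ci y=\alpha_\mm(z_0^-)^2\prod z_r^-$ vanish, whence by \eqref{equ.torsiz} either $x+\ci y$ or $\prod L_r(u,v)$ vanishes; a short case analysis combined with $p\notin\badp$ (so the $L_j(u,v)$ cannot simultaneously be divisible by $p$, in particular cannot all vanish since $\gcd(u,v)=1$) and the primitivity of $Q$ yields the contradiction. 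The cases involving $E^\pm$ are handled the same way since $z_{E^+}=z_{E^-}=0$ forces $t=0$, and $z_{D_j^\pm}=z_{E^\mp}=0$ forces both $L_j=0$ and a vanishing of $x\mp\ci y$.

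\textbf{Main obstacle.} The delicate point is the reverse inclusion, specifically the bookkeeping of the disjoint pairs $\delta_1\cap\delta_2=\emptyset$ and showing in each case that simultaneous vanishing is incompatible with $Q\in\torsiZ(\ZZ_p)$ and $p\notin\badp$. It may be cleaner to argue contrapositively via lemma~\ref{lemma.coeff.inter}: since $p\notin\badp$, assertion g) gives a \emph{unique} solution $\rr\in\Xi_p$ to \eqref{equ.rela.ni}--\eqref{equ.rela.no}, and by the construction in the proof of lemma~\ref{lemma.most.places} every point of $\torscZ_\mm(\ZZ_p)\cap\widehat\pi_\mm^{-1}(\torsiZ(\ZZ_p))$ has $\log_p$-image equal to this $\rr$; one then checks directly that $\rr\in\Xi_p$ together with $Q\in\torsiZ(\ZZ_p)$ forces the conditions~\eqref{equ.tors.open}, because a vanishing disjoint pair would translate into one of the entries of $\rr$ being $+\infty$, contradicting $\rr\in\Gamma_p\subset\ZZ^{\Sp\times\DDelta}$. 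This second approach avoids re-deriving the case analysis and simply packages it through the lemmas already proved.
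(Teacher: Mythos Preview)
Your forward inclusion is fine, and indeed the paper does not spell it out.

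The gap is in your reverse inclusion. To show that a point $R\in\torscZ_\mm(\ZZ_p)\cap\widehat\pi_\mm^{-1}(\torsiZ(\ZZ_p))$ lies in $\torsZ_\mm(\ZZ_p)$, you must verify the open conditions~\eqref{equ.tors.open} on the \emph{special fibre}: for each disjoint pair $\delta_1,\delta_2$, at least one of $z_{\delta_1},z_{\delta_2}$ must be a $p$-adic \emph{unit}, i.e.\ $\min(\rr_{\delta_1},\rr_{\delta_2})=0$ where $\rr=\log_p(R)$. Both of your proposed arguments only exclude the case where some $z_{\delta_i}$ is \emph{literally zero} in $\QQ_p$ (equivalently $\rr_{\delta_i}=+\infty$). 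That would show $R\in\torsQ_\mm(\QQ_p)$, but it does not rule out that both $z_{\delta_1},z_{\delta_2}$ lie in $p\ZZ_p$ while being nonzero. Your contradiction arguments from $L_j(u,v)=0$ or $t=0$ never fire if the coordinates merely have positive valuation, and your second approach's appeal to ``$\rr\in\Gamma_p$ so no entry is $+\infty$'' is likewise too weak.

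The paper's argument is short and different from either of your sketches: it invokes parts b) and d) of lemma~\ref{lemma.coeff.inter} directly. Writing $\rr=\log_p(R)\in\Xi_p$, part b) gives $\min(\boldsymbol n_i,\boldsymbol n_j)=0$ for $i\neq j$ in $\onefour$; since $\boldsymbol n_j=\rr_j^++\rr_j^-$ with both summands nonnegative, this forces $\rr_j^+=\rr_j^-=0$ for at least one of $i,j$, handling all disjoint pairs among the $D_k^\pm$. Part d), combined with the relations~\eqref{equ.rela.ni}--\eqref{equ.rela.no} and positivity, similarly handles the disjoint pairs involving $E^\pm$. The conclusion $\min(\rr_{\delta_1},\rr_{\delta_2})=0$ for every disjoint pair is precisely the statement $R\in\torsZ_\mm(\ZZ_p)$.
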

\begin{proof}
We keep the notation used in the proof of the previous lemma.
Using lemma~\ref{lemma.coeff.inter}, b)
and d), and the positivity of the coefficients in $\rr$, we get that 
$\min(\rr_{\delta_1},\rr_{\delta_2})=0$ whenever $\delta_1\cap\delta_2=\emptyset$,
which means that $R$ belongs to $\torsZ_\mm(\ZZ_p)$.
\end{proof}
\begin{defi}
Let $\mm$ belong to $\Sigma$.
If $p\not\in\badp$, we put $\mathcal D_{\mm,p}=\torsZ_\mm(\ZZ_p)$.
If $p\in\badp$ and $p\not\equiv 1\mod 4$, we put
\[\mathcal D_{\mm,p}=\torscZ_\mm(\ZZ_p)\cap\widehat\pi_\mm^{-1}(
\torsiZ(\ZZ_p)).\]
\end{defi}
It remains to define the domain for the primes $p\in\badp$
such that $p\equiv 1\mod 4$.
\begin{nota}
\label{nota.badp.one}
We put $\badp'=\{\,p\in\badp\mset  p\equiv 1\mod 4\,\}$. For any
$p\in\badp'$ we fix in the remainder of this text
a decomposition $p=\mathfrak \varpi_p\overline{\varpi_p}$
for an irreducible element $\varpi_p\in\ZZ[i]$.
We may then write $\Sp=\{\varpi_p,\overline{\varpi_p}\}$.
The group $\Gamma_p$ is isomorphic to $\ZZ^\DDelta$ through the
map $\phi_p$ which applies a family 
$(r_{\mathfrak p,\delta})_{(\mathfrak p,\delta)\in\Sp\times\DDelta}$ 
onto the family $(r_{\varpi_p,\delta})_{\delta\in\DDelta}$.
Let $j\neq k$ be two elements of $\onefour$ such that $p|\Delta_{j,k}$.
We then define
$\ff_{j,k}=(f_\delta)_{\delta\in\Delta}\in \ZZ^{\DDelta}$ by
\[f_\delta=\begin{cases}
1\text{ if $\delta\in\{D_j^-,D_k^+\}$},\\
0\text{ otherwise.}
\end{cases}\]
We put $\ee_{j,k}=\phi_p^{-1}(\ff_{j,k})$ and consider the set
\begin{equation}
\label{equ.def.lambdap}
\Lambda_p=\Xi_p\setminus\bigcup_{\{(j,k)\in\onefour\mid j<k
\text{ and }p\mid\Delta_{j,k}\}}
\ee_{j,k}+\Xi_p.
\end{equation}
\end{nota}
\begin{defi}
Let $\mm$ belong to $\Sigma$.
If $p\in\badp$ and $p\equiv 1\mod 4$, then we define $\mathcal D_{\mm,p}$
to be the set of $R\in\widehat\pi_\mm^{-1}(\torsiZ(\ZZ_p))$ such that
$\log_p(R)\in\Lambda_p$.
\end{defi}
\begin{rema}
In particular, one has $\mathcal D_{\mm,p}\subset
\torscZ_\mm(\ZZ_p)$ for any prime number $p$.
\end{rema}
\begin{lemma}
\label{lemma.bad.places}%
If $p\in\badp$ and $p\equiv 1\mod 4$,
then for $\mm\in\Sigma$, the set 
$\mathcal D_{\mm,p}$
satisfies the conditions (i) to (iii) and defines
a fundamental domain in $\torsQ_\mm(\QQ_p)$ under the action
of $\TNS(\ZZ_p)$.
\end{lemma}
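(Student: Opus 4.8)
The plan is to follow the argument of lemma~\ref{lemma.most.places}, the only new feature being that each fibre of $\pi_\mm$ now meets the naive domain $\torscZ_\mm(\ZZ_p)\cap\widehat\pi_\mm^{-1}(\torsiZ(\ZZ_p))$ in \emph{finitely many} $\TNS(\ZZ_p)$-orbits, and $\Lambda_p$ is tailored to single out exactly one of them. Condition~(i) should be easy: one has $\mathcal D_{\mm,p}\subset\torscZ_\mm(\ZZ_p)$ because $\Lambda_p\subset\Xi_p$; the set $\torscZ_\mm(\ZZ_p)$ is $\TNS(\ZZ_p)$-stable since the defining equations~\eqref{equ.torcz} are invariant under $\TNS$; the set $\widehat\pi_\mm^{-1}(\torsiZ(\ZZ_p))$ is $\TNS(\ZZ_p)$-stable because $\widehat\pi_\mm$ is equivariant for the natural surjection $\TNS\to\Tspl$ onto the maximal split quotient and $\TNS(\ZZ_p)$ is carried into $\Tspl(\ZZ_p)$; and $\log_p$ is constant on $\TNS(\ZZ_p)$-orbits, so the constraint $\log_p(R)\in\Lambda_p$ is preserved. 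Intersecting, $\mathcal D_{\mm,p}$ is stable under $\TNS(\ZZ_p)$.

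For (ii) and (iii) I would reduce everything to a single fibrewise count. Fix $P\in\pi_\mm(\torsQ_\mm(\QQ_p))$ and, using lemma~\ref{lemma.lift.inter}, a point $Q\in\torsiZ(\ZZ_p)$ over $P$; let $\Omega_Q\subset\Xi_p$ be the set of solutions in $\Xi_p$ of~\eqref{equ.rela.ni} and~\eqref{equ.rela.no} attached to $Q$. The claim is that $R\mapsto\log_p(R)$ induces a bijection between the set of $\TNS(\ZZ_p)$-orbits in $\mathcal D_{\mm,p}\cap\pi_\mm^{-1}(P)$ and $\Omega_Q\cap\Lambda_p$. Indeed, if $R\in\mathcal D_{\mm,p}$ lies over $P$ then $\widehat\pi_\mm(R)$ lies in $\torsiZ(\ZZ_p)$ over $P$, hence in the $\Tspl(\ZZ_p)$-orbit of $Q$ by lemma~\ref{lemma.lift.inter}, so it carries the same data $\boldsymbol n_j,\boldsymbol n^\pm$ as $Q$ ($\Tspl(\ZZ_p)$ acts by units); by remark~\ref{rema.lifting}, $\log_p(R)$ then solves~\eqref{equ.rela.ni},~\eqref{equ.rela.no}, and since $\Lambda_p\subset\Xi_p$ we get $\log_p(R)\in\Omega_Q\cap\Lambda_p$. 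Injectivity is lemma~\ref{lemma.orbit}. Surjectivity is obtained exactly as in the proof of lemma~\ref{lemma.most.places}: given $\rr\in\Omega_Q\cap\Lambda_p$ and any lift $R'$ of $P$, the difference $\log_p(R')-\rr$ lies in $X_*(\TNS)_p$, so lemma~\ref{lemma.idelic} gives $t\in\TNS(\QQ_p)$ with $\log_p(tR')=\rr$, and the valuation inspection of the coordinates of $\widehat\pi_\mm(tR')$ (via~\eqref{equ.rela.ni},~\eqref{equ.rela.no} and assertion~d) of lemma~\ref{lemma.coeff.inter}) shows $tR'\in\widehat\pi_\mm^{-1}(\torsiZ(\ZZ_p))$, whence $tR'\in\mathcal D_{\mm,p}$. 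Granting the bijection, (iii) is immediate (apply the surjective half to move any given point over $P$ into $\mathcal D_{\mm,p}$), and (ii) follows because if $R$ and $tR$ both lie in $\mathcal D_{\mm,p}$ with $t\in\TNS(\QQ_p)$ then $\log_p(R),\log_p(tR)\in\Omega_Q\cap\Lambda_p$, so they coincide once $\card(\Omega_Q\cap\Lambda_p)=1$, forcing $\log_p(t)=0$ and hence $t\in\TNS(\ZZ_p)$ by lemma~\ref{lemma.idelic}.

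It thus remains to prove that $\card(\Omega_Q\cap\Lambda_p)=1$, and this combinatorial count is the main obstacle. Working in the coordinates $(r_\delta)$ given by $\phi_p$ and writing $n_j,n^\pm$ for the $\varpi_p$-components of $\boldsymbol n_j,\boldsymbol n^\pm$, the equation of $\torsiZ$ yields $2n_0+\sum_{j=1}^4n_j=n^++n^-$, and eliminating $r_j^-=n_j-r_j^+$ one finds that $\Omega_Q$ is the set of $(r_0^+,r_1^+,\dots,r_4^+)$ with $0\le r_j^+\le n_j$ and $2r_0^++\sum_{j=1}^4r_j^+=n^+$. Since $p\equiv1\bmod4$ and $p\in\badp$ one has $p\nmid m_j$, so $n_j=v_p(L_j(u,v))$; hence whenever $n_j>0$ and $n_k>0$ one has $p\mid\Delta_{j,k}$ (as $p$ then divides $\Delta_{j,k}u$ and $\Delta_{j,k}v$ while $\gcd(u,v)=1$), and consequently membership of such a tuple in $\Lambda_p$ amounts to requiring, for every $j<k$ with $n_j,n_k>0$, that $r_j^+=n_j$ or $r_k^+=0$. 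If $n_0>0$, assertion~d) of lemma~\ref{lemma.coeff.inter} gives $n^+=0$ or $n^-=0$, and in either case the sum constraint pins down the unique element of $\Omega_Q$ (all $r_j^+$ equal to $0$, resp.\ to $n_j$), which visibly lies in $\Lambda_p$. If $n_0=0$ then $r_0^+=0$, $0\le n^+\le\sum_{j=1}^4n_j$, and the tuples in $\Omega_Q\cap\Lambda_p$ are exactly the ``staircases'': with $J=\{j:n_j>0\}$, one has $r_j^+=n_j$ for $j\in J$ below a threshold, one intermediate value at the threshold, and $r_j^+=0$ above; as the threshold and the intermediate value vary, $\sum_jr_j^+$ runs through $\{0,1,\dots,\sum_jn_j\}$ exactly once, so precisely one staircase satisfies $\sum_jr_j^+=n^+$. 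Hence $\card(\Omega_Q\cap\Lambda_p)=1$ in all cases. The delicate point is precisely to check that the one-sided conditions cutting out $\Lambda_p$ (via the vectors $\ee_{j,k}$ with $j<k$) are exactly what is needed to rigidify the solution set of~\eqref{equ.rela.ni},~\eqref{equ.rela.no}, and that the arithmetic constraints of lemma~\ref{lemma.coeff.inter}, above all $\min(\boldsymbol n_0,\boldsymbol n^+,\boldsymbol n^-)=0$, eliminate the configurations in which several staircases would compete.
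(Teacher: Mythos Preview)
Your argument is correct and follows the same route as the paper: after reducing via lemmas~\ref{lemma.orbit}, \ref{lemma.lift.inter}, \ref{lemma.idelic} and \ref{lemma.coeff.inter} exactly as in lemma~\ref{lemma.most.places}, everything comes down to the combinatorial claim $\card(\Omega_Q\cap\Lambda_p)=1$. The paper phrases that combinatorial step slightly differently---it characterises the unique element of $\Omega_Q\cap\Lambda_p$ as the solution with $(r_1^+,r_2^+,r_3^+,r_4^+)$ maximal for the lexicographic order and then checks that lex-maximality is equivalent to membership in $\Lambda_p$---but this lex-maximal tuple is precisely your staircase with sum $n^+$, so the two presentations are equivalent.
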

\begin{proof}
According to lemma~\ref{lemma.orbit} and lemma~\ref{lemma.coeff.inter} e),
we have only to prove that for any 
$Q\in\torsiZ(\ZZ_p)\cap\widehat\pi_\mm(\torsQ_p)$, there
exist a unique solution of the equations~\eqref{equ.rela.ni}
and \eqref{equ.rela.no} which belongs to $\Lambda_p$.
Among the solutions in $\Xi_p$, there is a unique solution such that
if $\sss=\phi_p(\rr)$, the quadruple $(s_1^+,s_2^+,s_3^+,s_4^+)$
is maximal for the lexicographic order. It remains to prove that the solution
satisfies this last condition if and only if $\rr$ belongs to $\Lambda_p$.
Let $\rr$ be the solution for which the above quadruple is maximal
and $\widetilde\rr$ be any solution in $\Xi_p$ 
and $\widetilde\sss=\phi_p(\widetilde\rr)$.
If $\rr\neq\widetilde\rr$, then we consider the smallest $j\in\onefour$
such that $s_j^+>\widetilde s_j^{\,+}$. With the notation
of remark~\ref{rema.lifting}, this implies that
$\nn_j\neq 0$, $\nn^+\neq 0$ and $\nn^-\neq 0$. Therefore
$\nn_0=0$ and there exists $k>j$ such that $s_k^+<\widetilde s_k^{\,+}$. 
Since $s_j^-<\widetilde s_j^{\,-}$, we may conclude that
$\widetilde\rr\in \ee_{j,k}+\Xi_p$. 
Moreover $p\mid\Delta_{j,k}$.
Conversely if $\widetilde\rr$
belongs to $\ee_{j,k}+\Xi_p$, 
for some $j,k\in\onefour$ such that $j<k$,
then $\widetilde\rr-\ee_{j,k}+\ee_{k,j}$
is another solution to system of equations which gives
a bigger quadruple for the lexicographic order.
\end{proof}
\Subsection{Adelic domains and lifting of the points}
\begin{defi}
Let $\mm\in\Sigma$.
We define the open subset $\mathcal{D}_\mm$ of $\torsQ_\mm(\Adeles_\QQ)$
as the product $\torsQ_\mm(\RR)\times\prod_{p\in\primes}\mathcal D_{\mm,p}$.
\end{defi}
\begin{prop}
%\label{prop.fundamentaldomain}%
The set $\mathcal D_\mm$ is a fundamental domain
in $\torsQ_\mm(\Adeles_\QQ)$ under the action of
$\TNS(\QQ)$ modulo $\TNS(\QQ)_\tors$. In other words
\begin{conditions}
\item The open set $\mathcal D_\mm$ is stable
under the action of $\TNS(\QQ)_\tors$;
\item For any $t$ in $\TNS(\QQ)\setminus\TNS(\QQ)_\tors$,
one has $t.\mathcal D_\mm\cap\mathcal D_\mm=\emptyset$;
\item For any~$x$ in $\torsQ_\mm(\Adeles_\QQ)$, there exists
an element~$t$ in $\TNS(\QQ)$ such that~$x$ belongs to
$t.\mathcal D_\mm$.
\end{conditions}
\end{prop}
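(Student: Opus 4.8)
The strategy is to assemble the three required conditions from the corresponding local statements proved in this section, combined with the exact sequence of Lemma~\ref{lemma.idelic}. Recall that for every prime $p$ the set $\mathcal D_{\mm,p}$ is a fundamental domain in $\torsQ_\mm(\QQ_p)$ under the action of $\TNS(\QQ_p)$ modulo $\TNS(\ZZ_p)$ --- this was established in Lemmas~\ref{lemma.most.places} and~\ref{lemma.bad.places} together with the definitions in between --- and that $\TNS(\ZZ_p)=\ker\log_p$ by Lemma~\ref{lemma.idelic}. Since the archimedean factor of $\mathcal D_\mm$ is all of $\torsQ_\mm(\RR)$, it plays no role in any of the three verifications, and we may work exclusively at the finite places.

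\emph{Conditions~(i) and~(ii).} If $t\in\TNS(\QQ)_\tors$, then by Lemma~\ref{lemma.idelic} its image under $\bigoplus_p\log_p$ is trivial, so the image of $t$ in $\TNS(\QQ_p)$ lies in $\TNS(\ZZ_p)$ for every $p$; as each $\mathcal D_{\mm,p}$ is stable under $\TNS(\ZZ_p)$, the product $\mathcal D_\mm$ is stable under $t$. Conversely, suppose $t\in\TNS(\QQ)$ satisfies $t.\mathcal D_\mm\cap\mathcal D_\mm\neq\emptyset$ and pick $x$ in this intersection. For each prime $p$ the component $x_p$ lies in $t.\mathcal D_{\mm,p}\cap\mathcal D_{\mm,p}$, where $t$ acts through its image in $\TNS(\QQ_p)$; by the local analogue of condition~(ii) this forces that image into $\TNS(\ZZ_p)$, i.e. $\log_p(t)=0$. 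As this holds for every $p$, the exact sequence of Lemma~\ref{lemma.idelic} gives $t\in\TNS(\QQ)_\tors$.

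\emph{Condition~(iii).} Let $x=(x_v)_v\in\torsQ_\mm(\Adeles_\QQ)$. Since $\torsZ_\mm$ is a model of $\torsQ_\mm$ over $\ZZ$ and $\mathcal D_{\mm,p}=\torsZ_\mm(\ZZ_p)$ for $p\notin\badp$, one has $x_p\in\mathcal D_{\mm,p}$ for all but finitely many $p$. For each prime $p$, the local analogue of condition~(iii) supplies $t_p\in\TNS(\QQ_p)$ with $x_p\in t_p.\mathcal D_{\mm,p}$, and we take $t_p=1$ whenever $x_p$ already lies in $\mathcal D_{\mm,p}$. Then $\rr_p=\log_p(t_p)\in X_*(\TNS)_p$ vanishes for almost all $p$, so $(\rr_p)_p$ is a well-defined element of $\bigoplus_p X_*(\TNS)_p$, and by the surjectivity in Lemma~\ref{lemma.idelic} there is $t\in\TNS(\QQ)$ with $\log_p(t)=\rr_p$ for all $p$. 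For each $p$ the images of $t$ and $t_p$ in $\TNS(\QQ_p)$ differ by an element of $\TNS(\ZZ_p)=\ker\log_p$, so the stability of $\mathcal D_{\mm,p}$ under $\TNS(\ZZ_p)$ gives $t.\mathcal D_{\mm,p}=t_p.\mathcal D_{\mm,p}\ni x_p$; together with $x_\infty\in\torsQ_\mm(\RR)$ this shows $x\in t.\mathcal D_\mm$.

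The proof is essentially formal once the local fundamental-domain lemmas are granted; the only delicate point is the support bookkeeping in~(iii), namely verifying that $(\log_p(t_p))_p$ lies in the direct sum rather than merely the product of the groups $X_*(\TNS)_p$ --- which rests on the fact that $\mathcal D_{\mm,p}=\torsZ_\mm(\ZZ_p)$ off $\badp$ and is exactly the hypothesis needed to apply the surjectivity statement of Lemma~\ref{lemma.idelic}. I anticipate no genuine obstacle beyond this.
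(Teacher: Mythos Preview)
Your proof is correct and follows essentially the same approach as the paper's: both reduce conditions~(i)--(iii) to the local fundamental-domain properties of $\mathcal D_{\mm,p}$ together with the exact sequence of Lemma~\ref{lemma.idelic}. Your treatment is in fact slightly more careful than the paper's, which does not spell out the finite-support verification in~(iii) that you highlight; this point is indeed implicit in the adelic definition (since $x_p\in\torsZ_\mm(\ZZ_p)=\mathcal D_{\mm,p}$ for almost all $p$), and your making it explicit is an improvement rather than a divergence.
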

\begin{proof}
The assertion (i) follows from the fact that $\mathcal D_{\mm,p}$
is stable under $\TNS(\ZZ_p)$ for any prime number $p$.
If $t$ belongs to $\TNS(\QQ)\setminus\TNS(\QQ)_\tors$,
then, by lemma~\ref{lemma.idelic}, there exists a prime number $p$
such that $\log_p(t)\neq 0$. Thus  
$t.\mathcal D_{\mm,p}\cap\mathcal D_{\mm,p}=\emptyset$, which proves (ii).
Let~$x$ belong to $\torsQ_\mm(\Adeles_\QQ)$. For any prime number~$p$, 
there exists an element $t_p\in\TNS(\QQ_p)$ such that
$t_p.x\in\mathcal D_{\mm,p}$. By lemma~\ref{lemma.idelic},
there exists an element $t\in\TNS(\QQ)$ such that
$\log_p(t)=\log_p(t_p)$ for any prime number $p$
and $t.x\in\mathcal D_\mm$.
\end{proof}
\begin{cor}
\label{cor.snow}%
Let $P$ belong to $S(\QQ)$ and let $\mm$ be the unique
element of $\Sigma$ such that $P\in\pi_\mm(\torsQ_\mm(\QQ))$.
Then
\[\card(\pi_\mm^{-1}(P)\cap\mathcal D_\mm)=\card\TNS(\QQ)_\tors=2^{8}.\]
\end{cor}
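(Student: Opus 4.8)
The plan is to combine the two local-to-global principles already established: Proposition~\ref{prop.uniquem}, which gives the unique $\mm\in\Sigma$ with $P\in\pi_\mm(\torsQ_\mm(\QQ))$, and the immediately preceding proposition, which exhibits $\mathcal D_\mm$ as a fundamental domain in $\torsQ_\mm(\Adeles_\QQ)$ for the action of $\TNS(\QQ)$ modulo $\TNS(\QQ)_\tors$. First I would fix a rational lift $R_0\in\torsQ_\mm(\QQ)$ of $P$, which exists by the choice of $\mm$. By Proposition~\ref{prop.torsor} the fibre $\pi_\mm^{-1}(P)$, as a subset of $\torsQ_\mm(\QQ)$, is exactly the orbit $\TNS(\QQ)\cdot R_0$; so counting $\pi_\mm^{-1}(P)\cap\mathcal D_\mm$ amounts to counting the $t\in\TNS(\QQ)$ with $t.R_0\in\mathcal D_\mm$.

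Next I would use the fundamental-domain property to show this count equals $\card\TNS(\QQ)_\tors$. Property~(iii) applied to the adelic point $R_0\in\torsQ_\mm(\Adeles_\QQ)$ (viewing the rational point diagonally) produces at least one $t\in\TNS(\QQ)$ with $t.R_0\in\mathcal D_\mm$; multiplying by any element of $\TNS(\QQ)_\tors$ and invoking property~(i) shows the solution set is a union of cosets of $\TNS(\QQ)_\tors$, so it has cardinality at least $\card\TNS(\QQ)_\tors$. For the reverse inequality, if $t.R_0$ and $t'.R_0$ both lie in $\mathcal D_\mm$, then $t'' = t'(t)^{-1}$ satisfies $t''.(t.R_0)\in\mathcal D_\mm$ and $t.R_0\in\mathcal D_\mm$; by property~(ii) this forces $t''\in\TNS(\QQ)_\tors$. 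Hence the solution set is a single coset of $\TNS(\QQ)_\tors$, giving exactly $\card\TNS(\QQ)_\tors$ lifts inside $\mathcal D_\mm$.

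It remains to identify $\card\TNS(\QQ)_\tors$ with $2^8$. The torsion subgroup of $\TNS(\QQ)$ consists of the $\QQ$-points whose coordinates, viewed in $T_\DDelta$, are roots of unity lying in $\ZZ[\ci]$, i.e.\ elements of $\{\pm1,\pm\ci\}$ subject to the defining equations~\eqref{equ.torus.fibre} and~\eqref{equ.torus.can} of $\TNS$ and to Galois invariance $Z_\delta^- = \overline{Z_\delta^+}$. Concretely I would parametrise $\TNS(\QQ)_\tors$ by the choice of the six values $Z_0^+, Z_1^+,\dots, Z_4^+, Z_1^-$ in $\{\pm1,\pm\ci\}$ — this being the basis of $\Pic(\overline S)$ recorded after~\eqref{equ.pic.can} — so a priori $4^6$ choices; but Galois invariance is automatic for the torus $\TNS$ over $\QQ$ and the relations reduce to the condition that $(X_*(\TNS)/\ZZ^\DDelta$-torsion)$^\vee$ be $2$-torsion, leaving $\card\TNS(\QQ)_\tors = \card(X^*(\TNS)_\QQ / 2)\cdot(\text{something})$; rather than run this abstractly I would simply note $\TNS$ is a torus of rank $6$ split over $\QQ(\ci)$, and its $\QQ$-rational torsion is $H^0(\Ga, \mu_\infty \otimes X_*(\TNS)) = (\ZZ/4)^{?}\times(\ZZ/2)^{?}$; counting via the explicit character lattice of $S$ gives $2^8$. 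The main obstacle is precisely this last bookkeeping: getting the exponent $8$ right requires carefully tracking which generators of the cocharacter lattice are Galois-fixed (contributing $\mu_4$, i.e.\ a factor $4$) versus swapped by conjugation (contributing $\mu_2$, a factor $2$), and checking the relations~\eqref{equ.torus.fibre}--\eqref{equ.torus.can} do not cut the order down. Everything else is a formal consequence of the fundamental-domain property already proved.
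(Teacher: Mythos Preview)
Your fundamental-domain argument is correct and is exactly what the paper does: the fibre $\pi_\mm^{-1}(P)$ is a single $\TNS(\QQ)$-orbit (Proposition~\ref{prop.torsor}), and the preceding proposition says $\mathcal D_\mm$ meets each such orbit in precisely one $\TNS(\QQ)_\tors$-coset. That is the whole content of the paper's one-line proof.

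The only weak spot is your sketch of $\card\TNS(\QQ)_\tors=2^8$, which the paper does not argue either but which you attempt. Your parametrisation by the six values $Z_0^+,Z_1^+,\dots,Z_4^+,Z_1^-$ is off: for a $\QQ$-point of $T_\DDelta$ one has $Z_j^-=\overline{Z_j^+}$, so $Z_1^-$ is not an independent parameter. The clean way is to note that a torsion $\QQ$-point is determined by $(z_0^+,z_1^+,\dots,z_4^+)\in\mu_4^5$, giving $4^5=2^{10}$ candidates. Relation~\eqref{equ.torus.fibre} becomes $|z_j^+|^2=|z_k^+|^2$, which is automatic. Relation~\eqref{equ.torus.can} becomes $z_0^+z_j^+z_k^+=\overline{z_0^+z_l^+z_m^+}=(z_0^+z_l^+z_m^+)^{-1}$, i.e.\ $(z_0^+)^2z_1^+z_2^+z_3^+z_4^+=1$, a single condition independent of the partition $\{j,k,l,m\}$. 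This surjective character $\mu_4^5\to\mu_4$ has kernel of order $4^4=2^8$.
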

\begin{proof}
This corollary follows from the last proposition and the
fact that $\pi_\mm^{-1}(x)$ is an orbit under the action of
$\TNS(\QQ)$.
\end{proof}
Let us now lift the heights to the versal torsors.
\begin{defi}
\label{defi.localheight}%
As in notation~\ref{nota.height} we put $C=\sqrt{\prod_{j=1}^4
|a_j|+|b_j|}$.
Let $w$ be a place of $\QQ$. We define a function $H_w$
on $\QQ_w^5$ by
\begin{equation*}
%E%\label{equ.liftinglocalheight}
H_w(x,y,t,u,v)=
\begin{cases}
\max(\frac{|x|_w}C,\frac{|y|_w}C,\max(|u|_w,|v|_w)^2|t|_w)&
\text{if $w=\infty$,}\\
\max(|x|_w,|y|_w,\max(|u|_w,|v|_w)^2|t|_w)&\text{otherwise,}
\end{cases}
\end{equation*}
for any $(x,y,t,u,v)\in\QQ_w^5$. If $\mm\in\Sigma$,
we shall also denote
by $H_w:\torsQ_\mm(\QQ_w)\to\RR$ the composite function
$H_w\circ\widehat\pi_\mm$. We then define $H:\torsQ_\mm(\Adeles_\QQ)\to\RR$
by $H=\prod_{w\in\Val(\QQ)}H_w$.
\end{defi}
\begin{rems}
(i) The line bundle $\omega_S^{-1}$ defines a character
$\chi_\omega$ on the torus $\Tspl=\GmQ^2$ simply given by 
${(\lambda,\mu)\mapsto\lambda}$ and we have
the relation
\begin{equation}
\label{equ.localheight}
H_w(t.R)=|\chi_\omega(t)|_wH_w(R)
\end{equation}
for any $t\in\Tspl(\QQ_w)$ and any $R\in\Tspl(\QQ_w)$.
A similar assertion is true on $\torsQ_\mm$ for $\mm\in\Sigma$.
\par
(ii) 
As a point $Q=(x:y:t:u:v)$ in $\torsiZ(\RR)$
satisfies the equations~\eqref{equ.torsiz},
we have that
\[\max(\vert x\vert,\vert y\vert)^2\leq 
\prod_{j=1}^4(|a_j|+|b_j|)
\max(\vert u\vert,\vert v\vert)^4\vert t\vert^2.\]
and it follows that
\[H_\infty(Q)=\max(\vert u\vert,\vert v\vert)^2\vert t\vert.\]
\end{rems}
\begin{prop}
\label{prop.liftingheight}%
Let $\mm\in\Sigma$.
For any $R\in\torsQ_\mm(\QQ)$, one has
\[H(\pi_\mm(R))=H(R).\]
\end{prop}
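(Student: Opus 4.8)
The plan is to push the whole identity down to the intermediate torsor $\torsiQ$ and then use the product formula to reduce to a normalised integral lift. Write $Q=\widehat\pi_\mm(R)=(x,y,t,u,v)\in\torsiQ(\QQ)$ and $P=\pi_\mm(R)=\pi_\inter(Q)\in S(\QQ)$, so that by Definition~\ref{defi.localheight} one has $H(R)=\prod_{w\in\Val(\QQ)}H_w(Q)$. The first step is to observe that this product depends only on $P$, not on the chosen lift $Q$. Indeed $\pi_\inter$ is a torsor under $\Tspl=\Gm^2$, so the fibre $\pi_\inter^{-1}(P)$, having the rational point $Q$, is a trivial $\Gm^2$-torsor over $\Spec\QQ$ by Hilbert~90, and hence $\pi_\inter^{-1}(P)(\QQ)$ is a single orbit under $\Tspl(\QQ)$; and for $\lambda\in\Tspl(\QQ)$ the relation $H_w(\lambda.Q)=|\chi_\omega(\lambda)|_wH_w(Q)$ from the remarks after Definition~\ref{defi.localheight}, together with $\prod_w|\chi_\omega(\lambda)|_w=1$, shows that $\prod_wH_w$ is unchanged along that orbit. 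We may therefore replace $Q$ by a normalised integral lift, which I again denote $(x,y,t,u,v)$: by Lemma~\ref{lemma.lift.split} we can arrange $(x,y,t,u,v)\in\torsiZ(\ZZ)$ with $\gcd(x,y,t)=1$ and $\gcd(u,v)=1$ (and note $t\neq0$, since $x^2+y^2=t^2\prod_jL_j(u,v)$ forces $x=y=0$ when $t=0$).

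For such a lift the value of $H(R)$ is immediate. At a finite place $p$, coprimality of $x,y,t$ gives $\max(|x|_p,|y|_p,|t|_p)=1$ and coprimality of $u,v$ gives $\max(|u|_p,|v|_p)=1$, whence $H_p(Q)=1$. At the real place, the inequality $\max(|x|,|y|)^2\leq\prod_{j=1}^4(|a_j|+|b_j|)\,\max(|u|,|v|)^4|t|^2=C^2\max(|u|,|v|)^4|t|^2$ (valid because $Q$ satisfies equation~\eqref{equ.torsiz} over $\RR$) yields $H_\infty(Q)=\max(|u|,|v|)^2|t|$, exactly as in part~(ii) of the remarks after Definition~\ref{defi.localheight}. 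Hence $H(R)=\prod_wH_w(Q)=\max(|u|,|v|)^2|t|$.

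It remains to identify the anticanonical height of $P$. By Notation~\ref{nota.height} one has $H(P)=H_4(\psi(P))$, and the two explicit descriptions of $\psi$ on $S_1$ and on $S_2$ recalled in the proof of Lemma~\ref{lemma.linearomega} both give
\[\psi(P)=(tv^2:uvt:u^2t:x:y)\in\Proj^4(\QQ).\]
These five integers are coprime: a prime dividing $x$ and $y$ cannot divide $t$ since $\gcd(x,y,t)=1$, so it would have to divide both $v$ and $u$, contradicting $\gcd(u,v)=1$. Using $\max(|tv^2|,|uvt|,|u^2t|)=|t|\max(|u|,|v|)^2$ and the same quadratic inequality as above, we get
\[H_4(\psi(P))=\max\bigl(|t|\max(|u|,|v|)^2,\ |x|/C,\ |y|/C\bigr)=\max(|u|,|v|)^2|t|.\]
Comparing with the previous paragraph gives $H(\pi_\mm(R))=H(P)=\max(|u|,|v|)^2|t|=H(R)$, which is the assertion.

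The only point that needs real care is the reduction in the first paragraph: one must be sure that $\widehat\pi_\mm(R)$ and a normalised integral lift of $P$ genuinely lie in the same $\Tspl(\QQ)$-orbit. This rests on the fact that $\pi_\inter$ is a torsor over all of $S$ under the split torus $\Gm^2$, so that Hilbert~90 applies fibrewise; granting that, every other step is a routine verification with coordinates.
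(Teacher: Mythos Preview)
Your proof is correct. The overall idea is the same as the paper's---the identity comes down to the product formula---but the execution differs slightly. The paper writes the anticanonical height on $\Proj^4$ in its adelic form
\[H_4(x_0:\cdots:x_4)=\max\Bigl(|x_0|_\infty,|x_1|_\infty,|x_2|_\infty,\tfrac{|x_3|_\infty}C,\tfrac{|x_4|_\infty}C\Bigr)\prod_{p}\max_{0\leq j\leq 4}|x_j|_p,\]
valid for \emph{any} rational representative, and then simply checks that each local factor of $H_4\circ\widehat\psi$ coincides with $H_w$ (using $\max(|v^2t|_w,|uvt|_w,|u^2t|_w)=\max(|u|_w,|v|_w)^2|t|_w$). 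This sidesteps the normalisation and the coprimality verification entirely: no appeal to Hilbert~90, no need to pick an integral lift, no check that $(tv^2,uvt,u^2t,x,y)$ is primitive. Your route is a bit more hands-on but equally valid; the paper's is shorter because it applies the product formula on the target rather than on the source.
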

\begin{proof}
We may define a map $\widehat\psi:\QQ^5\to\QQ^5$ by
$(x,y,t,u,v)\mapsto(v^2t:uvt:u^2t:x:y)$. The
restriction of the map~$\widehat\psi$ from
$\torsiZ$ to $\Aff_\QQ^5\setminus\{0\}$
is a lifting of the map $\psi:S\to S'$.
On $S'$ the height $H_4$ is given by
\[H_4(x_0:\cdots:x_4)=
\max\left(\vert x_0\vert_\infty,\vert x_1\vert_\infty,\vert x_2\vert_\infty,
\frac{\vert x_3\vert_\infty}C,\frac{\vert x_4\vert_\infty}C\right)
\times\prod_{p\in\primes}\max_{0\leq j\leq 4}(|x_j|_p)\]
for any $(x_0,\dots,x_4)\in\QQ^5$. This formula implies the statement
of the lemma.
\end{proof}
\begin{cor}
For any real number $B$, we have
\[N(B)=\frac 1{\card\TNS(\QQ)_\tors}
\sum_{\mm\in\Sigma}\card\{\,R\in\torsQ_\mm(\QQ)\cap\mathcal D_\mm
\mset H(R)\leq B\,\}\]
\end{cor}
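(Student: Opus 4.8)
The plan is to combine the three preceding results — Proposition~\ref{prop.uniquem}, Corollary~\ref{cor.snow}, and Proposition~\ref{prop.liftingheight} — into a counting identity. First I would recall that, by definition, $N(B) = N_{S,H}(B) = \card\{\,P\in S(\QQ)\mset H(P)\leq B\,\}$, so that the goal is to partition $S(\QQ)$ and count each piece via the torsors.

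\begin{proof}
By proposition~\ref{prop.uniquem}, every point $P\in S(\QQ)$ lies in
$\pi_\mm(\torsQ_\mm(\QQ))$ for exactly one $\mm\in\Sigma$, so
$S(\QQ)$ is the disjoint union of the sets $\pi_\mm(\torsQ_\mm(\QQ))$
as $\mm$ ranges over $\Sigma$. Hence
\[N(B)=\sum_{\mm\in\Sigma}\card\{\,P\in\pi_\mm(\torsQ_\mm(\QQ))
\mset H(P)\leq B\,\}.\]
Fix $\mm\in\Sigma$. For each $P\in\pi_\mm(\torsQ_\mm(\QQ))$, the fibre
$\pi_\mm^{-1}(P)$ is a single orbit under the action of $\TNS(\QQ)$
(see proposition~\ref{prop.torsor}), and by
corollary~\ref{cor.snow} we have
$\card(\pi_\mm^{-1}(P)\cap\mathcal D_\mm)=\card\TNS(\QQ)_\tors$. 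Thus the
map $R\mapsto\pi_\mm(R)$ restricts to a surjection
\[\{\,R\in\torsQ_\mm(\QQ)\cap\mathcal D_\mm\mset H(R)\leq B\,\}
\longrightarrow
\{\,P\in\pi_\mm(\torsQ_\mm(\QQ))\mset H(P)\leq B\,\}\]
each of whose fibres has cardinality exactly $\card\TNS(\QQ)_\tors$,
provided the height is compatible with the lifting. This compatibility
is precisely the content of proposition~\ref{prop.liftingheight}, which
gives $H(\pi_\mm(R))=H(R)$ for all $R\in\torsQ_\mm(\QQ)$; in particular
the two conditions $H(\pi_\mm(R))\leq B$ and $H(R)\leq B$ coincide, so
the displayed map is well defined and fibre-constant. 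Counting the
source set by its fibres over the target therefore yields
\[\card\{\,R\in\torsQ_\mm(\QQ)\cap\mathcal D_\mm\mset H(R)\leq B\,\}
=\card\TNS(\QQ)_\tors\cdot
\card\{\,P\in\pi_\mm(\torsQ_\mm(\QQ))\mset H(P)\leq B\,\}.\]
Summing over $\mm\in\Sigma$ and dividing by $\card\TNS(\QQ)_\tors$ gives
\[N(B)=\frac{1}{\card\TNS(\QQ)_\tors}\sum_{\mm\in\Sigma}
\card\{\,R\in\torsQ_\mm(\QQ)\cap\mathcal D_\mm\mset H(R)\leq B\,\},\]
as claimed.
\end{proof}

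There is essentially no analytic difficulty here: the statement is a bookkeeping consequence of three facts already established. The only point requiring a little care is to make sure that the fibres of the lifting map over $S(\QQ)$ (as opposed to over $S(\Adeles_\QQ)$) have the asserted constant size — this is exactly why corollary~\ref{cor.snow} is phrased for $\pi_\mm^{-1}(P)\cap\mathcal D_\mm$ with $P\in S(\QQ)$, and one must invoke proposition~\ref{prop.liftingheight} to transfer the height bound between $R$ and $\pi_\mm(R)$ so that the bounded-height subsets match up on the nose. Everything else is summation over the finite index set $\Sigma$.
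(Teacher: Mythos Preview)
Your proof is correct and follows exactly the same route as the paper, which simply cites propositions~\ref{prop.uniquem}, \ref{prop.torsor}, \ref{prop.liftingheight} and corollary~\ref{cor.snow}; you have merely spelled out the bookkeeping that those citations encode.
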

\begin{proof}
This corollary follows from propositions~\ref{prop.uniquem},
\ref{prop.torsor},
and~\ref{prop.liftingheight} and corollary \ref{cor.snow}.
\end{proof}
\begin{rema}
For any prime number~$p$ and any $\mm\in\Sigma$, we have 
$\mathcal D_{\mm,p}\subset\widehat\pi_\mm^{-1}(\torsiZ(\ZZ_p))$.
Therefore, for any $R=(R_w)_{w\in\Val(\QQ)}$ belonging
to $\mathcal D_\mm$, we have $H(R)=H_\infty(R_\infty)$. %E%
\end{rema}
\begin{nota}
For any real number~$B$, and any $\mm\in\Sigma$,
we denote by $\mathcal D_{\mm,\infty}(B)$ the set
of $R\in\torsQ_\mm(\RR)$ such that the point
$Q=(x,y,t,u,v)=\widehat\pi_\mm(R)$
satisfies the conditions
\begin{equation}
\label{equ.domain.infty}
H_\infty(Q)\leq B\quad\text{and}\quad H_\infty(Q)\geq\max(|u|,|v|)^2\geq 1.
\end{equation}
We define $\mathcal D_\mm(B)$ as the product
$\mathcal D_{\mm,\infty}(B)\times\prod_{p\in\primes}\mathcal D_{\mm,p}$.
\end{nota}
\begin{rema}
%\label{rema.domain.infty}%
Let $F$ be a fiber of the morphism $\pi:S\to\Proj^1_\QQ$.
Then the Picard group of $S$ is a free $\ZZ$-module
with a basis given by the pair $([F],[\omega_S^{-1}])$.
According to the formula~\eqref{equ.localheight},
the function $H_\infty$ corresponds to $[\omega_S^{-1}]$.
In a similar way the map applying $(x,y,t,u,v)$ to $\max(|u|,|v|)$ 
corresponds to $[F]$. On the other hand, the cone of effective
divisors in $\Pic(S)$ is the cone generated by $[F]$ and 
$[E^+]+[E^-]=[\omega_S^{-1}]-2[F]$. But, by the preceding remark, the function
\[Q=(x,y,t,u,v)\longmapsto\frac{H_\infty(Q)}{\max(|u|,|v|)^2}\]
corresponds to $[E^+]+[E^-]$. Thus the lower bounds
imposed in the definition of $\mathcal D_{\mm,\infty}(B)$
corresponds to the condition (3.9)
of~\cite[p.~268]{peyre:cercle}.
\par
These lower bounds are automatically
satisfied by any point~$R$ in $\mathcal D_\mm\cap\torsQ_\mm(\QQ)$.
Indeed $Q=\widehat\pi_\mm(R)$ belongs to $\torsiZ(\ZZ)$ and
writing $Q=(x,y,t,u,v)$ we get that $\max(|u|,|v|)\geq 1$.
Since $(x,y,t)\neq 0$, by equation~\eqref{equ.torsiz},
we also have that $t\neq 0$ and therefore $|t|\geq 1$ which yields
the second inequality.
\end{rema}
\begin{cor}
For any real number $B$, we have
\[N(B)=\frac 1{\card\TNS(\QQ)_\tors}
\sum_{\mm\in\Sigma}\card(\torsQ_\mm(\QQ)\cap\mathcal D_\mm(B)).\]
\end{cor}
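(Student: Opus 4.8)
The plan is to obtain this corollary directly from the previous one. Comparing the two statements, it suffices to establish, for each $\mm\in\Sigma$, the equality of finite sets
\[\{\,R\in\torsQ_\mm(\QQ)\cap\mathcal D_\mm\mset H(R)\leq B\,\}=\torsQ_\mm(\QQ)\cap\mathcal D_\mm(B);\]
one then sums over $\mm\in\Sigma$ and divides by $\card\TNS(\QQ)_\tors$. Note that $\mathcal D_\mm$ and $\mathcal D_\mm(B)$ share the same finite component $\prod_p\mathcal D_{\mm,p}$, and that $\mathcal D_{\mm,\infty}(B)\subset\torsQ_\mm(\RR)$, so $\mathcal D_\mm(B)\subset\mathcal D_\mm$; hence the task reduces to deciding, for a rational point $R\in\torsQ_\mm(\QQ)\cap\mathcal D_\mm$ with $Q=(x,y,t,u,v)=\widehat\pi_\mm(R)$, exactly when the archimedean point $R_\infty$ satisfies the conditions~\eqref{equ.domain.infty} defining $\mathcal D_{\mm,\infty}(B)$.

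For the height condition I would invoke the remark following the previous corollary: since $\mathcal D_{\mm,p}\subset\widehat\pi_\mm^{-1}(\torsiZ(\ZZ_p))$ for every prime $p$, each non-archimedean factor of $H$ is identically $1$ on $\mathcal D_\mm$, so $H(R)=H_\infty(R_\infty)=H_\infty(Q)$. Consequently the first inequality in~\eqref{equ.domain.infty}, namely $H_\infty(Q)\leq B$, is equivalent to $H(R)\leq B$.

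For the remaining conditions $H_\infty(Q)\geq\max(|u|,|v|)^2\geq 1$ in~\eqref{equ.domain.infty} I would appeal to the last remark above, which shows that they hold automatically for $R\in\torsQ_\mm(\QQ)\cap\mathcal D_\mm$: such an $R$ lies in $\mathcal D_{\mm,p}$ for all $p$, so $Q$ lies in $\torsiZ(\ZZ)$; thus $u,v$ are coprime integers and $\max(|u|,|v|)\geq 1$, while~\eqref{equ.torsiz} together with $(x,y,t)\neq 0$ forces $t\neq 0$ and hence $|t|\geq 1$, so that $H_\infty(Q)=\max(|u|,|v|)^2|t|\geq\max(|u|,|v|)^2\geq 1$ by remark~(ii) after Definition~\ref{defi.localheight}. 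Combining the last two paragraphs, for $R\in\torsQ_\mm(\QQ)\cap\mathcal D_\mm$ the membership $R\in\mathcal D_\mm(B)$ is equivalent to $H(R)\leq B$, which gives the set identity and therefore the corollary. The argument is purely formal once the two cited remarks are granted, so I do not anticipate a genuine obstacle; the only point deserving a little attention is the bookkeeping of the adelic product $H=\prod_w H_w$ and the triviality of its finite-place factors on $\mathcal D_\mm$.
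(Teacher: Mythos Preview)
Your proof is correct and follows exactly the same route as the paper: it combines the preceding corollary with the two remarks you cite (that $H(R)=H_\infty(R_\infty)$ on $\mathcal D_\mm$, and that the lower bounds in~\eqref{equ.domain.infty} are automatic for rational points of $\mathcal D_\mm$). The paper's proof is simply the one-line ``This follows from the last remark and the preceding corollary,'' and you have faithfully unpacked that sentence.
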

\begin{proof}
This follows from the last remark and the preceding corollary.
\end{proof}
%% End of file jumpingup.tex
%% Including file moebius.tex
\subsection{Moebius inversion formula and change of variables}
As is usual with these type of problems, we now wish
to use a Moebius inversion formula to replace the
primality conditions by divisibility conditions.
In fact we shall perform three inversions corresponding
to the various primality conditions.
\par
We shall simultaneously parametrize the sets thus introduced
to reduce our problem to the study of a series which
may be handled with techniques of analytic number theory.
\subsubsection{First inversion}
The first inversion corresponds to the conditions
imposed at the places $p\in\badp$ with $p\equiv 1\mod 4$.
\begin{nota}
Let $\N(\fa) =\#(\ZZ[\ci]/\fa)$ denote the norm of
an ideal $\fa$ of the ring of Gaussian integers $\ZZ[\ci]$.
We define
\begin{equation*}%E%\lab{18-hatD}
\widehat{\mathfrak{D}}=\{\mathfrak b\subset \ZZ[\ci]\mset
\N(\mathfrak b)\in \mathfrak{D}\},
\end{equation*}
where
\begin{equation}\lab{18-D}
\mathfrak{D}=\{d\in\ZZp\mset p\mid d \Rightarrow p\equiv 1 \bmod{4} \}.
\end{equation}
\par
Let $A$ be a commutative ring.
Let $\bb=(\mathfrak b_\delta)_{\delta\in\DDelta}$
be a family of ideals of $A\otimes_\ZZ\ZZ[\ci]$ such that
$\mathfrak b_{\overline \delta}=\overline{\mathfrak b_\delta}$ for
any $\delta\in\DDelta$. Then $(\prod_{\delta\in\DDelta}\mathfrak b_\delta)^\Ga$
is an ideal of $A_\DDelta$ and for any $\nn\in\ZZ^4$, we define
\[\torscZ_\nn(\bb)=\torscZ_\nn(A)\cap
\Bigl(\prod_{\delta\in\DDelta}\mathfrak b_\delta\Bigr)^\Ga.\]
We define $\mathcal I_\DDelta(A)$ as the set of such families of ideals.
For any $p$, the map $\log_p$ induces a map from
$\mathcal I_\DDelta(\ZZ)$ to $\overline\Gamma_p$.
If $\log_2(\faa)=0$, then we define
\[\llambda(\faa)=\prod_{p\in\primes\setminus\{2\}}\exp_{\varpi_p}(\log_p(\faa)).\]
For any $\faa\in\mathcal I_\DDelta(\ZZ)$, we also put
$\N(\faa)=(\N(\mathfrak a^+_j))_{1\leq j\leq 4}\in\NN^4$.
\par
If $\llambda=(\lambda_\delta)_{\delta\in\DDelta}$
belongs to $T_\DDelta(\QQ)\cap \ZZ_\DDelta$,
then we put
$\N(\llambda)=(\lambda^+_j\lambda^-_j)_{1\leq j\leq 4}\in\ZZp^4$ 
and define a morphism
$m_\llambda:\torscZ_{\N(\llambda)\nn}\to\torscZ_\nn$ using the action
of the torus $T_\DDelta$ on $\Aff_\DDelta$.
For any commutative ring $A$, we may define an element $\llambda A_\DDelta\in
\mathcal I_\Delta(A)$ by taking the family of ideals 
$(\lambda_\delta A)_{\delta\in\DDelta}$. If $\faa\in\mathcal I_\DDelta(\ZZ)$
satisfies $\log_2(\faa)=0$, then $\faa=\llambda(\faa)\ZZ_\DDelta$.
For any 
$\faa\in\mathcal I_\DDelta(\ZZ)$, we similarly define
$\faa A_\DDelta$ as 
$(\mathfrak a_\delta A)_{\delta\in\DDelta}\in\mathcal I_\DDelta(A)$.
\par
Let $\mm\in\Sigma$ and let $\faa=(\fa_j)_{1\leq j\leq 4}\in
\widehat{\mathfrak D}^4$. We may see $\faa$ as an element 
of $\mathcal I_\DDelta(\ZZ)$ by putting $\fa_j^+=\fa_j$ and $\fa_j^-=\overline
\fa_j$ for $j\in\onefour$ and $\fa_0^+=\fa_0^-=\ZZ[\ci]$.
Let $\nn=\mm\N(\faa)=(m_j\N(\fa_j))_{1\leq j\leq 4}$.
Recall that $\alpha_\mm$ is the positive square root of
$\prod_{j=1}^4m_j$. 
We put
\[\alpha_{\mm,\faa}=\alpha_\mm\times\prod_{j=1}^4\lambda(\faa)_j^+.\]
Note that $\prod_{j=1}^4n_j=N(\alpha_{\mm,\faa})$.
We then define a map $\widehat\pi_{\mm,\faa}:\torscZ_\nn\to\Aff_\ZZ^5$
as follows: thanks to equations~\eqref{equ.torcz}
and the fact that, by~\eqref{equ.pthreefour},
the family $(a_j,b_j)_{1\leq j\leq 4}$ generates $\ZZ^2$,
the system of equations
\begin{equation}
L_j(U,V)=n_j(X_j^2+Y_j^2)
\end{equation}
in the variables $U$ and $V$ has a unique solution
in the ring of functions on $\torscZ_\nn$.
We also define $T=X_0^2+Y_0^2$ and define $X$ and $Y$ by
the relation
\[X+\ci Y=\alpha_{\mm,\faa}(X_0+\ci Y_0)^2\prod_{j=1}^4(X_j+\ci Y_j).\]
The morphism $\widehat\pi_{\mm,\faa}$ is then defined
by the family of functions $(X,Y,T,U,V)$.
Since these functions satisfy the relation
\[X^2+Y^2=T^2\prod_{j=1}^4L_j(U,V),\]
the image of $\widehat\pi_{\mm,\faa}$ is contained in the
Zariski closure $\torsicZ$ of $\torsiZ$ in $\Aff_\ZZ^5$.
\par
Let $\mm\in\Sigma$ and $\faa\in\widehat\fD^4$. For any prime number $p$
we define $\mathcal D_{\mm,\faa,p}^1$ as 
$\torscZ_\nn(\ZZ_p)\cap\widehat\pi^{-1}_{\mm,\faa}(\torsiZ(\ZZ_p))$
where $\nn=\mm\N(\faa)$.
For any real number $B$, we also define 
$\mathcal D^1_{\mm,\faa,\infty}(B)$ as the set of $R\in\torscZ_\nn(\RR)$
such that $\widehat\pi_{\mm,\faa}(R)$ satisfies the 
conditions~\eqref{equ.domain.infty}.
We then put $\mathcal D^1_{\mm,\faa}(B)=\mathcal D^1_{\mm,\faa,\infty}(B)
\times\prod_{p\in\primes}\mathcal D^1_{\mm,\faa,p}$.
When $\faa_j=\ZZ[\ci]$ for $j\in\onefour$, we shall forget $\faa$ in
the notation.
\par
Let $\badp'$ be the set of
$p\in\badp$ such that $p\equiv 1\mod 4$.
For any $p\in\badp'$,
we consider the set $\mathcal E_p$ of subsets $I$
of $\Delta\setminus\{E^+,E^-\}$ such that
\begin{conditions}
\item if $\delta_j^+\in I$ then there exists  $k<j$ such that
$\delta_k^-\in I$;
\item if $\delta_k^-\in I$ then there exists  $j>k$ such that
$\delta_j^+\in I$;
\item if $\delta_j^+\in I$ and $\delta_k^-\in I$ with $j\neq k$
then $p\mid\Delta_{j,k}$.
\end{conditions}
For any $I\in\mathcal E_p$ we define 
$\ff_I=(f_\delta)_{\delta\in\DDelta}\in\ZZ^\DDelta$
by
\[f_\delta=\begin{cases}
1&\text{if $\delta\in I$,}\\
0&\text{otherwise.}
\end{cases}\]
Using notation~\ref{nota.badp.one}, we then consider
$\ee_I=\varphi_p^{-1}(\ff_I)$ and
$\Sigma'_p=\bigl\{\,\exp_{\varpi_p}(\ee_I),\  I\in
\mathcal E_p\,\bigr\}$.
We define $\Sigma'$ as the subset of $\mathcal I_\DDelta(\ZZ)$
defined by
\[\Sigma'=\biggl\{\,\biggl(\prod_{p\in\badp'}\llambda_p
\biggr)\ZZ_\DDelta,\ (\llambda_p)_{p\in\badp'}\in
\prod_{p\in\badp'}\Sigma'_p\,\biggr\}\]
An element $\faa\in\Sigma'$ is determined
by the quadruple $(\fa_j^+)_{1\leq j\leq 4}$ and we shall
also consider $\Sigma'$ as a subset of $\widehat\fD^4$.
For $p\in\badp'$ we define a map $\mu_p:\mathcal E_p\to\ZZ$
by the conditions
\[\mu_p(\emptyset)=1\quad \text{and}\quad
\sum_{J\subset I}\mu_p(J)=0\text{ if $I\neq\emptyset$.}\]
The map $\mu:\Sigma'\to\ZZ$ is defined by 
$\mu(\faa)=\prod_{p\in\badp'}\mu_p(I_p(\faa))$.
\par
We shall denote by $\Adeles_{f,\infty}$ the ring
$\RR\times\prod_{p\in\primes}\ZZ_p$.
\end{nota}
\begin{rems}
\label{rems.notas.ideals}
(i) Let $\llambda=(\lambda_\delta)_{\delta\in\DDelta}\in T_\DDelta(\QQ)\cap
\ZZ_\DDelta$.
Let $A$ be a commutative ring.
Then $m_\llambda$ is a bijection from the set 
$\torscZ_{\N(\llambda)\nn}(A)$ to the set 
$\torscZ_\nn(\llambda A_\DDelta)$.
\par
(ii) With the same notation, for the ring $A=\ZZ_p$, the set
$\torscZ_\nn(\boldsymbol{\mathfrak d})$ is the inverse 
image by $\log_p$ of the set $\log_p(\llambda)+\Xi_p$.
\end{rems}
\begin{lemma}
\label{lemma.firstinversion}%
Let $p\in\badp'$. For any subset $K$ of $\Gamma_p$,
we denote by $\oone_K$ its characteristic function.
Then
\[\oone_{\Lambda_p}=\sum_{I\in\mathcal E_p}\mu_p(I)\oone_{\ee_I+\Xi_p}.\]
\end{lemma}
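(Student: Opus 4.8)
The plan is to verify the identity value by value on $\Gamma_p$, turning it into a purely combinatorial statement about the poset $\mathcal E_p$. First, both sides vanish off $\Xi_p$: the left-hand side by the very definition~\eqref{equ.def.lambdap} of $\Lambda_p$, and the right-hand side because every $\ee_I=\phi_p^{-1}(\ff_I)$ already lies in $\Xi_p$, whence $\ee_I+\Xi_p\subseteq\Xi_p$. So fix $\rr\in\Xi_p$, set $\sss=\phi_p(\rr)$, and let $S$ be the set of those $\delta\in\DDelta\setminus\{E^+,E^-\}$ with $s_\delta\geq 1$. Since $\rr$ and each $\ee_I$ are invariant under $\Ga$, which interchanges the two elements of $\Sp$, the divisibility conditions at the two places above $p$ coincide, and one reads off the dictionary: for $I\subseteq\DDelta\setminus\{E^+,E^-\}$ one has $\rr\in\ee_I+\Xi_p$ exactly when $I\subseteq S$, and $\rr\in\Lambda_p$ exactly when $S$ contains no doubleton $\{D_j^-,D_k^+\}$ with $j<k$ and $p\mid\Delta_{j,k}$. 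Such doubletons are precisely the two-element members of $\mathcal E_p$; I call them the bad doubletons. With this dictionary the assertion at $\rr$ reads
\[\oone\bigl[\,S\text{ contains no bad doubleton}\,\bigr]=\sum_{I\in\mathcal E_p,\,I\subseteq S}\mu_p(I).\]

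Suppose first that $S$ contains no bad doubleton. Then the only member of $\mathcal E_p$ contained in $S$ is $\emptyset$, because any non-empty $I\in\mathcal E_p$ contains a bad doubleton: picking $D_j^+\in I$ (the case of a $D_k^-\in I$ being symmetric), condition~(i) yields some $k<j$ with $D_k^-\in I$, and condition~(iii) then forces $p\mid\Delta_{j,k}$, so $\{D_k^-,D_j^+\}\subseteq I$ is a bad doubleton. Hence the right-hand side is $\mu_p(\emptyset)=1$, and the two sides agree. Suppose next that $S$ does contain a bad doubleton; the left-hand side is then $0$, and I must show $\sum_{I\in\mathcal E_p,\,I\subseteq S}\mu_p(I)=0$. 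For this it is enough to know that the family $\{I\in\mathcal E_p:I\subseteq S\}$ has a largest element $I^\star$. Indeed $I^\star$ is then non-empty (it contains a bad doubleton), so the recursion defining $\mu_p$ gives $\sum_{J\in\mathcal E_p,\,J\subseteq I^\star}\mu_p(J)=0$, and since every $J\in\mathcal E_p$ with $J\subseteq S$ is contained in $I^\star$ this sum equals $\sum_{I\in\mathcal E_p,\,I\subseteq S}\mu_p(I)$.

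The crux is thus the stability of $\mathcal E_p$ under finite unions, which then lets one take $I^\star$ to be the union of all members of $\mathcal E_p$ lying in $S$. Conditions~(i) and~(ii) pass to a union $I_1\cup I_2$ at once, a witness in $I_a$ being a witness in $I_1\cup I_2$. The delicate point is condition~(iii) for $I_1\cup I_2$ in the case where $D_j^+$ and $D_k^-$ are contributed by different $I_a$'s: one has to exclude incompatible pairs of bad indices, and this is exactly where the arithmetic of the linear forms $L_j$ enters — through the normalisation $\Delta_{1,2}=1$ (so $p\nmid\Delta_{1,2}$), the hypotheses $\gcd(a_3,b_3)=\gcd(a_4,b_4)=1$, and the Plücker relation $\Delta_{1,2}\Delta_{3,4}=\Delta_{1,3}\Delta_{2,4}-\Delta_{1,4}\Delta_{2,3}$. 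I expect this verification of union-stability, i.e.\ the interaction between the combinatorics of $\mathcal E_p$ and the divisibilities among the $\Delta_{j,k}$, to be the only genuinely non-formal step; granting it, the identity follows from the reductions above. Equivalently, one may argue by expanding $\oone_{\Lambda_p}=\prod_{j<k,\,p\mid\Delta_{j,k}}\bigl(\oone_{\Xi_p}-\oone_{\ee_{j,k}+\Xi_p}\bigr)$, grouping terms by the set $I$ of coordinates they involve, and identifying the coefficient of $\oone_{\ee_I+\Xi_p}$ with $\mu_p(I)$ by the same union-stability.
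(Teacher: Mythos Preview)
Your argument follows the same route as the paper's: both reduce the identity to the claim that $\mathcal E_p$ coincides with the set of finite unions of the ``bad doubletons'' $I_{j,k}=\{D_j^-,D_k^+\}$ (for $j<k$ with $p\mid\Delta_{j,k}$), which is exactly the union-stability you single out as ``the crux''. The paper asserts this characterisation in one sentence and then invokes inclusion--exclusion; you instead argue value by value via the defining recursion for $\mu_p$, needing a largest element $I^\star$ in $\{I\in\mathcal E_p:I\subseteq S\}$. Your preliminary reductions --- vanishing off $\Xi_p$, the dictionary $\rr\in\ee_I+\Xi_p\Leftrightarrow I\subseteq S$, and the fact that every non-empty $I\in\mathcal E_p$ contains a bad doubleton --- are all correct.

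The gap you explicitly leave open is, however, a genuine one: $\mathcal E_p$ is \emph{not} closed under unions in general, and the arithmetic input you anticipate does not repair condition~(iii). Take $L_3=U+5V$, $L_4=5U+V$ and $p=5$; then $\Delta_{1,3}=5$, $\Delta_{2,4}=-5$, while $\Delta_{1,2}=1$, $\Delta_{1,4}=1$, $\Delta_{2,3}=-1$, $\Delta_{3,4}=-24$ are all prime to $5$, and all the standing hypotheses (including $\gcd(a_j,b_j)=1$) hold. The only bad doubletons are $I_{1,3}$ and $I_{2,4}$, and a direct check of conditions~(i)--(iii) gives $\mathcal E_p=\{\emptyset,I_{1,3},I_{2,4}\}$: the union $I_{1,3}\cup I_{2,4}=\{D_1^-,D_2^-,D_3^+,D_4^+\}$ violates~(iii) since $D_3^+,D_2^-$ are both present but $5\nmid\Delta_{3,2}$. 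With $\mu_p(\emptyset)=1$ and $\mu_p(I_{1,3})=\mu_p(I_{2,4})=-1$, the right-hand side of the lemma equals $\oone_{\Xi_p}-\oone_{\ee_{1,3}+\Xi_p}-\oone_{\ee_{2,4}+\Xi_p}$, which differs from $\oone_{\Lambda_p}$ by the non-zero term $\oone_{(\ee_{1,3}+\Xi_p)\cap(\ee_{2,4}+\Xi_p)}$. So the statement itself fails for this choice of data; the paper's proof asserts the same false characterisation of $\mathcal E_p$, so the defect is not particular to your write-up. The intended object is presumably the set of all unions of bad doubletons (for which both your argument and the paper's go through without change), rather than the set cut out by conditions~(i)--(iii).
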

\begin{proof}
For any $j,k$ in $\onefour$ such that $j<k$ and $p\mid\Delta_{j,k}$,
we put $I_{j,k}=\{\delta^-_j,\delta^+_k\}$. 
Let $K$ be a subset of $\{\,(j,k)\in\onefour^2\mset
j<k\text{ and }p\mid\Delta_{j,k}\,\}$. 
Let $I=\bigcup_{(j,k)\in K}I_{j,k}$. Then
we have
\[\bigcap_{(j,k)\in K}(\ee_{j,k}+\Xi_p)=\ee_I+\Xi_p.\]
On the other hand, a subset $I$ of $\DDelta$ belongs
to $\mathcal E_p$ if and only if it is the union of subsets
$I_{j,k}$ with $j<k$ and $p\mid\Delta_{j,k}$.
The lemma then follows from equation~\eqref{equ.def.lambdap}
which defines $\Lambda_p$
and the fact that the map $I\mapsto\ee_I+\Xi_p$ reverses the
inclusions.
\end{proof}
\begin{lemma}
\label{lemma.multlambda}
Let $\faa\in\Sigma'$ and let $B$ be a positive real number.
The multiplication by $\llambda(\faa)\in T_\DDelta(\QQ)$
maps  $\mathcal D^1_{\mm,\faa}(B)$ onto 
$\mathcal D^1_{\mm}(B)\cap\torscZ_\mm(\faa(\Adeles_{f,\infty})_\DDelta)$.
\end{lemma}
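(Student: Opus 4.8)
The plan is to reduce the asserted set-theoretic identity to a place-by-place statement, exploiting that multiplication by $\llambda(\faa)$ operates coordinate-wise on the adelic product and intertwines the parametrisation maps $\widehat\pi_{\mm,\faa}$ and $\widehat\pi_\mm$. As preliminary remarks, since $\faa\in\Sigma'\subset\widehat\fD^4$ each norm $\N(\fa_j)$ is odd, so $\log_2(\faa)=0$; hence $\llambda(\faa)$ is a well-defined element of $T_\DDelta(\QQ)\cap\ZZ_\DDelta$, one has $\faa=\llambda(\faa)\ZZ_\DDelta$, and $\N(\llambda(\faa))=\N(\faa)$. Consequently $\nn=\mm\N(\faa)=\N(\llambda(\faa))\mm$, so ``multiplication by $\llambda(\faa)$'' is precisely the morphism $m_{\llambda(\faa)}\colon\torscZ_\nn\to\torscZ_\mm$ of the text.

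The heart of the argument is the intertwining identity $\widehat\pi_\mm\circ m_{\llambda(\faa)}=\widehat\pi_{\mm,\faa}$, as morphisms $\torscZ_\nn\to\torsicZ$, which I would verify directly from the defining formulas. Writing a point of $\torscZ_\nn$ with coordinates $(z_\delta)_{\delta\in\DDelta}$ and setting $w_\delta=\lambda(\faa)_\delta z_\delta$ for the coordinates of its image, one has $w_0^\pm=z_0^\pm$ (as $\lambda(\faa)_0^\pm=1$) and $w_j^+w_j^-=\lambda(\faa)_j^+\lambda(\faa)_j^-z_j^+z_j^-=\N(\fa_j)\,z_j^+z_j^-$ for $j\in\onefour$; hence the equation $L_j(U,V)=m_jw_j^+w_j^-$ defining $(U,V)$ on $\torscZ_\mm$ coincides with the equation $L_j(U,V)=n_jz_j^+z_j^-$ on $\torscZ_\nn$, the coordinate $T=z_0^+z_0^-$ is unchanged, and
\[X+\ci Y=\alpha_\mm(w_0^+)^2\prod_{j=1}^4w_j^+=\alpha_\mm\Bigl(\prod_{j=1}^4\lambda(\faa)_j^+\Bigr)(z_0^+)^2\prod_{j=1}^4z_j^+=\alpha_{\mm,\faa}(z_0^+)^2\prod_{j=1}^4z_j^+,\]
which is exactly the value produced by $\widehat\pi_{\mm,\faa}$, by the definition of $\alpha_{\mm,\faa}$. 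In particular $\widehat\pi_{\mm,\faa}^{-1}(\torsiZ(\ZZ_p))=m_{\llambda(\faa)}^{-1}\bigl(\widehat\pi_\mm^{-1}(\torsiZ(\ZZ_p))\bigr)$ for every prime $p$, and the conditions~\eqref{equ.domain.infty} hold for $R$ if and only if they hold for $m_{\llambda(\faa)}(R)$.

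It then remains to assemble the equality place by place, using that $m_{\llambda(\faa)}$ acts coordinate-wise on $\torscZ_\nn(\Adeles_\QQ)$. At the real place each $\lambda(\faa)_\delta$ is a nonzero Gaussian integer, so $m_{\llambda(\faa)}$ is a bijection $\torscZ_\nn(\RR)\to\torscZ_\mm(\RR)$ carrying $\mathcal D^1_{\mm,\faa,\infty}(B)$ onto $\mathcal D^1_{\mm,\infty}(B)$ by the intertwining identity, while the ideal condition defining $\faa(\Adeles_{f,\infty})_\DDelta$ is vacuous there. At a prime $p$, remark~\ref{rems.notas.ideals}\,(i) applied with $A=\ZZ_p$, combined with $\faa=\llambda(\faa)\ZZ_\DDelta$, gives a bijection $m_{\llambda(\faa)}\colon\torscZ_\nn(\ZZ_p)\buildrel\sim\over\longrightarrow\torscZ_\mm(\faa(\ZZ_p)_\DDelta)\subset\torscZ_\mm(\ZZ_p)$; intersecting with $\widehat\pi_\mm^{-1}(\torsiZ(\ZZ_p))$ and invoking the intertwining identity yields
\[m_{\llambda(\faa)}\bigl(\mathcal D^1_{\mm,\faa,p}\bigr)=\widehat\pi_\mm^{-1}(\torsiZ(\ZZ_p))\cap\torscZ_\mm(\faa(\ZZ_p)_\DDelta)=\mathcal D^1_{\mm,p}\cap\torscZ_\mm(\faa(\ZZ_p)_\DDelta).\]
Taking the product over all places, and noting that the archimedean component of $\torscZ_\mm(\faa(\Adeles_{f,\infty})_\DDelta)$ is all of $\torscZ_\mm(\RR)$, gives the asserted identity — in fact $m_{\llambda(\faa)}$ realises it as a bijection. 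There is no deep obstacle here; the one point to treat with care is the identification $\llambda(\faa)(\ZZ_p)_\DDelta=\faa(\ZZ_p)_\DDelta$, that is, that $\lambda(\faa)_\delta$ generates $\fa_\delta\ZZ_p[\ci]$, which is exactly what $\faa=\llambda(\faa)\ZZ_\DDelta$ says after base change to $\ZZ_p$, using that $\ZZ[\ci]$ is a principal ideal domain.
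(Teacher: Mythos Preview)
Your proof is correct and follows essentially the same approach as the paper: you establish the intertwining identity $\widehat\pi_\mm\circ m_{\llambda(\faa)}=\widehat\pi_{\mm,\faa}$ by an explicit coordinate computation, then invoke remark~\ref{rems.notas.ideals}\,(i) to handle the integral and ideal conditions place by place. Your version is slightly more explicit in spelling out the preliminary identifications (well-definedness of $\llambda(\faa)$, the equality $\faa=\llambda(\faa)\ZZ_\DDelta$) and the archimedean component, but the substance is the same.
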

\begin{proof} 
By remark~\ref{rems.notas.ideals} (i),
the map $m_{\llambda(\faa)}$ is a bijection
from the set $\torscZ_{\N(\faa)\mm}(\Adeles_{f,\infty})$
onto the set $\torscZ_\mm(\faa(\Adeles_{f,\infty})_\DDelta)$.
Let us now compare the maps $\widehat\pi_\mm\circ m_{\llambda(\faa)}$
and $\widehat\pi_{\mm,\faa}$.
The map $\widehat\pi_{\mm,\faa}$ is given by the relations
\[\begin{cases}
L_j(U,V)=\N(\mathfrak a^+_j)m_i(X_j^2+Y_j^2)\text{ for }j\in\onefour,\\
T=X_0^2+Y_0^2,\\
X+\ci Y=\alpha_{\mm,\faa}(X_0+\ci Y_0)^2\prod_{j=1}^4(X_j+\ci Y_j),
\end{cases}\]
whereas $\widehat\pi_\mm\circ m_{\llambda(\faa)}$ is given by
\[\begin{cases}
L_j(U,V)=\lambda(\faa)^+_j\lambda(\faa)^-_j
m_i(X_j^2+Y_j^2)\text{ for }j\in\onefour,\\
T=X_0^2+Y_0^2,\\
X+\ci Y=\alpha_{\mm}\biggl(\prod_{j=1}^4\lambda(\faa)_j^+\biggr)
(X_0+\ci Y_0)^2\prod_{j=1}^4(X_j+\ci Y_j).
\end{cases}\]
Therefore $\widehat\pi_\mm\circ m_{\llambda(\faa)}$
coincides with $\widehat\pi_{\mm,\faa}$.
This proves that for any prime number $p$, the
map $m_{\lambda(\faa)}$ maps $\widehat\pi_{\mm,\faa}^{-1}(\ZZ_p)$
onto $\widehat\pi_\mm^{-1}(\ZZ_p)$.
Moreover $m_{\llambda(\faa)}$ sends the set $\mathcal D^1_{\mm,\faa,\infty}(B)$ 
onto $\mathcal D^1_{\mm,\infty}(B)$.
\end{proof}
\begin{prop}
%\label{prop.firstinversion}%
For any real number $B$, we have
\[N(B)=\frac 1{\card\TNS(\QQ)_\tors}
\sum_{\mm\in\Sigma}\sum_{\faa\in\Sigma'}
\mu(\faa)\card(\torsQ_{\N(\faa)\mm}(\QQ)\cap
\mathcal D^1_{\mm,\faa}(B)).\]
\end{prop}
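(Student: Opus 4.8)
The plan is to derive this from the corollary established above, namely
\[N(B)=\frac 1{\card\TNS(\QQ)_\tors}\sum_{\mm\in\Sigma}\card(\torsQ_\mm(\QQ)\cap\mathcal D_\mm(B)),\]
by replacing each $\mathcal D_\mm(B)$ with the domains $\mathcal D^1_{\mm,\faa}(B)$ through a M\"obius inversion over $\badp'$. The first step is to compare, for a fixed $\mm$, the local components of $\mathcal D_\mm(B)$ and of $\mathcal D^1_\mm(B)$ (the latter being $\mathcal D^1_{\mm,\faa}(B)$ with all $\fa_j=\ZZ[\ci]$, in which case $\widehat\pi_{\mm,\faa}=\widehat\pi_\mm$). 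At every prime $p\notin\badp'$ the two components coincide, both equal to $\torscZ_\mm(\ZZ_p)\cap\widehat\pi_\mm^{-1}(\torsiZ(\ZZ_p))$: for $p\notin\badp$ this is the unlabelled lemma following lemma~\ref{lemma.most.places}, and for $p\in\badp$ with $p\not\equiv 1\bmod 4$ it is the definition of $\mathcal D_{\mm,p}$. At a prime $p\in\badp'$ one has $\Lambda_p\subset\Xi_p$, so $\mathcal D_{\mm,p}=\{R\in\mathcal D^1_{\mm,p}\mset\log_p(R)\in\Lambda_p\}$; at the real place the two agree on rational points since $\torsQ_\mm\subset\torscZ_\mm$ and the conditions~\eqref{equ.domain.infty} on $\widehat\pi_\mm(R)$ are the same. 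Hence, for $R\in\torsQ_\mm(\QQ)$, membership in $\mathcal D_\mm(B)$ is equivalent to membership in $\mathcal D^1_\mm(B)$ together with $\log_p(R)\in\Lambda_p$ for all $p\in\badp'$, and I would write
\[\card(\torsQ_\mm(\QQ)\cap\mathcal D_\mm(B))=\sum_{R\in\torsQ_\mm(\QQ)\cap\mathcal D^1_\mm(B)}\ \prod_{p\in\badp'}\oone_{\Lambda_p}(\log_p(R)).\]
The set $\torsQ_\mm(\QQ)\cap\mathcal D^1_\mm(B)$ is finite (by the Archimedean height bound in~\eqref{equ.domain.infty} and the integrality conditions at the finite places), and $\badp'$ is finite, so all sums below are finite.

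Next I would insert $\oone_{\Lambda_p}=\sum_{I\in\mathcal E_p}\mu_p(I)\oone_{\ee_I+\Xi_p}$ from lemma~\ref{lemma.firstinversion} into each factor, expand the product over $p\in\badp'$, and interchange the finite summations. This produces an outer sum over families $(I_p)_{p\in\badp'}$ with $I_p\in\mathcal E_p$, weighted by $\prod_{p\in\badp'}\mu_p(I_p)$, of $\card\{R\in\torsQ_\mm(\QQ)\cap\mathcal D^1_\mm(B)\mset\log_p(R)\in\ee_{I_p}+\Xi_p\text{ for all }p\in\badp'\}$. Such a family $(I_p)$ corresponds bijectively to the element $\faa=\bigl(\prod_{p\in\badp'}\exp_{\varpi_p}(\ee_{I_p})\bigr)\ZZ_\DDelta$ of $\Sigma'$, for which $\mu(\faa)=\prod_{p\in\badp'}\mu_p(I_p)$, $\log_p(\faa)=\ee_{I_p}$ for $p\in\badp'$ and $\log_\ell(\faa)=0$ for $\ell\notin\badp'$; I would check this directly from the definitions of $\Sigma'$, $\mu$ and $\llambda(\faa)$, using that $\exp_{\varpi_p}$ is a section of $\log_p$ and that $I\mapsto\ee_I$ is injective. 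Then, by remark~\ref{rems.notas.ideals}~(ii), the conditions $\log_p(R)\in\ee_{I_p}+\Xi_p$ for $p\in\badp'$, combined with the conditions $R\in\torscZ_\mm(\ZZ_p)$ for $p\notin\badp'$ already contained in $\mathcal D^1_\mm(B)$, are exactly the condition $R\in\torscZ_\mm(\faa(\Adeles_{f,\infty})_\DDelta)$.

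Finally I would invoke remark~\ref{rems.notas.ideals}~(i) and lemma~\ref{lemma.multlambda}: the morphism $m_{\llambda(\faa)}$ is multiplication by the nonzero coordinates of $\llambda(\faa)\in T_\DDelta(\QQ)$, hence injective on $\QQ$-points, and by lemma~\ref{lemma.multlambda} it carries $\torsQ_{\N(\faa)\mm}(\QQ)\cap\mathcal D^1_{\mm,\faa}(B)$ bijectively onto $\torsQ_\mm(\QQ)\cap\mathcal D^1_\mm(B)\cap\torscZ_\mm(\faa(\Adeles_{f,\infty})_\DDelta)$ (using that $m_{\llambda(\faa)}$ restricts to $\torsQ$ because the open conditions~\eqref{equ.tors.open} are preserved by multiplication by nonzero scalars). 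Replacing each inner cardinality by $\card(\torsQ_{\N(\faa)\mm}(\QQ)\cap\mathcal D^1_{\mm,\faa}(B))$, reindexing the outer sum by $\faa\in\Sigma'$, and summing over $\mm\in\Sigma$ then yields the asserted formula.

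The part I expect to require the most care is the bookkeeping rather than any analytic input (there is none at this stage): verifying that $\mathcal D_{\mm,p}$ and $\mathcal D^1_{\mm,p}$ genuinely coincide at each $p\notin\badp'$, that $(I_p)_{p\in\badp'}\mapsto\faa$ is a genuine bijection $\prod_{p\in\badp'}\mathcal E_p\to\Sigma'$ compatible with the weights $\mu_p$, and --- most delicately --- that $m_{\llambda(\faa)}$ intertwines $\widehat\pi_{\mm,\faa}$ with $\widehat\pi_\mm\circ m_{\llambda(\faa)}$ so that the Archimedean height constraint and every integrality condition transport correctly; this last point is precisely what lemma~\ref{lemma.multlambda} supplies. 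Granting these compatibilities, the inversion and the reindexing are formal manipulations of finite sums.
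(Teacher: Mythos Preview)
Your proof is correct and follows the same route as the paper: the paper's proof is a one-line appeal to lemma~\ref{lemma.firstinversion}, the definition of $\mathcal D_\mm(B)$, and lemma~\ref{lemma.multlambda}, and you have simply unpacked these three ingredients in the right order. The bookkeeping you flag as delicate (coincidence of $\mathcal D_{\mm,p}$ with $\mathcal D^1_{\mm,p}$ for $p\notin\badp'$, the bijection $\prod_{p\in\badp'}\mathcal E_p\to\Sigma'$, and the compatibility $\widehat\pi_\mm\circ m_{\llambda(\faa)}=\widehat\pi_{\mm,\faa}$) is exactly what the cited lemmas and definitions provide.
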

\begin{proof}
This follows from lemma~\ref{lemma.firstinversion},
the definition of $\mathcal D_\mm(B)$ and lemma~\ref{lemma.multlambda}.
\end{proof}

\subsubsection{Second inversion}
The inversion we shall now perform corresponds
to the condition $\gcd(x,y,t)=1$.
\begin{nota}
The map $\mu:\widehat{\mathfrak D}\to \ZZ$ is the multiplicative function
such that
\[\mu(\mathfrak p^k)=\begin{cases}
1&\text{if $k=0$,}\\
-1&\text{if $k=1$,}\\
0&\text{otherwise.}
\end{cases}\]
for any prime ideal $\mathfrak p$ in $\widehat{\mathfrak D}$ and
any integer $k\geq 0$.
\par
Let $\mm\in\Sigma$ and $\faa\in\Sigma'\subset\widehat\fD^4$. Let 
$\fbb=(\fb_j)_{j\in\onefour}\in\widehat{\mathfrak{D}}^4$. 
We put $\nn=\N(\faa\fbb)\mm$ and
$\mu(\fbb)=\prod_{j=1}^4\mu(\fb_j)$.
Let $B$ be a real number. Let $p$ be a prime number.
If $R$ belongs to $\torscZ_{\nn}(\ZZ_p)$, we denote
by $X,Y,T,U$ and~$V$ the functions on $\torscZ_{\nn}$
which define $\widehat\pi_{\mm,\faa\fbb}$.
The local domain $\mathcal D^2_{\mm,\faa,\fbb,p}$ is then defined
as follows:
\begin{itemize}
\item 
If $p\equiv 3\mod 4$ or $p=2$, then $\mathcal D^2_{\mm,\faa,\fbb,p}$
is the set of $R\in\torscZ_{\nn}(\ZZ_p)$
such that $T(R)\in\ZZ_p^*$ and $\min(v_p(U(R)),v_p(V(R)))=0$;
\item
If $p\equiv 1\mod 4$ then $\mathcal D^2_{\mm,\faa,\fbb,p}$
is the set of $R=(z_\delta)_{\delta\in\DDelta}\in
\torscZ_{\nn}(\ZZ_p)$
such that $z_0^-$ belongs to $\bigcap_{j=1}^4\fb_j$, such that
$\min\bigl(v_p(T(R)),v_p\bigl(\prod_{j=1}^4\N(\faa_j)\bigr)\bigr)=0$
and such that
$\min(v_p(U(R)),v_p(V(R)))=0$.
\end{itemize}
We also put $\mathcal D^2_{\mm,\faa,\fbb,\infty}(B)=\mathcal D^1_{\mm,\faa,\infty}(B)$
and
\[\mathcal D^2_{\mm,\faa,\fbb}(B)=\mathcal D^2_{\mm,\faa,\fbb,\infty}(B)
\times\prod_{p\in\primes}\mathcal D^2_{\mm,\faa,\fbb,p}.\]
\end{nota}
\begin{prop}
\label{prop.secondinversion}%
For any real number $B$, we have the relation
\[N(B)=\frac 1{\card\TNS(\QQ)_\tors}
\sum_{\mm\in\Sigma}\sum_{\faa\in\Sigma'}\sum_{\fbb\in\widehat
{\mathfrak D}^4}
\mu(\faa)\mu(\fbb)
\card(\torsQ_{\N(\faa)\N(\fbb)\mm}(\QQ)\cap
\mathcal D^2_{\mm,\faa,\fbb}(B)).\]
\end{prop}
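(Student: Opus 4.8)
The proof is a further Möbius inversion, now eliminating the coprimality condition $\gcd(x,y,t)=1$ that is built into $\mathcal D^1_{\mm,\faa}$ through the requirement $\widehat\pi_{\mm,\faa}(R)\in\torsiZ(\ZZ_p)$. Substituting the asserted formula into the conclusion of the first inversion and renaming $\N(\faa)\N(\fbb)\mm=\N(\faa\fbb)\mm$, it suffices to establish, for each fixed $\mm\in\Sigma$, $\faa\in\Sigma'$ and real $B$, the identity
\[
\card\bigl(\torsQ_{\N(\faa)\mm}(\QQ)\cap\mathcal D^1_{\mm,\faa}(B)\bigr)
=\sum_{\fbb\in\widehat{\mathfrak D}^4}\mu(\fbb)\,
\card\bigl(\torsQ_{\N(\faa\fbb)\mm}(\QQ)\cap\mathcal D^2_{\mm,\faa,\fbb}(B)\bigr).
\]
Both sides are finite: $\mu(\fbb)=0$ unless each $\fb_j$ is squarefree, and for such $\fbb$ the finite coordinates of a point of $\mathcal D^2_{\mm,\faa,\fbb}(B)$ are bounded in terms of $B$.

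The algebraic input is that $\mu$, being the multiplicative function on $\widehat{\mathfrak D}$ with $\mu(\mathfrak p)=-1$ and $\mu(\mathfrak p^k)=0$ for $k\ge2$, satisfies $\sum_{\fbb'\mid\fbb}\mu(\fbb')=\oone[\fbb=(1)]$ on $\widehat{\mathfrak D}^4$. The inversion is purely local at the primes $p\equiv1\bmod4$: at the other primes Lemma~\ref{lemma.coeff.inter}~c) and~d) force $v_p(T)=0$ for every lift of a point of $S$, so the local conditions defining $\mathcal D^2_{\mm,\faa,\fbb,p}$ and $\mathcal D^1_{\mm,\faa,p}$ coincide and the $p$-component of any contributing $\fbb$ is trivial; moreover $\mathcal D^2_{\mm,\faa,\fbb,\infty}(B)=\mathcal D^1_{\mm,\faa,\infty}(B)$ by construction. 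The plan is therefore: (i) match the torsors $\torsQ_{\N(\faa\fbb)\mm}$ against $\torsQ_{\N(\faa)\mm}$ by the change of variables $m_{\llambda(\fbb)}$ of Remark~\ref{rems.notas.ideals}, which identifies $\torsQ_{\N(\faa\fbb)\mm}(\QQ)\cap\mathcal D^2_{\mm,\faa,\fbb}(B)$ with the locus of $R\in\torsQ_{\N(\faa)\mm}(\QQ)$ lying over the same point of $S$ whose coordinates are $\fbb$-divisible and which satisfy the transported $\mathcal D^2$-conditions (exactly as in Lemma~\ref{lemma.multlambda}, and preserving the height since $\widehat\pi_{\mm,\faa}\circ m_{\llambda(\fbb)}=\widehat\pi_{\mm,\faa\fbb}$); (ii) compute, for each $p\equiv1\bmod4$, the $\mu$-weighted sum over the $p$-components of $\fbb$ of these transported conditions, using $t=z_0^+z_0^-$ and $z_\delta^-=\overline{z_\delta^+}$, and check that it equals the indicator that $(x,y,t)$ is primitive at $p$, that is $v_p\bigl(\gcd(x+\ci y,x-\ci y,t)\bigr)=0$, which is precisely what distinguishes $\mathcal D^1_{\mm,\faa,p}$ from $\mathcal D^2_{\mm,\faa,\fbb,p}$; (iii) multiply the local identities together.

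The hard part is step (ii). Because the coordinates $z_0^\pm=Z_{E^\pm}$ occur quadratically in $x\pm\ci y$ and the $E$-component of $\fbb$ is trivial, failure of primitivity at $p$ is not simply divisibility of one torsor coordinate by $p$, and the bookkeeping is delicate. One must exploit the structure of $\Sigma'$ — so that $p\mid\prod_j\N(\faa_j)$ forces both primes above $p$ to divide $\alpha_{\mm,\faa}$, whence $v_p(x+\ci y)$ and $v_p(x-\ci y)$ are both positive and the condition $\min\bigl(v_p(T),v_p(\prod_j\N(\faa_j))\bigr)=0$ in $\mathcal D^2$ is a consequence of primitivity rather than an extra restriction — together with the condition $z_0^-\in\bigcap_j\fb_j$, to see that these two conditions are tuned exactly so that the alternating sum annihilates every non-primitive configuration and counts every primitive one once. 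Verifying this balance is the substance of the proof; once it is in place, the identity $\sum_{\fbb'\mid\fbb}\mu(\fbb')=\oone[\fbb=(1)]$ closes the argument.
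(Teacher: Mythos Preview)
Your outline is exactly the paper's argument: handle $p\not\equiv1\bmod4$ via Lemma~\ref{lemma.coeff.inter}~c) (so the $\mathcal D^1$- and $\mathcal D^2$-conditions there agree), perform a M\"obius inversion at each $p\equiv1\bmod4$, and transport with $m_{\llambda(\fbb)}$ as in Lemma~\ref{lemma.multlambda}. The only thing missing is the step you yourself flag as ``the substance of the proof''. The paper makes it explicit as follows: from $T=z_0^+z_0^-$ and $X+\ci Y=\alpha_{\mm,\faa}(z_0^+)^2\prod_jz_j^+$ together with the observation that $\varpi_p\mid\alpha_{\mm,\faa}$ forces $p\mid\alpha_{\mm,\faa}$ (this is where the structure of $\Sigma'$ enters), one checks that $\gcd(X,Y,T)=1$ in $\ZZ_p$ is equivalent to the conjunction of
\begin{enumerate}
\item[(i)] $\min\bigl(v_p(T),\,v_p(\prod_j\N(\fa_j))\bigr)=0$, and
\item[(ii)] there is no $j\in\onefour$ and no $\varpi\in\Sp$ with $\varpi\mid z_j^+$ and $\overline\varpi\mid z_0^+$.
\end{enumerate}
Condition (i) is retained verbatim in $\mathcal D^2_{\mm,\faa,\fbb,p}$, while the indicator of (ii) is precisely
\[
\prod_{j=1}^4\Bigl(\sum_{\fb_j\in\widehat{\mathfrak D}}\mu(\fb_j)\,\oone\bigl[\fb_j\mid z_j^+\text{ and }\fb_j\mid z_0^-\bigr]\Bigr)
=\sum_{\fbb\in\widehat{\mathfrak D}^4}\mu(\fbb)\,\oone\bigl[z_j^+\in\fb_j\ \forall j,\ z_0^-\in\textstyle\bigcap_j\fb_j\bigr],
\]
which is the M\"obius sum with the auxiliary constraint $z_0^-\in\bigcap_j\fb_j$ that you singled out. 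Writing this factorisation down is all that is needed to complete your argument.
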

\begin{proof}
Let $\mm\in\Sigma$, let $\faa\in\Sigma'$ and let $p$ be a prime
number.
\par 
Let us first assume that $p\not\equiv 1\mod 4$.
By lemma~\ref{lemma.coeff.inter} c), we have $v_p(t)=0$
for any $(x,y,t,u,v)\in\torsiZ(\ZZ_p)$. Conversely,
let $R$ belong to ${\torscZ_{\mm\N(\faa)}(\ZZ_p)}$. If $v_p(T(R))=0$,
then ${\min(v_p(X(R)),v_p(Y(R)),v_p(T(R)))=0}$.
\par
We now assume that $p\equiv 1\mod 4$.
For any $R=(z_\delta)_{\delta\in\DDelta}\in\torscZ_{\mm\N(\faa)}(\QQ_p)$
we have the relations
\[T(R)=z_0^+z_0^-\quad\text{and}\quad
X(R)+\ci Y(R)=\alpha_{\mm,\faa}(z_0^+)^2\prod_{j=1}^4z_j^+.\]
Note that if $\varpi_p|\alpha_{\mm,\faa}$ for any prime $p\equiv 1\mod 4$,
then $p|\alpha_{\mm,\faa}$.
Therefore we have the relation $\gcd(X(R),Y(R),T(R))=1$ in $\ZZ_p$
if and only if $R$ satisfies the following two conditions:
\begin{conditions}
\item One has $\min(v_p(T(R)),v_p(\N(\prod_{j=1}^4\fa_j)))=0$;
\item There is no $j\in\onefour$ and no $\varpi\in\Sp$ such that
$z_j^+\in\varpi$ and $z_0^+\in\overline\varpi$.
\end{conditions}
We denote by $\widehat\fbb$ the unique element
of $\mathcal I_\DDelta(\ZZ)$ such that
$\widehat\fb_j^+=\fb_j$ for $j\in\onefour$
and $\widehat\fb_0^-=\bigcap_{j=1}^4\fb_j$.
A classical Moebius inversion yields that the characteristic
function of the set of the elements $R$ in $\torscZ_{\mm\N(\faa)}(\ZZ_p)$
which satisfy condition (ii)
is equal to
\[\sum_{\fbb\in\widehat{\mathfrak D}^4}
\mu(\fbb)\oone_{\torscZ_{\mm\N(\faa)}\bigl(\widehat\fbb(\ZZ_p)_\DDelta\bigr)}.
\]

By remark~\ref{rems.notas.ideals}~(i), the multiplication
map $m_{\llambda(\fbb)}$ maps 
$\torscZ_{\mm\N(\faa)}\bigl(\widehat\fbb(\ZZ_p)_\DDelta\bigr)$ 
onto the set of $(z_\delta)_{\delta\in\DDelta}$ in
$\torscZ_{\mm\N(\faa\fbb)}(\ZZ_p)$
such that $z_0^-$ belongs to $\bigcap_{j=1}^4\fb_j$.
The rest of the proof is similar to the proof of lemma~\ref{lemma.multlambda}.
\end{proof}

\subsubsection{Third inversion}
The last inversion  corresponds to
the condition $\gcd(u,v)=1$, in which it will prove
nonetheless useful to retain the fact that $u,v$ cannot both be even.
\begin{nota}
Let $\mm\in\Sigma$ and $\faa\in\Sigma'$. 
Let $\fbb=(\fb_j)_{j\in\onefour}\in\widehat{\mathfrak{D}}^4$.
We put $\nn=\N(\faa)\N(\fbb)\mm$. 
Let~$\ell$ be an odd integer.
Let~$p$ be a prime number. The local domain $\mathcal D^3_{\mm,\faa,\fbb,\ell,p}$
%E%
is then defined as follows:
\begin{itemize}
\item 
If $p=2$, then $\mathcal D^3_{\mm,\faa,\fbb,\ell,p}$
is the set of $R\in\torscZ_{\nn}(\ZZ_p)$
such that $T(R)\in\ZZ_p^*$ and $\min(v_p(U(R)),v_p(V(R)))=0$;
\item
If $p\equiv 3\mod 4$, then $\mathcal D^3_{\mm,\faa,\fbb,\ell,p}$
is the set of $R\in\torscZ_{\nn}(\ZZ_p)$
such that $T(R)\in\ZZ_p^*$ and $\ell$ divides $U(R)$ and $V(R)$.
\item
If $p\equiv 1\mod 4$ then $\mathcal D^3_{\mm,\faa,\fbb,\ell,p}$
is the set of $R=(z_\delta)_{\delta\in\DDelta}\in
\torscZ_{\nn}(\ZZ_p)$
such that $z_0^-$ belongs to $\bigcap_{j=1}^4\fb_j$, such that
$\min\bigl(v_p(T(R)),v_p\bigl(\prod_{j=1}^4\N(\faa_j)\bigr)\bigr)=0$
and $\ell$ divides $U(R)$ and $V(R)$.
\end{itemize}
%E%
We define $\mathcal D^3_{\mm,\faa,\fbb,\ell,\infty}(B)
=\mathcal D^2_{\mm,\faa,\fbb,\infty}(B)$
and 
\[\mathcal D^3_{\mm,\faa,\fbb,\ell}(B)
=\mathcal D^3_{\mm,\faa,\fbb,\ell,\infty}(B)
\times\prod_{p\in\primes}\mathcal D^3_{\mm,\faa,\fbb,\ell,p}.\]
\end{nota}
\begin{prop}
\label{prop.aftermoebius}%
For any positive real number $B$,
we have that $N(B)$ is equal to
\[\frac 1{\card\TNS(\QQ)_\tors}
\sum_{\mm\in\Sigma}\sum_{\faa\in\Sigma'}\sum_{\fbb\in\widehat
{\mathfrak D}^4}
\sum_{\substack{\ell=1\\ 2\nmid \ell}}^\infty
\mu(\faa)\mu(\fbb)\mu(\ell)
\card(\torsQ_{\N(\faa)\N(\fbb)\mm}(\QQ)\cap
\mathcal D^3_{\mm,\faa,\fbb,\ell}(B)).\]
\end{prop}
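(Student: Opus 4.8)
The plan is to derive Proposition~\ref{prop.aftermoebius} from Proposition~\ref{prop.secondinversion} by a single Möbius inversion performed term by term, over the odd integers $\ell$. Fix $\mm\in\Sigma$, $\faa\in\Sigma'$ and $\fbb\in\widehat\fD^4$, put $\nn=\N(\faa)\N(\fbb)\mm$, and let $X,Y,T,U,V$ be the functions on $\torscZ_\nn$ defining $\widehat\pi_{\mm,\faa\fbb}$, which is the same map used in the definition of $\mathcal D^2_{\mm,\faa,\fbb}$. The crucial point, making the third inversion easier than the first two, is that $\mathcal D^3_{\mm,\faa,\fbb,\ell}(B)$ is built from this very same map and the same ambient scheme $\torscZ_\nn$, so no change of variables intervenes: by construction the archimedean components of $\mathcal D^2_{\mm,\faa,\fbb}(B)$ and $\mathcal D^3_{\mm,\faa,\fbb,\ell}(B)$ coincide, as do their components at $p=2$, and at each odd prime $p$ the only difference is that the condition $\min(v_p(U),v_p(V))=0$ appearing in $\mathcal D^2_{\mm,\faa,\fbb,p}$ is replaced in $\mathcal D^3_{\mm,\faa,\fbb,\ell,p}$ by the divisibilities $\ell\mid U$ and $\ell\mid V$. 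It therefore suffices to establish, for each fixed triple $(\mm,\faa,\fbb)$, the identity
\[
\card\bigl(\torsQ_\nn(\QQ)\cap\mathcal D^2_{\mm,\faa,\fbb}(B)\bigr)
=\sum_{\substack{\ell\geq 1\\ 2\nmid\ell}}\mu(\ell)\,
\card\bigl(\torsQ_\nn(\QQ)\cap\mathcal D^3_{\mm,\faa,\fbb,\ell}(B)\bigr),
\]
after which one substitutes this into each summand of Proposition~\ref{prop.secondinversion} and divides by $\card\TNS(\QQ)_{\tors}$.

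To prove this identity I would introduce the set $\mathcal B$ of all $R\in\torsQ_\nn(\QQ)$ lying in $\torscZ_\nn(\ZZ_p)$ for every prime $p$, satisfying~\eqref{equ.domain.infty} with $H_\infty\leq B$, satisfying $T(R)\in\ZZ_2^*$ and $\min(v_2(U(R)),v_2(V(R)))=0$, satisfying $T(R)\in\ZZ_p^*$ for each prime $p\equiv 3\bmod 4$, and satisfying $z_0^-\in\bigcap_{j=1}^4\fb_j$ together with $\min(v_p(T(R)),v_p(\prod_{j=1}^4\N(\fa_j)))=0$ for each prime $p\equiv 1\bmod 4$; thus $\mathcal B$ gathers exactly the local conditions common to $\mathcal D^2_{\mm,\faa,\fbb}$ and to every $\mathcal D^3_{\mm,\faa,\fbb,\ell}$. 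Any $R\in\mathcal B$ has integral coordinates, so $u=U(R)$ and $v=V(R)$ lie in $\ZZ$; they are not both zero since $H_\infty\geq\max(|u|,|v|)^2\geq 1$, and $\gcd(u,v)$ is odd because of the condition retained at $p=2$. Furthermore $\mathcal B$ is finite for fixed $B$, since the relations $L_j(U,V)=n_j(X_j^2+Y_j^2)$ and $T=X_0^2+Y_0^2$ together with $H_\infty\leq B$ bound all coordinates of $R$. Matching the local conditions one then checks that $\torsQ_\nn(\QQ)\cap\mathcal D^2_{\mm,\faa,\fbb}(B)=\{R\in\mathcal B:\gcd(U(R),V(R))=1\}$, the gcd being already odd on $\mathcal B$, and that, for every odd $\ell\geq1$, $\torsQ_\nn(\QQ)\cap\mathcal D^3_{\mm,\faa,\fbb,\ell}(B)=\{R\in\mathcal B:\ell\mid U(R)\ \text{and}\ \ell\mid V(R)\}$, since for odd $\ell$ the $p$-adic divisibilities imposed at the primes $p\equiv 1\bmod4$ and $p\equiv 3\bmod4$ are jointly equivalent to divisibility in $\ZZ$.

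It then remains only to invoke the elementary identity $\sum_{d\mid g}\mu(d)=1$ if $g=1$ and $0$ otherwise. For each $R\in\mathcal B$ the odd integers $\ell$ with $\ell\mid U(R)$ and $\ell\mid V(R)$ are precisely the (automatically odd) divisors of $g=\gcd(U(R),V(R))$, and they are bounded by $\max(|U(R)|,|V(R)|)\leq\sqrt{B}$; since $\mathcal B$ is finite I may interchange the summations to obtain
\[
\sum_{\substack{\ell\geq 1\\ 2\nmid\ell}}\mu(\ell)\,\card\{R\in\mathcal B:\ell\mid U(R),\ \ell\mid V(R)\}
=\sum_{R\in\mathcal B}\,\sum_{d\mid g}\mu(d)
=\card\{R\in\mathcal B:\gcd(U(R),V(R))=1\},
\]
which is the desired equality. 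I do not anticipate any genuine obstacle here: the real content of the third inversion — arranging the domains $\mathcal D^3$ so that the information that $u$ and $v$ are not both even is preserved — is already built into the definitions, and what remains is the routine verification that the local conditions defining $\mathcal D^2_{\mm,\faa,\fbb}$ and $\mathcal D^3_{\mm,\faa,\fbb,\ell}$ differ only through the stated substitution, together with the finiteness used to justify the interchange of sums.
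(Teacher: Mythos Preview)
Your argument is correct and is exactly the approach the paper intends: the paper states Proposition~\ref{prop.aftermoebius} without proof, relying on the remark that ``the last inversion corresponds to the condition $\gcd(u,v)=1$, in which it will prove nonetheless useful to retain the fact that $u,v$ cannot both be even.'' Your write-up carries out precisely this elementary M\"obius inversion over odd $\ell$, correctly identifying that the only difference between $\mathcal D^2_{\mm,\faa,\fbb,p}$ and $\mathcal D^3_{\mm,\faa,\fbb,\ell,p}$ at odd primes is the replacement of $\min(v_p(U),v_p(V))=0$ by $\ell\mid U,\ \ell\mid V$, and correctly justifying the interchange of sums via the finiteness of the set you call $\mathcal B$.
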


%% End of file moebius.tex
%% Including file formulation.tex
\section{Formulation of the counting problem}
%\label{section:formulation}

We are now ready to begin the analytic part of the proof 
of theorem \ref{theo.main}. Let us recall that
the linear forms that we are working with take the shape
\[\begin{array}{ll}
L_1(U,V)=U, & 
L_2(U,V)=V, \\
L_3(U,V)=a_3U+b_3V, &
L_4(U,V)=a_4U+b_4V,
\end{array}\]
with integers $a_3,b_3,a_4,b_4$ such that 
$\gcd(a_3,b_3)=\gcd(a_4,b_4)=1$ and 
\begin{equation}
  \label{eq:resultant}
\D=a_3b_3a_4b_4(a_3b_4-a_4b_3)\neq 0.
\end{equation}
It is clear that the forms involved are all pairwise non-proportional. 
In this section we will further reduce our counting problem
using the familiar multiplicative
arithmetic function
\[r(n)=\card\{(x,y)\in \ZZ^2\mset x^2+y^2=n\}=
4 \sum_{d\mid n }\chi(d),\]
where $\chi$ is the real non-principal character modulo $4$. It is to
this expression that we will be able to direct the full force of analytic
number theory.

In what follows we will allow the implied constant in any estimate to
depend arbitrarily upon the coefficients of the linear forms
involved. Furthermore, we will henceforth 
reserve $j$ for an arbitrary index from the set $\{1,2,3,4\}$.
Finally, many of our estimates will involve a small parameter
$\ve>0$ and it will ease notation if we also allow the implied constants to
depend on the choice of $\ve$.  We will follow common practice and allow $\ve$ to take
different values at different parts of the argument.

Recall the definitions of $\Sigma, \Sigma'$ from section
\ref{section.torsors} and section
\ref{section.jumpingup} respectively. 
In particular we have $m_jN(\fa^+_j)=O(1)$ whenever $\mm\in
\Sigma$ and $\faa\in\Sigma'$.

\begin{prop}\label{prop:6.1}
For $B\geq 1$, we have
\begin{align*}
N(B)&=\frac 1{\card\TNS(\QQ)_\tors}
\sum_{\substack{\mm\in \Sigma\\ \faa\in \Sigma'}}\mu(\faa)
\sum_{\substack{\ell=1\\ 2\nmid \ell}}^\infty \mu(\ell)
\sum_{\fbb \in \widehat{\fD}^4} \mu(\fbb) 
\sum_{\substack{t \in \fD\\ \gcd(t,\N(\faa))=1\\ \N(\bigcap
    \fb_j)\mid t}}
r\Big(\frac{t}{\N(\bigcap \fb_j)}\Big)
\mathcal{U}\Big(\frac{B}{t}\Big),
\end{align*}
where 
\begin{align*}
\mathcal{U}(T)=
\sum_{\substack{(u,v)\in \ZZ^2\cap \sqrt{T}\mcal{R_\mm}\\
\ell\mid u,v\\
2\nmid \gcd(u,v)\\
m_j\N(\fa^+_j\fb_j) \mid L_j(u,v)
}}\prod_{j=1}^4r\Big(\frac{L_j(u,v)}{m_j\N(\fa^+_j\fb_j) }  \Big)
\end{align*}
and
\begin{equation}
  \label{eq:RR}
\mcal{R_\mm}=\Big\{(u,v)\in \RR^2\mset
0<|u|,|v|\leq 1, ~m_jL_j(u,v)>0\text{ for }j\in\onefour\Big\}.
\end{equation}
\end{prop}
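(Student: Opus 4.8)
The plan is to take the formula for $N(B)$ from Proposition \ref{prop.aftermoebius} and, for each fixed tuple $(\mm,\faa,\fbb,\ell)$, unwind the geometric quantity $\card(\torsQ_{\N(\faa)\N(\fbb)\mm}(\QQ)\cap\mathcal D^3_{\mm,\faa,\fbb,\ell}(B))$ into an honest sum over integers. First I would fix a point $R\in\torscZ_{\nn}(\QQ)$ in the domain, with $\nn=\N(\faa)\N(\fbb)\mm$, and recall from the definition of $\widehat\pi_{\mm,\faa\fbb}$ that it produces functions $X,Y,T,U,V$ with $T=X_0^2+Y_0^2$ and $L_j(U,V)=n_j(X_j^2+Y_j^2)$ for $j\in\onefour$; the local conditions defining $\mathcal D^3_{\mm,\faa,\fbb,\ell,p}$ at the various primes translate exactly into: $T$ is a positive integer coprime to $2$ and coprime to $\N(\prod\fa_j)$ with $\N(\bigcap\fb_j)\mid T$, $(U,V)$ is a primitive pair of integers not both even with $\ell\mid\gcd(U,V)$, each $L_j(U,V)$ is a positive integer divisible by $m_j\N(\fa_j^+\fb_j)$, and the archimedean condition \eqref{equ.domain.infty} holds. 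Writing $t=T$ and $(u,v)=(U,V)$, the point $R$ is then recovered (up to the torsion action already quotiented out) by counting the ways of writing $t/\N(\bigcap\fb_j)$, $T$, and each $L_j(u,v)/(m_j\N(\fa_j^+\fb_j))$ as sums of two squares — this is where the function $r(n)=\card\{(x,y)\in\ZZ^2\mset x^2+y^2=n\}$ enters, via the coordinates $X_\delta,Y_\delta$.

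The key step is the bookkeeping of which norm condition attaches to which variable. Since $X_j+\ci Y_j$ is a Gaussian integer of norm $L_j(u,v)/n_j=L_j(u,v)/(m_j\N(\fa_j^+\fb_j))$ for $j\in\onefour$, the pair $(X_j,Y_j)$ ranges over exactly $r\big(L_j(u,v)/(m_j\N(\fa_j^+\fb_j))\big)$ values once $(u,v)$ is fixed, and similarly $(X_0,Y_0)$ ranges over $r(t)$ values subject to the extra ideal-divisibility constraints recorded in $\mathcal D^3$. Those constraints — that $z_0^-\in\bigcap_j\fb_j$ and the coprimality with $\N(\faa)$ — are precisely what cuts $r(t)$ down to $r\big(t/\N(\bigcap\fb_j)\big)$ together with the side conditions $\gcd(t,\N(\faa))=1$ and $\N(\bigcap\fb_j)\mid t$ in the outer sum. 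Collecting the four factors $r\big(L_j(u,v)/(m_j\N(\fa_j^+\fb_j))\big)$ gives the product appearing in the definition of $\mathcal U$; the archimedean condition \eqref{equ.domain.infty}, combined with the identity $H_\infty(Q)=\max(|u|,|v|)^2|t|$ from remark (ii) after Definition \ref{defi.localheight}, becomes $\max(|u|,|v|)^2 t\leq B$ together with the sign conditions $m_jL_j(u,v)>0$, i.e.\ $(u,v)\in\sqrt{B/t}\,\mathcal R_\mm$ with $\mathcal R_\mm$ as in \eqref{eq:RR} (after normalising by $\sqrt t$ and absorbing the bound $\max(|u|,|v|)\le\sqrt{B/t}$ into the scaling of the region, noting that in $\mathcal R_\mm$ one has $0<|u|,|v|\le 1$).

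Assembling everything, the count $\card(\torsQ_{\nn}(\QQ)\cap\mathcal D^3_{\mm,\faa,\fbb,\ell}(B))$ equals
\[
\sum_{\substack{t\in\fD\\ \gcd(t,\N(\faa))=1\\ \N(\bigcap\fb_j)\mid t}}
r\Big(\frac{t}{\N(\bigcap\fb_j)}\Big)\,
\sum_{\substack{(u,v)\in\ZZ^2\cap\sqrt{B/t}\,\mathcal R_\mm\\ \ell\mid u,v\\ 2\nmid\gcd(u,v)\\ m_j\N(\fa_j^+\fb_j)\mid L_j(u,v)}}
\prod_{j=1}^4 r\Big(\frac{L_j(u,v)}{m_j\N(\fa_j^+\fb_j)}\Big)
= \sum_{\substack{t\in\fD\\ \gcd(t,\N(\faa))=1\\ \N(\bigcap\fb_j)\mid t}}
r\Big(\frac{t}{\N(\bigcap\fb_j)}\Big)\,\mathcal U\Big(\frac{B}{t}\Big),
\]
and substituting this into Proposition \ref{prop.aftermoebius}, after interchanging the (absolutely convergent, for $B$ fixed) finite and infinite sums, yields the stated formula, where one also uses that $t\in\fD$ is forced because $t$ is a positive sum of two squares coprime to $2$. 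The main obstacle I anticipate is the careful verification that the constraint $z_0^-\in\bigcap_j\fb_j$ in the $p\equiv 1\bmod 4$ local domains, intersected over all such $p$, corresponds on the level of the integer $t=\N(z_0^+z_0^-)$ exactly to the divisibility $\N(\bigcap\fb_j)\mid t$ with the residual factor contributing $r(t/\N(\bigcap\fb_j))$ rather than $r(t)$ — i.e.\ disentangling the ideal-theoretic conditions on Gaussian integers from the rational-integer count — together with checking that no further over- or under-counting is introduced by the torsion quotient $\card\TNS(\QQ)_\tors$, which is already accounted for by the prefactor.
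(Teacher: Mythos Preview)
Your approach is essentially the same as the paper's: start from Proposition~\ref{prop.aftermoebius}, fix $(\mm,\faa,\fbb,\ell)$, fibre the count over the values $(t,u,v)=(T(R),U(R),V(R))$, and observe that for admissible $(t,u,v)$ the remaining coordinates $(X_\delta,Y_\delta)$ contribute exactly $r(t/\N(\bigcap\fb_j))\prod_j r(L_j(u,v)/n_j)$. Two small slips to fix: first, $(u,v)$ is \emph{not} a primitive pair after the third M\"obius inversion---the only surviving $2$-adic constraint is $2\nmid\gcd(u,v)$, while $\ell\mid u,v$ is imposed (your displayed formula is correct, only the prose is off); second, your justification that $t\in\fD$ is incomplete, since an odd sum of two squares can still be divisible by a prime $p\equiv 3\bmod 4$ (e.g.\ $9=0^2+3^2$)---the condition $t\in\fD$ comes instead from the local constraint $T(R)\in\ZZ_p^*$ at every $p\equiv 3\bmod 4$ in the definition of $\mathcal D^3_{\mm,\faa,\fbb,\ell,p}$.
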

\begin{proof}
We apply proposition~\ref{prop.aftermoebius}.
Let $\mm\in\Sigma$, $\faa\in\Sigma'$ and $\fbb\in\widehat{\mathcal D}^4$.
We wish to express $\card(\torsQ_{\N(\faa)\N(\fbb)\mm}(\QQ)\cap
\mathcal D^3_{\mm,\faa,\fbb,\ell}(B))$ in terms of the function $r$.
But given $(t,u,v)\in\ZZ^3$, the number of elements $R$ in that
intersection such that $(T(R),U(R),V(R))=(t,u,v)$
is $0$ if $(t,u,v)$ does not satisfy the conditions
\[\gcd(t,\N(\faa))=1,\quad
N(\bigcap\fb_j)|t,\quad
\ell|u,v,\quad 2\nmid t\gcd(u,v)%E%
\text{ and }m_j\N(\fa_j^+\fb_j) \mid L_j(u,v)\]
and is equal to
\[r\left(\frac t{\N(\bigcap\fb_j)}\right)
\prod_{j=1}^4r\left(\frac{L_j(u,v)}{m_j\N(\fa_j^+\fb_j)}\right)\]
otherwise.
\end{proof}

Let us set
\begin{equation}
  \label{eq:thed}
  d_j=m_j\N(\fa^+_j)\N(\fb_j), \quad D_j=\begin{cases}
[d_j,\ell], & \mbox{if $j=1$ or $2$},\\
d_j, & \mbox{if $j=3$ or $4$},
\end{cases}
\end{equation}
where $[d_j,\ell]$ is the least common multiple of $d_j,\ell$.
Then $d_j,D_j$ are odd positive integers such that $d_j\mid D_j$. 
We may then write %E%
\begin{equation}\label{eq:UUT}
\mathcal{U}(T)=
\sum_{\substack{(u,v)\in \sfg_{\ma{D}}\cap \sqrt{T}\mcal{R_\mm}\\
2\nmid \gcd(u,v)\\
}}\prod_{j=1}^4r\Big(\frac{L_j(u,v)}{d_j}  \Big),
\end{equation}
where 
\begin{equation}\lab{eq:lattice}
\sfg_{\ma{D}}=\{(u,v)\in\ZZ^2\mset D_j\mid L_j(u,v)\}.
\end{equation}

Before passing to a detailed analysis of the sum $\UU(T)$ and its
effect on the behaviour of the counting function $N(B)$, we will first
corral together some of the technical tools that
will prove useful to us.

\subsection{Geometric series}
\label{geom-series}

Given a vector $\ma{n}=(n_1,n_2,n_3,n_4)\in \ZZ_{\geq 0}^4$, let
$$
m(\ma{n})=\max_{i\neq j}\{n_i+n_j\}.
$$ 
It will be useful to note that
$m(n_1+\la,\ldots,n_4+\la)=m(\ma{n})+2\la$, for any $\la\in\ZZ$,
whence in particular $m(\ma{n})-2=m(n_1-1,n_2-1,n_3-1,n_4-1)$.  

For
$\ve\in\{-1,+1\}$ we will need to calculate the geometric series
\begin{equation}\lab{30-S0pm}
S_0^{\varepsilon}(z)=\sum_{\ma{n}\in\ZZ_{\geq 0}^4} \varepsilon^{n_1+n_2+n_3+n_4}
z^{m(\ma{n})},
\end{equation}
for $|z|<1$.  To do so we will break up the sum according to the values of
$\min\{n_1,n_2\}$ and $\min\{n_3,n_4\}$.  
Let $S_{0,0}^\ve(z)$ denote the contribution to $S_0^\ve(z)$ from
$\ma{n}$ such that $\min\{n_1,n_2\}=\min\{n_3,n_4\}=0$, and 
let $S_{0,1}^\ve(z)$ denote the corresponding contribution from
$\ma{n}$ such that $\min\{n_1,n_2\}\geq 1$ and $\min\{n_3,n_4\}=0$.
Now it is rather easy to see that
\begin{equation}\lab{31-0,0}
\begin{split}
S_{0,0}^\varepsilon(z)
&=
\Big(\sum_{\min\{n_1,n_2\}=0}(\varepsilon z)^{n_1+n_2}\Big)^2 =
\Big(\frac{1+\varepsilon z}{1-\varepsilon z}\Big)^2.
\end{split}
\end{equation}
since $m(\ma{n})=n_1+n_2+n_3+n_4$ in this setting. Next we claim that
\begin{equation}\lab{31-1,0}
S_{0,1}^\varepsilon(z)= \frac{ (1+2\varepsilon +2z+\varepsilon z^2)z^2}{(1-\varepsilon
z)^2(1-\varepsilon z^2)}.
\end{equation}
To see this we note that
\begin{align*}
S_{0,1}^\varepsilon(z)&=\Big(2\sum_{n_1,n_2,n_3\geq 1, n_4=0}+ \sum_{n_1,n_2\geq 1,
n_3=n_4=0} \Big) \varepsilon^{n_1+n_2+n_3+n_4}z^{m(\ma{n})}.
\end{align*}
Now the second summation is clearly  $\big(\sum_{a\geq
1}(\varepsilon z)^a\big)^2=z^2/(1-\varepsilon z)^2$. Similarly, the first summation is
\begin{align*}
&= 2
\sum_{n_1,n_2,n_3\geq 1} (\varepsilon z)^{n_1+n_2+n_3}z^{-\min\{n_j\}}\\
&=2\sum_{k\geq 1}z^{-k} \sum_{\min\{n_j\}=k} (\varepsilon z)^{n_1+n_2+n_3}\\
&=2\sum_{k\geq 1}z^{-k} \Big(\sum_{n_1,n_2,n_3\geq k}
(\varepsilon z)^{n_1+n_2+n_3}- \sum_{n_1,n_2,n_3,\geq k+1}
(\varepsilon z)^{n_1+n_2+n_3}\Big)\\  
&=2\sum_{k\geq 1}z^{-k}
\Big(\frac{(\varepsilon z)^{3k}}{(1-\ve z)^3}-\frac{(\varepsilon
  z)^{3k+3}}{(1-\ve z)^3}
\Big)
=  2\varepsilon\frac{(1+\varepsilon z+z^2)z^2}{(1-\varepsilon
z)^2(1-\varepsilon z^2)}.
\end{align*}
Combining these two equalities completes the proof of \eqref{31-1,0}.
We may now establish the following result.

\begin{lemma}\lab{31-s0-}
Let $|z|<1$.  Then we have
$$
S_0^-(z)=\frac{(1-z)^2}{(1+z)^2(1+z^2)}
$$
and
$$
S_0^+(z)=\frac{1+2z+6z^2+2z^3+z^4}{(1-z)^4(1+z)^2}.
$$
\end{lemma}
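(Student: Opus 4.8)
\emph{Sketch of proof.} The plan is to finish the dyadic decomposition of $S_0^\varepsilon(z)$ begun above and then observe that the ``deepest'' piece reproduces $S_0^\varepsilon(z)$ itself. Splitting $\ma{n}$ according to whether $\min\{n_1,n_2\}$ and $\min\{n_3,n_4\}$ vanish, and noting that interchanging $(n_1,n_2)$ with $(n_3,n_4)$ fixes both $m(\ma{n})$ and $n_1+n_2+n_3+n_4$, one gets
\[
S_0^\varepsilon(z)=S_{0,0}^\varepsilon(z)+2S_{0,1}^\varepsilon(z)+S_{0,2}^\varepsilon(z),
\]
where $S_{0,2}^\varepsilon(z)$ denotes the contribution of those $\ma{n}$ with $\min\{n_1,n_2\}\geq 1$ and $\min\{n_3,n_4\}\geq 1$. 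In that last sum I would substitute $n_i=n_i'+1$ for $i\in\{1,2,3,4\}$; since $m(n_1'+1,\ldots,n_4'+1)=m(\ma{n}')+2$ and $\varepsilon^4=1$, this gives $S_{0,2}^\varepsilon(z)=z^2S_0^\varepsilon(z)$, whence
\[
(1-z^2)\,S_0^\varepsilon(z)=S_{0,0}^\varepsilon(z)+2S_{0,1}^\varepsilon(z).
\]
All the series involved converge absolutely for $|z|<1$ because $m(\ma{n})\geq\frac12(n_1+n_2+n_3+n_4)$ (the sum of the two largest of the $n_i$ is at least the average of the four), so this rearrangement is legitimate.

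It then remains to insert the closed forms \eqref{31-0,0} and \eqref{31-1,0} and simplify. For $\varepsilon=-1$ one has $1+2\varepsilon+2z+\varepsilon z^2=-(1-z)^2$, so $S_{0,1}^-(z)=-(1-z)^2z^2/\bigl((1+z)^2(1+z^2)\bigr)$; adding $S_{0,0}^-(z)=(1-z)^2/(1+z)^2$ and dividing by $1-z^2$ produces the stated value of $S_0^-(z)$ after the common factor $(1-z)^2(1-z^2)$ cancels as expected. For $\varepsilon=+1$ I would put everything over the common denominator $(1-z)^2(1-z^2)$: using $(1+z)^2(1-z^2)=1+2z-2z^3-z^4$ and $2(3+2z+z^2)z^2=6z^2+4z^3+2z^4$ one finds $S_{0,0}^+(z)+2S_{0,1}^+(z)=(1+2z+6z^2+2z^3+z^4)/\bigl((1-z)^2(1-z^2)\bigr)$, and dividing by $1-z^2$ and using $(1-z^2)^2=(1-z)^2(1+z)^2$ gives the asserted formula for $S_0^+(z)$.

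The one non-mechanical point is the self-similarity identity $S_{0,2}^\varepsilon(z)=z^2S_0^\varepsilon(z)$; once that is in hand the rest is routine manipulation of rational functions, and I do not expect any real obstacle beyond keeping the algebra straight. As a sanity check one can compare the first few Taylor coefficients of each claimed closed form with those read off directly from \eqref{30-S0pm}.
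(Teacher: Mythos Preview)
Your proof is correct and follows essentially the same approach as the paper: decompose $S_0^\varepsilon$ according to whether $\min\{n_1,n_2\}$ and $\min\{n_3,n_4\}$ vanish, use the shift $n_i\mapsto n_i+1$ to obtain $S_{0,2}^\varepsilon(z)=z^2S_0^\varepsilon(z)$, and then insert \eqref{31-0,0} and \eqref{31-1,0}. You actually spell out the algebraic simplification and the absolute-convergence justification in more detail than the paper does.
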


\begin{proof}
The proof of lemma \ref{31-s0-} is based on the simple observation that
$$
S_0^\varepsilon(z)=S_{0,0}^\varepsilon(z)+2 S_{0,1}^\varepsilon(z) + z^2
S_0^\varepsilon(z),
$$
from which it follows that
$$
S_0^\varepsilon(z)=(1-z^{2})^{-1}\big(S_{0,0}^\varepsilon(z)+2
S_{0,1}^\varepsilon(z)\big).
$$
We complete the proof of the lemma  by inserting
\eqref{31-0,0} and \eqref{31-1,0} into this equality.
\end{proof}

\subsection{Geometry of numbers}

It will be useful to collect together some elementary facts concerning the
set $\sfg_{\ma{D}}$ that was defined in \eqref{eq:lattice}. 
For the moment we allow $\ma{D}\in \ZZp^4$ to be arbitrary. 
It is clear that $\sfg_{\ma{D}}$ 
defines a sublattice of $\ZZ^2$ of rank $2$, since it is closed
under addition and contains the vector $D_1D_2D_3D_4(u,v)$ for any $(u,v)\in \ZZ^2$.

Let us write 
\begin{equation}
  \label{eq:def-rho}
\rho(\ma{D})=\det \sfg_{\ma{D}},
\end{equation}
for the determinant.
It follows from the Chinese remainder theorem that there is a multiplicativity property
$$
\rho(g_1 h_1,\ldots,g_4 h_4)=\rho(g_1,\ldots,g_4)\rho(h_1,\ldots,h_4),
$$
whenever $\gcd(g_1 g_2 g_3 g_4,h_1 h_2 h_3 h_4)=1$.
Recall the definition \eqref{eq:resultant} of $\Delta$.  Then
\cite[Eqn. (3.12)]{heathbrown:2003} shows that 
\begin{equation}
  \label{eq:tenn''}
\rho(p^{e_1},\ldots,p^{e_4})=p^{\max_{i< j}\{e_{i}+e_{j}\}},
\end{equation}
for any prime $p\nmid \Delta$.  Likewise, 
when $p\mid \D$ one has 
\begin{equation}
  \label{eq:tenn-}
\rho(p^{e_1},\ldots,p^{e_4})\asymp p^{\max_{i< j}\{e_{i}+e_{j}\}},
\end{equation}
where the symbol $\asymp$ indicates that the two
quantities involved have the same order of magnitude. 
It follows from the properties that we have recorded here that
\begin{equation}
  \label{eq:251.1}
  \rho(\ma{D})\asymp [D_1D_2,D_1D_3,D_1D_4,D_2D_3,D_2D_4,D_3D_4].
\end{equation}

We can also say something about the size of the smallest successive
minimum, $s_1$ say,  of
$\sfg_{\ma{D}}$.  Thus we have
\begin{equation}\lab{sm}
s_1 \geq \min\{D_1,D_2\}.
\end{equation}
For this we note that
$\sfg_{\ma{D}}\subseteq \sfl=\{(u,v) \in \ZZ^2\mset ~D_1 \mid u, ~D_2 \mid v\}$.
Now $\sfl\subseteq \ZZ^2$ is a sublattice of rank $2$, with smallest
successive minimum $\min\{D_1,D_2\}$. 
The desired inequality is now obvious.
%% End of file formulation.tex
%% Including file upper.tex
\section{Estimating \texorpdfstring{$\UU(T)$}{U(T)}: an upper bound}
\label{section.bound}

Our goal in this section is to provide an upper bound for 
$\UU (T)$, which is uniform in the
various parameters.
This will allow us to reduce the range of
summation for the various parameters appearing in our expression for $N(B)$.
Our main tool will be previous work of the first two
authors \cite{bretechebrowning:sums}, 
which is concerned with the average order of
arithmetic functions ranging over the values taken by binary forms.

Throughout this section we continue to adhere to the convention that
all of our implied constants are allowed to depend upon the
coefficients of the forms $L_j$. Recall the expression for $\UU(T)$
given in \eqref{eq:UUT}, with $d_j, D_j$ given by \eqref{eq:thed}.
With these in mind we have the following result.

\begin{lemma}\label{lem:UU-upper}
Let $\ve>0$ and let $T\geq 1$.
Then we have
$$
\UU(T) \ll (d\ell)^{\ve} \Big(
\frac{T}{[D_1D_2,\ldots, D_3D_4]}+\frac{T^{1/2+\ve}}{\ell}\Big),
$$
where  $d=d_1d_2d_3d_4$.
\end{lemma}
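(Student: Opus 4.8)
The goal is a uniform upper bound for
$$
\UU(T)=\sum_{\substack{(u,v)\in \sfg_{\ma D}\cap\sqrt T\mcal R_\mm\\ 2\nmid\gcd(u,v)}}\prod_{j=1}^4 r\Big(\frac{L_j(u,v)}{d_j}\Big),
$$
so the plan is to drop the coprimality condition $2\nmid\gcd(u,v)$ (which only loses a constant factor, or can simply be discarded for an upper bound), and then to parametrise the lattice $\sfg_{\ma D}$ by a change of variables. Writing $(u,v)=w_1\ma b_1+w_2\ma b_2$ for a reduced basis $\ma b_1,\ma b_2$ of $\sfg_{\ma D}$, the forms $L_j(u,v)$ become new binary forms $\widetilde L_j(w_1,w_2)$ in the variables $(w_1,w_2)$, each divisible (identically on the lattice) by $D_j$, so that $L_j(u,v)/d_j=(D_j/d_j)\widetilde L_j(w_1,w_2)/D_j$ where $\widetilde L_j/D_j$ is again an integral binary form. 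The region $\sqrt T\mcal R_\mm$ pulls back to a convex region in the $(w_1,w_2)$-plane whose diameter is controlled by $\sqrt T/s_1$, where $s_1$ is the smallest successive minimum of $\sfg_{\ma D}$; by \eqref{sm} we have $s_1\geq\min\{D_1,D_2\}$, and by the theory of successive minima the second minimum $s_2$ satisfies $s_1 s_2\asymp\rho(\ma D)\asymp[D_1D_2,\ldots,D_3D_4]$ (using \eqref{eq:251.1}).

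The heart of the argument is then to invoke the first two authors' work \cite{bretechebrowning:sums} on averages of arithmetic functions over values of binary forms. After the change of variables we are estimating a sum of the shape $\sum_{(w_1,w_2)\in\mathcal W} h(\widetilde L_1\widetilde L_2\widetilde L_3\widetilde L_4)$ where $h$ is a suitable multiplicative function majorising $\prod r(\widetilde L_j/D_j)$ up to the bounded factors $\prod(D_j/d_j)^\ve$, and $\mathcal W$ is a convex region of the relevant size. The main term produced by \cite{bretechebrowning:sums} for such a sum over a box of sidelength roughly $R$ is of order $R^2$ times a singular series, which after unwinding the change of variables gives the expected $T/\rho(\ma D)\asymp T/[D_1D_2,\ldots,D_3D_4]$, while the error term there is of order $R^{1+\ve}$ (coming from the perimeter / one-dimensional boundary contribution); since $R\ll\sqrt T/s_1$, this contributes $\ll (\sqrt T/s_1)^{1+\ve}\ll T^{1/2+\ve}/\ell$ after noting $s_1\geq\min\{D_1,D_2\}\geq\ell$ (as $\ell\mid D_1$ and $\ell\mid D_2$ by \eqref{eq:thed}). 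The divisor-bounded losses are absorbed into the factor $(d\ell)^\ve$, using $d_j\mid D_j$ and crude bounds $r(n)\ll n^\ve$, $\widetilde L_j(w_1,w_2)\ll_{\ma b_i} T$.

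The main obstacle I anticipate is bookkeeping rather than anything deep: one must check that the hypotheses of \cite{bretechebrowning:sums} genuinely apply to the transformed forms $\widetilde L_j/D_j$, i.e.\ that they remain pairwise non-proportional with bounded coefficients relative to the problem (which they are, since the linear change of variables has bounded effect after the rescaling), and one must track how the level $\ma D$ enters the singular series so as to recover exactly $\rho(\ma D)$ in the denominator and not merely something of the same order. The one genuinely important input that keeps the dependence on the parameters clean is the successive-minima estimate combined with \eqref{eq:251.1}: it lets us bound the diameter of $\mathcal W$ by $\sqrt T/s_1$ and simultaneously identify $s_1 s_2$ with $\rho(\ma D)$, so that the two terms in \cite{bretechebrowning:sums} translate precisely into the two terms $T/[D_1D_2,\ldots,D_3D_4]$ and $T^{1/2+\ve}/\ell$ claimed in the lemma. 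Finally one should remark that in the degenerate case $\sqrt T<s_1$ the region $\mathcal W$ contains only the origin (or few points) and the bound is trivial, so we may assume $\sqrt T\geq s_1$ throughout.
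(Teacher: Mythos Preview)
Your plan is essentially the paper's own argument: drop the parity condition, pass to a minimal basis of $\sfg_{\ma D}$, rewrite the $L_j/d_j$ as integral linear forms $M_j$ in the new coordinates, bound $\prod_j r(|M_j|)$ by a single multiplicative majorant, and apply \cite[corollary~1]{bretechebrowning:sums} to the resulting box of sides $T_i=\sqrt T/s_i$, so that the main term $T_1T_2\asymp T/\rho(\ma D)$ and the boundary term $T_1^{1+\ve}\ll T^{1/2+\ve}/\ell$ (via $s_1\geq\ell$) give exactly the two claimed contributions. The one place where your write-up needs tightening is the assertion that the transformed forms have ``bounded coefficients'': in fact their coefficients are of size $O(\rho(\ma D))$, hence only polynomially bounded in $d\ell$, and the paper handles this not by any rescaling but by including the primes dividing these coefficients (along with those dividing $6d\ell$) in the exceptional set of the majorant multiplicative function $r_1$, which is precisely what produces the factor $(d\ell)^\ve$ when \cite[corollary~1]{bretechebrowning:sums} is invoked.
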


\begin{proof}
Since we are only concerned with providing an upper bound for
$\UU(T)$, we may drop any of the conditions in the
summation over $(u,v)$ that we care to choose.  Thus it follows that
$$
 \UU(T)\leq 
\sum_{(u,v)\in \sfg_{\ma{D}}\cap (0,\sqrt{T}]^2}
\prod_{j=1}^4
r\Big(\frac{|L_j(u,v)|}{d_{j}}\Big),
$$
where $\sfg_{\ma{D}}$ is the lattice defined in \eqref{eq:lattice}.

Let $\ma{e}_1,\ma{e}_2$ be a minimal basis for
$\sfg_{\ma{D}}$.
This is constructed by taking 
$\ma{e}_1 \in \sfg_{\ma{D}}$ to be any non-zero vector for which
$|\ma{e}_1|$ is least, and then choosing $\ma{e}_2 \in \sfg_{\ma{D}}$
to be any vector not proportional to $\ma{e}_1$, for which $|\ma{e}_2|$ is
least. The  successive minima of $\sfg_{\ma{D}}$ are
the numbers $s_i=|\ma{e}_i|,$ for $i=1,2$. They satisfy the inequalities
\begin{equation}\lab{rog6}
\ell \leq s_1\leq s_2, \quad s_1s_2 \ll \rho(\ma{D})  \leq s_1s_2,
\end{equation}
where $\rho$ is defined in \eqref{eq:def-rho} and the lower bound for
$s_1$ follows from \eqref{sm} and the definition \eqref{eq:thed} of $D_1,D_2$.
Write $M_j(X,Y)$ for the linear form obtained from $d_j^{-1}L_j(U,V)$
via the change of variables $(U,V)\mapsto X \ma{e}_1+Y\ma{e}_2$.
Each $M_j$ has integer coefficients of size $O(\rho(\ma{D}))$.
Furthermore, it follows from work of Davenport \cite[lemma 5]{davenport:cubic}
that $x\ll  \max\{|u|,|v|\}/s_1$ and $y\ll  \max\{|u|,|v|\}/s_2$ whenever one writes
$(u,v)\in\sfg_{\ma{D}}$ as
$
(u,v)=x \ma{e}_1+y\ma{e}_2,
$
with $x,y\in \ZZ$.  Let 
$$
T_1=s_1^{-1}\sqrt{T}, \quad T_2=s_2^{-1}\sqrt{T},
$$
so that in particular $T_1\geq T_2>0$. Then we may deduce that
$$
\UU(T)
\leq
\sum_{x \ll T_1, y\ll T_2}\prod_{j=1}^4r(|M_j(x,y)|).
$$

Suppose that $M_j(X,Y)=a_{j1}X+a_{j2}Y$, 
with integer coefficients $a_{ji}=O(\rho(\ma{D}))$. 
We proceed to introduce a multiplicative function $r_1(n)$, via
$$
r_1(p^\nu)=\left\{
\begin{array}{ll}
1+\chi(p), & \mbox{$\nu=1$ and $p \nmid 6d\ell \prod a_{ji}$,}\\
(1+\nu)^4, & \mbox{otherwise},
\end{array}
\right.
$$
where $d=d_1d_2d_3d_4$.
Then $r(n_1)r(n_2)r(n_3)r(n_4)\leq 2^8 r_1(n_1n_2n_3n_4),$
and it is not hard to see that $r_1$ belongs to the class of 
non-negative arithmetic functions considered previously by the first
two authors \cite{bretechebrowning:sums}. An 
application of \cite[corollary 1]{bretechebrowning:sums} now reveals that 
\begin{align*}
\UU(T)
\ll
(d\ell)^\ve (T_1T_2 + T_1^{1+\ve})
&\ll (d \ell)^\ve   \Big(\frac{T}{s_1s_2} + \frac{T^{1/2+\ve}}{s_1}\Big),
\end{align*}
for any $\ve>0$. 
Combining \eqref{rog6} with \eqref{eq:251.1} we therefore conclude the
proof of the lemma.
\end{proof}

The main purpose of lemma \ref{lem:UU-upper} is to reduce the range of
summation of the various parameters appearing in proposition \ref{prop:6.1}.
Let us write $E_0(B)$ for the overall contribution to the summation from
values of $\fb_j, \ell$ such that
\begin{equation}
  \label{eq:bnf}
\max \N(\fb_j)>\log (B)^{D} \quad \mbox{or} \quad \ell> \log (B)^L,
\end{equation}
for parameters $D,L>0$ to be selected in due course. 
We will denote by $N_1(B)$ the remaining contribution, so that 
\begin{equation}
  \label{eq:N1E0}
  N(B)=N_1(B)+E_0(B).
\end{equation}
Henceforth, the
implied constants in our
estimates will be allowed to depend on $D$ and $L$, in addition to 
the coefficients of the linear forms $L_j$.
We proceed to establish the following result.

\begin{lemma}\label{lem:M(B)}
We have $E_0(B)\ll B\log (B)^{1-\min\{D/4,L/2\}+\ve}$, for any $\ve>0$.
\end{lemma}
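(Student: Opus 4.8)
The plan is to bound $E_0(B)$ by inserting the uniform upper bound of Lemma~\ref{lem:UU-upper} into the expression for $N(B)$ from Proposition~\ref{prop:6.1}, and then summing over the parameters $\mm,\faa,\fbb,\ell,t$ subject to the restriction \eqref{eq:bnf}. First I would recall that $\mm\in\Sigma$ and $\faa\in\Sigma'$ range over finite sets of bounded size (so these contribute only $O(1)$), that $m_j\N(\fa_j^+)=O(1)$, and that the arguments $d_j=m_j\N(\fa_j^+)\N(\fb_j)$ and $D_j$ defined in \eqref{eq:thed} satisfy $d_j\ll \N(\fb_j)$, $D_j\ll [\N(\fb_j),\ell]\le \N(\fb_j)\ell$, while $[D_1D_2,\ldots,D_3D_4]\gg [\N(\fb_j)]_{\text{products}}$. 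Applying Lemma~\ref{lem:UU-upper} with $T=B/t$ gives
\[
\mathcal U(B/t)\ll (d\ell)^\ve\Big(\frac{B/t}{[D_1D_2,\ldots,D_3D_4]}+\frac{(B/t)^{1/2+\ve}}{\ell}\Big),
\]
so after multiplying by $r(t/\N(\bigcap\fb_j))\prod r(\cdots)=t^{O(\ve)}$ and summing over $t\in\fD$ with $t\le B$ (using $\sum_{t\le B} t^{-1+\ve}\ll B^{\ve}$ for the first term and $\sum_{t\le B} t^{-1/2-\ve+\ve'}\ll B^{1/2}$ for the second), the second term contributes only $O(B^{1/2+\ve})$, which is negligible. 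The main term is therefore
\[
E_0(B)\ll B^{1+\ve}\sum_{\substack{\fbb,\ell \text{ with }\eqref{eq:bnf}\\ 2\nmid\ell}}\frac{(\N(\fb_1)\N(\fb_2)\N(\fb_3)\N(\fb_4)\,\ell)^\ve}{[D_1D_2,\ldots,D_3D_4]}.
\]

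Next I would estimate this sum. Using \eqref{eq:251.1} and $D_j=[d_j,\ell]$ for $j=1,2$, $D_j=d_j$ for $j=3,4$, one checks that the denominator $[D_1D_2,D_1D_3,\ldots,D_3D_4]$ dominates, up to $(d\ell)^\ve$, a quantity of the shape $\ell\cdot\max_j\N(\fb_j)\cdot(\text{further factors})$; crucially it is at least $\gg \ell^{1-\ve}\max_j\N(\fb_j)^{1-\ve}$ and at least $\gg (\N(\fb_1)\N(\fb_2)\N(\fb_3)\N(\fb_4))^{1/2-\ve}$ times appropriate combinations. Since the number of ideals $\fb\subset\ZZ[\ci]$ with $\N(\fb)=n$ is $O(n^\ve)$ (indeed it is $r(n)/4$, bounded by the divisor function), the sum over $\fbb$ of $1/\prod\N(\fb_j)$ restricted by a lower bound $\max\N(\fb_j)>Y$ is $O(Y^{-1/2+\ve})$ — one exploits that the factor with the largest norm forces genuine decay of order $Y^{-1/2}$ while the other three sums over $1/\N(\fb_j)$ each converge up to $(\log)^{O(1)}$ loss. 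Similarly $\sum_{\ell>Z}\ell^{-1+\ve}\ll Z^{-1+\ve}$ (or rather, here the full denominator only gives one factor of $\ell$, so the honest decay in $\ell$ is of order $Z^{-1+\ve}$; but combined with the first-term structure one gets at least $Z^{-1/2}$ which already suffices after matching exponents). Splitting the region \eqref{eq:bnf} into the two cases $\max\N(\fb_j)>\log(B)^D$ and $\ell>\log(B)^L$, and using $(\N(\fbb)\ell)^\ve$ against a fixed small power saved from the decay, one obtains a saving of $\log(B)^{-D/4}$ from the first case and $\log(B)^{-L/2}$ from the second. This yields $E_0(B)\ll B^{1+\ve}(\log(B)^{-D/4}+\log(B)^{-L/2})$, and absorbing $B^\ve$ harmlessly (or, more carefully, running the argument with $t^{O(\ve)}$ replaced by the true divisor-bound $\tau(t)^{O(1)}$ and summing $\sum_{t\le B}\tau(t)^{O(1)}/t\ll\log(B)^{O(1)}$, then choosing $D,L$ large relative to that fixed power) gives the stated bound $E_0(B)\ll B\log(B)^{1-\min\{D/4,L/2\}+\ve}$.

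The main obstacle I anticipate is bookkeeping: making the factor $B^\ve$ disappear in favour of a logarithmic power, which forces one to be careful and replace the crude $T^\ve$ and $(d\ell)^\ve$ estimates by genuine divisor-function bounds, then carry out the $t$-summation as $\sum_{t\le B}\tau(t)^{C}/t\ll\log(B)^{O_C(1)}$ and the $\fbb$-summation as a product of truncated Dirichlet series $\sum_{\N(\fb)\le Y}\tau(\N(\fb))^C/\N(\fb)$. The exponents $D/4$ and $L/2$ (rather than $D$ and $L$) arise precisely because $\max_j\N(\fb_j)$ enters the denominator essentially to the first power but is the largest of four norms, so restricting it to be $>\log(B)^D$ saves only $\log(B)^{-D/4}$ after one redistributes: $\max\N(\fb_j)^{-1}\le\big(\prod\N(\fb_j)\big)^{-1/4}$; and $\ell$ enters to the first power, so $\ell>\log(B)^L$ saves $\log(B)^{-L/2}$ once one uses half of it against the competing $T^{1/2}/\ell$ term and the $\ell^\ve$ loss. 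Getting these exponents exactly right, and verifying that the convergent pieces really do only cost powers of $\log B$, is where the care is needed; everything else is a routine application of Lemma~\ref{lem:UU-upper} and elementary estimates for sums of multiplicative functions.
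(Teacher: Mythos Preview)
Your proposal has two genuine gaps that prevent the argument from going through.

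First, your treatment of the second term from Lemma~\ref{lem:UU-upper} is incorrect. You claim that after summing over $t\le B$ the contribution of $(B/t)^{1/2+\ve}/\ell$ is $O(B^{1/2+\ve})$, but in fact $\sum_{t\le B}(B/t)^{1/2+\ve}t^{\ve'}\asymp B^{1+\ve}$, so the contribution is of order $B^{1+\ve}/\ell$ for each fixed $(\fbb,\ell)$. Since this bound carries no decay in $\fbb$ at all, summing over the infinitely many $\fbb\in\widehat\fD^4$ (which is what you must do in the case $\ell>\log(B)^L$ of \eqref{eq:bnf}) diverges outright. The paper avoids this by first observing that $\UU(B/t)=0$ unless each $D_j\le\sqrt{B/t}$, which forces $t\le B_0$ with $B_0\ll B/(\ell\sqrt{\N(\fb_1)\cdots\N(\fb_4)})$; only with this truncation does the second term acquire the factor $\ell^{-3/2}(\prod_j\N(\fb_j))^{-1/4}$ needed for convergence.

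Second, even for the first term your sum over $\fbb$ diverges. You discard the constraint $\N(\bigcap_j\fb_j)\mid t$, but this is exactly what produces the extra factor $1/\N(\bigcap_j\fb_j)$ after the $t$-summation. Without it you are left with $\sum_{\fbb}(\prod_j\N(\fb_j))^\ve/[D_1D_2,\dots,D_3D_4]$, and by \eqref{eq:tenn''} the local Euler factor of this sum at a prime $p$ is $\sum_{\mathbf n\in\ZZ_{\ge0}^4}p^{-m(\mathbf n)}=1+4/p+O(1/p^2)$, so the product over $p$ diverges. The paper instead keeps the factor $1/\N(\bigcap_j\fb_j)$, uses $\N(\bigcap_j\fb_j)\ge[\N(\fb_1),\dots,\N(\fb_4)]$, and then applies Rankin's trick with an exponent $\delta<1/4$; this is precisely what produces the saving $\log(B)^{-D\delta}$ and hence the exponent $D/4$ in the statement. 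Similarly, the saving $\log(B)^{-L/2}$ comes from $\ell^{3/2}$ in the denominator (via both $B_0^{1/2}$ and the explicit $1/\ell$), not from the mechanism you sketch.

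Finally, the $B^{\ve}$ issue is not merely bookkeeping: with your crude bound $r(t/\N(\bigcap_j\fb_j))\le t^\ve$ you cannot recover a $\log B$ in place of $B^\ve$. The paper uses the honest estimate $\sum_{t\le x}r(t)/t\ll\log(2x)$ from \eqref{eq:familiar} together with the divisibility constraint, and the residual $(d\ell)^\ve$ from Lemma~\ref{lem:UU-upper} is absorbed by taking $\delta$ strictly below $1/4$ in the Rankin step.
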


\begin{proof}
We begin observing that  $\UU(B/t)=0$ in $E_0(B)$, unless 
$D_j\leq \sqrt{B/t}$,
in the notation of \eqref{eq:thed}. But then it follows that we must have
$$
t \leq \frac{B}{\sqrt{D_1D_2D_3D_4}}\leq \frac{B
  \sqrt{\gcd(\N(\fb_1),\ell)\gcd(\N(\fb_2),\ell)}}{\ell\sqrt{\N(\fb_1)\cdots
    \N(\fb_4)}} =B_0,
$$
say, in the summation over $t$. 
Here we have used the fact that $m_jN(\fa^+_j)=O(1)$ whenever $\mm\in %T%
\Sigma$ and $\faa\in\Sigma'$.

We now apply lemma  \ref{lem:UU-upper} to bound $\UU(B/t)$, giving  
\begin{align*}
E_0(B)\ll
\sum_{\substack{\mm\in \Sigma\\ \faa\in \Sigma'}}
&\sum_{\ell} \ell^\ve
\sum_{\fb_1,\ldots,\fb_4} (\N(\fb_1)\cdots \N(\fb_4))^\ve \\
&\times \sum_{\substack{t\leq B_0\\ \N(\bigcap
    \fb_j)\mid t}}
r\Big(\frac{t}{\N(\bigcap \fb_j)}\Big)
\Big(
\frac{B}{t[D_1D_2,\ldots, D_3D_4]}+\frac{B^{1/2+\ve}}{t^{1/2+\ve}\ell}\Big),
\end{align*}
for any $\ve>0$, where the summations over $\ell$ and $\fb_j$ are
subject to \eqref{eq:bnf}.  In view of the elementary estimates
\begin{equation}
  \label{eq:familiar}
\sum_{n\leq x} \frac{r(n)}{n^\theta} \ll \begin{cases}
\log (2x) & \mbox{if $\theta\geq 1$,}\\
x^{1-\theta} & \mbox{if $0\leq \theta<1$,}
\end{cases}
\end{equation}
we easily conclude that
\begin{align*}
E_0(B)\ll
\sum_{\substack{\mm\in \Sigma\\ \faa\in \Sigma'}}
&\sum_{\ell} \ell^\ve
\sum_{\fb_1,\ldots,\fb_4} (\N(\fb_1)\cdots \N(\fb_4))^\ve \\
&\times  \frac{1}{\N(\bigcap \fb_j)}\Big(
\frac{B \log (B)}{[D_1D_2,\ldots,
  D_3D_4]}+\frac{B^{1/2+\ve}B_0^{1/2-\ve}}{\ell}\Big).
\end{align*}
The second term in the inner bracket is 
$$
\frac{B^{1/2+\ve}B_0^{1/2-\ve}}{\ell}\ll 
B \cdot \frac{\gcd(\N(\fb_1),\ell)^{1/4}\gcd(\N(\fb_2),\ell)^{1/4}} 
{\ell^{3/2-\ve}\N(\fb_1)^{1/4-\ve}\cdots \N(\fb_4)^{1/4-\ve}}.
$$
Similarly, a rapid consultation with \eqref{eq:thed} reveals that the first term is 
\begin{align*}
  \frac{B \log (B)}{[D_1D_2,\ldots,
  D_3D_4]}&\ll  
  \frac{B \log (B)}{(D_1D_2)^{3/4}(D_3D_4)^{1/4}}\\
&\ll  
B\log (B)\cdot \frac{\gcd(\N(\fb_1),\ell)^{1/4}\gcd(\N(\fb_2),\ell)^{1/4}} 
{\ell^{3/2}\N(\fb_1)^{1/4}\cdots
    \N(\fb_4)^{1/4}}.
\end{align*}
Bringing these estimates together we may now conclude that 
\begin{align*}
E_0(B)\ll B\log (B)
\sum_{\ell} 
\sum_{\fb_1,\ldots,\fb_4} 
\frac{1}{\N(\bigcap \fb_j)}\cdot \frac{\gcd(\N(\fb_1),\ell)^{1/4}\gcd(\N(\fb_2),\ell)^{1/4}} 
{\ell^{3/2-\ve}\N(\fb_1)^{1/4-\ve}\cdots
    \N(\fb_4)^{1/4-\ve}},
\end{align*}
where the sums are over $\ell \in \ZZp$ and $\fb_1,\ldots,\fb_4 \subseteq
\widehat{\mathfrak{D}}$ such that \eqref{eq:bnf} holds.

For fixed $\ell\in \ZZp$
and $\ve>0$ we proceed to estimate the sum
$$
S_{\ell}(T)=\sum_{\substack{\fb_1,\ldots,\fb_4\subseteq \ZZ[i]\\ \max
    \N(\fb_j)\geq T}} 
\frac{\gcd(\N(\fb_1),\ell)^{1/4}\gcd(\N(\fb_2),\ell)^{1/4}} 
{\N(\bigcap \fb_j)\N(\fb_1)^{1/4-\ve}\cdots
    \N(\fb_4)^{1/4-\ve}}.
$$
This is readily achieved via Rankin's trick and the observation
that $\N(\mathfrak{a})\mid \N(\mathfrak{a}\cap\mathfrak{b})$ for
any $\mathfrak{a}, \mathfrak{b}\subseteq \ZZ[i]$.  Thus it follows that 
$\N(\bigcap \fb_j)\geq [\N(\fb_1),\ldots,\N(\fb_4)]$, whence
\begin{align*}
S_{\ell}(T)
&\leq \frac{1}{T^\delta}\sum_{\fb_1,\ldots,\fb_4\subseteq\ZZ[i]}
\frac{\gcd(\N(\fb_1),\ell)^{1/4}\gcd(\N(\fb_2),\ell)^{1/4}} 
{[\N(\fb_1),\ldots,\N(\fb_4)]^{1-\delta}\N(\fb_1)^{1/4-\ve}\cdots
    \N(\fb_4)^{1/4-\ve}}\\
&\ll \frac{1}{T^\delta}\sum_{b_1,\ldots,b_4=1}^\infty 
\frac{\gcd(b_1,\ell)^{1/4}\gcd(b_2,\ell)^{1/4}} 
{[b_1,\ldots,b_4]^{1-\delta}b_1^{1/4-\ve}\cdots
    b_4^{1/4-\ve}}\\
&\ll \frac{1}{T^\delta}\sum_{[k_1,k_2]\mid \ell} (k_1k_2)^\ve
\sum_{b_1,\ldots,b_4=1}^\infty 
\frac{1}{[b_1,\ldots,b_4]^{1-\delta}b_1^{1/4-\ve}\cdots
    b_4^{1/4-\ve}}\\
&\ll_{\delta} \ell^\ve T^{-\delta},
\end{align*}
provided that $\delta<1/4,$  
as can be seen by considering the corresponding Euler product. 

Armed with this we see that the overall contribution to the above
estimate for $E_0(B)$ arising from 
$\ell, \fb_1,\ldots,\fb_4 $ for which $\ell>\log (B)^L$
is 
\begin{align*}
&\ll B\log (B) \sum_{\ell> \log (B)^L} \ell^{-3/2+\ve} 
S_{\ell}(1) \ll B\log (B)^{1-L/2+\ve},
\end{align*}
which is satisfactory.  In a similar fashion we 
see that the overall contribution to $E_0(B)$ arising from 
$\ell, \fb_1,\ldots,\fb_4 $ for which $\max \N(\fb_j)>\log (B)^D$ is 
\begin{align*}
&\ll 
B\log (B)
\sum_{\ell} \ell^{-3/2+\ve}
S_{\ell}(\log (B)^D) 
\ll B\log (B)^{1-D/4+\ve},
\end{align*}
which is also satisfactory. The statement of lemma \ref{lem:M(B)} is
now obvious. 
\end{proof}
%% End of file upper.tex
%% Including file estimate.tex
\section{Estimating \texorpdfstring{$\UU(T)$}{U(T)}: an asymptotic formula}
\label{section.estimate}

In view of our work in the previous section it remains to estimate 
$N_1(B)$, which we have defined 
as the contribution to $N(B)$ from values of $\fb_j, \ell$ for which
\eqref{eq:bnf} fails.  Thus 
\begin{align*}
N_1(B)&=\frac 1{\card\TNS(\QQ)_\tors}
\sum_{\substack{\mm\in \Sigma\\ \faa\in \Sigma'}}\mu(\faa)
\hspace{-0.2cm}
\sum_{\substack{\ell \leq \log (B)^L \\ 2\nmid \ell}} 
\hspace{-0.2cm}
\mu(\ell)
\hspace{-0.5cm}
\sum_{\substack{\fb_1,\ldots,\fb_4 \in \widehat{\fD}\\
\N(\fb_j)\leq \log (B)^{D}}} 
\hspace{-0.2cm}
\prod_{j=1}^4 \mu(\fb_j) 
\hspace{-0.3cm}
\sum_{\substack{t \in \fD\cap[1,B]\\ \gcd(t,\N(\faa))=1\\ \N(\bigcap
    \fb_j)\mid t}}
\hspace{-0.3cm}
r\Big(\frac{t}{\N(\bigcap \fb_j)}\Big)
\mathcal{U}\Big(\frac{B}{t}\Big).
\end{align*}
Here we have inserted the condition $t\leq B$ in the summation over
$t$, since the innermost summand is visibly zero otherwise. 
Whereas the previous section was primarily concerned with a uniform
upper bound for the sum $\UU(T)$ defined in \eqref{eq:UUT}, our work in
the present section will revolve around a uniform asymptotic formula
for $\UU(T)$. The error term that arises in our analysis will involve
the real number
\begin{equation}\label{defeta} 
\eta=1-\frac{1+\log(\log (2))}{\log (2)},
\end{equation}
which has numerical value $0.086071\ldots$.

Before revealing our result for $\UU(T)$, we must first 
introduce some notation for certain local densities that emerge in the
asymptotic formula. 
In fact estimating $\UU(T)$ boils down to counting integer points on the affine variety
\begin{equation}
  \label{eq:torsor}
L_j(U,V)=d_j(S_j^2+T_j^2), \quad (1\leq j\leq 4),
\end{equation}
in $\Aff_{\QQ}^{10}$, with $U,V$ restricted to lie in a lattice
depending on $\mathbf{D}$. %E%
Thus the expected leading constant admits an interpretation as a product of local
densities. 
Given a prime $p>2$ and 
$\bd,\mathbf{D}$ as in~\eqref{eq:thed}, let %E%
%E%$\mal,\mnu\in\ZZ_{\geq 0}^4$, let
$$
N_{\bd,\mathbf{D}}(p^n)=\card\Big\{(u,v,\ma{s},\ma{t})\in
(\ZZ/p^n\ZZ)^{10}\bmset\begin{array}{l}
L_j(u,v)\equiv d_j(s_j^2+t_j^2) \bmod{p^n}\\
D_j \mid L_j(u,v)
\end{array}
\Big\}.
$$
The $p$-adic density on  \eqref{eq:torsor}
is defined to be  
\begin{equation}
  \label{eq:def-sigp}
\omega_{\bd,\mathbf{D}}(p)=\lim_{n\rightarrow
  \infty}p^{-6n-\lambda_1-\cdots-\lambda_4}N_{\bd,\mathbf{D}}(p^n), 
\end{equation}
when $p>2, $ where
\begin{equation}\label{eq:lm}
\mal=\big(v_p(d_{1}),\ldots,v_p(d_{4})\big),
\quad \mmu=\big(v_p(D_{1}),\ldots,v_p(D_{4})\big).
\end{equation}
When $\ma{d},\ma{D}$ are as in \eqref{eq:thed} and $p>2$, we will set 
\begin{equation}
  \label{eq:sig>2}
\sigma_p(\ma{d},\ma{D})=\omega_{\bd,\mathbf{D}}(p).
\end{equation}
Turning to the case $p=2$, we define 
\begin{equation}
  \label{eq:sig=2}
\sigma_2(\ma{d},\ma{D})=
\lim_{n\rightarrow \infty}2^{-6n}N_{\bd,\mathbf{D}}(2^n)%T% 
%T% N_{\ma{d},\ma{D}}'(2^n),
\end{equation}
where
$$
N_{\bd,\mathbf{D}}(2^n) =
%T%N_{\ma{d},\ma{D}}'(2^n)=
\card\Big\{(u,v,\ma{s},\ma{t})\in
(\ZZ/2^n\ZZ)^{10}\bmset \begin{array}{l}
L_j(u,v)\equiv d_j
(s_j^2+t_j^2) \bmod{2^n}\\
2\nmid \gcd(u,v)
\end{array}
\Big\}.
$$
Finally, we let $\omega_{\mcal{R}_\mm}(\infty)$ denote the usual 
archimedean density of solutions  to the system of equations
\eqref{eq:torsor}, with  $(u,v,\ma{s},\ma{t})\in\mcal{R}_\mm\times
\RR^{8}$ and where $\mcal{R}_\mm$  is defined in \eqref{eq:RR}. 
We are now ready to record our main estimate for $\UU(T)$.

\begin{lemma}\label{lem:Tfinal}
Recall the definitions of $\ma{d},\ma{D}$ from \eqref{eq:thed}.
Then for any $\ve>0$ and $T>1$ we have 
$$
\UU(T)=c_{\bd,\ma{D},\mcal{R}_\mm} T
+O\Big(\frac{(d_1d_2d_3d_4 \ell)^\ve T}{\log (T)^{
    \eta-\varepsilon}}\Big), 
$$
where
\begin{equation}
  \label{eq:cfinal}
c_{\bd,\ma{D},\mcal{R}_\mm}=\omega_{\mcal{R}_\mm}(\infty)
\prod_{p\in\primes}\sigma_p(\ma{d},\ma{D}).
\end{equation}
\end{lemma}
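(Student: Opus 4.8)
Our plan is to derive Lemma~\ref{lem:Tfinal} by adapting the geometry-of-numbers method of \cite{bretechebrowning:4linear}, the two new features being the congruence conditions already present in \eqref{eq:UUT} and, more importantly, the uniformity required in $\ma{d}$ and~$\ell$. The natural starting point is to reinterpret $\UU(T)$ as a weighted count of integral points on the affine variety \eqref{eq:torsor}. Using the identity $r(n)=4\sum_{k\mid n}\chi(k)$ to expand each of the four factors $r(L_j(u,v)/d_j)$ turns \eqref{eq:UUT} into
\[
\UU(T)=4^4\sum_{\ma{k}\in\ZZp^4}\Big(\prod_{j=1}^{4}\chi(k_j)\Big)\,M(\ma{k};T),
\]
where $M(\ma{k};T)$ counts the $(u,v)\in\sfg_{\ma{D}}\cap\sqrt{T}\,\mcal{R}_\mm$ with $2\nmid\gcd(u,v)$ and $d_jk_j\mid L_j(u,v)$ for every~$j$. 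Since $\chi$ vanishes on even integers and all of $d_j,D_j$ are odd, the conditions $d_jk_j\mid L_j(u,v)$ and $D_j\mid L_j(u,v)$ together cut out a sublattice $\sfg_{\ma{k}}\subseteq\sfg_{\ma{D}}$ whose index and successive minima are controlled, prime by prime, through the Chinese remainder theorem and the local formulas \eqref{eq:tenn''}--\eqref{eq:251.1}, the remaining constraint $2\nmid\gcd(u,v)$ being a congruence modulo~$2$ that decouples from the rest. As in the proof of Lemma~\ref{lem:UU-upper} it is convenient first to fix a minimal basis $\ma{e}_1,\ma{e}_2$ of $\sfg_{\ma{D}}$ and to replace each $d_j^{-1}L_j$ by the integral linear form $M_j$ obtained under $(U,V)\mapsto X\ma{e}_1+Y\ma{e}_2$: these $M_j$ remain pairwise non-proportional and have coefficients of size $O(\rho(\ma{D}))$, which is the mechanism by which the final estimate will be uniform, as $\rho(\ma{D})\ll(d_1d_2d_3d_4\ell)^{O(1)}$.

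For the main term we introduce a truncation level $K=K(T)$ and split according to whether $\max_j k_j\le K$. For $\max_j k_j\le K$ we estimate $M(\ma{k};T)$ by a lattice-point count for $\sfg_{\ma{k}}$ in the dilated region,
\[
M(\ma{k};T)=\kappa_2(\ma{k})\,\Vol(\mcal{R}_\mm)\,\frac{T}{\det\sfg_{\ma{k}}}+O\!\left(\frac{\sqrt{T}}{s_1(\sfg_{\ma{k}})}\right),
\]
where $\kappa_2(\ma{k})$ is the density of the $2$-adic congruence and, by \eqref{sm} and \eqref{eq:thed}, $s_1(\sfg_{\ma{k}})\ge s_1(\sfg_{\ma{D}})\ge\min\{D_1,D_2\}\ge\ell$. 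Summing these main terms and recognising the outcome, via the Chinese remainder theorem and the local formulas above, as the truncation at level~$K$ of an Euler product over $\primes$, we arrive at a leading term $c\,T$ in which the Euler product — which converges conditionally, the convergence being driven by the oscillation of $\chi$ exactly as for $L(1,\chi)$ — combines with $\Vol(\mcal{R}_\mm)$ and with the (constant) densities $\pi/|d_j|$ of the real fibres $\{d_j(s_j^2+t_j^2)=L_j(u,v)\}$ to give precisely $c=\omega_{\mcal{R}_\mm}(\infty)\prod_{p}\sigma_p(\ma{d},\ma{D})=c_{\bd,\ma{D},\mcal{R}_\mm}$ of \eqref{eq:cfinal}; positivity of the local densities and non-vanishing of the product use that the $L_j$ are pairwise non-proportional (equivalently $\D\neq0$). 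It then remains to show that the accumulated lattice-point errors $O(\sqrt{T}/s_1(\sfg_{\ma{k}}))$, the contribution of $(u,v)$ whose coordinates share a large common factor, and the tail $\max_j k_j>K$ of the completed sum, are all absorbed into the error term asserted by the lemma.

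The delicate point, and the origin of the exponent~$\eta$, is the tail $\max_j k_j>K$. Here one cannot argue with absolute values: taking $k_3=k_4=1$ and letting $k_1,k_2$ run freely one has $\det\sfg_{\ma{k}}\asymp k_1k_2$ (up to a constant depending only on $\ma{D}$), so that $\sum_{\ma{k}}\prod_j|\chi(k_j)|/\det\sfg_{\ma{k}}$ diverges, and the oscillation of the characters $\chi(k_j)$ must be exploited, precisely as in \cite{bretechebrowning:4linear}. Equivalently, the divisor-switching step of the hyperbola method reduces matters to bounding the number of $(u,v)$ in the region for which some $L_j(u,v)$ has a divisor in a dyadic interval near $\sqrt{T}$ — a quantity governed by the Erd\H{o}s multiplication-table phenomenon — and optimising the truncation then yields a saving of $(\log T)^{\eta}$, where $\eta=1-(1+\log\log 2)/\log 2$ is Ford's exponent from \eqref{defeta}. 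Keeping track of the dependence of every error on the heights $O(\rho(\ma{D}))$ of the $M_j$, hence on $d_1d_2d_3d_4\ell$, one checks that only a factor $(d_1d_2d_3d_4\ell)^\ve$ is lost at each step, which produces the stated bound $O\big((d_1d_2d_3d_4\ell)^\ve T/(\log T)^{\eta-\ve}\big)$. Thus the bulk of the work is the tail estimate, where a uniform version of the argument of \cite{bretechebrowning:4linear} must be carried out; the remainder is bookkeeping with the lattice $\sfg_{\ma{D}}$ and the local densities.
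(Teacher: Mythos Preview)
Your proposal sketches how one might reprove, in this setting, the main results of \cite{bretechebrowning:4linear}, whereas the paper simply \emph{applies} those results as black boxes. Specifically, the paper's proof consists of three short steps: (i) verify that the linear forms $L_j$ and the region $\mcal{R}_\mm$ satisfy the normalisation hypothesis \textsf{NH}$_2(\ma{d})$ of \cite{bretechebrowning:4linear} (after a harmless sign change if $d_1\equiv 3\bmod 4$); (ii) split $\UU(T)=U_1(T)+U_2(T)+U_3(T)$ according to the parity pattern of $(u,v)$, so that each piece is literally an instance of the sum $S_i(\sqrt{T},\ma{d},\sfg_{\ma{D}})$ treated in \cite[Theorems~3 and~4]{bretechebrowning:4linear}; (iii) read off the asymptotic formula with the error $(d\ell)^\ve T/\log(T)^{\eta-\ve}$ directly from those theorems and reassemble the three $2$-adic densities into $\sigma_2(\ma{d},\ma{D})$. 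In particular, the uniformity in $\ma{d}$ and $\ell$ that you flag as a ``new feature'' is already built into the cited theorems, and no adaptation is needed.

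Your outline is not wrong as a description of what underlies \cite{bretechebrowning:4linear}, but as a proof of the lemma it has a genuine gap: the entire analytic content --- the divisor-switching/hyperbola argument that produces the saving $(\log T)^\eta$ uniformly --- is deferred to the sentence ``a uniform version of the argument of \cite{bretechebrowning:4linear} must be carried out''. That is precisely the hard part, and you have not done it. Moreover, you miss the one concrete verification the paper actually performs, namely checking \textsf{NH}$_2(\ma{d})$ and decomposing according to the parity of $u$ and $v$; without this, the theorems of \cite{bretechebrowning:4linear} do not apply as stated. The efficient route is to cite those theorems rather than rederive them.
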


\begin{proof}
Our primary tool in estimating $\UU(T)$ asymptotically is the subject
of allied work of the first two authors \cite{bretechebrowning:4linear}. We begin by bringing our 
expression for $\UU(T)$ into
a form that can be tackled by the main
results there. According to \eqref{eq:resultant} 
we may assume that the binary linear forms $L_j$ are 
pairwise non-proportional and primitive. Furthermore, it is clear that the region
$\mcal{R}_\mm\subset \RR^2$ 
defined in \eqref{eq:RR} is open, bounded and convex, with a piecewise continuously
differentiable boundary such that $m_jL_j(u,v)>0$ for each $(u,v)\in \mcal{R}_\mm$.

A key step in applying the work of \cite{bretechebrowning:4linear} consists in
checking that the ``normalisation hypothesis''  
\textsf{NH}$_2(\ma{d})$
is satisfied  in the present context.
In fact it is easy to see that 
$L_j,\mcal{R}_\mm$ will satisfy \textsf{NH}$_2(\ma{d})$ 
provided that   
$$
L_1(U,V)\equiv d_1U \pmod{4}, \quad L_2(U,V)\equiv V \pmod{4}.
$$
The second congruence is automatic since $L_2(U,V)=V$. 
Recalling that $L_1(U,V)=U$, we therefore conclude that 
\textsf{NH}$_2(\ma{d})$  holds if $d_1\equiv 1
\bmod{4}$. Alternatively, if $d_1\equiv 3
\bmod{4}$, we make the unimodular change of variables
$(U,V)\mapsto (-U,V)$ to place ourselves in the setting of 
\textsf{NH}$_2(\ma{d})$.
We leave the reader to check that this ultimately leads to an
identical estimate in the ensuing argument. Thus, for the purposes of
our exposition here, we may freely assume that 
$L_j,\mcal{R}_\mm$ satisfy \textsf{NH}$_2(\ma{d})$ in $\UU(T)$.

We proceed by writing
\begin{equation}
  \label{eq:0501}
\UU(T)=U_1(T)+U_2(T)+U_3(T),
\end{equation}
where $U_1(T)$ denotes the contribution to $\UU(T)$ from $(u,v)$ such that
$2\nmid uv$, 
$U_2(T)$ denotes the contribution from $(u,v)$ such that
$2\nmid u$ and $2\mid v$, and finally  
$U_3(T)$ is the contribution from $(u,v)$ such that
$2\mid u$ and $2\nmid v$.  
Beginning with an estimate for $U_1(T)$,
we observe that 
$$
U_1(T)=S_1(\sqrt{T},\ma{d},\sfg_{\ma{D}}),
$$
in the notation of \cite[eq. (1.9)]{bretechebrowning:4linear}, with $\ma{d},\ma{D}$
given by \eqref{eq:thed}. An application of \cite[theorems 3 and
4]{bretechebrowning:4linear} with $(j,k)=(1,2)$ therefore reveals that 
there exists a constant $c_1$ such that
$$
U_1(T)
=c_1T +
O\Big(\frac{(d\ell)^{\ve} T}{\log(T)^{ \eta-\varepsilon}}\Big),
$$
where $d=d_1d_2d_3d_4$.
The value of the constant is given by 
$$
  c_1=\omega_{\mcal{R}_\mm}(\infty)\omega_{1,\ma{d}}(2)\prod_{p>2}
\omega_{\bd,\mathbf{D}}(p).
$$
Here $\omega_{\bd,\mathbf{D}}(p)$ is given by 
\eqref{eq:def-sigp}
%E%, with $\mal, \mmu$ as in \eqref{eq:lm}, 
and $\omega_{\mcal{R}_\mm}(\infty)$ is defined prior to the
statement of the lemma. 
Finally, if 
$$
N_{i,\bd}'(2^n)=\card\Big\{(u,v,\ma{s},\ma{t})\in
(\ZZ/2^n\ZZ)^{10}\bmset\begin{array}{l}
L_j(u,v)\equiv d_j(s_j^2+t_j^2) \bmod{2^n}\\
u \equiv 1 \bmod{4}, ~v\equiv i \bmod{2}
\end{array}
\Big\},
$$
for any $i \in \{0,1\}$, then
the corresponding $2$-adic density is given by 
\begin{equation*}
%E%  \label{eq:eq:def-dig2}
\omega_{i,\bd}(2)=\lim_{n\rightarrow \infty}2^{-6n}N_{i,\bd}'(2^n).
\end{equation*}
Note that the notation introduced in \cite{bretechebrowning:4linear} involves an
additional subscript in $\omega_{i,\ma{d}}(2)$ whose presence indicates
which of the various normalisation hypotheses the $L_j,\mcal{R}_\mm$ are
assumed to satisfy. Since we have placed ourselves in the context of 
\textsf{NH}$_2(\ma{d})$ in each case, we have found it reasonable to
suppress mentioning this here.

Let us now shift to a consideration of the sum $U_2(T)$ in
\eqref{eq:0501}, for which one finds that
$$
U_2(T)=S_0(\sqrt{T},\ma{d},\sfg_{\ma{D}}).
$$
Applying \cite[theorems 3 and
4]{bretechebrowning:4linear} with $(j,k)=(0,2)$ therefore yields
$$
U_2(T)
=c_2T +
O\Big(\frac{(d\ell)^{\ve} T}{\log (T)^{ \eta-\varepsilon}}\Big),
$$
where now 
$$
c_2=\omega_{\mcal{R}_\mm}(\infty)\omega_{0,\ma{d}}(2)\prod_{p>2}
\omega_{\bd,\mathbf{D}}(p),
$$
with notation as above.

Finally we turn to the sum $U_3(T)$ in \eqref{eq:0501}. 
Making the unimodular change of variables
$(U,V)\mapsto (V,U)$, one now sees that 
$$
U_3(T)=S_0(\sqrt{T};\ma{d},\sfg_{\ma{D}}^{\flat}),
$$
where now the underlying region is $\mcal{R}_\mm^\flat=
\{(u,v) \in \RR^2\mset (v, u)\in \mcal{R}_\mm\}$
and $\sfg_{\ma{D}}^\flat$ is defined as for $\sfg_{\ma{D}}$, but with 
the linear forms $L_j(U,V)$ replaced by $L_j(V,U)$.
Thus an application of \cite[theorems 3 and
4]{bretechebrowning:4linear} with $(j,k)=(0,2)$ produces 
$$
U_3(T)
=c_3T +
O\Big(\frac{(d\ell)^{\ve} T}{\log (T)^{ \eta-\varepsilon}}\Big),
$$
with 
$$
c_3=\omega_{\mcal{R}_\mm^\flat}(\infty)\omega_{0,\ma{d}}^\flat(2)\prod_{p>2}
\omega_{\bd,\mathbf{D}}^\flat(p)=
\omega_{\mcal{R}_\mm}(\infty)\omega_{0,\ma{d}}^{\flat}(2)\prod_{p>2}
\omega_{\bd,\mathbf{D}}(p),
$$
where the superscripts $\flat$ indicate that the local densities 
are taken with respect to the linear forms $L_j(V,U)$. 

We are now ready to bring together our various estimates for 
$U_1(T), U_2(T)$ and $U_3(T)$
in \eqref{eq:0501}. This leads to the asymptotic formula in the
statement of the lemma, with leading constant
$$
c_{\bd,\ma{D},\mcal{R}_\mm}=
\omega_{\mcal{R}_\mm}(\infty)\big(\omega_{1,\ma{d}}(2)+\omega_{0,\ma{d}}(2)+
\omega_{0,\ma{d}}^{\flat}(2)\big) \prod_{p>2} \omega_{\bd,\mathbf{D}}(p). 
$$
The statement of the lemma easily follows with recourse to the
definitions 
\eqref{eq:sig>2}, \eqref{eq:sig=2} of the local densities 
$\sigma_p(\ma{d},\ma{D})$.
\end{proof}

We will need to consider the effect of the error term  %T%
in lemma \ref{lem:Tfinal} on the quantity $N_1(B)$ that was described
at the start of the section.  Accordingly, let us write
\begin{equation}
  \label{eq:vas}
 N_1(B)=N_2(B)+E_1(B),
\end{equation}
where $N_2(B)$ denotes the overall contribution from the main term in 
lemma \ref{lem:Tfinal} and $E_1(B)$ denotes the contribution from the
error term.  
%T%Once again we recall our convention that 
%T%the implied constant in any estimate is allowed to depend on the
%T%parameters $L,D$, in addition to the small parameter $\ve>0$ and the 
%T%coefficients of the linear forms $L_j$. 
%T%With this in mind we have the following result.

\begin{lemma}\label{lem:M(B)e1}
We have $E_1(B)\ll B\log (B)^{1+L-\eta+\ve}$, for any $\ve>0$.
\end{lemma}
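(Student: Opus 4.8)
The strategy is to plug the error term from Lemma \ref{lem:Tfinal} into the expression for $N_1(B)$ and sum over all the relevant parameters. Recall that in $N_1(B)$ the parameters are constrained by the failure of \eqref{eq:bnf}, so $\ell\leq\log(B)^L$ and $\N(\fb_j)\leq\log(B)^D$ for each $j$; moreover $\mm\in\Sigma$ and $\faa\in\Sigma'$ range over finite sets whose size is $O(1)$ (depending only on the coefficients of the linear forms, which we have agreed to treat as constants). The error term contributed by a single value of $t$ is
\[
\ll \frac{(d_1d_2d_3d_4\ell)^\ve (B/t)}{\log(B/t)^{\eta-\ve}},
\]
and since in the relevant range one has $d_j = m_j\N(\fa_j^+)\N(\fb_j) = O(\log(B)^D)$ and $\ell=O(\log(B)^L)$, the factor $(d_1d_2d_3d_4\ell)^\ve$ is absorbed into $B^\ve$ or, more carefully, into an extra power of $\log B$ that can be folded into the $\ve$ in the final exponent. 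I would first dispose of the small-$t$ versus large-$t$ split for the logarithmic factor: for $t\leq\sqrt B$ we have $\log(B/t)\gg\log B$, while for $\sqrt B < t\leq B$ we can simply bound $\log(B/t)^{-(\eta-\ve)}$ crudely (say by $1$, since $\eta-\ve>0$ is small) and note that the contribution of this range is smaller because $\sum_{\sqrt B<t\leq B} r(t/\N(\bigcap\fb_j))/t \ll \log B$ still, but with an extra saving from the factor $B/t \leq \sqrt B$; thus the large-$t$ range is genuinely negligible and the main contribution comes from $t\leq\sqrt B$, where $\log(B/t)^{-(\eta-\ve)}\ll \log(B)^{-(\eta-\ve)}$.

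Next I would carry out the sum over $t$. Using $r(t/\N(\bigcap\fb_j))\ll (t/\N(\bigcap\fb_j))^\ve$ together with the elementary estimate \eqref{eq:familiar} (the case $\theta=1$, since here the summand after extracting $B/t$ behaves like $r(t/n)/t$), we get
\[
\sum_{\substack{t\in\fD\cap[1,B]\\ \N(\bigcap\fb_j)\mid t}} \frac{r(t/\N(\bigcap\fb_j))}{t}\cdot\frac{B}{\log(B/t)^{\eta-\ve}}\ll \frac{B\log(B)}{\log(B)^{\eta-\ve}} = B\log(B)^{1-\eta+\ve},
\]
after absorbing the $(d\ell)^\ve$ and $t^\ve$ factors into the $\ve$. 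Here the condition $\N(\bigcap\fb_j)\mid t$ only improves matters (it removes a $1/\N(\bigcap\fb_j)$ factor which is harmless since $\N(\bigcap\fb_j)\geq 1$), so it can be dropped for an upper bound. It remains to sum the resulting bound $B\log(B)^{1-\eta+\ve}$ over the remaining parameters: the sum over $\mm,\faa$ contributes $O(1)$; the sum over $\fb_1,\dots,\fb_4$ with each $\N(\fb_j)\leq\log(B)^D$ contributes at most $O(\log(B)^{4D})$ terms but, crucially, the summand itself carries a factor like $\prod\N(\fb_j)^{-1/4+\ve}$ coming through $\rho(\mathbf D)$ — however, for the crude bound we are after we do not even need this: the key point is the sum over $\ell$.

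The sum over the odd $\ell\leq\log(B)^L$ is where the exponent $L$ enters. Since the summand (after the $t$-sum) is bounded by $(d\ell)^\ve B\log(B)^{1-\eta+\ve}$ and there are $O(\log(B)^L)$ values of $\ell$, the total is $\ll B\log(B)^{1+L-\eta+\ve}$, which is precisely the claimed bound. (One could do better by exploiting the $1/\ell$-type decay present in $\UU(T)$, but the statement of the lemma only requires this crude count, and saving more would require reintroducing the lattice-determinant analysis of Section \ref{section.bound}, which is unnecessary here.) The main obstacle, such as it is, is purely bookkeeping: one must be careful that every parameter-dependent factor of the form $d_j^\ve$ or $\ell^\ve$ — with $d_j,\ell$ now being genuinely of polylogarithmic size in $B$ rather than fixed — gets correctly folded into the final $\log(B)^\ve$, so that the stated exponent $1+L-\eta+\ve$ is honest; this is routine given that $\ve$ is allowed to vary. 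I would conclude by noting that combining Lemmas \ref{lem:M(B)} and \ref{lem:M(B)e1} via \eqref{eq:N1E0} and \eqref{eq:vas} reduces the proof of Theorem \ref{theo.main} to analysing the main term $N_2(B)$, with $D,L$ to be chosen so that $1-\min\{D/4,L/2\}$ and $1+L-\eta$ are both safely below $1$ — which forces a balance eventually yielding the exponent $0.972$ in $(\mathrm F)$.
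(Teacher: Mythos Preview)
Your argument has a genuine gap in the treatment of the sum over $\fbb=(\fb_1,\ldots,\fb_4)$.  After carrying out the $t$-sum you discard the factor $1/\N(\bigcap_j\fb_j)$, writing that it ``only improves matters'' and ``can be dropped for an upper bound''.  That step is fatal: once it is gone, the summand carries no negative power of the $\N(\fb_j)$ at all.  The error term in Lemma~\ref{lem:Tfinal} is
\[
O\!\left(\frac{(d_1d_2d_3d_4\ell)^{\ve}\,T}{\log(T)^{\eta-\ve}}\right),
\]
and there is \emph{no} factor of the type $\rho(\mathbf D)^{-1}$ or $\prod_j\N(\fb_j)^{-1/4}$ here; such decay appears only in the main term $c_{\bd,\ma D,\mcal R_\mm}T$ via Lemma~\ref{lem:c-upper}.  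So your parenthetical claim that ``the summand itself carries a factor like $\prod\N(\fb_j)^{-1/4+\ve}$ coming through $\rho(\mathbf D)$'' is simply false for the error term, and your fallback (``for the crude bound we do not even need this'') leaves you summing a quantity independent of $\fbb$ over $O(\log(B)^{4D})$ terms.  With $D=4$ this costs an extra $\log(B)^{16}$, giving $B\log(B)^{1+L+16-\eta+\ve}$ rather than the claimed $B\log(B)^{1+L-\eta+\ve}$.

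The fix is exactly what the paper does: keep the substitution $t=\N(\bigcap_j\fb_j)\,t'$, which produces a genuine factor $1/\N(\bigcap_j\fb_j)$ in front of the $t'$-sum, and then show that
\[
\sum_{\substack{\fb_1,\ldots,\fb_4\in\widehat{\fD}\\ \N(\fb_j)\le \log(B)^{D}}}\frac{1}{\N(\bigcap_j\fb_j)}
\]
is of size $\ll \log(B)^{\ve}$ (it is bounded by an Euler product over primes $p\le \log(B)^{D}$ with local factor $1+O(1/p)$, hence of polyloglog size).  Once this is in place, the trivial sum over $\ell\le\log(B)^{L}$ and partial summation in $t$ give the stated bound.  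A secondary minor point: your treatment of the range $\sqrt B<t\le B$ is also too casual, since $\log(B/t)^{-(\eta-\ve)}$ is not bounded as $t\to B$; the paper avoids this by working with $\log(2B/t)$ and a direct partial-summation argument, which you should imitate.
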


\begin{proof}
Inserting the error term in lemma \ref{lem:Tfinal} into our expression
for $N_1(B)$, we obtain
\begin{align*}
E_1(B)
&\ll
B \log (B)^\ve
\sum_{\substack{\ell \leq \log (B)^L}} 
\sum_{\substack{\fb_1,\ldots,\fb_4 \in \widehat{\fD}\\
\N(\fb_j)\leq \log (B)^{D}}} 
\sum_{\substack{t\leq B \\ \N(\bigcap
    \fb_j)\mid t}}
r\Big(\frac{t}{\N(\bigcap \fb_j)}\Big)\cdot \frac{1}{t\log (2B/t)^{
    \eta}}\\
&\ll
B \log (B)^{L+\ve}
\sum_{\substack{\fb_1,\ldots,\fb_4 \in \widehat{\fD}\\
\N(\fb_j)\leq \log (B)^{D}}} \frac{1}{\N(\bigcap \fb_j)}
\sum_{\substack{t\leq B_1}} \frac{r(t)}{t\log (2B_1/t)^{\eta}}, %T%
\end{align*}
where we have written $B_1=B/\N(\bigcap \fb_j)$, for ease of notation. 
Combining the familiar \eqref{eq:familiar} with partial summation, we
therefore conclude that
\begin{align*}
E_1(B)
&\ll
B \log (B)^{1+L-\eta+\ve}
\sum_{\substack{\fb_1,\ldots,\fb_4 \in \widehat{\fD}\\
\N(\fb_j)\leq \log (B)^{D}}} \frac{1}{\N(\bigcap \fb_j)}\\
&\ll
B \log (B)^{1+L-\eta+\ve}
\sum_{b_1,\ldots,b_4=1}^\infty
\frac{1}{[b_1,\ldots,b_4](b_1b_2b_3b_4)^\ve}\\%E%
&\ll
B \log (B)^{1+L-\eta+\ve}.
\end{align*}
This concludes the proof of the lemma. 
\end{proof}

To be useful we will also need a uniform upper bound for the constant 
\eqref{eq:cfinal} appearing in lemma \ref{lem:Tfinal}. 
This is achieved in the following result.

\begin{lemma}\label{lem:c-upper}
Let $\ve>0$. Then we have
$$
c_{\bd,\ma{D},\mcal{R}_\mm}\ll \frac{(D_1D_2D_3D_4)^\ve}{[D_1D_2,\ldots, D_3D_4]},
$$
where $\bd,\ma{D}$ are given by \eqref{eq:thed}.
\end{lemma}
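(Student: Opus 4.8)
The plan is to estimate the three kinds of factor in the Euler product \eqref{eq:cfinal} for $c_{\bd,\ma{D},\mcal{R}_\mm}$ separately: the archimedean density $\omega_{\mcal{R}_\mm}(\infty)$, the factors $\sigma_p(\ma d,\ma D)$ at primes of good reduction, and the factors at the finitely many ``ramified'' primes $p\mid\D$ together with those dividing $D_1D_2D_3D_4$. Only the last family will produce the saving $1/[D_1D_2,\ldots,D_3D_4]$, while the factor $(D_1D_2D_3D_4)^\ve$ will absorb the divisor-function losses incurred there.

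First I would dispose of the two ``trivial'' factors. On $\mcal{R}_\mm$ one has $d_jL_j(u,v)>0$ for every $j$ (the sign of $d_j$ is that of $m_j$ since $\N(\fa^+_j),\N(\fb_j)>0$), so the fibre of \eqref{eq:torsor} over $(u,v)\in\mcal{R}_\mm$ is non-empty, and a routine evaluation of the singular integral gives $\omega_{\mcal{R}_\mm}(\infty)\ll \Vol(\mcal{R}_\mm)\prod_{j=1}^4|d_j|^{-1}\ll 1$, using $\Vol(\mcal{R}_\mm)\leq 4$ from \eqref{eq:RR}. The prime $p=2$ is equally harmless: since $d_j,D_j$ are odd, $N_{\bd,\ma{D}}(2^n)$ in \eqref{eq:sig=2} is a plain congruence count with no divisibility constraint and no negative power of $2$ in its normalisation, so $\sigma_2(\ma{d},\ma{D})\ll 1$ uniformly.

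The heart of the matter is the product over odd primes, which I would split as $\prod_{p>2}\sigma_p=\prod_{p\nmid 2\D D_1D_2D_3D_4}\sigma_p\cdot\prod_{p\mid\D D_1D_2D_3D_4,\ p>2}\sigma_p$. For the first product the variety \eqref{eq:torsor} has good reduction, $v_p(d_j)=v_p(D_j)=0$, and a direct point count (using $\card\{(s,t)\in\mathbf{F}_p^2 : s^2+t^2=a\}=p-\chi(p)$ for $a\neq0$, together with the contributions of the loci $L_j\equiv0$) shows $\sigma_p(\ma{d},\ma{D})=1+O(p^{-2})$, so this product converges and is $\ll 1$; alternatively this convergence is already contained in \cite[theorems 3 and 4]{bretechebrowning:4linear}, whose leading constant is precisely $c_{\bd,\ma{D},\mcal{R}_\mm}$. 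For the second product I would bound each factor by passing, as in the computation underlying \eqref{eq:def-sigp}, to the average over $(u,v)\in\ZZ_p^2$ of $\oone[\,p^{v_p(D_j)}\mid L_j(u,v)\ \forall j\,]\prod_{j=1}^4\omega_p\bigl(v_p(L_j(u,v))-v_p(d_j)\bigr)$, where $\omega_p(k)$ denotes the $p$-adic density of $s^2+t^2=a$ with $v_p(a)=k$. Using the elementary bound $\omega_p(k)\ll k+1$ with an absolute constant (which comes from $r(n)=4\sum_{e\mid n}\chi(e)$), the fact recorded in \eqref{eq:tenn''}--\eqref{eq:tenn-} that the sublattice $\{(u,v):p^{v_p(D_j)}\mid L_j(u,v)\ \forall j\}$ of $\ZZ_p^2$ has index $\asymp p^{\max_{i<j}\{v_p(D_i)+v_p(D_j)\}}$, and a separate treatment of the measure–zero loci $L_j(u,v)=0$, one obtains
\[
\sigma_p(\ma{d},\ma{D})\ll\frac{\prod_{j=1}^4\bigl(v_p(D_j)+1\bigr)}{p^{\max_{i<j}\{v_p(D_i)+v_p(D_j)\}}},
\]
with an absolute implied constant when $p\nmid\D$ and a $\D$–dependent one for the finitely many $p\mid\D$. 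Multiplying over such $p$ and invoking the multiplicativity of $\rho$ together with \eqref{eq:tenn''}, \eqref{eq:tenn-} and \eqref{eq:251.1},
\[
\prod_{p\mid\D D_1D_2D_3D_4,\ p>2}\sigma_p(\ma{d},\ma{D})\ll_{\D}\ \frac{\prod_{j=1}^4\bigl(\sum_{e\mid D_j}1\bigr)}{\rho(\ma{D})}\ll\frac{(D_1D_2D_3D_4)^\ve}{[D_1D_2,\ldots,D_3D_4]},
\]
since $\sum_{e\mid D_j}1\ll_\ve D_j^\ve$. Combining this with $\omega_{\mcal{R}_\mm}(\infty)\ll1$, $\sigma_2\ll1$ and $\prod_{p\nmid 2\D D_1D_2D_3D_4}\sigma_p\ll1$ gives the lemma.

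The step I expect to be the main obstacle is the uniform per–prime bound on $\sigma_p(\ma d,\ma D)$ for $p\mid D_1D_2D_3D_4$ with $p\nmid\D$: the implied constant must be independent of $p$ \emph{and} of the exponents $v_p(D_j)$, which forces one to control the contribution of the $(u,v)$ for which some $L_j(u,v)\equiv0$ to high $p$-power precision — exactly where $\omega_p$ at large (or infinite) argument must be handled carefully. If one prefers not to reprove this, the cleanest route is to note that for $p>2$ the density $\sigma_p(\ma d,\ma D)$ is the local factor $\omega_{\bd,\ma D}(p)$ of \cite{bretechebrowning:4linear}, that $\sigma_2$ and $\omega_{\mcal R_\mm}(\infty)$ match the corresponding quantities there (after the threefold parity decomposition used in the proof of Lemma~\ref{lem:Tfinal}), and then to quote directly the upper bound for the leading constant proved in \cite{bretechebrowning:4linear}, converting $\rho(\ma D)$ into $[D_1D_2,\ldots,D_3D_4]$ by \eqref{eq:251.1}.
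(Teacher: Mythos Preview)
Your plan coincides with the paper's in structure: dispose of $\omega_{\mcal R_\mm}(\infty)$ and $\sigma_2$ as $O(1)$, factor $1/\rho(\ma D)$ out of $\prod_{p>2}\sigma_p$, show the residual factor is $1+O(p^{-2})$ at primes $p\nmid\D D$ and controlled by $(D_1D_2D_3D_4)^\ve$ at the rest. The difference is in how the odd-prime factors are treated. The paper does not work with the density integral and the bound $\omega_p(k)\ll k+1$; instead it invokes the explicit formula from \cite[theorem~4]{bretechebrowning:4linear},
\[
\sigma_p(\ma d,\ma D)=\Bigl(1-\tfrac{\chi(p)}{p}\Bigr)^{4}\sum_{\mnu\in\ZZ_{\ge0}^4}\frac{\chi(p)^{\nu_1+\cdots+\nu_4}}{\rho\bigl(p^{\max\{\mu_1,\lambda_1+\nu_1\}},\ldots,p^{\max\{\mu_4,\lambda_4+\nu_4\}}\bigr)},
\]
writes $\sigma_p=\rho(p^{\mmu})^{-1}\sigma_p'$, evaluates $\sigma_p'$ in closed form for $p\nmid\D D$ via the geometric series of Lemma~\ref{31-s0-} (this is where the precise $1+O(p^{-2})$ comes from), and for $p\mid\D D$ bounds the $\mnu$-sum directly using \eqref{eq:tenn-}. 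This sidesteps the obstacle you flag: with the series formula in hand there is no need to control the $(u,v)$ with large $v_p(L_j(u,v))$, because the $\nu_j$ already parametrise those contributions (they are the divisor variables coming from $r/4=\sum_{e\mid n}\chi(e)$). Your route is morally equivalent and your stated per-prime bound $\sigma_p\ll\prod_j(v_p(D_j)+1)\,p^{-m(\mmu)}$ is correct, but making it rigorous via the density integral does require the averaging step you describe, whereas the paper's series formula reduces it to a one-line estimate.

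One small correction on normalisation: in the paper's conventions $\omega_{\mcal R_\mm}(\infty)=\pi^4\Vol(\mcal R_\mm)$, with no $d_j$-dependence; the factor $\prod|d_j|^{-1}$ is carried by the $p$-adic densities through the $p^{-\sum\lambda_j}$ in \eqref{eq:def-sigp}. Your conclusion $\ll 1$ is unaffected.
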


\begin{proof}
Now it follows from \cite[theorem 4]{bretechebrowning:4linear} that
$\omega_{\mcal{R}_\mm}(\infty)=\pi^4 \Vol(\mcal{R}_\mm)\ll 1$. Similarly, it
is easy to see that 
$\sigma_2(\ma{d},\ma{D})\leq 2^4$, since for any $A\in \ZZ$ there are %T%
at most $2^{n+1}$  solutions of the congruence $s^2+t^2 \equiv A \bmod{2^n}$ by
\cite[eq. (2.5)]{bretechebrowning:4linear}.
Thus we have
$$
c_{\bd,\ma{D},\mcal{R}_\mm}\ll \prod_{p>2}|\sigma_p(\ma{d},\ma{D})|,
$$
where $\sigma_p(\ma{d},\ma{D})$ is given by \eqref{eq:sig>2}. 
Assume that $p>2$.  A further application of \cite[theorem~4]{bretechebrowning:4linear}
now yields 
$$
\sigma_p(\ma{d},\ma{D})=
\Big(1-\frac{\chi(p)}{p}\Big)^4
\sum_{\nu_1,\ldots,\nu_4=0}^{\infty}
\frac{\chi(p)^{{\nu_1}+{\nu_2}+{\nu_3}+{\nu_4}}}{\rho(p^{\max\{\mu_1,\lambda_1+\nu_1\}},
\ldots, p^{\max\{\mu_4,\lambda_4+\nu_4\}})},
$$
where $\rho$ is the determinant given in \eqref{eq:def-rho} and 
$\mal, \mmu$ are given by \eqref{eq:lm}. Using the multiplicativity of
$\rho$ we may clearly write 
$$
\prod_{p>2}|\sigma_p(\ma{d},\ma{D})|=\frac{1}{\rho(\ma{D})}
\prod_{p>2}|\sigma_p'(\ma{d},\ma{D})|,
$$
where now
$$
\sigma_p'(\ma{d},\ma{D})=
\Big(1-\frac{\chi(p)}{p}\Big)^4
\sum_{\nu_1,\ldots,\nu_4=0}^{\infty}
\frac{\chi(p)^{{\nu_1}+{\nu_2}+{\nu_3}+{\nu_4}}\rho(p^{\mu_1},
\ldots, p^{\mu_4})}{\rho(p^{\max\{\mu_1,\lambda_1+\nu_1\}},
\ldots, p^{\max\{\mu_4,\lambda_4+\nu_4\}})}.
$$
In view of \eqref{eq:251.1}, it will suffice to
show that 
\begin{equation}
  \label{eq:suffice}
\prod_{p>2}|\sigma_p'(\ma{d},\ma{D})|\ll (D_1D_2D_3D_4)^\ve,
\end{equation}
in order to complete the proof of the lemma.

Recall the definition \eqref{eq:resultant} of  $\D$ and write $D=D_1D_2D_3D_4$. Then
for $p\nmid \D D$ 
it follows from \eqref{eq:tenn''} that
$$
\sigma_p'(\ma{d},\ma{D})=
\Big(1-\frac{\chi(p)}{p}\Big)^4
\sum_{\nu_1,\ldots,\nu_4=0}^{\infty}
\frac{\chi(p)^{{\nu_1}+{\nu_2}+{\nu_3}+{\nu_4}}}{p^{m(\mnu)}},
$$
where $m(\mnu)$ is defined in section \ref{geom-series}.
On refamiliarising oneself with the notation $S_0^\ve(z)$ introduced
in \eqref{30-S0pm}, lemma \ref{31-s0-} therefore yields
$$
\sigma_p'(\ma{d},\ma{D})=
\Big(1-\frac{1}{p}\Big)^4
S_0^{+}(1/p)=
\frac{1+2/p+6/p^2+2/p^3+1/p^4}{(1+1/p)^2},
$$
if $p\equiv 1 \bmod 4$, and 
$$
\sigma_p'(\ma{d},\ma{D})=
\Big(1+\frac{1}{p}\Big)^4
S_0^{-}(1/p)=\frac{(1-1/p^2)^2}{(1+1/p^2)},
$$
if $p\equiv 3 \bmod 4$. Thus 
$\sigma_p'(\ma{d},\ma{D})=1+O(1/p^2)$ for $p\nmid \D D$.

Suppose now that $p\mid \D D$. Then \eqref{eq:tenn-} implies that 
%t% sufficiently clear?
$$
\sigma_p'(\ma{d},\ma{D})\ll 
\sum_{\nu_1,\ldots,\nu_4=0}^{\infty}
\frac{1}{p^{m(\ma{n})-m(\mmu)}}\ll 1
$$
where $\ma{n}=
(\max\{\mu_1,\lambda_1+\nu_1\},\ldots, \max\{\mu_4,\lambda_4+\nu_4\})$.
Putting this together with our treatment of the factors  corresponding
to $p\nmid \D D$, we are easily led to the desired upper bound in \eqref{eq:suffice}.
This therefore concludes the proof of the lemma. 
\end{proof}

%% End of file estimate.tex
%% Including file denouement.tex
\section{The d\'enouement}
%\label{section.denouement}

Take $D=4$ and $L=2\eta/3$ in lemmas \ref{lem:M(B)} 
and lemma \ref{lem:M(B)e1}, and let $\ve>0$ be given. We therefore deduce that
$$
N(B)=N_2(B)+O\big(B\log (B)^{1-\eta/3+\ve} \big)
$$
via \eqref{eq:N1E0} and \eqref{eq:vas}, where $\card\TNS(\QQ)_\tors
\times N_2(B)$ is equal to
\begin{align*}
B\sum_{\substack{\mm\in \Sigma\\ \faa\in \Sigma'}}\mu(\faa)
\hspace{-0.1cm}
\sum_{\substack{\ell \leq \log (B)^{2\eta/3} \\ 2\nmid \ell}} 
\hspace{-0.1cm}
\mu(\ell)
\hspace{-0.1cm}
\sum_{\substack{\fb_1,\ldots,\fb_4 \in \widehat{\fD}\\
\N(\fb_j)\leq \log (B)^{4}}} 
\prod_{j=1}^4 \mu(\fb_j) 
c_{\bd,\ma{D},\mcal{R}_\mm}
\hspace{-0.1cm}
\sum_{\substack{t \in \fD\cap[1,B]\\ \gcd(t,\N(\faa))=1\\ \N(\bigcap
    \fb_j)\mid t}}
\hspace{-0.1cm}
\frac{r(t/\N(\bigcap \fb_j))}{t}.
\end{align*}
Here $ c_{\bd,\ma{D},\mcal{R}_\mm}$ is given by 
\eqref{eq:cfinal}, with $\bd,\ma{D}$ being given by \eqref{eq:thed}
and $\mcal{R}_\mm$ given by \eqref{eq:RR}. 
The following simple result allows us to carry out the inner summation over
$t$.

\begin{lemma}\label{lem:r}
Let $m\in \ZZp$ and let $T\geq 1$. Then for any $\ve>0$ we have 
$$
\sum_{\substack{t \in \fD\cap[1,T]\\ \gcd(t,m)=1}}
\frac{r(t)}{t}= C_m \log (T)
+O(m^\ve),
$$
where
$$
C_m=2L(1,\chi) 
\prod_{p\equiv 3
  \bmod{4}}\Big(1-\frac{1}{p^{2}}\Big)
\prod_{\substack{p\mid m\\ p\equiv 1\bmod{4}}} \Big(1-\frac{1}{p}\Big)^2.
$$
\end{lemma}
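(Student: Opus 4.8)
The plan is to exploit the multiplicative structure of the summand. Since $r(t)=4\sum_{d\mid t}\chi(d)$ and every $t\in\fD$ is a product of primes $\equiv 1\bmod 4$, on which $\chi$ is identically $1$, one has $r(t)=4d(t)$ for $t\in\fD$, where $d(\cdot)$ denotes the divisor function. Setting $g(t)=d(t)\oone_{t\in\fD}\oone_{\gcd(t,m)=1}$, a multiplicative function, the sum in question equals $4\sum_{t\leq T}g(t)/t$, so it suffices to treat the latter. Computing the Euler product of $g$ factor by factor gives
\[\sum_{t\geq 1}\frac{g(t)}{t^{s}}=\prod_{\substack{p\equiv 1\bmod 4\\ p\nmid m}}(1-p^{-s})^{-2}=\zeta(s)L(s,\chi)K_m(s),\]
where
\[K_m(s)=(1-2^{-s})\prod_{p\equiv 3\bmod 4}(1-p^{-2s})\prod_{\substack{p\equiv 1\bmod 4\\ p\mid m}}(1-p^{-s})^{2}.\]
The crucial point is that the Euler product defining $K_m(s)$ converges absolutely for $\Re s>\tfrac12$, and the associated multiplicative function $k_m$ satisfies $\sum_{c}|k_m(c)|c^{-\sigma}\ll_{\sigma}m^{\ve}$ for each fixed $\sigma>\tfrac12$, the dependence on $m$ entering only through factors $\prod_{p\mid m}(1+O(p^{-\sigma}))$.

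The identity above says $g=\oone\ast\chi\ast k_m$ as a Dirichlet convolution, so that
\[\sum_{t\leq T}\frac{g(t)}{t}=\sum_{c\leq T}\frac{k_m(c)}{c}\sum_{b\leq T/c}\frac{\chi(b)}{b}\sum_{a\leq T/(bc)}\frac{1}{a}.\]
I would evaluate the inner sums by elementary means: $\sum_{a\leq y}a^{-1}=\log y+\gamma+O(1/y)$, and then — using that $\chi$ has bounded partial sums — $\sum_{b\leq z}\chi(b)b^{-1}=L(1,\chi)+O(1/z)$ and $\sum_{b\leq z}\chi(b)(\log b)b^{-1}=-L'(1,\chi)+O((\log z)/z)$. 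Finally one replaces the outer sum over $c\leq T$ by the complete, absolutely convergent series, incurring a tail error $O(T^{-\delta}m^{\ve})$ for a suitable $\delta<\tfrac12$. Collecting everything yields
\[\sum_{t\leq T}\frac{g(t)}{t}=L(1,\chi)K_m(1)\log T+O(m^{\ve}),\]
the surviving constant terms (which involve $\gamma L(1,\chi)+L'(1,\chi)$ together with $K_m(1)$ and $K_m'(1)$) all being $O(m^{\ve})$. Multiplying by $4$ and noting that $K_m(1)=\tfrac12\prod_{p\equiv 3\bmod 4}(1-p^{-2})\prod_{p\mid m,\ p\equiv 1\bmod 4}(1-p^{-1})^{2}$, so that $4L(1,\chi)K_m(1)=C_m$, gives the statement of the lemma.

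The one point requiring care is the uniformity in $m$: at each step one must verify that the $m$-dependence enters solely through factors $\prod_{p\mid m}(1+O(1/p))$, which are $O(m^{\ve})$, and that the power-saving tails in $T$ used to truncate the $c$-summation carry implied constants $O(m^{\ve})$. Both follow at once from the absolute convergence of $K_m(s)$ in $\Re s>\tfrac12$. No contour integration or zero-free region is needed; the whole argument is elementary.
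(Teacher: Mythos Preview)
Your proof is correct and genuinely more elementary than the paper's. The paper proceeds in two steps: first, for $m=1$, it identifies the Dirichlet series $F_1(s)=4\zeta(s)L(s,\chi)(1-2^{-s})\prod_{p\equiv 3}(1-p^{-2s})$ and then invokes Perron's formula (a contour-integration argument) to obtain $\sum_{t\in\fD\cap[1,T]}r(t)/t=C_1\log T+O(1)$; second, it writes $F_m(s)=F_1(s)H(s)$ with $H(s)=\prod_{p\mid m,\,p\equiv 1}(1-p^{-s})^2$, a \emph{finite} Dirichlet series supported on divisors of $m^2$, and recovers the general case by a short convolution argument over $d\mid m^2$, so that uniformity in $m$ is immediate. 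Your route replaces the Perron step by the purely real-variable triple convolution $g=\oone\ast\chi\ast k_m$ and elementary partial-summation estimates for $\sum_{a\leq y}1/a$, $\sum_{b\leq z}\chi(b)/b$, $\sum_{b\leq z}\chi(b)(\log b)/b$. What this buys you is the avoidance of any complex analysis; what the paper's decomposition buys is a cleaner bookkeeping of the $m$-dependence, since it is confined to a finite sum over $d\mid m^2$ rather than spread across the tail estimates for $k_m$. Both yield the same error $O(m^\ve)$, and your verification that $K_m(1)$, $K_m'(1)$ and the tails $\sum_{c>T}|k_m(c)|c^{-1}$ are all $O(m^\ve)$ is the right way to close your argument.
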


\begin{proof}
Recall the definition \eqref{18-D} of the set $\fD$.
We consider the Dirichlet series
$$
F_m(s)=\sum_{\substack{t\in \fD\\ \gcd(t,m)=1}} 
\frac{r(t)}{t^s}
=4\prod_{\substack{p\nmid m\\ p\equiv 1 \bmod{4}}}
\sum_{k\geq 0}\frac{k+1}{p^{ks}}
=4\prod_{\substack{p\nmid m\\ p\equiv 1 \bmod{4}}}
\Big(1-\frac{1}{p^s}\Big)^{-2},
$$
for $\Re e(s)>1$. 
Thus we may write $F_m(s)=F_1(s)H(s)$, with 
$$
H(s)=\prod_{\substack{p\mid m\\ p\equiv 1 \bmod{4}}}
\Big(1-\frac{1}{p^s}\Big)^{2}=\sum_{d=1}^\infty \frac{h(d)}{d^s},
$$
say, for an appropriate arithmetic function $h$. One calculates
\begin{align*}
F_1(s)=4\zeta(s)L(s,\chi) \Big(1-\frac{1}{2^{s}}\Big)\prod_{p\equiv 3
  \bmod{4}} \Big(1-\frac{1}{p^{2s}}\Big), 
\end{align*}
whence an application of Perron's formula yields
%T% do we need  to say more about error term here?
$$
\sum_{\substack{t \in \fD\cap[1,T]}}
\frac{r(t)}{t}= C_1\log (T) +O(1),
$$
with $C_1$ defined as in the statement of the lemma. 

We may complete the proof of the lemma using an argument based on
Dirichlet convolution. Thus it follows that 
\begin{align*}
\sum_{\substack{t \in \fD\cap[1,T]\\ \gcd(t,m)=1}}
\frac{r(t)}{t}
&= \sum_{\substack{d \mid m^2\\ d \in \fD\cap [1,T]}}
\frac{h(d)}{d}\Big(C_1 \log\Big(\frac{T}{d}\Big) +O(1)\Big)\\
&= \prod_{\substack{p\mid m\\ p\equiv 1\bmod{4}}} \Big(1-\frac{1}{p}\Big)^2
C_1 \log (T) +O\Big(\sum_{d\mid m^2} \frac{|h(d)|\log (2d)}{d}\Big).
\end{align*}
The main term confirms the prediction in the statement of the lemma
and the error term is easily seen to be $O(m^\ve)$ for any $\ve>0$, 
which is satisfactory. 
\end{proof}

Making the obvious change of variables it now follows from lemma
\ref{lem:r} that 
\begin{align*}
\sum_{\substack{t \in \fD\cap[1,B]\\ \gcd(t,\N(\faa))=1\\ \N(\bigcap
    \fb_j)\mid t}}
\frac{r(t/\N(\bigcap \fb_j))}{t}
&= \frac{c_{\faa,\fbb}\log (B/\N(\bigcap
    \fb_j))}{\N(\bigcap
    \fb_j)}
+O(1)\\
&= \frac{c_{\faa,\fbb}\log (B)}{\N(\bigcap
    \fb_j)}
+O(1),
\end{align*}
where 
$$
c_{\faa,\fbb}=
\begin{cases}
C_{\N(\faa)} & \mbox{if $\gcd(\N(\bigcap \fb_j),\N(\faa))=1$,}\\
0 & \mbox{otherwise}.
\end{cases}
$$
In particular it is clear that 
$c_{\faa,\fbb}=O(1)$.
Applying lemma \ref{lem:c-upper} it is easy to conclude that 
the overall contribution to $N_2(B)$ from the error term in this estimate is
\begin{align*}
&\ll 
B\sum_{\ell \leq \log (B)^{2\eta/3}}
\ell^\ve
\sum_{\N(\fb_j)\leq \log (B)^{4}} 
\frac{(\N(\fb_1)\cdots \N(\fb_4))^\ve}{[
\N(\fb_1)\N(\fb_2), \ldots, \N(\fb_3)\N(\fb_4)]}\\
&\ll 
B\log (B)^{2\eta/3+\ve} \sum_{b_1,\ldots,b_4\leq \log (B)^{4}}  
\frac{1}{[b_1b_2, \ldots, b_3b_4]}\\
&
\leq B\log (B)^{2\eta/3+\ve} \prod_{p\leq \log (B)^4} S_0^+(1/p),
\end{align*}
in the notation of \eqref{30-S0pm}. This is therefore seen to be 
$O(B\log (B)^{2\eta/3+\ve})$ via lemma \ref{31-s0-}.

In conclusion, we may  write
$$
N(B)=N_3(B)+O\big(B\log (B)^{1-\eta/3+\ve} \big),
$$
where now
\begin{align*}
N_3(B)=
\frac{B\log (B)}{\card\TNS(\QQ)_\tors}
\sum_{\substack{\mm\in \Sigma\\ \faa\in \Sigma'}}\mu(\faa)
\sum_{\substack{\ell \leq \log (B)^{2\eta/3} \\ 2\nmid \ell}} 
\mu(\ell)
\sum_{\substack{\fb_1,\ldots,\fb_4 \in \widehat{\fD}\\
\N(\fb_j)\leq \log (B)^{4}}} \frac{c_{\faa,\fbb}c_{\bd,\ma{D},\mcal{R}_\mm}}{\N(\bigcap
    \fb_j)} \prod_{j=1}^4 \mu(\fb_j) .
\end{align*}
Here we have used \eqref{defeta} to observe that $1-\eta/3>2\eta/3$. 
Finally, through a further application of lemma \ref{lem:c-upper}, it
is now a trivial matter to re-apply the proof of lemma \ref{lem:M(B)}
to show that the summations over $\ell$ and 
$\fb_j$ can be extended to infinity with error 
$O(B\log (B)^{1-\eta/3+\ve} )$. 
This therefore leads to the final outcome that 
$$
N(B)=cB\log (B)+O\big(B\log (B)^{1-\eta/3+\ve} \big),
$$
for any $\ve>0$, where
\begin{equation}
\label{equ.final.constant}
c=\frac 1{\card\TNS(\QQ)_\tors}
\sum_{\substack{\mm\in \Sigma\\ \faa\in \Sigma'}}\mu(\faa)
\sum_{\substack{\ell=1\\ 2\nmid \ell}}^\infty
\mu(\ell)
\sum_{\substack{\fb_1,\ldots,\fb_4 \in \widehat{\fD}}}
\frac{c_{\faa,\fbb}c_{\bd,\ma{D},\mcal{R}_\mm}}{\N(\bigcap
    \fb_j)} \prod_{j=1}^4 \mu(\fb_j) .
\end{equation}
Here $c_{\bd,\ma{D},\mcal{R}_\mm}$ is given by 
\eqref{eq:cfinal}, with $\bd,\ma{D}$ being given by \eqref{eq:thed}. 
%% End of file denouement.tex
%% Including file jumpingdown.tex
\section{Jumping down}
%\label{section.jumpingdown}
We shall now relate the constant $c$
defined by equation~\eqref{equ.final.constant}
with the one expected, as required to complete
the proof of theorem~\ref{theo.main}.

\subsection{Expression in terms of volumes}
Let us first recall that the adelic set $\torsQ_\nn(\Adeles_\QQ)$
comes with a canonical measure which is defined as follows.
The canonical line bundle on $\omega_{\torsQ_\nn}$ is trivial
\cite[lemme 3.1.12]{peyre:cercle}
and the invertible functions on $\torsQ_\nn$ are constant.
Therefore up to multiplication by a constant there exists
a unique section $\formon{\torsQ_\nn}$ of $\omega_{\torsQ_\nn}$
which does not vanish.
By \cite[\S2]{weil:adeles}, this form defines a measure 
$\measure_{\torsQ_\nn,v}$ on $\torsQ_\nn(\QQ_v)$ for any
place $v$ of $\QQ$. According to \cite[lemme 3.1.14]{peyre:cercle},
the product $\prod_v\measure_{\torsQ_\nn,v}$ converges
and defines a measure on $\torsQ_\nn(\Adeles_\QQ)$. By the product
formula, this measure does not depend on the choice of the section
$\formon{\torsQ_\nn}$.
Let us now describe explicitly how to construct such
a section $\formon{\torsQ_\nn}$. 
\begin{nota}
Let $\mathcal X_\nn$ be the
subscheme of $\Aff^8_\ZZ=\Spec(\ZZ[X_j,Y_j,1\leq j\leq 4])$
defined by the equations~\eqref{equ.torcz}. Then $\torscZ_\nn$ is
the product $\mathcal X_\nn\times\Aff^2_\ZZ$.
We denote by $\mathcal X^\circ_\nn$ the complement of the origin
in $\mathcal X_\nn$.
For three distinct elements $j,k,l$
of $\onefour$, let us denote by $P_{j,k,l}$ the quadratic form
\[\Delta_{j,k}n_l(X^2_l+Y^2_l)+\Delta_{k,l}n_j(X_j^2+Y_j^2)
+\Delta_{l,j}n_k(X_k^2+Y_k^2).\]
Then we have the relations
\begin{align*}
a_jP_{k,l,m}+a_kP_{l,m,j}+a_lP_{m,j,k}+a_mP_{j,k,l}&=0\\
b_jP_{k,l,m}+b_kP_{l,m,j}+b_lP_{m,j,k}+b_mP_{j,k,l}&=0
\end{align*}
whenever $\{j,k,l,m\}=\onefour$.
Since $\Delta_{1,2}=1$, the scheme $\mathcal X^\circ_\nn$ is the complete
intersection in $\Aff^6_\ZZ\setminus\{0\}$
of the quadrics defined by $P_{1,2,3}$ and $P_{1,2,4}$.
Therefore the corresponding Leray form is a nonzero
section of the canonical line bundle $\omega_{\mathcal X^\circ_{\nn,\QQ}}$.
On $\Aff^2_\ZZ$, we may take the natural form 
$\frac\partial{\partial X_0}\wedge\frac\partial{\partial Y_0}$.
The exterior product of these forms 
gives a form on an open subset of $\mathcal Y_\nn$,
and by restriction a form $\formon{\torsQ_\nn}$
on $\torsQ_\nn$ which does not vanish.
We denote by $\measure_{\nn,v}$ the corresponding measure on
$\mathcal Y_n(\QQ_v)$ for $v\in\Val(\QQ)$.
\end{nota}
\begin{lemma}
\label{lemma.volumeprime}%
Let $\mm\in\Sigma$ and $\faa\in\Sigma'$. 
Let $\fbb=(\fb_j)_{j\in\onefour}$ belong to $\widehat{\mathfrak{D}}^4$.
Let~$\ell$ be an odd integer.
Let $d_j$ and $D_j$ be defined by formula~\eqref{eq:thed}.
%and $\mal$, $\mnu$ be defined by formula~\eqref{eq:lm}.
Then for any prime number $p$ we have
\[\measure_{\nn,p}(\mathcal D^3_{\mm,\faa,\fbb,\ell,p})
=\beta_pp^{-v_p\bigl(\N\bigl(\bigcap_j\fb_j\bigr)\bigr)}
\lim_{n\to+\infty}p^{-6n}N_{\bd,\mathbf{D}}(p^n),\]
where
\[\beta_p=\begin{cases}
\frac 12&\text{if $p=2$},\\
1-\frac 1{p^2}&\text{if $p\equiv 3\mod 4$},\\
\left(1-\frac 1p\right)^2&\text{if $p\mid \prod_j\N(\fa^+_j)$ 
and $p\equiv 1\mod 4$},\\
0&\text{if $p\mid\prod_j\N(\fa^+_j)$ and $p\mid\prod_j\N(\fb_j)$},\\
1&\text{otherwise.}
\end{cases}\]
\end{lemma}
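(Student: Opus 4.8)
The plan is to exploit the product structure $\torscZ_\nn=\mathcal X_\nn\times\Aff^2_\ZZ$ together with the explicit shape of $\mathcal D^3_{\mm,\faa,\fbb,\ell,p}$. First one notes that $\nn=\N(\faa)\N(\fbb)\mm$ has $j$-th coordinate $m_j\N(\fa_j^+)\N(\fb_j)=d_j$, so $\nn=\bd$, and that, by the construction of $\formon{\torsQ_\nn}$ as the exterior product of the Leray form of the complete intersection $\mathcal X_\nn$ with $\mathrm{d}X_0\wedge\mathrm{d}Y_0$, the measure $\measure_{\nn,p}$ on $\torscZ_\nn(\QQ_p)=\mathcal X_\nn(\QQ_p)\times\QQ_p^2$ is the product of the Leray measure $\measure_{\mathcal X_\nn,p}$ on $\mathcal X_\nn(\QQ_p)$ with the Haar measure $\mu_p$ on $\QQ_p^2$ normalised by $\mu_p(\ZZ_p^2)=1$. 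The functions $U=d_1(X_1^2+Y_1^2)$, $V=d_2(X_2^2+Y_2^2)$ (using \eqref{equ.pthreefour}, so that $L_1=U$, $L_2=V$), $T=X_0^2+Y_0^2$ and $z_0^{\pm}=X_0\pm\ci Y_0$ are global polynomials on $\torscZ_\nn$, the first two involving only the $\mathcal X_\nn$-coordinates and the last two only $X_0,Y_0$; inspecting the definition of $\mathcal D^3_{\mm,\faa,\fbb,\ell,p}$ then shows that it is a product $W_1\times W_2$ with $W_1\subset\mathcal X_\nn(\ZZ_p)$ cut out by the conditions on $U,V$ and $W_2\subset\ZZ_p^2$ by those on $z_0^{-}$ and $T$. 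Hence
\[\measure_{\nn,p}(\mathcal D^3_{\mm,\faa,\fbb,\ell,p})=\measure_{\mathcal X_\nn,p}(W_1)\cdot\mu_p(W_2),\]
and it remains to evaluate the two factors.

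For $\mu_p(W_2)$ one integrates directly. If $p=2$ then $W_2=\{(X_0,Y_0)\in\ZZ_2^2\mset X_0^2+Y_0^2\ \text{odd}\}$, of measure $\frac12$, while $v_2(\N(\bigcap_j\fb_j))=0$ since each $\fb_j\in\widehat\fD$. If $p\equiv3\bmod 4$ then $W_2=\ZZ_p^2\setminus(p\ZZ_p)^2$ (because $p\mid X_0^2+Y_0^2$ forces $p\mid X_0,Y_0$), so $\mu_p(W_2)=1-p^{-2}$ and again $v_p(\N(\bigcap_j\fb_j))=0$. If $p\equiv1\bmod 4$ I would pass through the splitting $\ZZ_p[\ci]\simeq\ZZ_p\times\ZZ_p$: the $\ZZ_p$-linear isomorphism $(X_0,Y_0)\mapsto z_0^{-}$ carries $\mu_p$ to the product Haar measure, the ideal $(\bigcap_j\fb_j)\ZZ_p[\ci]=\varpi_p^a\overline{\varpi_p}^b$ to $p^{a}\ZZ_p\times p^{b}\ZZ_p$ with $a+b=v_p(\N(\bigcap_j\fb_j))$, and $T=z_0^+z_0^-$ to an element whose valuation is the sum of the two coordinate valuations. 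Then $\mu_p(W_2)$ equals the measure of $\{(\xi,\eta)\in p^{a}\ZZ_p\times p^{b}\ZZ_p\mset\min(v_p(\xi)+v_p(\eta),v_p(\prod_j\N(\fa_j^+)))=0\}$, which is $p^{-a-b}$ when $p\nmid\prod_j\N(\fa_j^+)$; is empty, hence of measure $0$, when $p\mid\prod_j\N(\fa_j^+)$ and $p\mid\prod_j\N(\fb_j)$ (then $v_p(\xi)=v_p(\eta)=0$ is incompatible with $a+b\geq1$); and is $(1-p^{-1})^2$ in the remaining case $p\mid\prod_j\N(\fa_j^+)$, $p\nmid\prod_j\N(\fb_j)$. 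In every case $\mu_p(W_2)=\beta_p\,p^{-v_p(\N(\bigcap_j\fb_j))}$, matching the table for $\beta_p$.

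It then remains to identify $\measure_{\mathcal X_\nn,p}(W_1)$ with $\lim_{n\to+\infty}p^{-6n}N_{\bd,\mathbf{D}}(p^n)$. On $\mathcal X_\nn(\ZZ_p)$ the defining quadrics $P_{1,2,3},P_{1,2,4}$ of $\mathcal X_\nn$ are the relations $L_3(U,V)=d_3(X_3^2+Y_3^2)$, $L_4(U,V)=d_4(X_4^2+Y_4^2)$, so that $L_j(U,V)=d_j(X_j^2+Y_j^2)$ for every $j$ and in particular $d_j\mid L_j(U,V)$ always. Since $D_j=d_j$ for $j=3,4$, the divisibility $D_j\mid L_j(U,V)$ is automatic there, while for $j=1,2$ one has $D_j=[d_j,\ell]$ and $D_j\mid L_j(U,V)$ is equivalent, at the prime $p$, to $\ell\mid L_j(U,V)$ (vacuous for $p=2$, where the requirement in $\mathcal D^3_{\mm,\faa,\fbb,\ell,p}$ is rather $2\nmid\gcd(U,V)$). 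Comparing with \eqref{eq:thed} and the definition of $\mathcal D^3_{\mm,\faa,\fbb,\ell,p}$, this shows that $W_1$ is exactly the set of $R\in\mathcal X_\nn(\ZZ_p)$ with $D_j\mid L_j(U,V)$ for all $j$ (resp. $2\nmid\gcd(U,V)$ if $p=2$). Via the isomorphism $(X_j,Y_j)_{1\leq j\leq4}\mapsto(d_1(X_1^2+Y_1^2),d_2(X_2^2+Y_2^2),X_1,Y_1,\dots,X_4,Y_4)$ of $\mathcal X_\nn$ with the complete intersection of codimension $4$ in $\Aff^{10}_\ZZ$ given by \eqref{eq:torsor} — under which the Leray forms agree up to sign, as a one-line computation in the chart $Y_3Y_4\neq0$ shows, this then propagating to all of $\torscZ_\nn$ by triviality of $\omega_{\torsQ_\nn}$ and constancy of the units — the set $W_1$ corresponds precisely to the locus of $\ZZ_p$-points of \eqref{eq:torsor} counted by $N_{\bd,\mathbf{D}}$. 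The sought equality then follows from the standard identification of the measure defined by a Leray form with the associated naive $p$-adic density (cf. \cite{weil:adeles}, \cite{peyre:cercle}), the normalising power being $p^{-(10-4)n}=p^{-6n}$. Assembling this with the product identity above and the value of $\mu_p(W_2)$ gives the lemma.

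The step I expect to demand the most care is the last one: establishing the Leray-measure-versus-naive-density identity in the presence of the singularities of \eqref{eq:torsor} (equivalently of $\mathcal X_\nn$). Its singular locus is a union of lines through the origin and, at primes $p\equiv1\bmod4$ where $\ci\in\ZZ_p$, it contains genuine $\ZZ_p$-points; one must bound the contribution to the residue counts of points reducing into this locus by $o(p^{6n})$, so that $\lim_n p^{-6n}N_{\bd,\mathbf{D}}(p^n)$ exists and equals the Leray integral. This is exactly the kind of estimate established in the companion paper \cite{bretechebrowning:4linear}, from which the needed convergence can be quoted; the remaining ingredients — the product decomposition, the translation of the congruence conditions through \eqref{eq:thed}, and the comparison of Leray forms — are routine.
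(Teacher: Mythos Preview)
Your proof is correct and follows essentially the same route as the paper: decompose $\mathcal D^3_{\mm,\faa,\fbb,\ell,p}$ according to the product $\torscZ_\nn=\mathcal X_\nn\times\Aff^2_\ZZ$, identify the $\mathcal X_\nn$-factor with the naive $p$-adic density $\lim_n p^{-6n}N_{\bd,\mathbf D}(p^n)$ via the isomorphism with the complete intersection~\eqref{eq:torsor} and compatibility of Leray forms, and compute the $\Aff^2$-factor by the case analysis you give (the paper records the same list of subsets of $\ZZ_p^2$). The only notable difference is bibliographic: where you flag the singular locus and appeal to \cite{bretechebrowning:4linear} for the convergence of the density, the paper simply invokes \cite[proposition~1.14]{lachaud:waring} for the coincidence of Leray measure and counting measure.
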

\begin{proof}
In the product $\mathcal X_{N(\faa\fbb)\mm}\times\Aff^2_\ZZ$,
the domain $\mathcal D^3_{\mm,\faa,\fbb,\ell,p}$ decomposes as
a product.
The projection on the eight coordinates $X_j,Y_j$, where $j\in\onefour$,
gives an isomorphism from
the complete intersection in $\Aff^{10}_\ZZ-\{0\}$
given by the equations
\[L_j(U,V)=n_j(X_j^2+Y_j^2)\]
for $j\in\onefour$ to the scheme $\mathcal X^\circ_\nn$.
Moreover this isomorphism map is compatible with the respective Leray forms.
Since the measure defined by the Leray measure coincides
with the counting measure (see, for example, 
\cite[proposition 1.14]{lachaud:waring}),
the volume of the first component is equal to
$\lim_{n\to+\infty}p^{-6n}N_{\bd,\mathbf{D}}(p^n)$.
The measure on $\Aff^2_\ZZ$ is the standard Haar measure.
On the other hand, the image of the domain in $\ZZ_p^2$ may be described
as follows:
\begin{itemize}
\item It is $\ZZ[\ci]_{1+\ci}\setminus(1+\ci)\ZZ[\ci]_{1+\ci}$ if $p=2$;
\item It is $\ZZ_p^2\setminus p\ZZ_p^2$ if $p\equiv 3\mod 4$;
\item It is the set of $(x,y)\in\ZZ_p^2$
such that $p$ does not divide $\N(x+\ci y)$ if $p\mid\prod_j\N(\fa^+_j)$,
the prime~$p$ does not divide $\N(\bigcap_j\fb_j)$
and $p\equiv 1\mod 4$;
\item It is empty if $p\mid\prod_j\N(\fa^+_j)$ and $p\mid\prod_j\N(\fb_j)$;
\item It is $(\bigcap_j\fb_j)\ZZ_p[\ci]$ otherwise.
\end{itemize}
Therefore $\beta_pp^{-v_p\bigl(\N\bigl(\bigcap_j\fb_j\bigr)\bigr)}$
is the volume of this component.
\end{proof}
\begin{lemma}
\label{lemma.volumeinfinite}%
Let $\mm\in\Sigma$ and $\faa\in\Sigma'$. 
Let $\fbb=(\fb_j)_{j\in\onefour}$ belong to $\widehat{\mathfrak{D}}^4$.
We put $\nn=\N(\faa\fbb)\mm$.
Let~$\ell$ be an odd integer. For any real number $B$, we have
\[\measure_{\nn,\infty}(\mathcal D^3_{\mm,\faa,\fbb,\ell,\infty}(B))=
\frac{4L(1,\chi)
\pi^4}{\prod_{j=1}^4n_j}\Vol(\mathcal R_\mm)f(B),\]
where $f(B)=\int_0^{\log(B)}ue^u \rd u=B\log(B)-B+1$.
\end{lemma}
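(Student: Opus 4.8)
The plan is to compute the archimedean volume directly from the trivialising form $\formon{\torsQ_\nn}$ constructed above. Since $\torscZ_\nn=\mathcal X_\nn\times\Aff^2_\ZZ$ and $\formon{\torsQ_\nn}$ is the exterior product of the Leray form of the complete intersection $\mathcal X^\circ_\nn$ with $dX_0\wedge dY_0$, the measure $\measure_{\nn,\infty}$ splits as the product of the Leray measure on $\mathcal X_\nn(\RR)$ and Lebesgue measure on the $(X_0,Y_0)$-plane. First I would rephrase the conditions defining $\mathcal D^3_{\mm,\faa,\fbb,\ell,\infty}(B)$ purely in terms of $U,V,T$: writing $Q=(X,Y,T,U,V)=\widehat\pi_{\mm,\faa\fbb}(R)$ one has $T=X_0^2+Y_0^2\geq 0$, and since the functions $X,Y,T,U,V$ defining $\widehat\pi_{\mm,\faa\fbb}$ satisfy \eqref{equ.torsiz}, the computation made in the second remark following Definition~\ref{defi.localheight} gives $H_\infty(Q)=\max(|U|,|V|)^2T$. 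Hence \eqref{equ.domain.infty} is equivalent to the three inequalities $T\geq 1$, $\max(|U|,|V|)\geq 1$ and $\max(|U|,|V|)^2T\leq B$; in particular $\ell$ plays no role here.

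For the Leray measure on $\mathcal X_\nn(\RR)$ I would pass to polar coordinates $X_j=\sqrt{\rho_j}\cos\theta_j$, $Y_j=\sqrt{\rho_j}\sin\theta_j$ for $j\in\onefour$, under which the volume form of $\Aff^8$ becomes $\frac1{16}\,d\rho_1\cdots d\rho_4\,d\theta_1\cdots d\theta_4$ and the two defining quadrics $P_{1,2,3},P_{1,2,4}$ become affine-linear in $\rho_1,\dots,\rho_4$, with $\partial(P_{1,2,3},P_{1,2,4})/\partial(\rho_3,\rho_4)=\Delta_{1,2}^2n_3n_4=n_3n_4$. Hence the Leray measure equals $\frac1{16n_3n_4}\,d\rho_1\,d\rho_2\,d\theta_1\cdots d\theta_4$ on $\{\rho_1,\dots,\rho_4\geq 0\}$. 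Substituting $U=n_1\rho_1$ and $V=n_2\rho_2$, so that $\rho_1=L_1(U,V)/n_1$, $\rho_2=L_2(U,V)/n_2$ and, by \eqref{equ.pthreefour}, $\rho_3=L_3(U,V)/n_3$, $\rho_4=L_4(U,V)/n_4$, turns this into $\frac1{16\,n_1n_2n_3n_4}\,dU\,dV\,d\theta_1\cdots d\theta_4$ on $\{(U,V):m_jL_j(U,V)\geq 0\text{ for all }j\}$ (the condition $\rho_j\geq 0$ being equivalent to $m_jL_j(U,V)\geq 0$, since $n_j$ has the sign of $m_j$). Integrating out the four angles over $[0,2\pi]^4$ and then $(X_0,Y_0)$ in polar form gives
\[
\measure_{\nn,\infty}\bigl(\mathcal D^3_{\mm,\faa,\fbb,\ell,\infty}(B)\bigr)=\frac{\pi^5}{n_1n_2n_3n_4}\,I,
\]
where $I=\int dU\,dV\,dT$ is the integral over $\{m_jL_j(U,V)\geq 0\ \forall j\}\cap\{T\geq1,\ \max(|U|,|V|)\geq1,\ \max(|U|,|V|)^2T\leq B\}$.

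It then remains to evaluate $I$. Writing the convex cone $\mathcal C=\{(U,V):m_jL_j(U,V)\geq 0\ \forall j\}$ in polar form $(U,V)=\rho\,\omega$ with $\omega$ on the unit circle and $g(\omega)=\max(|\omega_1|,|\omega_2|)$, we have $dU\,dV=\rho\,d\rho\,d\sigma(\omega)$; carrying out the $T$-integral over $[1,B/(\rho g(\omega))^2]$ and then substituting $w=\rho g(\omega)$ gives, for each fixed $\omega$,
\[
\frac1{g(\omega)^2}\int_1^{\sqrt B}\Bigl(\frac Bw-w\Bigr)\,dw=\frac{f(B)}{2g(\omega)^2},
\]
since $\int_1^{\sqrt B}(B/w-w)\,dw=\frac12(B\log B-B+1)=\frac12f(B)$. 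Therefore $I=\frac12f(B)\int_{\mathcal C\cap S^1}g(\omega)^{-2}\,d\sigma(\omega)$. On the other hand, up to a null set (the coordinate axes and the lines $L_j=0$), the region $\mathcal R_\mm$ of \eqref{eq:RR} is precisely $\{(u,v)\in\mathcal C:\max(|u|,|v|)\leq 1\}$, and the same polar decomposition yields $\Vol(\mathcal R_\mm)=\frac12\int_{\mathcal C\cap S^1}g(\omega)^{-2}\,d\sigma(\omega)$; hence $I=f(B)\Vol(\mathcal R_\mm)$. Substituting back and using $L(1,\chi)=1-\frac13+\frac15-\cdots=\frac\pi4$, that is $\pi^5=4L(1,\chi)\pi^4$, yields the asserted formula.

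The only genuinely delicate step is the Leray-measure computation: one must check carefully that, under the polar substitution, the Leray form of $\mathcal X^\circ_\nn$ pushes forward to $\frac1{16n_3n_4}\,d\rho_1\wedge d\rho_2\wedge d\theta_1\wedge\cdots\wedge d\theta_4$ (using its characterising relation together with $dX_j\wedge dY_j=\frac12\,d\rho_j\wedge d\theta_j$), and keep track of which of the two connected components of $\{(u,v):\prod_jL_j(u,v)>0\}$ is selected by the sign conditions $m_jL_j>0$. Everything else reduces to elementary calculus.
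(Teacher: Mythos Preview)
Your proof is correct and follows essentially the same strategy as the paper's own argument: both exploit the product decomposition $\torscZ_\nn=\mathcal X_\nn\times\Aff^2$, separate the archimedean volume into a Leray-measure factor on $\mathcal X_\nn(\RR)$, an area factor from the $(X_0,Y_0)$-plane, and the height-integral $f(B)$, and then identify the three pieces as $\pi^4\Vol(\mathcal R_\mm)/\prod_j n_j$, $\pi=4L(1,\chi)$, and $f(B)$ respectively. The only notable difference is in the computation of the Leray factor: where the paper scales by $x_j'=\sqrt{|n_j|}x_j$, $y_j'=\sqrt{|n_j|}y_j$ to reduce to the sign-vector case and then asserts the value $\pi^4\Vol(\mathcal R_\mm)$, you instead pass to polar coordinates $(\rho_j,\theta_j)$, linearise the defining quadrics, and carry out the Jacobian explicitly; and where the paper writes the height-integral as $\int_{t\geq1,u\geq1,t^2u\leq B}2t\,du\,dt$ using the homogeneity of $v_{\nn,1}$ and $v_2$, you use polar coordinates in $(U,V)$ and the substitution $w=\rho g(\omega)$ to reach the same $\tfrac12f(B)\int g(\omega)^{-2}d\sigma(\omega)=f(B)\Vol(\mathcal R_\mm)$. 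These are equivalent bookkeepings of the same computation.
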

\begin{proof}
The functions $U$ and $V$ on $\torscZ_\nn=\mathcal X_\nn\times\Aff^2$ 
are induced by functions on $\mathcal X_\nn$ which we shall
also denote by $U$ and $V$. Let $H_{F,\infty}:\mathcal X_\nn(\RR)\to\RR$
and $H_{E,\infty}:\RR^2\to\RR$ be defined by
\[H_{F,\infty}(R)=\max(|U(R)|,|V(R)|)\qquad\text{and}\qquad
H_{E,\infty}(x_0,y_0)=x_0^2+y_0^2.\]
Then the domain $\mathcal D^3_{\mm,\faa,\fbb,\ell,\infty}(B)$ is the set
of $(R,(x_0,y_0))\in\mathcal X_\nn(\RR)\times\RR^2$ such that
\[H_{F,\infty}(R)\geq 1,\qquad H_{E,\infty}(x_0,y_0)\geq 1,\qquad\text{and}\qquad
H_{F,\infty}(R)^2H_{E,\infty}(x_0,y_0)\leq B.\]
Let us denot by $v_{\nn,1}(t)$ (resp. $v_2(t)$) the
volume of the set of $R\in\mathcal X_\nn(\RR)$ (resp. $(x_0,y_0)\in\RR^2$)
such that $H_{F,\infty}(R)\leq t$ (resp. $H_{E,\infty}(x_0,y_0)\leq t$).
Then the functions $v_{\nn,1}$ and $v_2$ are monomials of respective
degrees $2$ and $1$. Therefore the volume
of the domain $\mathcal D^3_{\mm,\faa,\fbb,\ell,\infty}(B)$
is given by
\[v_{\nn,1}(1)v_2(1)\int_{\substack{t\geq 1, u\geq 1\\t^2u\leq B}} 2t\rd u\rd t
=v_{\nn,1}(1)v_2(1)f(B).\]
To compute the value of $v_{\nn,1}(1)$, we may use the change
of variables $x_j'=\sqrt{|n_j|}x_j$ and $y'_j=\sqrt{|n_j|}y_j$.
Since the Leray form may be locally described as
\[\left|\begin{matrix}\frac{\partial P_{1,2,3}}{\partial X_1}
&\frac{\partial P_{1,2,3}}{\partial X_2}\\
\frac{\partial P_{1,2,4}}{\partial X_1}
&\frac{\partial P_{1,2,4}}{\partial X_2}\\\end{matrix}\right|^{-1}
\rd X_3 \rd X_4\prod_{j=1}^4\rd Y_j=(4\Delta_{3,4}X_1X_2)^{-1}
\rd X_3\rd X_4\prod_{j=1}^4\rd Y_j\]
we get that $v_{\nn,1}(1)=v_{\boldsymbol\varepsilon,1}(1)\prod_{j=1}^4n_j^{-1}$,
where $\varepsilon_j=\sgn(n_j)=\sgn(m_j)$. It follows
that $v_{\nn,1}(1)=(\prod_{j=1}^4n_j)^{-1}\pi^4\Vol(\mathcal R_\mm)$.
We conclude the proof with the equalities $v_2(1)=\pi=4L(1,\chi)$.
\end{proof}
\begin{prop}
Let $\mm\in\Sigma$ and $\faa\in\Sigma'$. 
Let $\fbb=(\fb_j)_{j\in\onefour}$ belong to $\widehat{\mathfrak{D}}^4$.
Let~$\ell$ be an odd integer. Then
\[\frac{c_{\faa,\fbb}c_{\bd,\ma{D},\mcal{R}}}{\N(\bigcap
    \fb_j)}f(B)=\Vol(\mathcal D^3_{\mm,\faa,\fbb,\ell}(B)),\]
where $f(B)=B\log(B)-B+1$.
\end{prop}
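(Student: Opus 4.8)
The plan is to use the fact that, by its very definition, $\mathcal D^3_{\mm,\faa,\fbb,\ell}(B)=\mathcal D^3_{\mm,\faa,\fbb,\ell,\infty}(B)\times\prod_{p\in\primes}\mathcal D^3_{\mm,\faa,\fbb,\ell,p}$ is a product, so that its volume for the canonical measure $\prod_v\measure_{\nn,v}$ splits as
\[\Vol(\mathcal D^3_{\mm,\faa,\fbb,\ell}(B))=\measure_{\nn,\infty}\bigl(\mathcal D^3_{\mm,\faa,\fbb,\ell,\infty}(B)\bigr)\prod_{p\in\primes}\measure_{\nn,p}\bigl(\mathcal D^3_{\mm,\faa,\fbb,\ell,p}\bigr),\]
the product being convergent by the same estimates used in Lemma~\ref{lem:c-upper}. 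The first step is to insert Lemma~\ref{lemma.volumeinfinite} for the archimedean factor and Lemma~\ref{lemma.volumeprime} for each finite factor.

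For the finite part, the identity $\prod_p p^{-v_p(\N(\bigcap_j\fb_j))}=\N(\bigcap_j\fb_j)^{-1}$ produces the denominator. To handle $\prod_p\lim_{n\to\infty}p^{-6n}N_{\bd,\mathbf D}(p^n)$, I would observe that $\nn=\mm\N(\faa\fbb)$ gives $n_j=d_j$, that all $d_j$ are odd, and that by \eqref{eq:def-sigp} and \eqref{eq:sig>2} one has $\lim_n p^{-6n}N_{\bd,\mathbf D}(p^n)=p^{v_p(d_1d_2d_3d_4)}\sigma_p(\ma d,\ma D)$ for $p>2$, while $\lim_n 2^{-6n}N_{\bd,\mathbf D}(2^n)=\sigma_2(\ma d,\ma D)$ by \eqref{eq:sig=2}. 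Multiplying over all primes yields $(d_1d_2d_3d_4)\prod_p\sigma_p(\ma d,\ma D)$, and the factor $d_1d_2d_3d_4=n_1n_2n_3n_4$ cancels against the denominator $\prod_j n_j$ coming from Lemma~\ref{lemma.volumeinfinite}. Combining with $\pi^4\Vol(\mcal R_\mm)=\omega_{\mcal R_\mm}(\infty)$ (as used in the proof of Lemma~\ref{lem:c-upper}) and the definition \eqref{eq:cfinal} of $c_{\bd,\ma D,\mcal R_\mm}$, the whole expression collapses to
\[\Vol(\mathcal D^3_{\mm,\faa,\fbb,\ell}(B))=\frac{4L(1,\chi)\bigl(\prod_p\beta_p\bigr)\,c_{\bd,\ma D,\mcal R_\mm}}{\N(\bigcap_j\fb_j)}\,f(B).\]

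It then remains to check that $4L(1,\chi)\prod_p\beta_p=c_{\faa,\fbb}$. Using $4L(1,\chi)=\pi$ and $\beta_2=\frac12$, the product of the $\beta_p$ contributes exactly $2L(1,\chi)\prod_{p\equiv3\,(4)}(1-p^{-2})\prod_{p\mid\prod_j\N(\fa_j^+),\,p\equiv1\,(4)}(1-p^{-1})^2$, which is precisely $C_{\N(\faa)}$ in the notation of Lemma~\ref{lem:r}, since the primes dividing $\N(\faa)$ are exactly those dividing $\prod_j\N(\fa_j^+)$. For the vanishing case, one notes that $\gcd(\N(\bigcap_j\fb_j),\N(\faa))\neq1$ — the condition forcing $c_{\faa,\fbb}=0$ — is equivalent to the existence of a prime dividing both $\prod_j\N(\fa_j^+)$ and $\prod_j\N(\fb_j)$, which is precisely the situation in which some $\beta_p$ vanishes; hence both sides of the asserted identity are $0$ in that case. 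Substituting $c_{\faa,\fbb}$ for $4L(1,\chi)\prod_p\beta_p$ in the displayed formula completes the proof.

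I expect the part needing most care to be the Euler-factor bookkeeping: verifying which variant of $N_{\bd,\mathbf D}(p^n)$ enters at $p=2$ in Lemma~\ref{lemma.volumeprime}, confirming the cancellation $d_1d_2d_3d_4=n_1n_2n_3n_4$, and checking that $\prod_p\beta_p$ together with the $4L(1,\chi)$ arising from $v_2(1)=\pi$ reproduces $C_{\N(\faa)}$ on the nose, including the exact matching of the two vanishing conditions.
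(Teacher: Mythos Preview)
Your proposal is correct and follows exactly the route the paper takes: combine Lemma~\ref{lemma.volumeinfinite} and Lemma~\ref{lemma.volumeprime}, use $\omega_{\mcal R_\mm}(\infty)=\pi^4\Vol(\mcal R_\mm)$ and the relation $\prod_p\sigma_p(\bd,\ma D)=(\prod_j n_j)^{-1}\prod_p\lim_{k\to\infty}p^{-6k}N_{\bd,\ma D}(p^k)$, and then match $4L(1,\chi)\prod_p\beta_p$ with $c_{\faa,\fbb}$. The paper's own proof records only these two identities and leaves the Euler-factor bookkeeping (the cancellation $\prod_j n_j=\prod_j d_j$, the identification of the vanishing conditions, and the recovery of $C_{\N(\faa)}$) to the reader, whereas you have written it out explicitly; there is no difference in substance.
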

\begin{proof}
This follows from lemmata~\ref{lemma.volumeprime}
and \ref{lemma.volumeinfinite}: indeed, by
\cite[(2.8)]{bretechebrowning:4linear}, 
we have $\omega_{\mathcal R_\mm}(\infty)=
\pi^4\Vol(\mathcal R_\mm)$ and
\[\prod_{p\in\primes}\sigma_p(\bd,\mathbf D)=\frac 1{\prod_{j=1}^4n_j}
\prod_{p\in\primes}\lim_{k\to+\infty}p^{-6k}N_{\bd,\mathbf{D}}(p^k)\]
where $\nn=N(\faa\fbb)\mm$.
\end{proof}
\Subsection{Moebius reversion}

\begin{prop}
Let $B$ be a real number and $\mm$ belong to $\Sigma$.
Then 
\[\Vol(\mathcal D_{\mm}(B))=\sum_{\faa\in\Sigma'}\sum_{\fbb\in\widehat
{\mathfrak D}^4}
\sum_{\ell\operatorname{ odd}}
\mu(\faa)\mu(\fbb)\mu(\ell)
\Vol(\mathcal D^3_{\mm,\faa,\fbb,\ell}(B)).\]
\end{prop}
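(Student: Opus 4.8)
The plan is to view the claimed equality as the measure-theoretic shadow of the three M\"obius inversions of Section~\ref{section.jumpingup}, and to obtain it by integrating one pointwise identity of characteristic functions against the canonical measure $\prod_v\measure_{\mm,v}$ on $\torscZ_\mm(\Adeles_{f,\infty})$.

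First I would record how the multiplication maps interact with that measure. For $\llambda=(\lambda_\delta)_{\delta\in\DDelta}\in T_\DDelta(\QQ)\cap\ZZ_\DDelta$, the morphism $m_\llambda\colon Z_\delta\mapsto\lambda_\delta Z_\delta$ is an \emph{isomorphism} of $\QQ$-varieties $\torscZ_{\N(\llambda)\nn}\to\torscZ_\nn$ — though not an isomorphism of $\ZZ$-schemes, which is exactly why on $\ZZ_p$-points it is only a bijection onto a proper subset. Presenting $\mathcal X^\circ_\nn$ as the complete intersection of $P_{1,2,3}$ and $P_{1,2,4}$, which is possible since $\Delta_{1,2}=1$, and computing the Leray form together with the form $\rd X_0\wedge\rd Y_0$ on the $\Aff^2$-factor in the $Z_\delta$-coordinates, one finds
\[m_\llambda^{*}\formon{\torscZ_\nn}=s_\llambda\,\formon{\torscZ_{\N(\llambda)\nn}},\qquad s_\llambda=\prod_{j=0}^{4}\N(\lambda^{+}_j)\in\ZZ_{>0}.\]
Hence $\measure_{\nn,v}(m_\llambda(A))=|s_\llambda|_v\,\measure_{\N(\llambda)\nn,v}(A)$ at each place $v$, and since $s_\llambda$ is a positive rational integer the product formula gives $\prod_v|s_\llambda|_v=1$; therefore $m_\llambda$ preserves the adelic canonical volume of every product domain $D=\prod_v D_v$, even though it fails to do so place by place. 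This is precisely the bookkeeping behind the norm factors $\prod_j\N(\fa^+_j\fb_j)$ and $1/\N(\bigcap_j\fb_j)$ appearing in Lemmas~\ref{lemma.volumeprime} and~\ref{lemma.volumeinfinite}.

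Next I would assemble the pointwise identity. The proofs of Lemma~\ref{lemma.firstinversion}, Proposition~\ref{prop.secondinversion} and Proposition~\ref{prop.aftermoebius} each establish, prime by prime, an identity of characteristic functions on $\torscZ_\bullet(\ZZ_p)$: respectively $\oone_{\Lambda_p}=\sum_{I\in\mathcal E_p}\mu_p(I)\,\oone_{\ee_I+\Xi_p}$ composed with $\log_p$, the M\"obius inversion of the condition $\gcd(x,y,t)=1$, and the M\"obius inversion of $\gcd(u,v)=1$ over odd integers. Transporting these through the isomorphisms $m_{\llambda(\faa)}$ and $m_{\llambda(\fbb)}$ (via Lemma~\ref{lemma.multlambda} and remark~\ref{rems.notas.ideals}, which is legitimate because $\widehat\pi_\mm\circ m_{\llambda(\faa)}=\widehat\pi_{\mm,\faa}$, so that the archimedean condition~\eqref{equ.domain.infty} is respected), and multiplying over the remaining places, they combine into the single identity of functions on $\torscZ_\mm(\Adeles_{f,\infty})$
\[\oone_{\mathcal D_\mm(B)}=\sum_{\faa\in\Sigma'}\sum_{\fbb\in\widehat{\mathfrak D}^{4}}\sum_{\ell\text{ odd}}\mu(\faa)\mu(\fbb)\mu(\ell)\,\oone_{m_{\llambda(\faa)}\circ m_{\llambda(\fbb)}(\mathcal D^3_{\mm,\faa,\fbb,\ell}(B))},\]
in which, at any fixed adelic point, only finitely many terms are non-zero: $\Sigma'$ is finite, $\ell$ must divide the non-vanishing pair $(U,V)$, and only finitely many ideals $\fb_j$ can divide the relevant coordinates. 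Integrating this identity against $\prod_v\measure_{\mm,v}$, using the volume-preservation of $m_{\llambda(\faa)}\circ m_{\llambda(\fbb)}$ from the previous paragraph to replace $\int\oone_{m_{\llambda(\faa)}\circ m_{\llambda(\fbb)}(\mathcal D^3_{\ldots})}\,\rd\measure_\mm$ by $\Vol(\mathcal D^3_{\mm,\faa,\fbb,\ell}(B))$, and interchanging the absolutely convergent sum with the integral, gives precisely the asserted formula. The absolute convergence needed to justify the interchange follows from Lemma~\ref{lem:c-upper} and the identification of $\Vol(\mathcal D^3_{\mm,\faa,\fbb,\ell}(B))$ with $c_{\faa,\fbb}\,c_{\bd,\ma D,\mcal R_\mm}\,f(B)/\N(\bigcap_j\fb_j)$ from the preceding proposition, by the same tail estimate as in the proof of Lemma~\ref{lem:M(B)}.

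The step I expect to be the main obstacle is the first one: verifying that the Jacobian $s_\llambda$ of the composite multiplication map $\torscZ_{\N(\faa\fbb)\mm}\to\torscZ_\mm$ is a positive rational \emph{integer}, so that the product formula makes the adelic volume genuinely insensitive to the rescaling — in particular that nothing is lost at the archimedean place and that the factor $\N(\bigcap_j\fb_j)$ coming from the $E$-component of $\widehat\fbb$ is correctly balanced between the finite and the infinite places. Once that local computation is in hand, everything else is a faithful transcription of the combinatorics of Section~\ref{section.jumpingup} with counting measure replaced by $\measure_\mm$.
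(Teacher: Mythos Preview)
Your proposal is correct and follows the same route as the paper's own proof: $m_\llambda$ is an isomorphism of $\QQ$-varieties carrying the canonical adelic measure on $\torscZ_{\N(\llambda)\nn}(\Adeles_\QQ)$ to that on $\torscZ_\nn(\Adeles_\QQ)$, after which the formula follows from the characteristic-function identities underlying Lemma~\ref{lemma.firstinversion} and Propositions~\ref{prop.secondinversion} and~\ref{prop.aftermoebius}. Your anticipated obstacle is in fact none: the product formula $\prod_v|s|_v=1$ holds for every $s\in\QQ^*$, and since invertible regular functions on $\torsQ_\nn$ are constant the pullback $m_\llambda^*\formon{\torscZ_\nn}$ is automatically a nonzero rational multiple of $\formon{\torscZ_{\N(\llambda)\nn}}$ --- the paper invokes exactly this and never computes the Jacobian.
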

\begin{proof}
For any $\llambda\in T_\DDelta(\QQ)\cap\ZZ_\DDelta$,
and any $\nn\in\ZZ^4$,
the multiplication by $\llambda$ defines an isomorphism
from $\torscZ_{N(\llambda)\nn}$ to $\torscZ_{\nn}$. Therefore
it sends the canonical form on the adelic set
$\torscZ_{N(\llambda)\nn}(\Adeles_\QQ)$ onto the
canonical form on $\torscZ_{\nn}(\Adeles_\QQ)$.
Therefore the volume of $\mathcal D^3_{\mm,\faa,\fbb,\ell}(B)$
coincides with the volume of its image in $\torscZ_{\mm}(\Adeles_\QQ)$.
The formula then follows from lemma~\ref{lemma.firstinversion}
and the proofs of propositions~\ref{prop.secondinversion}
and~\ref{prop.aftermoebius}.
\end{proof}

\Subsection{The constant}

\begin{prop}
We have
\[C_H(S)B\log(B)=\frac 1{\card\TNS(\QQ)_\tors}
\sum_{\mm\in\Sigma}\Vol(\mathcal D_{\mm}(B))+O(B).\]
\end{prop}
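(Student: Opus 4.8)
The plan is to identify $\frac 1{\card\TNS(\QQ)_\tors}\sum_{\mm\in\Sigma}\Vol(\mathcal D_\mm(B))$ with the adelic‑volume side of Peyre's Tamagawa formula, so that the statement becomes an instance of the torsor formalism of \cite{peyre:cercle} and \cite{peyre:torseurs} (following Salberger \cite{salberger:tamagawa}). Two preliminary reductions make this precise. First, since $\mathcal D_\mm(B)=\mathcal D_{\mm,\infty}(B)\times\prod_{p\in\primes}\mathcal D_{\mm,p}$ and the finite‑place domains impose no archimedean constraint, $\Vol(\mathcal D_\mm(B))$ factors as $\meas_{\mm,\infty}(\mathcal D_{\mm,\infty}(B))\prod_{p\in\primes}\meas_{\mm,p}(\mathcal D_{\mm,p})$; by Lemma \ref{lemma.volumeinfinite} (with $\faa$ and $\fbb$ trivial, so $\nn=\mm$) the archimedean factor is a fixed multiple of $f(B)=B\log(B)-B+1$, and the remaining product converges, so the left‑hand side equals $c_\ast f(B)$ for an explicit real number $c_\ast$ (here one uses that $\Sigma$ is finite). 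Second, by the fundamental‑domain proposition of \S\ref{section.jumpingup} the set $\mathcal D_\mm$ is a fundamental domain for $\TNS(\QQ)$ acting on $\torsQ_\mm(\Adeles_\QQ)$ modulo $\TNS(\QQ)_\tors$, while $\torsQ_\mm\to S$ is a versal torsor by Proposition \ref{prop.torsor}; trivialising $\torsQ_\mm$ locally over $S$ and using the transformation law \eqref{equ.localheight} of the height under the torus action, the part of $\meas_{\mm,v}$ along the fibres is the canonical measure on $\TNS(\QQ_v)$, and the fibrewise integral over the fundamental domain becomes a torus ``norm‑ball'' volume of size $\frac 1{(\rho-1)!}(\log X)^{\rho-1}+\cdots$ with $X=B/H(P)$ and $\rho=2$ the rank of $\Pic(S)$.

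Assembling these pieces as in \cite{peyre:cercle}, the lower bounds $H_\infty(Q)\geq\max(|u|,|v|)^2\geq 1$ built into $\mathcal D_{\mm,\infty}(B)$ are exactly the effective‑cone condition~(3.9) of \cite[p.~268]{peyre:cercle} recalled after the definition of $\mathcal D_\mm(B)$, so the torus integral produces the factor $\alpha(S)$, equal to $1$ here. Summing over $\mm\in\Sigma$ and invoking that, by Proposition \ref{prop.uniquem}, the family $(\torsQ_\mm)_{\mm\in\Sigma}$ represents the isomorphism classes of versal torsors with a rational point, the images $\pi_\mm(\mathcal D_\mm)$ cover the Brauer--Manin set $S(\Adeles_\QQ)^{\Br}$, each adelic point being reached with multiplicity $\sharp\bigl(\coker(\Br(\QQ)\to\Br(S))\bigr)=\beta(S)=4$, by Lemma \ref{lem:brauer} and the descent theory of \cite{cts:descente1}, \cite{cts:descente2}; the factor $1/\card\TNS(\QQ)_\tors$ absorbs the torsion contributed by the fibrewise integral. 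With $\alpha(S)=1$ and $(\rho-1)!=1$ this gives
\[\frac 1{\card\TNS(\QQ)_\tors}\sum_{\mm\in\Sigma}\Vol(\mathcal D_\mm(B))=\alpha(S)\beta(S)\,\meas_H\bigl(S(\Adeles_\QQ)^{\Br}\bigr)f(B)=C_H(S)B\log(B)+O(B),\]
using $f(B)=B\log(B)+O(B)$ at the last step. Equivalently, comparing $c_\ast$ with the expression \eqref{equ.final.constant} for $c$, the proposition amounts to the identity $c=C_H(S)$, which together with the analytic estimate already obtained yields Theorem \ref{theo.main}.

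The substance of the argument is the numerical bookkeeping in the second step. One must check that the explicit Euler product formed from the local volumes $\meas_{\mm,p}(\mathcal D_{\mm,p})$ --- expressed through the densities $\sigma_p(\bd,\ma D)$ of Lemma \ref{lemma.volumeprime} and the archimedean factor $4L(1,\chi)\pi^4\Vol(\mathcal R_\mm)$ of Lemma \ref{lemma.volumeinfinite} --- agrees with Peyre's normalisation of $\meas_H$: the convergence factors $L_p(1,\Pic(\overline S))^{-1}$, the residue $\lim_{s\to 1}(s-1)^2 L_S(s,\Pic(\overline S))=L(1,\chi)^4$ (since $\Pic(\overline S)\otimes\QQ$ is two copies of the trivial and four copies of the sign representation of $\Ga$), the powers of $\pi$ through $L(1,\chi)=\pi/4$, and the torsion $\card\TNS(\QQ)_\tors=2^8$. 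The other delicate point is to confirm that the descent count really lands on $S(\Adeles_\QQ)^{\Br}$ with multiplicity $4$, and not on all of $S(\Adeles_\QQ)$: it is precisely the failure of weak approximation --- which makes $\meas_H(S(\Adeles_\QQ)^{\Br})<\meas_H(S(\Adeles_\QQ))$ --- that makes this non‑trivial, and here one must use the explicit shape of the local domains $\mathcal D_{\mm,p}$ at the primes of $\badp$ congruent to $1$ modulo $4$ and of the index sets $\Sigma$, $\Sigma'$. As an alternative, one may bypass the general formalism and prove $c=C_H(S)$ by comparing the two Euler products term by term, which is longer but entirely elementary.
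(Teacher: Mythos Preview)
Your outline has the right architecture --- factor the adelic volume, push the fibre measure onto $\TNS$, recognise the effective-cone integral as $\alpha(S)f(B)$, and sum over $\Sigma$ to land on $S(\Adeles_\QQ)^{\Br}$ --- and this is indeed the route the paper takes, following \cite{peyre:cercle} and \cite{salberger:tamagawa}. But one step is misstated and hides the actual mechanism.

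You assert that each adelic point of $S(\Adeles_\QQ)^{\Br}$ is hit by the images $\pi_\mm(\torsQ_\mm(\Adeles_\QQ))$ with multiplicity $\beta(S)=\card H^1(\QQ,\Pic(\overline S))$. That is not what descent gives: Salberger's argument (invoked in the paper via \cite[proof of lemma~6.17]{salberger:tamagawa}) shows the multiplicity is $\card\sha^1(\QQ,\TNS)$, which a priori has nothing to do with $\beta(S)$. The factor $\beta(S)$ only appears after a second, independent input: the fibrewise torus volume, once you pass from $\TNS^1(\RR)\times\prod_p\TNS(\ZZ_p)$ to $\TNS^1(\Adeles_\QQ)/\TNS(\QQ)$ via the exact sequence coming from lemma~\ref{lemma.idelic}, equals $\card\TNS(\QQ)_\tors\cdot\tau(\TNS)$, and Ono's main theorem \cite[\S5]{ono:tamagawa} gives $\tau(\TNS)=\card H^1(\QQ,\Pic(\overline S))/\card\sha^1(\QQ,\TNS)$. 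It is the \emph{product} of the Tamagawa number and the covering multiplicity that collapses to $\beta(S)$; neither factor alone is $\beta(S)$. Your sentence ``the factor $1/\card\TNS(\QQ)_\tors$ absorbs the torsion contributed by the fibrewise integral'' is where this should have been made precise, and as written it papers over both the exact sequence and Ono's formula.

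The paper's proof carries this out explicitly: it writes $\meas_\mm(\mathcal D_\mm(B))=\card\TNS(\QQ)_\tors\,\alpha(S)\,\tau(\TNS)\,\meas_H(\pi_\mm(\torsQ_\mm(\Adeles_\QQ)))\,f(B)$, then sums over $\mm$ using the $\sha^1$ multiplicity, so that $\tau(\TNS)\cdot\card\sha^1=\beta(S)$ by Ono. Your final paragraph correctly flags that the Euler-product bookkeeping and the Brauer--Manin restriction are where the work lies, but without the Ono/$\sha^1$ step the argument as you have it does not close. The alternative you mention --- matching $c$ from \eqref{equ.final.constant} to $C_H(S)$ prime by prime --- is viable but is a different (and longer) proof, not a substitute for the missing step here.
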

\begin{proof}
The following proof is based upon the ideas 
of Per Salberger~\cite{salberger:tamagawa}
as des\-cri\-bed in~\cite[\S5.3]{peyre:cercle}.

We may identify $\omega_S^{-1}$ with $\mathcal O_{S'}(1)$
(see lemma~\ref{lemma.linearomega}). This enables us
to define an adelic metric on $\omega_S^{-1}$ by
\[\Vert y\Vert_v=\begin{cases}
\min\left(\left|\frac{y}{X_0(x)}\right|,
\left|\frac{y}{X_1(x)}\right|,
\left|\frac{y}{X_2(x)}\right|,
C\left|\frac{y}{X_3(x)}\right|,
C\left|\frac{y}{X_4(x)}\right|\right)&\text{if $v=\infty$,}\\
\min_{0\leq i\leq 4}\left(\left|\frac{y}{X_i(x)}\right|_v
\right)&\text{otherwise.}
\end{cases}\]
for $x\in S'(\QQ_v)$ and $y$ in the corresponding fiber
$\mathcal O_{S'}(1)_x\otimes\QQ_v$, with the constant $C$ defined
in notation~\ref{nota.height}. This adelic metric defines the height
used throughout the text.
Let $v$ be a place of $\QQ$. We denote by $\meas_{H,v}$
the measure on $S(\QQ_v)$ corresponding
to the adelic metric on $\omega_S^{-1}$ (see \cite[\S2]{peyre:fano}).
Let us recall that on a split torus $\Gm^n$,
the form $\bigwedge_{j=1}^n\xi_j^{-1}d\xi_j$, where
$(\xi_j)_{1\leq j\leq n}$ is a basis of $X^*(\Gm^n)$,
up to sign does not depend on the choice of the basis.
Therefore there is a canonical Haar measure on $\TNS(\QQ_v)$
which we shall denote by $\meas_{\TNS,v}$.
Let $\mm$ be an element of $\Sigma$.
The functions $H_w$ defined in definition~\ref{defi.localheight} 
may been seen as the composite of the metrics on $\omega_S^{-1}$
with the natural morphism from the universal torsor
$\torsQ_\mm$ to the line bundle $\omega_S^{-1}$.
Let $U\neq\emptyset$ be an open subset of $\pi_\mm(\torsQ_\mm(\QQ_v))$.
According to \cite[lemme 3.1.14]{peyre:cercle} and
\cite[\S4.4]{peyre:torseurs}, if $s:U\to\torsQ_\mm(\QQ_v)$ is a continuous
section of $\pi_\mm$, then the measure $\meas_{\mm,v}$ is characterised by
the relation
\begin{equation}
\label{equ.meas.torsor}
\int_{\pi_\mm^{-1}(U)}f(y)\meas_{\mm,v}(y)=\int_U\int_{\TNS(\QQ_v)}f(t.s(x))
H_{v}(t.s(x))\meas_{\TNS,v}(t)\meas_{H,v}(x)
\end{equation}
for any continuous function $f$ on $\pi_\mm^{-1}(U)$ with compact support.

By lemmata~\ref{lemma.most.places} and~\ref{lemma.bad.places},
for any prime number $p$,
$\mathcal D_{\mm,p}$ is a fundamental domain in $\torsQ_\mm(\QQ_p)$
under the action of $\TNS(\QQ_p)$ modulo $\TNS(\ZZ_p)$.
Moreover, by definition, we have that $\mathcal D_{\mm,p}$ is
contained in $\widehat\pi_\mm^{-1}(\torsiZ(\ZZ_p))$ and thus 
$H_p$ is equal to $1$ on $\mathcal D_{\mm,p}$. 
Using~\eqref{equ.meas.torsor}, we get that
\[\meas_{\mm,p}(\pi_\mm^{-1}(U)
\cap\mathcal D_{\mm,p})=\meas_{\TNS,p}(\TNS(\ZZ_p))\meas_{H,v}(U)\]
for any open subset $U$ of $\pi_\mm(\mathcal D_{\mm,p})$.

The maps $\log\circ H_F$ and $\log\circ H_E$ define
a map $\log_\infty:\torsQ_\mm(\RR)\to\Pic(S)\dual\otimes_\ZZ\RR$
and using $\log_\infty\times \pi_\mm$ we get a homeomorphism
\[\torsQ_\mm(\RR)\to\Pic(S)\dual\otimes_\ZZ\RR\times\pi_\mm(\torsQ_\mm(\RR)).\]
Let
\[\TNS^1(\RR)=\{\,t\in\TNS(\RR)\mset \forall\chi\in\Pic(S),
\vert\chi(t)\vert=1\,\}.\]
Then for any real number $B$ and any open 
subset $U$ of $\pi_m(\mathcal D_{\mm,\infty}(B)$,
we get
\begin{align*}
&\meas_{\mm,\infty}(\pi_\mm^{-1}(U)\cap
\mathcal D_{\mm,\infty}(B))\\
&=\int_{\{\,y\in\Ceff(S)\dual\mset \langle\omega_S^{-1}
,y\rangle\leq \log(B)\,\}}e^{\langle\omega_S^{-1},y\rangle}\rd y\times
\omega_{\TNS}(\TNS^1(\RR))\,\omega_{H,\infty}(U)\\
&=\alpha(S)\meas_{\TNS,\infty}(\TNS^1(\RR))\,\meas_{H,\infty}(U)
f(B),
\end{align*}
where $\Ceff(S)\dual$ is the dual to the closed cone in $\Pic(S)\otimes_\ZZ\RR$
generated by the effective divisors.

Taking the product over all places of $\QQ$, we get the formula
\begin{equation}
\label{equ.global.volume}
\begin{aligned}
\meas_{\mm}(\mathcal D_{\mm}(B))&=
\alpha(S)\meas_{\TNS,\infty}(\TNS^1(\RR))
\meas_{H,\infty}(\pi_\mm(\torsQ_\mm(\RR)))\int_0^{\log(B)}ue^u\rd u\\
&\quad\times\left(\prod_{p\in\primes}
L_p(1,\Pic(\overline S))\meas_{\TNS,p}(\TNS(\ZZ_p))\right)\\
&\quad\times
\left(\prod_{p\in\primes}L_p(1,\Pic(\overline S))^{-1}
\meas_{H,p}(\pi_\mm(\torsQ_\mm(\QQ_p)))\right).
\end{aligned}
\end{equation}

By lemma~\ref{lemma.idelic}, the map from $\TNS(\QQ)$ 
to $\bigoplus_{p\in\primes}X_*(\TNS)_p$ is surjective. It follows
that
\[\TNS^1(\Adeles_\QQ)=(\TNS^1(\RR)\times\prod_{p\in\primes}\TNS(\ZZ_p))
.\TNS(\QQ)\]
and we get an exact sequence
\[1\longrightarrow\TNS(\QQ)_\tors\longrightarrow
\TNS^1(\RR)\times\prod_{p\in\primes}\TNS(\ZZ_p)
\longrightarrow \TNS^1(\Adeles_\QQ)/\TNS(\QQ)\longrightarrow 1.\]
Combining this with formula~\eqref{equ.global.volume} and the definitions
of the adelic measures, we get the formula
\[\meas_{\mm}(\mathcal D_{\mm}(B))=
\card\TNS(\QQ)_\tors\alpha(S)\tau(\TNS)\,\meas_H(\pi_m(\torsQ_\mm(\Adeles_\QQ)))
\int_0^{\log(B)}ue^u\rd u,\]
where $\tau(\TNS)$ denotes the Tamagawa number of $\TNS$.
By Ono's main theorem~\cite[\S 5]{ono:tamagawa}, $\tau(\TNS)$ is equal to 
$\card H^1(\QQ,\Pic(\overline S)
/\card\sha^1(\QQ,\TNS)$ and using Salberger's 
argument \cite[proof of lemma 6.17]{salberger:tamagawa}
and prop.~\ref{prop.uniquem},
any point in $S(\Adeles_\QQ)^{\Br}$ belongs to exactly $\card\sha^1(\QQ,\TNS)$
sets of the form $\pi_\mm(\torsQ_\mm(\Adeles_\QQ))$.
This concludes the proof of the proposition.
\end{proof}
%% End of file jumpingdown.tex

\let\bold\mathbf
%% \bibliographystyle{amsext}
%% \bibliography{general}

\begin{thebibliography}{CTSSD}

\bibitem[BM]{batyrevmanin:hauteur}
V.~V. Batyrev \andname Y.~I. Manin, {\em Sur le nombre des points rationnels de
  hauteur born\'ee des vari\'et\'es alg\'ebriques}, Math. Ann. {\bf 286}
  (1990), 27--43.

\bibitem[BT]{batyrevtschinkel:toric}
V.~V. Batyrev \andname Y.~Tschinkel, {\em Rational points of bounded height on
  compactifications of anisotropic tori}, Internat. Math. Res. Notices {\bf 12}
  (1995), 591--635.

\bibitem[BB1]{bretechebrowning:sums}
R.~de~la Bret\`eche \andname T.~D. Browning, {\em Sums of arithmetic functions
  over values of binary forms}, Acta Arith. {\bf 125} (2007), 291--304.

\bibitem[BB2]{bretechebrowning:4linear}
\bysame, {\em Binary linear forms as sums of two squares}, Compositio Math.
  {\bf 144} (2008), 1375--1402.

\bibitem[Ch1]{chatelet:points}
F.~Ch\^atelet, {\em Points rationnels sur certaines courbes et surfaces
  cubiques}, Enseignement Math. (2) {\bf 5} (1959), 153--170.

\bibitem[Ch2]{chatelet:points2}
\bysame, {\em Points rationnels sur certaines surfaces cubiques}, Colloque
  Intern. CNRS, les tendances g\'eom\'etriques en alg\`ebre et th\'eorie des
  nombres (Clermond-Ferrand, 1964), Paris, 1966, pp.~67--75.

\bibitem[CTS1]{cts:predescente2}
J.-L. Colliot-Th\'el\`ene et J.-J. Sansuc, {\em La descente sur une vari\'et\'e
  rationnelle d\'efinie sur un corps de nombres}, C. R. Acad. Sci. Paris S\'er.
  A {\bf 284} (1977), 1215--1218.

\bibitem[CTS2]{cts:descente1}
\bysame, {\em La descente sur les vari\'et\'es rationnelles}, Journ\'ees de
  g\'eom\'etrie alg\'ebrique d'Angers (1979) (A.~Beauville, ed.), Sijthoff \&
  Noordhoff, Alphen aan den Rijn, 1980, pp.~223--237.

\bibitem[CTS3]{cts:descente2}
\bysame, {\em La descente sur les vari\'et\'es rationnelles, II}, Duke Math. J.
  {\bf 54} (1987), \numero~2, 375--492.

\bibitem[CTSSD1]{ctssd:chatelet1}
J.-L. Colliot-Th\'el\`ene, J.-J. Sansuc{\comma} \andname H.~P.~F.
  Swinnerton-Dyer, {\em Intersections of two quadrics and Ch\^atelet surfaces
  I}, J. f\"ur reine angew. Math. {\bf 373} (1987), 37--107.

\bibitem[CTSSD2]{ctssd:chatelet2}
\bysame, {\em Intersections of two quadrics and Ch\^atelet surfaces II}, J.
  f\"ur reine angew. Math. {\bf 374} (1987), 72--168.

\bibitem[Co]{cook:quadratic}
R.~J. Cook, {\em Simultaneous quadratic equations}, J. London Math. Soc. (2)
  {\bf 4} (1971), 319--326.

\bibitem[CoTs]{coraytsfasman:delpezzo}
D.~F. Coray \andname M.~A. Tsfasman, {\em Artithmetic on singular Del Pezzo
  surfaces}, Proc. London Math. Soc. {\bf 57} (1988), \numero~1, 25--87.

\bibitem[Da]{davenport:cubic}
H.~Davenport, {\em Cubic forms in 16 variables}, Proc. Roy. Soc. A (1963),
  \numero~272, 285--303.

\bibitem[Dr]{draxl:tori}
P.~K.~J. Draxl, {\em $L$-Funktionen algebraischer Tori}, J. of Number Theory
  {\bf 3} (1971), 444--467.

\bibitem[HB]{heathbrown:2003}
D.~R. Heath-Brown, {\em Linear relations amongst sums of two squares}, Number
  theory and algebraic geometry, London Math. Soc. Lecture Note Ser., vol. 303,
  Cambridge University Press, 2003, pp.~133--176.

\bibitem[Is]{iskovskih:hasse}
V.~A. Iskovskih, {\em A counterexample to the Hasse principle for systems of
  two quadratic forms in five variables}, Mat. Zametki {\bf 10} (1971),
  253--257; English transl. in Math. Notes {\bf 10} (1971), 575--577.

\bibitem[Lac]{lachaud:waring}
G.~Lachaud, {\em Une pr\'esentation ad\'elique de la s\'erie singuli\`ere et du
  probl\`eme de Waring}, Enseign. Math. (2) {\bf 28} (1982), 139--169.

\bibitem[Ono1]{ono:tori}
T.~Ono, {\em Arithmetic of algebraic tori}, Ann. of Math. (2) {\bf 74} (1961),
  \numero~1, 101--139.

\bibitem[Ono2]{ono:tamagawa}
\bysame, {\em On the Tamagawa number of algebraic tori}, Ann. of Math. (2) {\bf
  78} (1963), \numero~1, 47--73.

\bibitem[Pe1]{peyre:fano}
E.~Peyre, {\em Hauteurs et mesures de Tamagawa sur les vari\'et\'es de Fano},
  Duke Math. J. {\bf 79} (1995), \numero~1, 101--218.

\bibitem[Pe2]{peyre:torseurs}
\bysame, {\em Terme principal de la fonction z\^eta des hauteurs et torseurs
  universels}, Nombre et r\'epartition de points de hauteur born\'ee,
  Ast\'erisque, vol. 251, SMF, Paris, 1998, pp.~259--298.

\bibitem[Pe3]{peyre:cercle}
\bysame, {\em Torseurs universels et m\'ethode du cercle}, Rational points on
  algebraic varieties, Progress in Math., vol. 199, Birkha\"user, Basel, 2001,
  pp.~221--274.

\bibitem[Pe4]{peyre:lille}
\bysame, {\em Points de hauteur born\'ee et mesures de Tamagawa}, J. Th\'eorie
  des nombres de Bordeaux {\bf 15} (2003), 319--349.

\bibitem[Sal]{salberger:tamagawa}
P.~Salberger, {\em Tamagawa measures on universal torsors and points of bounded
  height on Fano varieties}, Nombre et r\'epartition de points de hauteur
  born\'ee, Ast\'erisque, vol. 251, SMF, Paris, 1998, pp.~91--258.

\bibitem[San]{sansuc:brauer}
J.-J. Sansuc, {\em Groupe de Brauer et arithm\'etique des groupes alg\'ebriques
  lin\'eaires sur un corps de nombres}, J. f\"ur reine angew. Math. {\bf 327}
  (1981), 12--80.

\bibitem[We]{weil:adeles}
A.~Weil, {\em Ad\`eles and algebraic groups}, Progress in Mathematics, vol.~23,
  Birkha\"user, Boston, Basel, Stuttgart, 1982.

\end{thebibliography}
%% Including file main.bbl
%% Unique keys chosen by makekeys
\ifx\undefined\bysame
\newcommand{\bysame}{\leavevmode\hbox to3em{\hrulefill}\,}
\fi
\ifx\undefined\numero
\newcommand{\numero}{$\hbox{n}^\circ$}
\fi
\ifx\undefined\andname
\newcommand{\andname}{and }
\fi
\ifx\undefined\comma
\newcommand{\comma}{,}
\fi

%% End of file main.bbl
\end{document}